\title{Consistent Dimer Models on Surfaces with Boundary}
\author{Jonah Berggren$^1$ and Khrystyna Serhiyenko$^2$}
\date{
	$^1$University of Kentucky \href{mailto:jrberggren@uky.edu}{jrberggren@uky.edu} 
	\\
	$^2$University of Kentucky \href{mailto:khrystyna.serhiyenko@uky.edu}{khrystyna.serhiyenko@uky.edu}
}
\newtheorem{thm}{Theorem}[section]
\newtheorem{thmIntro}{Theorem}    
\newtheorem{conIntro}[thmIntro]{Corollary}    
\newtheorem{prop}[thm]{Proposition}
\newtheorem{lemma}[thm]{Lemma}
\newtheorem{cor}[thm]{Corollary}
\theoremstyle{definition}
\newtheorem{defn}[thm]{Definition}
\newtheorem{remk}[thm]{Remark}
\newtheorem{example}[thm]{Example}
\newcommand{\wind}{\textup{Wind}}
\newcommand{\A}{{\mathcal A}}
\renewcommand{\AA}{\mathscr A}
\newcommand{\F}{\mathcal F}
\begin{document}
\maketitle

\begin{abstract}
	A dimer model is a quiver with faces embedded in a surface. We define and investigate notions of consistency for dimer models on general surfaces with boundary which restrict to well-studied consistency conditions in the disk and torus case. We define weak consistency in terms of the associated dimer algebra and show that it is equivalent to the absence of bad configurations on the strand diagram.  In the disk and torus case, weakly consistent models are nondegenerate, meaning that every arrow is contained in a perfect matching; this is not true for general surfaces. Strong consistency is defined to require weak consistency as well as nondegeneracy. We prove that the completed as well as the noncompleted dimer algebra of a strongly consistent dimer model are bimodule internally 3-Calabi-Yau with respect to their boundary idempotents. As a consequence, the Gorenstein-projective module category of the completed boundary algebra of suitable dimer models categorifies the cluster algebra given by their underlying quiver. We provide additional consequences of weak and strong consistency, including that one may reduce a strongly consistent dimer model by removing digons and that consistency behaves well under taking dimer submodels.
\end{abstract}

\section{Introduction}

Dimer models were introduced as a model to study phase transitions in solid state physics.
In this setting, a dimer model is a bicolored graph embedded into a surface, representing a configuration of particles which may bond to one another.
The physics of this system is described by \textit{perfect matchings} of the graph, see the survey \cite{Kenyon} and references therein.
Moreover, to a dimer model one may associate a \textit{dimer algebra}, which is the Jacobian algebra of a certain quiver with potential, whose combinatorics and representation theory relate to the physics of the dimer model.
In the physics literature, dimer models on tori have seen the most study, especially those satisfying certain \textit{consistency conditions}~\cite{XHV}. 
Under these conditions, several authors including Mozgovoy and Reineke~\cite{XMR},  Davison~\cite{XDavison}, and Broomhead~\cite{XBroomhead2009}, showed that the dimer algebra is 3-Calabi-Yau. 
Ishii and Ueda~\cite{XIU2007} showed that the moduli space $\mathcal M_\theta$ of stable representations of the dimer algebra with dimension vector $(1,\dots,1)$ and a generic stability condition $\theta$ in the sense of King~\cite{XKing} is a smooth toric Calabi-Yau 3-fold. 
Moreover, the center $Z$ of the dimer algebra $A_Q$ is a Gorenstein affine 3-fold, 
the dimer algebra $A_Q$ is a non-commutative crepant resolution of $Z$, and $\mathcal M_\theta$ is a crepant resolution of $Z$~\cite{XIU2007}.
{Properties of the category of coherent sheaves over $\mathcal M_\theta$ may be understood through the combinatorics of the dimer model, opening a rich connection to mirror symmetry~\cite{XBocklandt2015,XBocklandt2013,FHKVW,XFU}.}

Many equivalent definitions of consistency have been introduced for torus dimer models. See, for example,~\cite[Theorem 10.2]{XBocklandt2011},~\cite{XBocklandt2009,XBroomhead2009,Kennaway,XMR}.
In particular, consistency of a dimer model is equivalent to the absence of certain bad configurations in the strand diagram of the dimer model~\cite[Theorem 1.37]{XBocklandt2015}. 

Dimer models on disks have been studied separately, and are of particular interest to the theory of cluster algebras. Postnikov introduced plabic graphs and strand diagrams in~\cite{Postnikov}. Scott~\cite{XScott} showed that the homogeneous coordinate ring of the Grassmannian $\textup{Gr}(k,n)$ is a cluster algebra, in which certain seeds are indexed by $(k,n)$-Postnikov diagrams.
Jensen-King-Su~\cite{XJKS} gave an additive categorification for this cluster structure, and Baur-King-Marsh~\cite{BKMX} interpreted this categorification as the Gorenstein-projective module category over the completed boundary algebra of the associated dimer model. Pressland extended these results to arbitrary Postnikov diagrams in~\cite{XPressland2019} and observed that a dimer model coming from a Postnikov diagram satisfies a \textit{thinness condition}, which is analogous to the algebraic consistency conditions in the torus literature.

{A systematic study on dimer models on more general surfaces was initiated by Franco in~\cite{BFTFDBPTSA}. This study is largely concerned with the master and mesonic moduli spaces on dimer models, which may be computed using the combinatorics of perfect matchings.
Operations such as removing an edge and the dual untwisting map were investigated in~\cite{BFTFDBPTSA,NDIBFT,BFTFDB}. Quiver mutation and square moves were connected with cluster mutation in~\cite{CTFBFT}, and further connected with combinatorial mutation of polytopes in~\cite{Twin}.
Dimer models on general surfaces were connected with matroid polytopes and used to obtain a partial matroid stratification of the Grassmannian, generalizing the place of dimer models in disks in the matroid stratification of the Grassmannian~\cite{TGOOSD, BFTCAATG, NPOSD, SOSCVTGONPOSD, HCCAQFTD}.
Various generalizations of the notions of consistency in the disk and torus case have been considered in this body of work.} 

\nocite{BBMTCYFADQ}
\nocite{BFTDBI}
\nocite{BFT2}
\nocite{BB}

We define a new notion of consistency for dimer models on compact orientable surfaces with or without boundary.
We call a dimer model \textit{path-consistent} if, for any fixed vertices $v_1$ and $v_2$ and a fixed homotopy class $C$ of paths from $v_1$ to $v_2$, there is a unique (up to path-equivalence) \textit{minimal path} $r$ from $v_1$ to $v_2$ in $C$ such that any path from $v_1$ to $v_2$ in $C$ is equivalent to $r$ composed with some number face-paths. When $Q$ is a dimer model on a torus, path-consistency is equivalent to the many consistency conditions in the literature. When $Q$ is a dimer model on a higher genus surface without boundary, path-consistency is equivalent to the weaker notions of consistency rather than the stronger algebraic consistency. See~\cite[Theorem 10.2]{XBocklandt2011}. When $Q$ is on a disk, path-consistency is the thinness condition appearing in Pressland~\cite{XPressland2019}.

We associate a strand diagram to a dimer model and define \textit{bad configurations}. We say that a dimer model is \textit{strand-consistent} if it has no bad configurations. This matches the notion of zigzag consistency of general dimer models on surfaces with boundary briefly considered in the first section of~\cite{XBocklandt2015}. In particular, it agrees with the well-studied notions of consistency in the torus case.

Our first main theorem is as follows, where we exclude the case of a sphere without boundary as such a dimer model is never strand-consistent.  A key idea of the proof is to observe that either notion of consistency of a dimer model is equivalent to consistency of its (possibly infinite) universal cover model, which enables the assumption of simple connectedness.

\begin{thmIntro}[Theorem~\ref{thm:cons-alg-str}]\label{thm:main1}
	Let $Q$ be a dimer model not on a sphere. The following are equivalent:
\begin{enumerate}
	\item\label{q1} The dimer model $Q$ is path-consistent.
	\item\label{q2} The dimer model $Q$ is strand-consistent.
	\item The dimer algebra $A_Q$ is cancellative.
\end{enumerate}
\end{thmIntro}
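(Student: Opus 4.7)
The plan is to reduce to the simply connected setting and then prove the two equivalences \eqref{q1}$\Leftrightarrow$(3) and \eqref{q1}$\Leftrightarrow$\eqref{q2} separately. Both notions of consistency are local-to-global properties: path-consistency is phrased in terms of lifts of paths to homotopy classes, while strand-consistency is a local combinatorial statement about strand diagrams, and bad configurations descend from any cover. Cancellativity of $A_Q$ likewise lifts: a nonzero equation $ab = ac$ in $A_Q$ lifts to the covering dimer algebra upon a choice of starting vertex. The first step of the proof is therefore to pass to the universal cover $\widetilde Q$, which is a dimer model on a simply connected surface (a disk when $Q$ has boundary, the plane otherwise; the sphere case is excluded), so that all homotopy classes of paths between fixed vertices are trivially determined by their endpoints.

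For \eqref{q1}$\Leftrightarrow$(3) in the simply connected case, I would exploit that path-equivalence in $A_Q$ is generated precisely by replacing a prefix (or suffix) of a face-path by the remaining face-path with the same endpoints. Under path-consistency, each pair of vertices admits a minimal path unique up to path-equivalence and a basis of $A_Q$ given by minimal paths composed with central elements (the face-paths). Cancellativity then follows by cancelling prefixes of monomial expressions in this basis. Conversely, assuming cancellativity, if two paths $p, p'$ in the same homotopy class are inequivalent, I would post-compose with a path designed to yield a common target and use cancellation to force $p$ and $p'$ to differ by face-paths, which is exactly the content of path-consistency.

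For \eqref{q1}$\Leftrightarrow$\eqref{q2}, I would follow the strategy used by Bocklandt in the torus case. A bad configuration in the strand diagram, whether a self-intersection, a parallel pair of strands forming an empty bigon, or an untwisted homotopic pair (depending on the classification used in the paper's definition), produces two distinct minimal paths between the same pair of vertices in the same homotopy class, violating path-consistency. Conversely, if path-consistency fails, two incompatible minimal paths $p, p'$ between the same endpoints bound a disk region in $\widetilde Q$, and by induction on the number of faces contained in this region one exhibits a minimal such configuration whose boundary strand diagram gives one of the prohibited bad configurations.

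The technical heart of the argument, and the main obstacle, is the universal-cover reduction: $\widetilde{A}_Q$ is typically infinite-dimensional, and one must check that cancellativity is detected locally enough to pass between $A_Q$ and $\widetilde{A}_Q$ cleanly. This is especially delicate in the unbounded surface case, where the cover may accumulate infinitely many faces around any vertex only when the dimer model has higher genus or cuts. A secondary combinatorial obstacle is verifying that the full list of bad configurations is exhausted by the pathologies arising from inequivalent minimal paths; this requires a careful case analysis analogous to \cite[Theorem 1.37]{XBocklandt2015} but adapted to surfaces with boundary.
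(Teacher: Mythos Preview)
Your outline matches the paper's structure at the top level: pass to the universal cover, establish \eqref{q1}$\Leftrightarrow$(3) directly, and treat \eqref{q1}$\Leftrightarrow$\eqref{q2} via strand combinatorics. The equivalence with cancellativity is indeed proven essentially as you describe (Theorem~\ref{thm:consistent-iff-cancellation}), and the universal-cover reduction is not the obstacle you fear---local finiteness passes to $\widetilde Q$, and the lifting lemmas (Proposition~\ref{prop:Q-consistent-iff-hat-Q-consistent}, Lemma~\ref{lem:Q-canc-iff-hat-Q-canc}, Proposition~\ref{prop:Q-g-consistent-iff-hat-Q-g-consistent}) are short. For \eqref{q2}$\Rightarrow$\eqref{q1} the paper also takes a shortcut you do not: rather than induct on faces in a bounded region, it restricts any cancellation problem to a finite disk submodel of $\widetilde Q$, notes that the restricted strand diagram inherits the absence of bad configurations, and invokes the known disk case from~\cite{CKP} (Proposition~\ref{thm:not-a-cons-then-not-g-conss}).

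The genuine gap is in \eqref{q1}$\Rightarrow$\eqref{q2}. You assert that a bad configuration ``produces two distinct minimal paths \dots\ violating path-consistency,'' but path-consistency permits many distinct minimal paths---it only requires them to be path-\emph{equivalent}. What must be shown is that the two paths arising from a bad configuration are \emph{inequivalent}, and nothing a priori prevents a long chain of basic morphs from connecting them. The paper's mechanism is that each bad configuration yields an \emph{irreducible pair} $(p,q)$: a leftmost $p$ and rightmost $q$ bounding a disk (Definition~\ref{defn:irreducible-pair}, Theorem~\ref{thm:good-quiver-no-bad-configurations}). The hard content is then Theorem~\ref{prop:no-irreducible-pairs}, that path-consistent simply connected models admit no irreducible pairs, and its proof occupies all of Section~\ref{sec:mac}: one develops \emph{cycle-removing morphs} and \emph{good chains} to show that morphs applied to $p$ can be organized so as never to enter the disk $q^{-1}p$, forcing a winding-number contradiction. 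Your sketch contains no analogue of this machinery, and the Bocklandt-style case analysis you allude to does not by itself supply it.
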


We may thus say that a \textit{weakly consistent dimer model} is a model not on a sphere satisfying any of the above equivalent conditions. This generalizes results in the case of the torus~\cite[Theorem 10.1]{XBocklandt2011},~\cite{XIU}.
This was also shown for dimer models on the disk corresponding to $(k,n)$-diagrams in~\cite{BKMX}. The implication~\eqref{q2}$\implies$\eqref{q1} for general dimer models on disks appears in~\cite[Proposition 2.11]{XPressland2019}. 
A corollary of our result is the reverse direction in the disk case (Corollary~\ref{cor:main-cor-disk}).

As an application, we use the strand diagram characterization of consistency to prove that \textit{dimer submodels} of weakly consistent dimer models are weakly consistent (Corollary~\ref{cor:dimer-submodel-consistent}). 
This gives us practical ways to get new weakly consistent models from old and to understand equivalence classes of minimal paths. 

Next, we study \textit{perfect matchings} of weakly consistent dimer models. In the torus case, perfect matchings of the dimer model feature prominently~\cite{XIU,XBroomhead2009,XBocklandt2011}. Perfect matchings of a torus dimer model generate the cone of R-symmetries, which have applications in physics. Perfect matchings may be used to calculate the perfect matching polygon of the dimer model, which is related to the center of the dimer algebra.
Perfect matchings of a dimer model on a disk~\cite{CKP,XLam2015} are the natural analog and may be connected with certain perfect matching modules of the completed dimer algebra to understand the categorification given by the boundary algebra of a dimer model on a disk~\cite{CKP}.
{Over arbitrary compact surfaces with boundary, perfect matchings may be used to describe the master and mesonic moduli spaces associated to the dimer model. Moreover, perfect matchings of a dimer model on a general surface may be calculated by taking determinants of Kasteleyn matrices~\cite{HaK}, \cite[\S5]{BFTFDBPTSA}.}
In Theorem~\ref{thm:geo-consistent-then-almost-perfect-matching}, we show that any (possibly infinite) simply connected weakly consistent dimer model has a perfect matching. This means that the universal cover model of any weakly consistent dimer model has a perfect matching. On the other hand, we give an example of a (non-simply-connected) weakly consistent dimer model which has no perfect matching (Example~\ref{ex:cons-tor-no-perf}).
One important notion for dimer models in the disk and torus is \textit{nondegeneracy}, which requires that all arrows are contained in a perfect matching. We extend this definition to general surfaces and show that nondegeneracy gives a positive grading to the dimer algebra.
We define a \textit{strongly consistent} dimer model as one which is weakly consistent and nondegenerate. In the disk and torus case, weak and strong consistency are equivalent, but this is not true for more general surfaces.
We then use~\cite{XPressland2019} to prove the following result. 
\begin{thmIntro}[{Theorem~\ref{thm:Calabi-Yau}}]\label{thm:AAAz}
	Let $Q$ be a finite strongly consistent dimer model. Then the dimer algebra $A_Q$ and the completed dimer algebra $\widehat A_Q$ are bimodule internally 3-Calabi-Yau with respect to their boundary idempotents.
\end{thmIntro}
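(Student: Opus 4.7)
The plan is to reduce the theorem to a criterion of Pressland~\cite{XPressland2019}, which, although originally applied to dimer models on disks, relies on general structural input that is available to us in the strongly consistent setting. Pressland's criterion says, roughly, that a Jacobian algebra of a quiver with potential that is cancellative, carries a suitable positive grading, and whose boundary idempotent $e$ satisfies the standard local homological hypotheses coming from the Ginzburg bimodule complex, is bimodule internally 3-Calabi-Yau with respect to $e$. The task is then to verify each of these inputs for $(A_Q, e)$ when $Q$ is finite and strongly consistent, and to pass the conclusion through to the completion $\widehat A_Q$.

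The verification proceeds in stages. Cancellativity of $A_Q$, and hence of $\widehat A_Q$, is immediate from Theorem~A, since strong consistency includes weak consistency. A positive grading on $A_Q$ is produced from nondegeneracy: given any perfect matching, one assigns degree one to its arrows and degree zero to the remaining arrows, so that every element of the potential (and hence every Jacobian relation) becomes homogeneous of positive degree, as indicated just before the theorem's statement. Finiteness of $Q$ ensures that $\widehat A_Q$ is Noetherian. The boundary idempotent $e$ is the sum of vertex idempotents at boundary vertices, matching Pressland's convention.

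With these ingredients in hand, Pressland's machinery yields that $\widehat A_Q$ is bimodule internally 3-Calabi-Yau with respect to $e$. To descend from $\widehat A_Q$ to $A_Q$, I would use the positive grading: each graded piece of $A_Q$ is finite-dimensional and agrees with the corresponding piece of $\widehat A_Q$, so a projective bimodule resolution of $\widehat A_Q$ of length $3$ restricts degreewise to a projective bimodule resolution of $A_Q$ of the same length, and the Calabi-Yau duality is preserved on each graded component.

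The main obstacle is verifying that Pressland's criterion applies verbatim on an arbitrary compact surface with boundary rather than only on the disk. In the disk case several steps implicitly exploit simple connectedness or the specific combinatorics of Postnikov diagrams, while here the required exactness of the Ginzburg-type bimodule complex attached to the potential, after projecting away from $e$, has to be extracted purely from cancellativity together with the strand-diagram characterization of weak consistency (Theorem~A, particularly the absence of bad configurations). Once this exactness is established in the interior, the boundary contribution matches Pressland's formulation, and the theorem follows for both the completed and the uncompleted dimer algebras.
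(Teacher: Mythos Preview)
Your outline has the right shape---reduce to exactness of the Ginzburg-type bimodule complex via Pressland's criterion~\cite[Theorem~5.6]{XPressland2015}, use a positive grading to control the reduction---but there are several concrete problems.

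First, the claim that finiteness of $Q$ implies $\widehat A_Q$ is Noetherian is false: the paper exhibits finite strongly consistent dimer models on annuli whose completed dimer algebras are not Noetherian (Example~\ref{ex:rightest}). Fortunately Noetherianness is not a hypothesis of Pressland's exactness criterion at all; it only enters later for the categorification corollary. Second, your grading from a single perfect matching makes the potential homogeneous but gives many arrows degree zero. The paper instead sums over \emph{all} perfect matchings (Lemma~\ref{lem:posgrad}), so that nondegeneracy forces every arrow to have strictly positive degree; this stronger positivity is what drives the filtration argument (Proposition~\ref{prop:1iff2}) reducing two-sided exactness to one-sided exactness in the noncompleted case, and what makes Lemma~\ref{lem:mabel} work in the completed case.

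The decisive gap is the one you flag but do not resolve: establishing exactness of the one-sided complex at an internal vertex when the surface is not a disk. The paper's mechanism is not to reprove Pressland's argument abstractly but to \emph{localize} to the disk case. One decomposes elements of $\ker\bar\mu_2$ by homotopy class, lifts each homotopy-class piece to the universal cover $\widetilde Q$, and then chooses a finite disk submodel $Q^C\subseteq\widetilde Q$ containing the relevant minimal paths. By Corollary~\ref{cor:dimer-submodel-consistent} this submodel is weakly consistent, and by Theorem~\ref{thm:submodel-path-equivalence} path-equivalence in $Q^C$ agrees with path-equivalence in $\widetilde Q$; so Pressland's disk result (Theorem~\ref{thm:Calabi-Yau-disk}) applies to $Q^C$ and produces the required preimage, which then descends to $Q$. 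This lift--restrict--descend maneuver is the missing idea, and without it the proposal does not close.

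Finally, the paper does not argue by proving the completed case first and transferring to the uncompleted algebra; it runs the same exactness argument for both $A_Q$ and $\widehat A_Q$ in parallel, with the grading-based Proposition~\ref{prop:1iff2} handling the noncompleted reduction and Pressland's~\cite[Lemma~4.7]{XPressland2017} handling the completed one.
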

When $Q$ is a dimer model on a disk, we recover~\cite[Theorem 3.7]{XPressland2019}. When $Q$ has no boundary, this translates to the algebra being 3-Calabi-Yau~\cite[Remark 2.2]{XPressland2015}. Hence, we recover the statement in the torus (and closed surface of higher genus) literature that consistent dimer models are 3-Calabi-Yau proven in~\cite[Corollary 4.4]{XDavison}.
Using~\cite[Theorem 4.1 and Theorem 4.10]{AIRX}, Theorem~\ref{thm:AAAz} immediately implies the following. 
\begin{conIntro}[{Corollary~\ref{thm:cat-thm}}]\label{cor:GFDS}
	Let $Q$ be a strongly consistent, Noetherian, and boundary-finite (Definition~\ref{defn:bf}) dimer model with no 1-cycles or 2-cycles. Then the Gorenstein-projective module category of the completed boundary algebra of $Q$ categorifies the cluster algebra given by the ice quiver of $Q$.
\end{conIntro}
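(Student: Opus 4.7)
The plan is to set up the corollary as a direct specialization of the general categorification machinery of~\cite{AIRX}, taking the Calabi--Yau input from Theorem~\ref{thm:AAAz}. By that theorem, the completed dimer algebra $\widehat A_Q$ is bimodule internally $3$-Calabi--Yau with respect to its boundary idempotent $e$, which is exactly the hypothesis on which~\cite[Theorem 4.1]{AIRX} and~\cite[Theorem 4.10]{AIRX} operate. The completed boundary algebra is, by definition, the corner algebra $e\widehat A_Q e$, and the Gorenstein-projective modules in the statement are the Gorenstein-projective modules over this corner algebra, so the target of the categorification is correctly identified from the outset.

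Next I would match the side hypotheses in the statement to those of~\cite{AIRX}. The Noetherian assumption ensures that $\widehat A_Q$ is a Noetherian algebra and hence that the module-theoretic constructions of~\cite{AIRX} are well-posed, while boundary-finiteness (Definition~\ref{defn:bf}) supplies the finiteness property of the quotient by the ideal $\langle e\rangle$ that is needed to obtain a cluster-tilting object with endomorphism algebra $\widehat A_Q$. The ``no 1-cycles or 2-cycles'' hypothesis guarantees that the ice quiver of $Q$ coincides with the Gabriel quiver of this endomorphism algebra, so that the cluster algebra that appears on the categorified side is literally the cluster algebra of the ice quiver of $Q$, and not one obtained from it after mutation-reduction.

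With these translations in hand, \cite[Theorem 4.1]{AIRX} produces a Frobenius exact structure on the Gorenstein-projectives of the completed boundary algebra together with a distinguished cluster-tilting object whose endomorphism algebra is $\widehat A_Q$, and \cite[Theorem 4.10]{AIRX} identifies the associated cluster character image with the cluster algebra of the ice quiver. The main obstacle is therefore not computational but rather the verification of the dictionary between the terminology developed in this paper and the framework of~\cite{AIRX}: one must confirm that the notion of ``bimodule internally $3$-Calabi--Yau with respect to the boundary idempotent'' established in Theorem~\ref{thm:AAAz} is the same notion required in~\cite{AIRX}, and that the boundary-finiteness condition introduced in Definition~\ref{defn:bf} does provide whatever finiteness of $\widehat A_Q/\langle e\rangle$ their hypotheses demand. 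Once this correspondence is in place, the corollary is an immediate specialization.
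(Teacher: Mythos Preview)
Your proposal is correct and follows essentially the same route as the paper: the paper derives Corollary~\ref{thm:cat-thm} by feeding the bimodule internally $3$-Calabi--Yau property of $\widehat A_Q$ from Theorem~\ref{thm:AAAz} together with the Noetherian and boundary-finite hypotheses (the latter being precisely finite-dimensionality of $\widehat A_Q/\langle e\rangle$) into~\cite[Theorem~4.1 and Theorem~4.10]{AIRX}, and then observes that the absence of $1$- and $2$-cycles lets one read off the cluster algebra directly from the ice quiver of $Q$. Your identification of each hypothesis with its counterpart in~\cite{AIRX} matches the paper's reasoning exactly.
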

We use the term ``categorification'' for brevity during the introduction; see Corollary~\ref{thm:cat-thm} for a more rigorous statement.
We give some examples of strongly consistent dimer models on annuli satisfying the requirements of Corollary~\ref{cor:GFDS}. 

{Next, we use the theory of dimer submodels to get some interesting results about equivalence classes of minimal paths in (weakly and strongly) consistent dimer models. We prove that in a weakly consistent dimer model, minimal \textit{leftmost} and \textit{rightmost} paths in a given homotopy class between two vertices are unique when they exist. If we further assume nondegeneracy, then they always exist.}

Finally, we study the reduction of dimer models. In the disk case, consistent dimer models with at least three boundary vertices may be \textit{reduced} in order to obtain a dimer model with no 2-cycles and an isomorphic dimer algebra~\cite[\S2]{XPressland2019}. 
We show in Proposition~\ref{prop:reduce-dimer-model} that a similar process may be used to remove certain, but not all, digons in the non-simply-connected case. 
Figure~\ref{fig:non-red-annulus} gives a weakly (but not strongly) consistent dimer model with a digon which may not be removed in this way. 
On the other hand, Corollary~\ref{cor:degenreduce} states that if we require strong consistency, then we may remove all digons from a dimer model.

The article is organized as follows. In Section~\ref{sec:first-section}, we define dimer models and prove that path-consistency is equivalent to cancellativity. We also show that these notions behave well when passing to the universal cover of a dimer model.
In Section~\ref{sec:mac}, we develop some technical theory of basic and cycle-removing morphs in order to prove that a path-consistent and simply connected dimer model has no \textit{irreducible pairs} (Theorem~\ref{prop:no-irreducible-pairs}). This result is used in Section~\ref{sec:pdsc} to complete the proof of Theorem~\ref{thm:main1} by showing that path-consistency and strand-consistency are equivalent.
Next, in Section~\ref{sec:ds}, we introduce dimer submodels and prove that dimer submodels of weakly consistent dimer models are weakly consistent (Corollary~\ref{cor:dimer-submodel-consistent}). 
This gives us practical ways to get new weakly consistent models from old and to understand equivalence classes of minimal paths. 
Section~\ref{sec:perfect-matching} is dedicated to perfect matchings of weakly consistent dimer models. 
In Section~\ref{sec:3cy}, we prove that the noncompleted and completed dimer algebras of a strongly consistent dimer model are bimodule internally 3-Calabi-Yau with respect to their boundary idempotents (Theorem~\ref{thm:AAAz}). 
As a result, we obtain our categorification result in Corollary~\ref{cor:GFDS}.
In Section~\ref{sec:eraec}, we use the results of Section~\ref{sec:ds} to understand the equivalence classes of minimal paths in (weakly and/or strongly) consistent dimer models.
Lastly, in Section~\ref{sec:red}, we discuss the process of reducing a dimer model by removing digons. We prove that if $Q$ is strongly consistent, then all digons may be removed.

\subsection*{Acknowledgments}

The authors thank Matthew Pressland for useful discussions and for comments on preliminary versions. The authors also thank two anonymous referees for their careful review of the manuscript.
The authors were supported by the NSF grant DMS-2054255.

\section{Covers and Consistency}\label{sec:first-section}

In this section we define a dimer model on an arbitrary surface with boundary. We introduce {path-consistency}, which generalizes notions of consistency of dimer models on the disk and torus. We show that path-consistency is equivalent to cancellativity. Moreover, we prove that these notions work well with taking the universal cover of a dimer model.

\subsection{Dimer Models}

We begin by defining dimer models, following~\cite[\S3]{BKMX}.
A \textit{quiver} is a directed graph. A \textit{cycle} of $Q$ is a nonconstant oriented path of $Q$ which starts and ends at the same vertex.  A cycle of length $a$ is called an $a$-\emph{cycle}.
Two cycles $\alpha_1\dots\alpha_a$ and $\beta_1\dots\beta_b$ are \emph{cyclically equivalent} if $a=b$ and there is some $j\in\mathbb Z$ such that $\alpha_i=\beta_{i+j}$ (where the subscript addition is calculated modulo $a$) for all $i\in[a]$.
If $Q$ is a quiver, we write $Q_{\text{cyc}}$ for the set of cycles in $Q$ of length at least two up to cyclic equivalence.

\begin{defn}
	A \textit{quiver with faces} is a triple $Q=(Q_0,Q_1,Q_2)$, where $(Q_0,Q_1)$ are the vertices and arrows of a quiver and $Q_2\subseteq Q_{\text{cyc}}$ is a set of \textit{faces} of $Q$.
\end{defn}

A \textit{digon} of $Q$ is a face in $Q_2$ consisting of two arrows.
Given a vertex $i\in Q_0$, we define the \textit{incidence graph} of $Q$ at $i$ to be the graph whose vertices are given by the arrows incident to $i$ and whose arrows $\alpha\to\beta$ correspond to paths
\[\xrightarrow{\alpha}i\xrightarrow{\beta}\]
which occur in faces of $Q$.

\begin{defn}\label{defn:dimer-model}
	A (locally finite, oriented) \textit{dimer model with boundary} is given by a quiver with faces $Q=(Q_0,Q_1,Q_2)$, where $Q_2$ is written as a disjoint union $Q_2=Q_2^{cc}\cup Q_2^{cl}$, satisfying the following properties:
    \begin{enumerate}
        \item Each arrow of $Q_1$ is in either one face or two faces of $Q$. An arrow which is in one face is called a \textit{boundary arrow} and an arrow which is in two faces is called an \textit{internal arrow}.
	\item Each internal arrow appears once in one cycle bounding a face in $Q_2^{cc}$ and once in one cycle bounding a face in $Q_2^{cl}$.
	{\item\label{ddm:4} The incidence graph of $Q$ at each vertex is connected.}
	\item Any vertex of $Q$ is incident with a finite number of arrows.
    \end{enumerate}
\end{defn}

A vertex of $Q$ is called boundary if it is adjacent to a boundary arrow, and otherwise it is called internal. 

Given a dimer model with boundary $Q$ we may associate each face $F$ of $Q$ with a polygon whose edges are labeled by the arrows in $F$ and glue the edges of these polygons together as indicated by the directions of the arrows to form a surface with boundary $S(Q)$ into which $Q$ may be embedded~\cite[Lemma 6.4]{XBocklandt2009}. The surface $S(Q)$ is oriented such that the cycles of faces in $Q_2^{cc}$ are oriented positive (or counter-clockwise) and the cycles of faces in $Q_2^{cl}$ are oriented negative (or clockwise). The boundary of $S(Q)$ runs along the boundary arrows of $Q$. If $S(Q)$ is a disk, then we say that $Q$ is a \textit{dimer model on a disk}. If $S(Q)$ is simply connected, then we say that $Q$ is a \textit{simply connected dimer model}.

A dimer model $Q$ is \textit{finite} if its vertex set is finite. Note that $Q$ is finite if and only if $S(Q)$ is compact.

Suppose that $Q$ is a finite quiver 
such that every vertex has finite degree. Suppose further that $Q$ has an embedding into an oriented surface $\Sigma$ with boundary such that the complement of $Q$ in $\Sigma$ is a disjoint union of discs, each of which is bounded by a cycle of $Q$. We may then view $Q$ as a dimer model with boundary by declaring $Q_2^{cc}$ (respectively $Q_2^{cl}$) to be the set of positively (respectively, negatively) oriented cycles of $Q$ which bound a connected component of the complement of $Q$ in $\Sigma$. All dimer models may be obtained in this way.

Let $Q$ be a dimer model and let $p$ be a path in $Q$. We write $t(p)$ and $h(p)$ for the start and end vertex of $p$, respectively. If a path $q$ can be factored in the form $q=q_2pq_1$, where $h(q_1)=t(p)$ and $t(q_2)=h(p)$, we say that $p$ is in $q$ or that $q$ contains $p$ as a subpath and we write $p\in q$. Corresponding to any vertex $v$ is a \textit{constant path} $e_v$ from $v$ to itself which has no arrows. 

Any arrow $\alpha$ in $Q$ is associated with at most one clockwise and one counter-clockwise face of the dimer model. We refer to these faces as $F_\alpha^{cl}$ and $F_\alpha^{cc}$, respectively, when they exist. Let $R_\alpha^{cl}$ (respectively $R_\alpha^{cc}$) be the subpath of $F_\alpha^{cl}$ (respectively $F_\alpha^{cc}$) beginning at $h(\alpha)$ and ending at $t(\alpha)$, and consisting of all arrows in $F_\alpha^{cl}$ (respectively $R_\alpha^{cc}$) except for $\alpha$. A path in $Q$ of the form $R_\alpha^{cl}$ (respectively $R_\alpha^{cc}$) for some $\alpha$ is called a \textit{clockwise return path} (respectively a \textit{counterclockwise return path}) of $\alpha$. 

\begin{defn}\label{defn:dimer-algebra}
    Given a dimer model with boundary $Q$, the \textit{dimer algebra} $A_Q$ is defined as the quotient of the path algebra $\mathbb CQ$ by the relations
	\[R_\alpha^{cc}-R_\alpha^{cl}\]
    for every internal arrow $\alpha\in Q_1$.
\end{defn}

We now make more definitions. We say that two paths $p$ and $q$ in $Q$ are \textit{path-equivalent} if their associated elements in the dimer algebra $A_Q$ are equal. If $p$ is a path in $Q$, we write $[p]$ for the path-equivalence class of $p$ under these relations.

The set of \textit{left-morphable} (respectively \textit{right-morphable}) arrows for $p$ is the set of internal arrows $\alpha\in Q_1$ such that $R_\alpha^{cc}$ (respectively $R_\alpha^{cl}$) is in $p$. The set of \textit{morphable arrows} for $p$ is the set of arrows which are left-morphable or right-morphable for $p$. 
Let $\alpha$ be a morphable arrow for $p$. We also say that $p$ \textit{has the morphable arrow $\alpha$}. Then $p$ contains $R_\alpha^{cl}$ or $R_\alpha^{cc}$ as a subpath, and may possibly contain multiple such subpaths. If $p'$ is a path obtained from $p$ by replacing a single subpath $R_\alpha^{cl}$ with $R_\alpha^{cc}$ ($R_\alpha^{cc}$ with $R_\alpha^{cl}$, respectively), then we say that $p'$ is a \textit{basic right-morph} (respectively, \textit{basic left-morph}) of $p$. We omit the word ``basic'' when the context is clear. If $p$ only has one subpath which is a copy of $R_\alpha^{cl}$ or $R_\alpha^{cc}$, then we say that $p'$ is an \textit{unambiguous basic (right or left) morph} of $p$ and we write $p'=m_\alpha(p)$. We say that $\alpha$ is an \emph{unambiguous} morphable arrow for $p$ in this case.
Since the relations of $A_Q$ are generated by the relations $\{R_\alpha^{cc}-R_\alpha^{cl}\ :\ \alpha\textup{ is an internal arrow of }Q\}$, two paths $p$ and $q$ are path-equivalent if and only if 
there is a sequence of paths $p=r_1,\dots,r_m=q$ such that $r_{i+1}$ is a basic morph of $r_i$ for $i\in[m-1]$. 

Suppose $p$ is a cycle in $Q$ which starts and ends at some vertex $v$ and travels around a face of $Q$ once. Then we say that $p$ is a \textit{face-path} of $Q$ starting at $v$. The terminology is justified by the following observation which follows from the defining relations.

\begin{remk}\label{faces-are-equivalent}
    Any two face-paths of $Q$ starting at $v$ are path-equivalent.
\end{remk}

\begin{defn}
    For all $v\in Q_0$, fix some face-path $f_v$ at $v$. Then define
    \begin{equation}
	    [f]:=\sum_{v\in Q_0}[f_v].
    \end{equation}
\end{defn}

If $|Q_0|$ is finite, then $[f]$ is an element of $A_Q$. It follows from Remark~\ref{faces-are-equivalent} that the path-equivalence class $[f]$ is independent of the choice of $f_v$ for all $v\in Q_0$. Moreover, the dimer algebra relations imply that $[f]$ commutes with every arrow. Hence, if $|Q_0|$ is finite, then $[f]$ is in the center of $A_Q$. 
If $|Q_0|$ is not finite, then $[f]$ 
is not an element of the dimer model $A_Q$.
However, every element $x$ of $A_Q$ has a well-defined product with $f$, so we use notation such as $[xf^m]$ in this case as well. 

The \textit{completed path algebra} $\langle\langle\mathbb CQ\rangle\rangle$ is the completion of $\mathbb CQ$ with respect to the arrow ideal. The completed path algebra has as its underlying vector space the \textit{possibly infinite} linear combinations of (distinct) finite paths in $Q$. Multiplication in $\langle\langle\mathbb CQ\rangle\rangle$ is induced by composition. See~\cite[Definition 2.6]{XPressland2020}.

\begin{defn}\label{defn:completed-dimer-algebra}
	The \textit{completed dimer algebra} $\widehat A_Q$ is the quotient of the completed path algebra $\mathbb C\langle\langle Q\rangle\rangle$ by the closure $\widehat I_Q$ of the ideal generated by the relations $R_\alpha^{cc}-R_\alpha^{cl}$ for each internal arrow $\alpha$ with respect to the arrow ideal.
\end{defn}

Elements of $\widehat A_Q$ are possibly infinite linear combinations of (finite) paths of $Q$, with multiplication induced by composition.

\subsection{Path-Consistency}\label{sec:hc}

We now define path-consistency, which is a nice 
condition on the equivalence classes of paths between two vertices. We prove some short lemmas about path-consistent models.

A path $p$ in $Q$ is also a path in the surface $S(Q)$. We thus say that paths $p$ and $q$ of $Q$ are \textit{homotopic} if they are homotopic as paths in $S(Q)$.

\begin{defn}
	A path $p$ in a dimer model $Q$ is \textit{minimal} if we may not write $[p]=[qf^m]$ for any $m\geq1$.
\end{defn}

\begin{defn}\label{defn:algebraic-consistency}
	A dimer model $Q=(Q_0,Q_1,Q_2)$ is \textit{path-consistent} if it satisfies the following \textit{path-consistency condition}:

         \indent\indent\hangindent=1cm Fix vertices $v_1$ and $v_2$ of $Q$. For any homotopy class $C$ of paths in $S(Q)$ from $v_1$ to $v_2$ there is a minimal path $p_{v_2v_1}^C$, unique up to path-equivalence, with the property that any path $p$ from $v_1$ to $v_2$ in $Q$ in the homotopy class $C$ satisfies $[p]=[f^mp_{v_2v_1}^C]$ for a unique nonnegative integer $m$.  We call $m$ the \textit{c-value} of $p$.
\end{defn}

We remark that equivalent paths of a general dimer model must be homotopic, so in some sense this is the ``lowest number of path equivalence classes'' that one could hope for in a dimer model.

\begin{lemma}\label{lem:c-values-compose}
    If $p$ and $q$ are paths in a path-consistent dimer model $Q$ with $h(p)=t(q)$, then the c-value of the composition $qp$ is greater than or equal to the c-value of $p$ plus the c-value of $q$.
\end{lemma}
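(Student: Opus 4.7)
The plan is to reduce the statement directly to the defining uniqueness property in Definition~\ref{defn:algebraic-consistency}. Let $v_1 = t(p)$, $v_2 = h(p) = t(q)$, and $v_3 = h(q)$. Let $C_p$, $C_q$, and $C_{qp}$ denote the homotopy classes of $p$, $q$, and $qp$ respectively; note that $C_{qp}$ is obtained by concatenating $C_p$ and $C_q$. Writing $m_p$, $m_q$, $m_{qp}$ for the c-values, path-consistency gives
\[
	[p] = [f^{m_p} p_{v_2 v_1}^{C_p}], \qquad [q] = [f^{m_q} p_{v_3 v_2}^{C_q}], \qquad [qp] = [f^{m_{qp}} p_{v_3 v_1}^{C_{qp}}],
\]
with the c-values uniquely determined.

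Next I would multiply the first two equalities to compute $[qp] = [q][p]$. The key technical point is that powers of $[f]$ commute past paths: since any face-path at a vertex $v$ is path-equivalent to any other face-path at $v$ (Remark~\ref{faces-are-equivalent}), for any path $r$ from $u$ to $v$ one has $[f_v][r] = [r][f_u]$, so the $[f]$ factors can be gathered on the left. This gives
\[
	[qp] = [f^{m_p + m_q} \, p_{v_3 v_2}^{C_q} \, p_{v_2 v_1}^{C_p}].
\]
Now $r := p_{v_3 v_2}^{C_q} \, p_{v_2 v_1}^{C_p}$ is a path from $v_1$ to $v_3$ lying in the homotopy class $C_{qp}$, so applying path-consistency to $r$ yields $[r] = [f^{m_r} p_{v_3 v_1}^{C_{qp}}]$ for some nonnegative integer $m_r$. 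Substituting,
\[
	[qp] = [f^{m_p + m_q + m_r} \, p_{v_3 v_1}^{C_{qp}}].
\]

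Finally, the uniqueness clause of Definition~\ref{defn:algebraic-consistency} applied to the path $qp$ forces $m_{qp} = m_p + m_q + m_r$, and since $m_r \geq 0$ this yields $m_{qp} \geq m_p + m_q$, which is the desired inequality. I do not anticipate a serious obstacle; the only subtlety is verifying that $[f]$ may be commuted past paths without needing the finiteness of $Q_0$, which is handled by the vertex-by-vertex argument above via Remark~\ref{faces-are-equivalent}.
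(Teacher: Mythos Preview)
Your proposal is correct and follows essentially the same approach as the paper: both write $[p]=[f^{m_p}r_p]$ and $[q]=[f^{m_q}r_q]$ for minimal $r_p,r_q$, commute the $f$-factors to the front to obtain $[qp]=[f^{m_p+m_q}r_qr_p]$, and conclude that at least $m_p+m_q$ face-paths factor out of $qp$. Your version is slightly more explicit in invoking the uniqueness clause of path-consistency to identify $m_{qp}=m_p+m_q+m_r$, whereas the paper simply observes that the c-value is at least $m_p+m_q$, but the argument is the same.
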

\begin{proof}
    If $p$ and $q$ are paths in a path-consistent dimer model $Q$ with $h(p)=t(q)$, then we may write $[p]=[f^{m_p}r_p]$ and $[q]=[f^{m_q}r_q]$ for some minimal paths $r_p$ and $r_q$. Then $m_p$ is the c-value of $p$ and $m_q$ is the c-value of $q$. Then, using the fact that $[f]$ is central, we calculate
    \[[qp]=[f^{m_q}r_qf^{m_p}r_p]=[f^{m_q+m_p}r_qr_p].\]
	We have shown that $[f^{m_q+m_p}]$ may be factored out of $[qp]$, hence the c-value of $[qp]$ is greater than or equal to $m_q+m_p$.
\end{proof}

Two paths are equivalent if and only if there is a sequence of basic morphs taking one to the other. Since a basic morph cannot remove some arrows without replacing them with other arrows, the constant path is the unique minimal path from a vertex to itself. This leads to the following remark.

\begin{remk}\label{rem:cycle-cant-be-constant}
    If $Q$ is path-consistent and $p$ is a nonconstant null-homotopic cycle, then the c-value of $p$ is positive.
\end{remk}

It is an important fact that all face-paths of a dimer model are null-homotopic. This lets us show the following.

\begin{lemma}\label{lem:subpath-of-facepath-is-thin}
    Let $Q$ be a path-consistent dimer model. Any proper subpath of a face-path of $Q$ is minimal.
\end{lemma}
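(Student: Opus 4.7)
My plan is to argue by contradiction using the c-value machinery from Definition~\ref{defn:algebraic-consistency}. Let $p$ be a proper subpath of some face-path of $Q$, with $t(p) = v_1$ and $h(p) = v_2$. The constant subpath case is immediate, so assume $p$ has at least one arrow. Since $p$ is a proper subpath, the remaining portion of the face, going from $v_2$ around to $v_1$, is a nonconstant path $p'$ such that $p' p$ is a face-path $f_{v_1}$ at $v_1$ (of length at least one).

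Assume for contradiction that $p$ is not minimal, so $\textup{c-value}(p) = c \geq 1$, and write $[p] = [f^c r]$ where $r$ is the minimal representative of the homotopy class of $p$. Using centrality of $[f]$, I compute
\[[f_{v_1}] = [p' p] = [f^c\, p' r].\]
On the left, the c-value is $1$ because $f_{v_1}$ is itself a face-path, so $[f_{v_1}] = [f \cdot e_{v_1}]$. On the right, the c-value equals $c + \textup{c-value}(p' r)$ by the additivity of c-values under multiplication by $[f]$, which is a direct consequence of the uniqueness clause in Definition~\ref{defn:algebraic-consistency}. Equating c-values forces $c = 1$ and $\textup{c-value}(p' r) = 0$.

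To finish, I observe that $p' r$ is a cycle at $v_1$, and it is nonconstant because $p'$ contains at least one arrow. Moreover $r$ lies in the same homotopy class as $p$, so $p' r$ is homotopic in $S(Q)$ to $p' p = f_{v_1}$, which is null-homotopic as the boundary of a face. Remark~\ref{rem:cycle-cant-be-constant} then gives $\textup{c-value}(p' r) \geq 1$, contradicting $\textup{c-value}(p' r) = 0$.

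The only nonobvious step here is the additivity $\textup{c-value}(f \cdot x) = 1 + \textup{c-value}(x)$, which I would want to record as a quick preliminary observation: if $[x] = [f^m p_{wu}^C]$ with $m$ minimal, then $[fx] = [f^{m+1} p_{wu}^C]$, and uniqueness in the path-consistency condition forces the c-value to increase by exactly one. Beyond that, the argument is essentially a matching of homotopy classes on both sides of a face relation.
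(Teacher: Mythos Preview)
Your proof is correct and follows essentially the same approach as the paper's: both complete $p$ to a face-path via the complementary subpath $p'$, use path-consistency to write $[p]=[f^c r]$ with $c\geq 1$, observe that $p'r$ is a nonconstant null-homotopic cycle and hence has positive c-value by Remark~\ref{rem:cycle-cant-be-constant}, and derive a contradiction with the face-path having c-value $1$. The only cosmetic difference is that you isolate the additivity $\textup{c-value}(f\cdot x)=1+\textup{c-value}(x)$ as a preliminary observation, whereas the paper writes out $[f_v]=[f_v^{m'+m}]$ and invokes uniqueness directly.
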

\begin{proof}
	Suppose $p$ is a proper subpath of a face-path $f_v$ starting at $v:=t(p)$. Let $p'$ be the subpath of $f_v$ such that $p'p=f_v$. If $p$ is not minimal, then by definition of path-consistency, $[p]=[rf^m]$ for some positive integer $m$ and some minimal path $r$ from $v$ to $h(p)$ homotopic to $p$. 
	Then $[p'p]=[p'rf^m]=[p'rf_v^m]$. Moreover, $r$ is homotopic to $p$, hence $p'r$ is homotopic to the face-path $p'p$, hence is null-homotopic. Then by Remark~\ref{rem:cycle-cant-be-constant} it has some positive c-value of $m'$. By definition of path-consistency, $[p'r]=[f_v^{m'}]$. It follows that 
	\[[f_v]=[p'p]=[p'rf_v^m]=[f_v^{m'}f_v^m]=[f_v^{m'+m}],\] which is a contradiction since $m+m'\geq1+1=2$ but all face-paths trivially have a c-value of 1. It follows that $p$ is minimal.
\end{proof}

\begin{lemma}\label{lem:subpath-of-facepath-not-cycle}
	Let $Q$ be a simply connected path-consistent dimer model. No proper subpath of a face-path of $Q$ is a cycle.
\end{lemma}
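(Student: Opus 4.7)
The plan is to derive a contradiction from the combination of Lemma~\ref{lem:subpath-of-facepath-is-thin} (which forces proper subpaths of face-paths to be minimal, hence to have c-value $0$) and Remark~\ref{rem:cycle-cant-be-constant} (which forces nonconstant null-homotopic cycles to have positive c-value). Simple connectedness of $Q$ is exactly what is needed to bridge these two facts, by guaranteeing that any cycle in $Q$ is null-homotopic.

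In more detail, I would assume for contradiction that some proper subpath $p$ of a face-path $f_v$ is a cycle. By the definition given at the start of Section~\ref{sec:first-section}, a cycle is a \emph{nonconstant} oriented path with coinciding start and end vertex, so $p$ has positive length and satisfies $t(p)=h(p)$. Since $S(Q)$ is simply connected, the cycle $p$ is null-homotopic in $S(Q)$, and Remark~\ref{rem:cycle-cant-be-constant} then gives that the c-value of $p$ is strictly positive. On the other hand, Lemma~\ref{lem:subpath-of-facepath-is-thin} applies to $p$ and tells us that $p$ is minimal, which by the uniqueness clause in Definition~\ref{defn:algebraic-consistency} forces the c-value of $p$ to be $0$. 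These two statements are incompatible, so no such $p$ can exist.

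There is no real obstacle here; everything follows by quoting the two previously established facts together with the simple-connectedness hypothesis. The only point to be careful about is the convention that a cycle is by definition nonconstant, which is what prevents the trivial case $p=e_w$ from being a counterexample and which is needed to apply Remark~\ref{rem:cycle-cant-be-constant}.
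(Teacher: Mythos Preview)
Your proof is correct and follows essentially the same route as the paper's: both invoke Lemma~\ref{lem:subpath-of-facepath-is-thin} to get minimality, use simple connectedness to make the cycle null-homotopic, and then derive a contradiction from the fact that the only minimal null-homotopic cycle is the constant path. Your version is slightly more explicit in citing Remark~\ref{rem:cycle-cant-be-constant} for that last step, whereas the paper states it directly.
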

\begin{proof}
	Suppose $p$ is a proper subpath of a face-path of $Q$ which is a cycle. By Lemma~\ref{lem:subpath-of-facepath-is-thin}, the path $p$ is minimal. Since $Q$ is simply-connected, $p$ is null-homotopic. The only minimal null-homotopic path from a vertex to itself is the constant path, so this is a contradiction.
\end{proof}

\subsection{Universal Covers}

We define the notion of a \textit{universal cover dimer model} and show that it behaves well with respect to path-consistency and the cancellation property.

Let $Q$ be a dimer model. We construct a dimer model $\widetilde Q$ over the universal cover $\widetilde{S(Q)}$ of $S(Q)$. We consider $Q$ to be embedded into $S(Q)$, so that a vertex $v\in Q$ may be considered as a point of $S(Q)$. Similarly, we describe $\widetilde Q$ embedded into $\widetilde{S(Q)}$.

The vertices of $\widetilde Q$ are the points $\tilde v\in\widetilde{S(Q)}$ which descend to vertices $v$ of $S(Q)$. For any arrow $\alpha$ from $v$ to $w$ in $Q$ and any $\tilde v\in\widetilde Q_0$, there is an arrow $\alpha_{\tilde v}$ obtained by lifting $\alpha$ as a path in $Q$ up to a path in $\widetilde Q$ starting at $\tilde v$. The face-paths of $\widetilde Q$ are similarly induced by lifting the face-paths of $Q$. It is not hard to see that $\widetilde Q$ is a (locally finite) dimer model. 
The following facts follow by universal cover theory.
\begin{enumerate}
	\item If $S(Q)$ is not a sphere, then $\widetilde{S(Q)}$ is not a sphere.
	\item The surface $\widetilde{S(Q)}$ is simply connected.
	\item Let $\tilde p$ and $\tilde q$ be paths in $\widetilde Q$ with the same start and end vertices which are lifts of paths $p$ and $q$ of $Q$, respectively. Then $[p]=[q]$ in $A_Q$ if and only if $[\tilde p]=[\tilde q]$ in $A_{\widetilde Q}$.
\end{enumerate}

Universal covers are useful to consider because simple cycles on the universal cover have well-defined interiors. The following remark gives another advantage of universal covers.

\begin{remk}\label{remk:Q-hat-Q-path-correspondence}
	Choose vertices $\tilde v_1$ and $\tilde v_2$ of $\widetilde Q$. Any two paths from $\tilde v_1$ to $\tilde v_2$ are homotopic, hence descend to homotopic paths in $Q$. Then this choice of vertices gives a homotopy class $C$ of paths between the corresponding vertices $v_1$ and $v_2$ of $Q$. The paths from $\tilde v_1$ to $\tilde v_2$ in $\widetilde Q$ correspond precisely to the paths from $v_1$ to $v_2$ in the homotopy class $C$. Equivalence classes of paths in the dimer algebra are respected by this correspondence.
\end{remk}

Remark~\ref{remk:Q-hat-Q-path-correspondence} relates $Q$ and $\widetilde Q$ in a useful way.
Many of our technical results require simple connectedness. Passing to the universal cover model allows us to prove things about general dimer models $Q$ by considering their simply connected universal cover models.
In particular, we may study path-consistency of $Q$ by studying path-consistency of $\widetilde Q$. 

\begin{prop}\label{prop:Q-consistent-iff-hat-Q-consistent}
        A dimer model $Q$ is path-consistent if and only if $\widetilde Q$ is path-consistent.
\end{prop}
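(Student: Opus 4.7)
The plan is to translate the path-consistency statement across the universal cover using Remark~\ref{remk:Q-hat-Q-path-correspondence} together with property (3) of the universal cover (equivalence of paths is preserved by the lift/descent correspondence). Since $\widetilde{S(Q)}$ is simply connected, there is a single homotopy class of paths between any two vertices of $\widetilde Q$, so path-consistency of $\widetilde Q$ simplifies to the statement that for every pair of vertices $\tilde v_1,\tilde v_2\in \widetilde Q_0$ there is a minimal path $\tilde p$ between them, unique up to equivalence, such that every path from $\tilde v_1$ to $\tilde v_2$ has the form $[\tilde f^m \tilde p]$ for a unique $m\geq 0$. A key bookkeeping fact I will use throughout is that face-paths of $Q$ lift to face-paths of $\widetilde Q$ and vice-versa, so the formal expression $[qf^m]$ lifts and projects correctly (the lift of a face-path of $Q$ at $v$ starting at a chosen preimage $\tilde v$ is a face-path of $\widetilde Q$ at $\tilde v$).

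For the forward direction ($Q$ path-consistent $\Rightarrow$ $\widetilde Q$ path-consistent), fix $\tilde v_1,\tilde v_2$ and let $v_1,v_2$ be their projections. By Remark~\ref{remk:Q-hat-Q-path-correspondence}, paths $\tilde v_1\to\tilde v_2$ in $\widetilde Q$ correspond bijectively to paths $v_1\to v_2$ in some fixed homotopy class $C$. Lift $p_{v_2v_1}^C$ to a path $\tilde p$ starting at $\tilde v_1$; since the lift lies over $C$, its endpoint is $\tilde v_2$. I claim $\tilde p$ is minimal: if $[\tilde p]=[\tilde q \tilde f^k]$ with $k\geq 1$, projecting via property (3) yields $[p_{v_2v_1}^C]=[q f^k]$ contradicting minimality of $p_{v_2v_1}^C$. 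For any path $\tilde r$ from $\tilde v_1$ to $\tilde v_2$, its projection $r$ lies in $C$, so $[r]=[f^m p_{v_2v_1}^C]$ for a unique $m\geq 0$; lifting both sides at $\tilde v_1$ and invoking property (3) gives $[\tilde r]=[\tilde f^m\tilde p]$, and uniqueness of $m$ transfers from $Q$ to $\widetilde Q$ in the same way.

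The backward direction is symmetric. Fix $v_1,v_2$ and a homotopy class $C$; choose any preimage $\tilde v_1$ of $v_1$ and let $\tilde v_2$ be the endpoint of the lift of a chosen path in $C$ starting at $\tilde v_1$, so that the correspondence of Remark~\ref{remk:Q-hat-Q-path-correspondence} matches $C$ with the set of paths $\tilde v_1\to\tilde v_2$. Take the minimal path $\tilde p$ guaranteed by path-consistency of $\widetilde Q$ and let $p$ be its projection. Minimality of $p$ in $C$ follows from the same descent-to-lift argument: a factorization $[p]=[qf^k]$ with $k\geq 1$ would lift (with $\tilde q$ the lift of $q$ at $\tilde v_1$) to contradict minimality of $\tilde p$. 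Given any path $r$ from $v_1$ to $v_2$ in $C$, its lift $\tilde r$ satisfies $[\tilde r]=[\tilde f^m\tilde p]$ for a unique $m$, and projecting gives $[r]=[f^m p]$ with $m$ unique by the same lift/descent argument.

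I do not anticipate a serious obstacle; the argument is essentially a clean bookkeeping exercise once one notes that (a) face-paths are respected by the cover, (b) equivalence is respected by the cover (property (3)), and (c) the homotopy class $C$ in $Q$ picked out by a pair $(\tilde v_1,\tilde v_2)$ is exactly the one whose paths lift to paths between $\tilde v_1$ and $\tilde v_2$. The mildest care is needed to ensure that when one writes $[qf^m]$ and lifts it, the chosen face-paths at the projected vertices pull back to face-paths at the chosen lifted vertices; this is immediate from the construction of $\widetilde Q$.
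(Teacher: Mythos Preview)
Your proposal is correct and follows essentially the same approach as the paper, which also reduces everything to the path correspondence of Remark~\ref{remk:Q-hat-Q-path-correspondence} and the compatibility of equivalence under lifting. The paper's proof is terser and leaves the verification of minimality and uniqueness of $m$ implicit, whereas you spell these out carefully; your extra bookkeeping about face-paths lifting to face-paths is exactly the sort of detail the paper suppresses.
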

\begin{proof}
	Suppose $Q$ is path-consistent. Choose vertices $\tilde v_1$ and $\tilde v_2$ of $\widetilde Q$; these correspond to vertices $v_1$ and $v_2$ of $Q$ and induce a homotopy class $C$ of paths between them. By Remark~\ref{remk:Q-hat-Q-path-correspondence}, the paths in $\widetilde Q$ from $\tilde v_1$ to  $\tilde v_2$ correspond to the paths in $Q$ from $v_1$ to $v_2$ in $C$. By path-consistency of $Q$, each such path in $Q$ is equivalent to $p_{v_2v_1}^C$ composed with some power of a face-path, hence $\widetilde Q$ is path-consistent.

	The other direction is similar.
\end{proof}

It follows from Lemma~\ref{lem:subpath-of-facepath-not-cycle} and Proposition~\ref{prop:Q-consistent-iff-hat-Q-consistent} that a path-consistent dimer model cannot have contractible loops.

\begin{defn}
	A dimer algebra $A_Q=\mathbb C Q/I$ is called a \textit{cancellation algebra} (or \textit{cancellative}) if for paths $p,q,a,b$ of $Q$ with $h(a)=t(p)=t(q)$ and $t(b)=h(p)=h(q)$, we have $[pa]=[qa]\iff [p]=[q]$ and $[bp]=[bq]\iff [p]=[q]$. We call this the \textit{cancellation property}.
\end{defn}

\begin{lemma}\label{lem:Q-canc-iff-hat-Q-canc}
	$A_Q$ is a cancellation algebra if and only if $A_{\widetilde Q}$ is a cancellation algebra.
\end{lemma}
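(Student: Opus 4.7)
The plan is to reduce the lemma to the correspondence between path-equivalence classes in $A_Q$ and $A_{\widetilde Q}$ provided by item (3) preceding Remark~\ref{remk:Q-hat-Q-path-correspondence} (equivalently, by the remark itself). Both directions will use the same two ingredients: (i) the fact that path-equivalence is preserved by the covering projection $\widetilde Q \to Q$ and, conversely, that equivalent paths in $Q$ lift to equivalent paths in $\widetilde Q$ provided the lifts share endpoints; and (ii) the topological fact that homotopic paths in $S(Q)$ starting at the same lift-point end at the same lift-point.

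For the forward direction, I would assume $A_Q$ is cancellative and take paths $\tilde p,\tilde q,\tilde a,\tilde b$ in $\widetilde Q$ with $h(\tilde a)=t(\tilde p)=t(\tilde q)$, $t(\tilde b)=h(\tilde p)=h(\tilde q)$, and $[\tilde p\tilde a]=[\tilde q\tilde a]$. Projecting to $Q$ yields paths $p,q,a,b$ satisfying the composability conditions, and item (3) gives $[pa]=[qa]$ in $A_Q$. Cancellativity of $A_Q$ then produces $[p]=[q]$, and applying item (3) one more time to the pair $\tilde p,\tilde q$ (which share endpoints by hypothesis) lifts this back to $[\tilde p]=[\tilde q]$ in $A_{\widetilde Q}$. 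The left-cancellation case $[\tilde b\tilde p]=[\tilde b\tilde q]$ is entirely analogous.

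For the reverse direction, assume $A_{\widetilde Q}$ is cancellative and take $p,q,a,b$ in $Q$ with $[pa]=[qa]$. The key step is choosing appropriate lifts: fix a lift $\tilde v_0$ of $t(a)$, lift $a$ uniquely to a path $\tilde a$ starting at $\tilde v_0$, and then lift $p$ and $q$ to paths $\tilde p,\tilde q$ starting at $h(\tilde a)$. Since path-equivalent paths in $Q$ are homotopic in $S(Q)$, the concatenations $pa$ and $qa$ are homotopic; by unique path lifting their lifts from $\tilde v_0$ are $\tilde p\tilde a$ and $\tilde q\tilde a$ and must terminate at the same vertex, so $\tilde p$ and $\tilde q$ share endpoints. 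Item (3) then gives $[\tilde p\tilde a]=[\tilde q\tilde a]$ in $A_{\widetilde Q}$, cancellativity yields $[\tilde p]=[\tilde q]$, and projecting back by item (3) produces $[p]=[q]$ in $A_Q$. Again the left-cancellation case is symmetric.

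The one spot requiring a little care is ensuring that $\tilde p$ and $\tilde q$ share endpoints in the reverse direction, since a priori equivalence in $A_Q$ only gives us that $pa$ and $qa$ are equivalent, not that arbitrarily chosen lifts of $p$ and $q$ end at the same vertex; this is where the homotopy-lifting argument is essential. Every other step is a direct invocation of item (3) and the cancellation hypothesis, so I do not anticipate substantial obstacles beyond this bookkeeping.
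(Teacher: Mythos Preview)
Your proposal is correct and follows essentially the same approach as the paper's proof, which is a one-sentence invocation of item~(3): ``This follows because $[p]=[q]$ in $A_Q$ if and only if $[\tilde p]=[\tilde q]$ in $A_{\widetilde Q}$, where $\tilde p$ and $\tilde q$ are any lifts of $p$ and $q$ to $\widetilde Q$ with $t(\tilde p)=t(\tilde q)$.'' Your write-up is simply a careful unpacking of this, and in particular you correctly isolate the one nontrivial point (that in the reverse direction the lifts $\tilde p,\tilde q$ share both endpoints, via homotopy lifting) which the paper leaves implicit.
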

\begin{proof}
	This follows because $[p]=[q]$ in $A_Q$ if and only if $[\tilde p]=[\tilde q]$ in $A_{\widetilde Q}$, where $\tilde p$ and $\tilde q$ are any lifts of $p$ and $q$ to $\widetilde Q$ with $t(\tilde p)=t(\tilde q)$.
\end{proof}

\begin{lemma}\label{lem:many-cycles-factor-through-path}
	Let $p$ be a path in $Q$ of length $m$. Then the composition of face-paths $f^m_{t(p)}$ is equivalent to a path beginning with $p$.
\end{lemma}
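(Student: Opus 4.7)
The plan is to prove this by induction on the length $m$ of $p$, using two building blocks: the fact established just above that any two face-paths at a given vertex are path-equivalent (Remark~\ref{faces-are-equivalent}), and a ``commutation'' fact that for any arrow $\alpha$ we have $[\alpha \cdot f_{t(\alpha)}] = [f_{h(\alpha)} \cdot \alpha]$. The commutation fact follows by choosing the face-paths judiciously: if the face-path $f_{h(\alpha)}$ is taken to be the rotation of the face that begins with $\alpha$, say $\alpha R$ where $R$ is a return path of $\alpha$, then $R\alpha$ is a face-path at $t(\alpha)$, and Remark~\ref{faces-are-equivalent} forces $[f_{t(\alpha)}] = [R\alpha]$. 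Then $[\alpha \cdot f_{t(\alpha)}] = [\alpha R \alpha] = [f_{h(\alpha)} \cdot \alpha]$. By iterating this identity along a path, one gets the more general statement that for any path $q$ from $v$ to $w$ and any $k \geq 0$, $[q \cdot f_v^k] = [f_w^k \cdot q]$.

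For the base case $m = 1$, write $p = \alpha$; then $v := t(p) = t(\alpha)$ and the face-path $f_v$ can be chosen to be $R \alpha$ for a return path $R$ of $\alpha$, which plainly begins with $p$. For the inductive step, factor $p = \alpha \cdot p'$ where $p'$ has length $m-1$ and $\alpha$ is the final arrow of $p$ (so $t(\alpha) = h(p')$ and $t(p') = t(p) = v$). By the inductive hypothesis applied to $p'$, there is a path $q'$ from $h(p')$ to $v$ with $[f_v^{m-1}] = [q' \cdot p']$. Then
\[
	[f_v^m] \;=\; [f_v \cdot f_v^{m-1}] \;=\; [f_v \cdot q' \cdot p'] \;=\; [q' \cdot f_{t(\alpha)} \cdot p'],
\]
where the last equality uses the commutation fact for the path $q'$ from $t(\alpha)$ to $v$. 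Now apply the base case to $\alpha$ to rewrite $[f_{t(\alpha)}] = [R\alpha]$, giving
\[
	[f_v^m] \;=\; [q' R \alpha \cdot p'] \;=\; [(q'R) \cdot p],
\]
which exhibits $f_v^m$ as equivalent to a path beginning with $p$, completing the induction.

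I do not anticipate a substantive obstacle: the entire argument is a bookkeeping exercise in the dimer relations together with Remark~\ref{faces-are-equivalent}. The only point requiring care is ensuring that the choices of face-paths and return paths $R$ depend on the arrow under consideration at each inductive step, but this is harmless because Remark~\ref{faces-are-equivalent} allows one to replace any face-path at a vertex with any other up to path-equivalence.
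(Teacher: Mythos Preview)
Your proof is correct and is essentially an inductive unpacking of the paper's one-line argument: the paper directly writes down the path $R_{\gamma_1}\dots R_{\gamma_m}\gamma_m\dots\gamma_1$ (where $p=\gamma_m\dots\gamma_1$ and each $R_{\gamma_i}$ is a return path of $\gamma_i$) and observes it is equivalent to $f_{t(p)}^m$ by repeatedly collapsing the innermost $R_{\gamma_i}\gamma_i$ to a face-path and commuting it out. Unwinding your induction produces exactly this path, with your $q'R$ at each step accumulating the prefix $R_{\gamma_1}\dots R_{\gamma_m}$.
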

\begin{proof}
	Let $p=\gamma_m\dots\gamma_1$ be a product of arrows.
	For each $\gamma_i$, let $R_{\gamma_i}$ be a return path of $\gamma_i$. The path $R_{\gamma_1}\dots R_{\gamma_m}\gamma_m\dots\gamma_1$ is equivalent to $f^m_{t(p)}$ and begins with $p$.
\end{proof}

We now show that the notions of path-consistency and cancellativity coincide.

\begin{thm}\label{thm:consistent-iff-cancellation}
	A dimer model $Q$ is path-consistent if and only if $A_Q$ is a cancellation algebra. 
\end{thm}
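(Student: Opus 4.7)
The plan is to prove the two directions separately, reducing the harder direction to the simply connected case via Proposition~\ref{prop:Q-consistent-iff-hat-Q-consistent} and Lemma~\ref{lem:Q-canc-iff-hat-Q-canc}.

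For the forward direction, suppose $Q$ is path-consistent and $[bp]=[bq]$. Since path-equivalence preserves homotopy class, $p$ and $q$ lie in a common homotopy class $C$. Path-consistency gives $[p]=[f^{m_p}p_C]$ and $[q]=[f^{m_q}p_C]$. If $[bp_C]=[f^n p_{C'}]$ is the decomposition in the homotopy class $C'$ of $bp$, centrality of $f$ yields $[bp]=[f^{m_p+n}p_{C'}]$ and $[bq]=[f^{m_q+n}p_{C'}]$. Uniqueness of the c-value forces $m_p=m_q$, whence $[p]=[q]$. Right cancellation is symmetric.

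For the reverse direction, assume $A_Q$ is cancellative and reduce to $Q$ simply connected. All paths between two endpoints are then homotopic, so I must show that each pair $(v_1,v_2)$ admits a unique-up-to-equivalence minimal path $r$ with $[p]=[f^m r]$ for a unique $m\ge 0$ for every path $p\colon v_1\to v_2$. For existence of a minimal representative of $[p]$, iterate: if $[p_k]$ is not minimal, write $[p_k]=[fp_{k+1}]$, so $[p]=[f^k p_k]$. By Lemma~\ref{lem:many-cycles-factor-through-path}, choose $\beta$ with $\beta p\equiv f_{v_1}^{|p|}$; using centrality of $f$, $[f_{v_1}^k\beta p_k]=[\beta f^k p_k]=[\beta p]=[f_{v_1}^{|p|}]$. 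For $k>|p|$, cancellativity permits cancelling $f_{v_1}^{|p|}$ on the left to give $[f_{v_1}^{k-|p|}\beta p_k]=[e_{v_1}]$; the left side has positive length (faces have length at least two) whereas basic morphs replace subpaths of positive length with subpaths of positive length and thus cannot produce the constant path, a contradiction. Hence the iteration terminates in at most $|p|$ steps.

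The principal obstacle is uniqueness: I need that any two paths $p_1,p_2\colon v_1\to v_2$ satisfy $[f^a p_1]=[f^b p_2]$ for some $a,b\ge 0$, so that every path between $v_1$ and $v_2$ has the same minimal representative. My plan is to prove this via a combinatorial-topology argument in the CW complex $S(Q)$: a null-homotopy between $p_1$ and $p_2$ refines into a finite sequence of elementary moves across individual faces, each either a basic morph (preserving the equivalence class) or an insertion/deletion of a face cycle at a vertex of the path (multiplying or dividing the class by $[f]$), and tracking the net balance yields the identity. Given this, uniqueness follows by cancellation: if $[p]=[f^{m_1}r_1]=[f^{m_2}r_2]$ with $r_1,r_2$ minimal and $m_1\ge m_2$, cancelling $f^{m_2}$ on the left yields $[f^{m_1-m_2}r_1]=[r_2]$; minimality of $r_2$ forces $m_1=m_2$, so $[r_1]=[r_2]$. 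Combining this with the homotopy claim shows all paths between $v_1$ and $v_2$ share a single minimal, completing path-consistency.
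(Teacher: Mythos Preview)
Your forward direction and the termination argument for the reverse direction are correct and match the paper's approach closely (the paper also bounds the number of extractable face-paths by the length of $p$ via Lemma~\ref{lem:many-cycles-factor-through-path} and cancellation).

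The gap is in your uniqueness step. You assert that a null-homotopy between $p_1$ and $p_2$ in the CW complex $S(Q)$ ``refines into a finite sequence of elementary moves across individual faces, each either a basic morph \ldots or an insertion/deletion of a face cycle,'' but this is exactly the content that needs proof, and the correspondence you claim is not immediate. A combinatorial homotopy of edge-paths in a 2-complex decomposes into backtrack insertions/deletions and pushes across single 2-cells; the intermediate edge-paths will in general contain reversed arrows $\alpha^{-1}$, which are not directed paths in $Q$. A basic morph, by contrast, involves \emph{two} adjacent faces (it swaps $R_\alpha^{cc}$ for $R_\alpha^{cl}$), so it is not a single ``push across a face.'' One can repair this by systematically replacing each $\alpha^{-1}$ with a return path $R_\alpha$ at the cost of a face-path factor, and then checking that each undirected elementary move becomes a composite of basic morphs and face insertions on the directed side, but you have not carried this out and it is not a one-line observation.

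The paper sidesteps this entirely: instead of decomposing a homotopy, it proves directly that every (directed) cycle in $\widetilde Q$ is path-equivalent to some $f^m$. The argument is a minimal-counterexample induction on simple cycles: given a simple cycle $l=\gamma_{s'}\cdots\gamma_1$ not yet known to be a power of $f$, one forms the ``inner'' cycle $l'=R_{\gamma_1}\cdots R_{\gamma_{s'}}$ using return paths into the disk bounded by $l$, so that $[l'l]=[f^{s'}]$; since $l'$ lies strictly inside, induction (after decomposing $l'$ into simple subcycles) gives $[l']=[f^s]$, and cancellation yields $[l]=[f^{s'-s}]$. Once every cycle is a power of $f$, uniqueness of the minimal representative follows exactly as you outline in your last paragraph. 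This route never leaves the world of directed paths and avoids the undirected-homotopy bookkeeping altogether.
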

\begin{proof}
	By Lemma~\ref{lem:Q-canc-iff-hat-Q-canc} and Proposition~\ref{prop:Q-consistent-iff-hat-Q-consistent}, it suffices to show the result on the universal cover $\widetilde Q$.

	Suppose that $\widetilde Q$ is path-consistent. 
	We prove that $A_{\widetilde Q}$ is a cancellation algebra. Accordingly, take paths $p,q,a$ of ${\widetilde Q}$ with $h(a)=t(p)=t(q)$ and $h(p)=h(q)$. We show that $[pa]=[qa]\implies [p]=[q]$. The case of left composition is symmetric. By path-consistency, we may write $[p]=[rf^{m_p}]$ and $[q]=[rf^{m_q}]$, where $r$ is a minimal path from $t(p)$ to $h(p)$, necessarily homotopic to $p$ and $q$ by simple connectedness. Given $[pa]=[qa]$, we have $[f^{m_p}ra]=[f^{m_q}ra]$. Then $m_p=m_q$ by path-consistency. We have shown that
	\[[q]=[rf^{m_q}]=[rf^{m_p}]=[p]\]
	and the proof of this direction is complete.

	Suppose now that $A_{\widetilde Q}$ is a cancellation algebra. 
	We first show that only a finite number of face-paths may be factored out of any path $p$, and that this number is bounded by the number of arrows in $p$. Suppose to the contrary that there is some path $p$ of $Q$ with $m$ arrows such that we may write
		$[p]=[p'f^{m'}]$ for some $m'>m$. By Lemma~\ref{lem:many-cycles-factor-through-path}, $[p'f^{m'}]=[lp]$ for some nonconstant cycle $l$ at $h(p)$. 
	Applying the cancellation property to the equation $[p]=[p'f^{m'}]=[lp]$ gives that $l$ is equivalent to the constant path, which is a contradiction. This shows that only a finite number of face-paths may be factored out of any path of ${\widetilde Q}$.

	Then any path $p$ of ${\widetilde Q}$ is equivalent to $rf^m$ for a minimal path $r$ and a nonnegative integer $m$. Suppose $[p]=[rf^m]=[r'f^{m'}]$ for some nonnegative integers $m$ and $m'$ and minimal paths $r$ and $r'$. Without loss of generality suppose $m\leq m'$. By the cancellation property, $[r]=[r'f^{m'-m}]$. Then if $m'>m$ we have factored a face-path out of $r$, contradicting minimality of $r$, hence $m'=m$ and $[r]=[r']$. 

	Then if ${\widetilde Q}$ is not path-consistent, there must be minimal paths $p$ and $q$ between the same vertices which are not equivalent. Take $m$ which is greater than the length of $p$ and the length of $q$. 
		By Lemma~\ref{lem:many-cycles-factor-through-path}, $[pf^m]=[pq'q]$ for some path $q'$ from $h(q)$ to $t(q)$. Suppose we have shown that $pq'$ is equivalent to $f^{m'}$ for some $m'$. Then 
				\[[pf^m]=[pq'q]=[qf^{m'}].\]
			By the cancellation property, we get either $[p]=[qf^{m'-m}]$ or $[q]=[pf^{m-m'}]$. 
			Since $p$ and $q$ are minimal, we have $m=m'$ and $[p]=[q]$, contradicting our initial assumption. The proof is then complete if we show that any cycle is equivalent to a composition of face-paths. We do so now.

			Suppose to the contrary and take a simple cycle $l=\gamma_{s'}\dots\gamma_1$ which is not equivalent to a composition of face-paths and such that every simple cycle inside the disk bounded by $l$ is equivalent to a composition of face-paths.  Note that if   $\widetilde{S(Q)}$ is a sphere then choose one of the two regions that $l$ bounds as being the disk. 
			For any arrow $\gamma_i\in l$, let $R_{\gamma_i}$ be the return path of $\gamma_i$ inside of the disk bounded by $l$. 
				As in Lemma~\ref{lem:many-cycles-factor-through-path}, set $l':=R_1\dots R_{s'}$. Then $l'l=R_1\dots R_{s'}\gamma_{{s'}}\dots\gamma_1$ is equivalent to $f_{t(l)}^{s'}$. Moreover, $l'$ is a cycle lying in the area bounded by $l$.
			If $l'$ is a simple cycle strictly contained in $l$ then $l'$ is equivalent to a composition of face-paths by choice of $l$. If $l'$ is not a simple cycle, then one-by-one we remove simple proper subcycles of $l'$, each of which is strictly contained in the area defined by $l$ and hence is equivalent to a composition of face-paths. Then we replace them with a composition of face-paths until 
			we get $[l']=[f_{t(p)}^{s}]$ for some $s$.

			Either way, $l'$ is equivalent to some composition of face-paths $f_{t(p)}^s$. Then $[f_{t(p)}^{s'}]=[l'l]=[f_{t(p)}^sl]$. Since $l'$ is a subpath of $l'l$, we must have that ${s'}\geq s$. Then the cancellation property gives $[f_{t(p)}^{{s'}-s}]=[l]$ and $l$ is equivalent to a composition of face-paths, contradicting the choice of $l$. This completes the proof that all cycles are equivalent to a composition of face-paths and yields the theorem.
\end{proof}

\subsection{Winding Numbers}\label{ssec:wnt}

In later sections, we will make use of the {winding number} of an (undirected) cycle around a point in a simply connected dimer model. We now set up notation and prove a lemma.

A \textit{signed arrow} $\alpha^{\varepsilon}$ of $Q$ is an arrow $\alpha$ along with a sign $\varepsilon\in\{1,-1\}$. We consider 
\[h(\alpha^\varepsilon):=\begin{cases}
h(\alpha)&\varepsilon=1\\t(\alpha)&\varepsilon=-1\end{cases} \text{ and }
t(\alpha^\varepsilon):=\begin{cases}
t(\alpha)&\varepsilon=1\\h(\alpha)&\varepsilon=-1\end{cases}.\]
A \textit{walk} on $Q$ is a string of signed arrows $p:=\alpha_m^{\varepsilon_m}\dots\alpha_1^{\varepsilon_1}$ of $Q$ such that $h(\alpha_j^{\varepsilon_j})=t(\alpha_{j+1}^{\varepsilon_{j+1}})$ for all $j\in[m-1]$. The walk $p$ is a \textit{cycle-walk} if $h(\alpha_m^{\varepsilon_m})=t(\alpha_1^{\varepsilon_1})$.  Furthermore, we write $p^{-1}=\alpha_1^{-\varepsilon_1}\dots \alpha_m^{-\varepsilon_m}$.

\begin{defn}\label{defn:winding-number}
	Let $p$ be a path in $\widetilde Q$ and let $F$ be a face of $\widetilde Q$. Let $q$ be a walk on $\widetilde Q$ from $h(p)$ to $t(p)$. We write $\wind(qp,F)$ for the winding number of the path $qp$, considered as a path on the surface $\widetilde{S(Q)}$,  around some point in the interior of $F$.
\end{defn}

\begin{lemma}\label{lem:rotation-number-formula}
    Let $p$ be a path in $\widetilde Q$ and let $q$ be a walk on $\widetilde Q$ such that $qp$ is a cycle-walk. Let $\alpha$ be a left-morphable arrow for $p$. Then for any face $F$ of $\widetilde Q$,
    \begin{equation}
        \wind(m_\alpha(p)q,F)=
            \begin{cases}
		\wind(qp,F)-1 & F\in\{F_\alpha^{cl},F_\alpha^{cc}\}\\
                \wind(qp,F) & \text{else}.
            \end{cases}
    \end{equation}
\end{lemma}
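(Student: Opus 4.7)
The plan is to reduce the computation of the difference $\wind(m_\alpha(p)q,F)-\wind(qp,F)$ to a winding-number computation for a small ``difference cycle.'' Write $p=p_2 R_\alpha^{cc}p_1$, so that $m_\alpha(p)=p_2 R_\alpha^{cl}p_1$. Since winding number of a closed curve around a point depends only on its image as an unbased loop, cyclic rotation of the basepoint gives $\wind(m_\alpha(p)q,F)=\wind(qm_\alpha(p),F)$, so it suffices to compare the cycle-walks $qp$ and $qm_\alpha(p)$, both based at $t(p)$.

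Next, I would form the based loop $\lambda:=(qm_\alpha(p))\cdot(qp)^{-1}=q m_\alpha(p)\,p^{-1}q^{-1}$ at $t(p)$. Additivity of winding number under concatenation of based loops yields
\[
\wind(qm_\alpha(p),F)-\wind(qp,F)=\wind(\lambda,F).
\]
Expanding gives
\[
\lambda = q\,p_2 R_\alpha^{cl} p_1 p_1^{-1}(R_\alpha^{cc})^{-1}p_2^{-1}\,q^{-1},
\]
which as a free loop is homotopic to $R_\alpha^{cl}(R_\alpha^{cc})^{-1}$ by cyclically conjugating out $q$ and $p_2$ and contracting the backtrack $p_1 p_1^{-1}$. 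This free homotopy can be performed entirely within the $1$-skeleton of $\widetilde Q$, which does not meet the interior of any face, so winding-number invariance under free homotopy in the complement of the relevant point yields
\[
\wind(qm_\alpha(p),F)-\wind(qp,F)=\wind\bigl(R_\alpha^{cl}(R_\alpha^{cc})^{-1},F\bigr).
\]

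To finish, I would compute the right-hand side directly from the geometry of the two faces sharing $\alpha$. The cycle $R_\alpha^{cl}(R_\alpha^{cc})^{-1}$ is precisely the boundary of the closed region $\overline{F_\alpha^{cc}}\cup\overline{F_\alpha^{cl}}$, with the edge $\alpha$ lying in its interior. The piece $(R_\alpha^{cc})^{-1}$ traces the boundary of $F_\alpha^{cc}$ in the reverse of its counterclockwise orientation, while $R_\alpha^{cl}$ traces the boundary of $F_\alpha^{cl}$ in its clockwise orientation; both pieces therefore wind clockwise around the combined region. Consequently the winding number equals $-1$ around any interior point of $F_\alpha^{cc}$ or $F_\alpha^{cl}$ and $0$ around any point outside, matching the statement. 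The main obstacle is a careful sign check: one must verify that the reversal in $(R_\alpha^{cc})^{-1}$ combines with the opposite orientations of the two faces to produce a clockwise (hence $-1$) rather than counterclockwise cycle. Once that is settled, the rest of the argument is routine application of the homotopy invariance and additivity of the winding number on the surface $\widetilde{S(Q)}$.
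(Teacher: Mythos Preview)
Your argument is correct and is essentially a more formal version of the paper's proof. The paper argues directly that, for $F\in\{F_\alpha^{cc},F_\alpha^{cl}\}$, the segment $R_\alpha^{cc}$ sweeps an angle $\theta\in[0,2\pi]$ around an interior point of $F$ while $R_\alpha^{cl}$ sweeps $\theta-2\pi$, so swapping one for the other decreases the total angle by $2\pi$; for any other face the two return paths sweep the same angle. Your difference-loop computation $\wind(R_\alpha^{cl}(R_\alpha^{cc})^{-1},F)$ is exactly this angle comparison repackaged via additivity and homotopy invariance, and your sign analysis of the clockwise boundary of $\overline{F_\alpha^{cc}}\cup\overline{F_\alpha^{cl}}$ recovers the same $-1$.
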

\begin{proof}
    If $F=F_\alpha^{cc}$ or $F=F_\alpha^{cl}$, then $R_\alpha^{cc}$ winds around $F$ for some positive angle $0\leq\theta\leq2\pi$, while $R_\alpha^{cl}$ winds around $F$ for an angle of $\theta-2\pi$. Left-morphing at $\alpha$ switches the former for the latter, leading to a net decrease of $2\pi$ radians. Then $\wind(m_\alpha(p)q,F)=\wind(qp,F)-1$ in this case.
    If $F$ is any other face, then $R_\alpha^{cl}$ and $R_\alpha^{cc}$ do not wind differently around $F$ and the winding number does not change.
\end{proof}

\section{Morphs and Chains}\label{sec:morphsandchains}\label{sec:actual-argument}\label{sec:mac}

In this section, we prove some technical results about basic morphs with the goal of proving that a path-consistent and simply connected dimer model has no {irreducible pairs} (Theorem~\ref{prop:no-irreducible-pairs}). This result will be used in Section~\ref{sec:pdsc} to characterize path-consistency in terms of the strand diagram of a dimer model.
We start with a definition.

In the preceding, we have used the fact that two paths $p$ and $q$ are equivalent if and only if there is a sequence of basic morphs taking $p$ to $q$. We introduce the idea of a \textit{chain} of morphable arrows that allows us to talk about sequences of morphs applied to a path in special cases.

Recall that a morphable arrow $\alpha$ for $p$ is \emph{unambiguous} if $p$ only has one subpath which is a copy of $R_\alpha^{cl}$ or $R_\alpha^{cc}$. In this case, there is a unique path $p'=m_\alpha(p)$ obtained by replacing the subpath $R_\alpha^{cl}$ with $R_\alpha^{cc}$, or vice versa.
If $\alpha_1,\dots,\alpha_r\in Q_1$ with each $\alpha_i$ an unambiguous morphable arrow for $m_{\alpha_{i-1}}\circ\dots\circ m_{\alpha_1}(p)$ for all $i\leq r$, we call the sequence $a=\alpha_r\dots\alpha_1$ a \textit{morphable chain}, or simply a \textit{chain}, for $p$. 
We introduce the notation $m_{a}(p):=m_{\alpha_r}\circ\dots\circ m_{\alpha_1}(p)$ and we say that $a$ is a chain from $p$ to $m_a(p)$. 
For some $i\in[r]$, we say that $\alpha_i$ is a left-morph (respectively right-morph) of $a$ if $\alpha_i$ is left-morphable (respectively right-morphable) for $m_{\alpha_{i-1}\dots\alpha_{1}}(p)$. Note that since $\alpha_i$ is an \textit{unambiguous} morphable arrow for this path, $\alpha_i$ is either a left-morph or a right-morph of $a$, but not both. If $\alpha_i$ is a left-morph (respectively right-morph) for all $i$, we say that $a$ is a \textit{left-chain} (respectively \textit{right-chain}). 
Two chains $a$ and $b$ of $p$ are \textit{equivalent} if $m_a(p)=m_b(p)$. 

Since we require morphable arrows of a chain to be unambiguous, it may be the case that paths $p$ and $q$ are equivalent despite there being no chain from $p$ to $q$. For example, this is true if $p$ and $q$ are equivalent but distinct and every morphable arrow for $p$ is ambiguous. In reasonable circumstances, however, the notion of a chain is often sufficient. For example, minimal paths (in path-consistent dimer models) have no unambiguous morphs, so two minimal paths are equivalent if and only if there is a chain from one to the other.

\subsection{Cycle-Removing Morphs}

In this subsection, we let $\widetilde Q$ be a path-consistent and simply connected dimer model and we define a new type of morph which weakly decreases the c-value of a path and preserves the property of being an elementary path, which we define now.

\begin{defn}\label{defn:elementary}
	A \textit{elementary} path in a dimer model $Q$ is a (possibly constant) path which is not a face-path and which contains no cycles as proper subpaths.
\end{defn}

Note that an elementary path may never contain all arrows in a given face-path. 
Then if $p$ is elementary, no morphable arrow for $p$ is in $p$. Moreover, every morphable arrow for $p$ is unambiguous. We also have the following.

\begin{defn}\label{defn:cycle-removing-morph}
    Let $p$ be an elementary path in a 
	path-consistent dimer model ${\widetilde Q}$. Let $\alpha$ be a right-morphable arrow for $p$. If $m_\alpha(p)$ is elementary, we define the \textit{cycle-removing right-morph} $\omega_\alpha(p)$ to be $m_\alpha(p)$.

    If not, write $p=p''R_\alpha^{cl}p'$ for subpaths $p'$ and $p''$ of $p$. This decomposition is unique because morphable arrows for elementary paths are unambiguous.
	Let $v_0:=h(\alpha)$ and number the vertices of $F_\alpha^{cc}$ counter-clockwise as $h(\alpha)=v_0,v_1,\dots,v_m=t(\alpha)$. Let $a$ be the largest integer less than $m$ such that $v_a\in p'$. Note that if $v_m\in p'$, then $p''$ is constant by elementariness. Let $b$ be the smallest integer greater than 0 such that $v_b\in p''$.
		
    Since $p=p''R_\alpha^{cl}p'$ is elementary, $p'$ and $p''$ do not intersect except for possibly at the endpoints $t(p'),h(p'')$ if they coincide. Then any proper subcycle of $m_\alpha(p)=p''R_\alpha^{cc}p'$ must involve some $v_i$ for $i\in\{1,\dots,m-1\}$, hence either $a>0$ or $b<m$ or both. Moreover, $a\leq b$. 
    
    Let $q'$ be the subpath of $p'$ from $t(p')$ to $v_a$. Let $R'$ be the subpath of $R_\alpha^{cc}$ from $v_a$ to $v_b$. Let $q''$ be the subpath of $p''$ from $v_b$ to $h(p'')$. 
	If $q''R'q'$ is not a face-path, define the \textit{cycle-removing right-morph} $\omega_\alpha(p)$ to be $q''R'q'$. Otherwise, define $\omega_\alpha(p)$ to be the constant path. For example, see Figures~\ref{fig:cycle-removing-example} and~\ref{fig:cycle-removing-to-constant}.

    \def\svgwidth{300pt}
    \begin{figure}
        \centering
\begingroup%
  \makeatletter%
  \providecommand\color[2][]{%
    \errmessage{(Inkscape) Color is used for the text in Inkscape, but the package 'color.sty' is not loaded}%
    \renewcommand\color[2][]{}%
  }%
  \providecommand\transparent[1]{%
    \errmessage{(Inkscape) Transparency is used (non-zero) for the text in Inkscape, but the package 'transparent.sty' is not loaded}%
    \renewcommand\transparent[1]{}%
  }%
  \providecommand\rotatebox[2]{#2}%
  \newcommand*\fsize{\dimexpr\f@size pt\relax}%
  \newcommand*\lineheight[1]{\fontsize{\fsize}{#1\fsize}\selectfont}%
  \ifx\svgwidth\undefined%
    \setlength{\unitlength}{1189.7208252bp}%
    \ifx\svgscale\undefined%
      \relax%
    \else%
      \setlength{\unitlength}{\unitlength * \real{\svgscale}}%
    \fi%
  \else%
    \setlength{\unitlength}{\svgwidth}%
  \fi%
  \global\let\svgwidth\undefined%
  \global\let\svgscale\undefined%
  \makeatother%
  \begin{picture}(1,0.3207988)%
    \lineheight{1}%
    \setlength\tabcolsep{0pt}%
    \put(0.53075578,0.31099507){\color[rgb]{0.09019608,0.08627451,0.07058824}\makebox(0,0)[lt]{\lineheight{1.25}\smash{\begin{tabular}[t]{l}$m_\alpha(p)$\end{tabular}}}}%
    \put(0.25127845,0.3105748){\color[rgb]{0.09019608,0.08627451,0.07058824}\makebox(0,0)[lt]{\lineheight{1.25}\smash{\begin{tabular}[t]{l}$p$\end{tabular}}}}%
    \put(0.35432779,0.18340186){\color[rgb]{0.09019608,0.08627451,0.07058824}\makebox(0,0)[lt]{\lineheight{1.25}\smash{\begin{tabular}[t]{l}$p’$\end{tabular}}}}%
    \put(0.86117363,0.31099507){\color[rgb]{0.09019608,0.08627451,0.07058824}\makebox(0,0)[lt]{\lineheight{1.25}\smash{\begin{tabular}[t]{l}$\omega_\alpha(p)$\end{tabular}}}}%
    \put(0.10184171,0.16949692){\color[rgb]{0.09019608,0.08627451,0.07058824}\makebox(0,0)[lt]{\lineheight{1.25}\smash{\begin{tabular}[t]{l}$p^{\prime\prime}$\end{tabular}}}}%
    \put(0,0){\includegraphics[width=\unitlength,page=1]{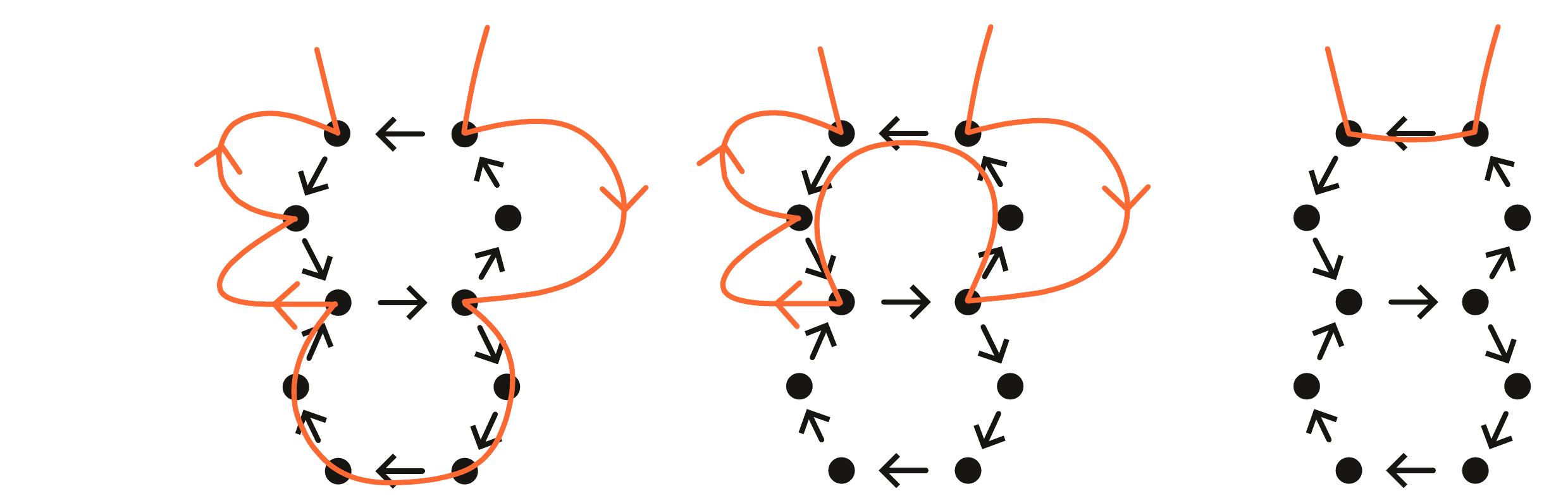}}%
    \put(0.24323677,0.09841856){\color[rgb]{0,0,0}\makebox(0,0)[lt]{\lineheight{1.25}\smash{\begin{tabular}[t]{l}$\alpha$\end{tabular}}}}%
  \end{picture}%
\endgroup%

	    \caption{An example of a cycle-removing morph.}
        \label{fig:cycle-removing-example}
    \end{figure}

	\def\svgwidth{300pt}
	\begin{figure}
		\centering
		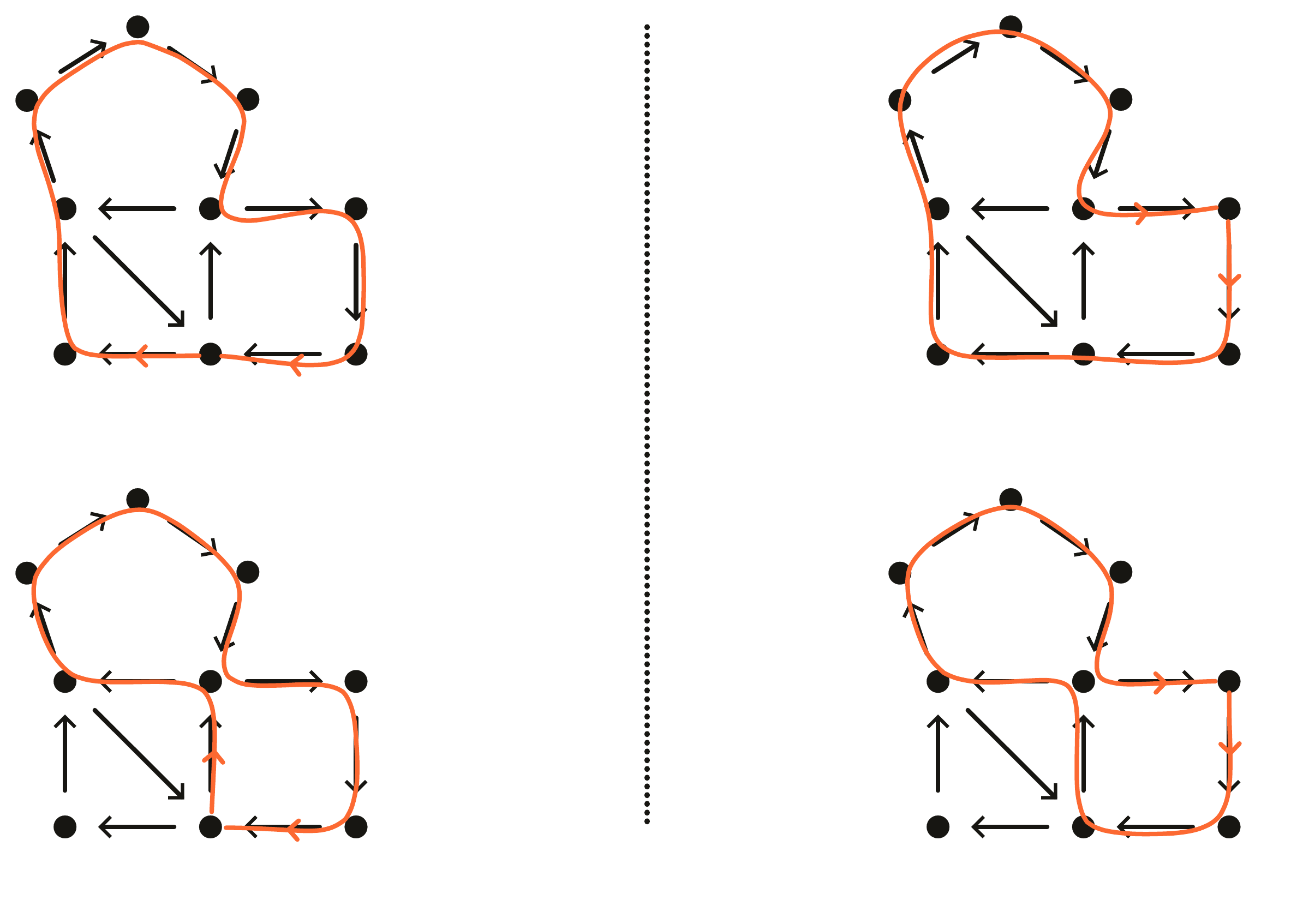
		\caption{The top left shows a clockwise cycle $p$ at $v$. The bottom left shows $m_\alpha(p)$. In the notation of Definition~\ref{defn:cycle-removing-morph}, $a=b$ and the paths $p',q',R',q''$ are all constant paths at $v$, hence $\omega_\alpha(p)$ is constant. The top right shows a clockwise cycle $q$ at a different $v$, and the bottom right shows $m_\alpha(q)$. In this case, $q''R'q'$ is the clockwise square face containing $v$, so $\omega_\alpha(q)$ is defined to be constant.}
		\label{fig:cycle-removing-to-constant}
	\end{figure}
    We similarly define \textit{cycle-removing left-morphs}.
\end{defn}

Intuitively, the cycle-removing right-morph $\omega_\alpha(p)$ is obtained by removing the proper subcycles from $m_\alpha(p)$ to get an elementary path. Since $\widetilde Q$ is path-consistent, any cycle is equivalent to a composition of face-paths, hence $m_\alpha(p)$ is equivalent to $\omega_\alpha(p)f^m$ for some $m\geq0$.

We define cycle-removing morphs only for simply connected and path-consistent dimer models because without these hypotheses, there may be cycles which are not equivalent to a composition of face-paths. Hence, we may have that $m_\alpha(p)$ is not equivalent to $\omega_\alpha(p)f^m$ for any $m\geq0$. If $Q$ is path-consistent but not simply connected, we can pass to $\widetilde Q$, do a cycle-removing morph, and pass back to $Q$. The result is that we do the corresponding basic morph and remove \textit{null-homotopic} cycles of $Q$.

If $p$ is elementary and $m_\alpha(p)$ contains a proper subcycle, we say that $\alpha$ \textit{creates a proper subcycle} of $p$. Observe that $\alpha$ creates a proper subcycle if and only if $\omega_\alpha(p)\neq m_\alpha(p)$.

\begin{lemma}\label{remk:cycle-removing-results}
    Let $p$ be an elementary path in a path-consistent quiver ${\widetilde Q}$ and let $\alpha$ be a right-morphable arrow for $p$. Then we have the following:
    \begin{enumerate}
        {\item\label{lrr:1} The cycle-removing right-morph $\omega_\alpha(p)$ is elementary.}
        {\item\label{lrr:2} The cycle-removing right-morph $\omega_\alpha(p)$ contains some arrow of $R_\alpha^{cc}$ if and only if $\omega_\alpha(p)$ is nonconstant.}
        {\item\label{lrr:3} The arrow $\alpha$ creates a proper subcycle if and only if $\omega_\alpha(p)$ does not contain all of $R_\alpha^{cc}$.}
    \end{enumerate}
\end{lemma}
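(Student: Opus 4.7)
The plan is to handle each claim in order, with claim (1) being the substantive one and claims (2) and (3) following quickly from the definition together with (1).

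For claim (1), I would split on the definition of $\omega_\alpha(p)$. If $m_\alpha(p)$ is already elementary, then $\omega_\alpha(p)=m_\alpha(p)$ and there is nothing to prove; if $\omega_\alpha(p)$ was declared constant because $q''R'q'$ is a face-path, then constants are elementary by definition. The remaining case is $\omega_\alpha(p)=q''R'q'$, where I must check that no vertex is repeated except possibly at the endpoints $t(p')$ and $h(p'')$. Within $q'$, $R'$, and $q''$ individually there are no repeats: $q'\subseteq p'$ and $q''\subseteq p''$ are subpaths of the elementary $p$, while $R'$ is a (proper) subpath of the face-path $F_\alpha^{cc}$, whose vertices $v_0,\dots,v_m$ are distinct by Lemma~\ref{lem:subpath-of-facepath-not-cycle} (in the simply connected cover). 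The cross-intersections are controlled as follows: the internal vertices $v_{a+1},\dots,v_{b-1}$ of $R'$ cannot occur in $p'$ by the maximality of $a$, and cannot occur in $p''$ by the minimality of $b$; hence they are disjoint from $q'$ and $q''$. It remains to rule out $v_a$ appearing inside $q''$ and $v_b$ appearing inside $q'$ away from the endpoints of $\omega_\alpha(p)$. Each such potential coincidence would put a common vertex inside both $p'$ and $p''$, which by elementariness of $p$ forces that vertex to be both $t(p')$ and $h(p'')$, and hence to be an endpoint of $\omega_\alpha(p)$ rather than an interior repetition. The one subtle subcase is $a=b$: here $v_a=v_b$ is simultaneously in $p'$ and $p''$, so the same endpoint-argument forces $q'=e_{v_a}=q''$, making $\omega_\alpha(p)$ constant (and hence elementary).

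For claim (2), the ``$\Rightarrow$'' direction is immediate since a path containing an arrow is not constant. Conversely, suppose $\omega_\alpha(p)$ is nonconstant, so in particular $\omega_\alpha(p)=q''R'q'$ (we are not in the declared-constant branch). By the $a=b$ analysis above, if $a=b$ then $\omega_\alpha(p)$ would be constant; so $a<b$ and $R'$ contains at least one arrow of $R_\alpha^{cc}$.

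For claim (3), if $\alpha$ does not create a proper subcycle, then $\omega_\alpha(p)=m_\alpha(p)=p''R_\alpha^{cc}p'$ contains $R_\alpha^{cc}$ entirely. For the converse, if $\alpha$ does create a proper subcycle, then either $\omega_\alpha(p)$ is constant (which clearly misses arrows of $R_\alpha^{cc}$) or $\omega_\alpha(p)=q''R'q'$, in which case the construction ensures at least one of $a>0$ or $b<m$ holds, so $R'$ is a proper subpath of $R_\alpha^{cc}$. The missing arrows of $R_\alpha^{cc}$ cannot reappear elsewhere in $q''R'q'$, since by claim (1) $\omega_\alpha(p)$ is elementary and a repeated arrow would produce a proper subcycle. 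Thus $\omega_\alpha(p)$ does not contain all of $R_\alpha^{cc}$.

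The main obstacle is the vertex-level bookkeeping in claim (1), particularly the interaction between the maximality of $a$, the minimality of $b$, and the elementariness of $p$ that rules out repeated vertices at the junctions $v_a$ and $v_b$; once this is in hand, the remaining claims are short consequences.
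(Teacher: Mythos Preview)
Your argument is correct and takes the same approach as the paper, which in fact dismisses (1) and (3) as following ``from the definitions'' and only writes out (2), using exactly your observation that nonconstancy forces $a<b$ and hence $R'$ contains an arrow of $R_\alpha^{cc}$. Your careful vertex-bookkeeping for (1) is a legitimate unpacking of what the paper leaves implicit. The only imprecision is in (3): the phrase ``a repeated arrow would produce a proper subcycle'' does not literally apply, since the missing arrows of $R_\alpha^{cc}$ are absent from $R'$ and so would not be repeated by appearing once in $q'$ or $q''$; the clean fix is to note that when $a>0$ the vertex $v_0=h(p')$ cannot lie in $q'$ or in $p''$ (else $p$ would acquire a proper subcycle), and symmetrically $v_m\notin\omega_\alpha(p)$ when $b<m$, so $R_\alpha^{cc}$ cannot sit inside $\omega_\alpha(p)$ as a subpath.
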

\begin{proof}
	Parts~\eqref{lrr:1} and~\eqref{lrr:3} follow from the definitions. We prove~\eqref{lrr:2}. 
	Certainly if $\omega_\alpha(p)$ is constant then it contains no arrow of $R_\alpha^{cc}$. On the other hand, if $\omega_\alpha(p)$ is nonconstant, then in the notation of Definition~\ref{defn:cycle-removing-morph} we must have $a<b$. Then some arrow of $R_\alpha^{cc}$ is contained in $R'$, and hence is contained in $\omega_\alpha(p)=q''R'q'$.
\end{proof}

\begin{remk}
    Many of the definitions and results appearing above as well as later in the text are symmetric. If one switches ``left'' for ``right'' and ``clockwise'' for ``counter-clockwise'' in the statements and proofs, the analogous arguments and results hold. We will refer to these as dual results without stating them separately.
\end{remk}

\subsection{Left, Right, Good, and Bad}

For the remainder of Section~\ref{sec:morphsandchains} we assume that $\widetilde{S(Q)}$ is not a sphere.  We now define a notion of one path being to the right of another.
We obtain some conditions under which cycle-removing morphs behave well with respect to this concept of left and right.

\begin{defn}\label{defn:left-right-disks-stuff}
	Suppose $p$ and $q$ are elementary paths in a simply connected dimer model ${\widetilde Q}$ which is not on a sphere with $t(p)=t(q)$ and $h(p)=h(q)$. We say that $p$ is \textit{to the right of} $q$ if the following conditions are satisfied. 
	\begin{enumerate}
		\item The shared vertices of $p$ and $q$ may be ordered $v_1,\dots,v_m$ such that $v_i$ is the $i$th vertex among $\{v_1,\dots,v_m\}$ to appear in $p$ and is the $i$th such vertex to appear in $q$. We remark that if $p$ is an elementary cycle and $q$ is trivial, then $m=2$ and $v_1=v_2$.
		\item For all $i\in[m-1]$, if $p_i$ (respectively $q_i$) is the subpath of $p$ (respectively $q$) from $v_i$ to $v_{i+1}$, then either $p_i$ and $q_i$ are the same arrow or $q_i^{-1}p_i$ is a counter-clockwise (necessarily simple) cycle-walk. In the latter case, we say that $p$ and $q$ bound the disk $q_i^{-1}p_i$.
	\end{enumerate}
	See Figure~\ref{ex:disksbounded}. We remark that if $q$ is constant and $p$ is an elementary counter-clockwise cycle, then $p$ is to the right of $q$ with $m=2$ and $v_1=v_2$. Similarly, an elementary clockwise cycle at $v$ is to the left of the constant path at $v$.
\end{defn}

We warn the reader that Definition~\ref{defn:left-right-disks-stuff} does not form a partial order on paths in $\widetilde Q$ with the same start and end vertices as the relation is not transitive. See Figure~\ref{fig:lr-not-trans} for an example.

\def\svgwidth{200pt}
\begin{figure}[htbp]
    \centering
\begingroup%
  \makeatletter%
  \providecommand\color[2][]{%
    \errmessage{(Inkscape) Color is used for the text in Inkscape, but the package 'color.sty' is not loaded}%
    \renewcommand\color[2][]{}%
  }%
  \providecommand\transparent[1]{%
    \errmessage{(Inkscape) Transparency is used (non-zero) for the text in Inkscape, but the package 'transparent.sty' is not loaded}%
    \renewcommand\transparent[1]{}%
  }%
  \providecommand\rotatebox[2]{#2}%
  \newcommand*\fsize{\dimexpr\f@size pt\relax}%
  \newcommand*\lineheight[1]{\fontsize{\fsize}{#1\fsize}\selectfont}%
  \ifx\svgwidth\undefined%
    \setlength{\unitlength}{856.22900391bp}%
    \ifx\svgscale\undefined%
      \relax%
    \else%
      \setlength{\unitlength}{\unitlength * \real{\svgscale}}%
    \fi%
  \else%
    \setlength{\unitlength}{\svgwidth}%
  \fi%
  \global\let\svgwidth\undefined%
  \global\let\svgscale\undefined%
  \makeatother%
  \begin{picture}(1,0.5682265)%
    \lineheight{1}%
    \setlength\tabcolsep{0pt}%
    \put(0,0){\includegraphics[width=\unitlength,page=1]{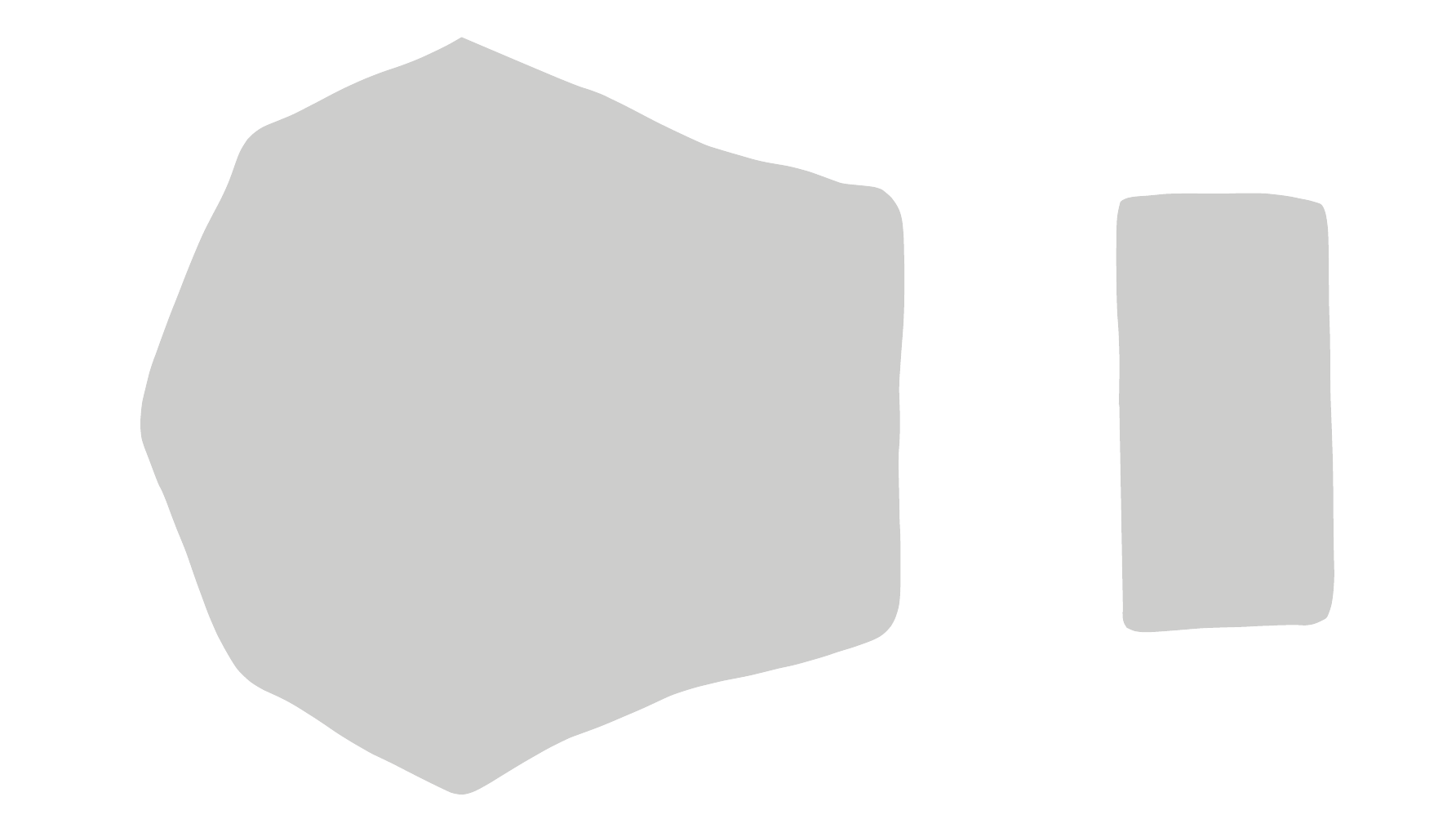}}%
    \put(0.47481343,0.52233166){\color[rgb]{0.09019608,0.08627451,0.07058824}\makebox(0,0)[lt]{\lineheight{1.25}\smash{\begin{tabular}[t]{l}$p$\end{tabular}}}}%
    \put(0.47446385,0.04900092){\color[rgb]{0.09019608,0.08627451,0.07058824}\makebox(0,0)[lt]{\lineheight{1.25}\smash{\begin{tabular}[t]{l}$q$\end{tabular}}}}%
    \put(0.94000226,0.32410945){\color[rgb]{0.09019608,0.08627451,0.07058824}\makebox(0,0)[lt]{\lineheight{1.25}\smash{\begin{tabular}[t]{l}$t(p)$\end{tabular}}}}%
    \put(0.00980345,0.31915178){\color[rgb]{0.09019608,0.08627451,0.07058824}\makebox(0,0)[lt]{\lineheight{1.25}\smash{\begin{tabular}[t]{l}$h(p)$\end{tabular}}}}%
    \put(0,0){\includegraphics[width=\unitlength,page=2]{disks_bounde.pdf}}%
  \end{picture}%
\endgroup%

    \caption{The paths $p$ and $q$ bound two disks and $p$ is to the right of $q$.}
	\label{ex:disksbounded}
\end{figure}

\def\svgwidth{100pt}
\begin{figure}	
	\centering
\begingroup%
  \makeatletter%
  \providecommand\color[2][]{%
    \errmessage{(Inkscape) Color is used for the text in Inkscape, but the package 'color.sty' is not loaded}%
    \renewcommand\color[2][]{}%
  }%
  \providecommand\transparent[1]{%
    \errmessage{(Inkscape) Transparency is used (non-zero) for the text in Inkscape, but the package 'transparent.sty' is not loaded}%
    \renewcommand\transparent[1]{}%
  }%
  \providecommand\rotatebox[2]{#2}%
  \newcommand*\fsize{\dimexpr\f@size pt\relax}%
  \newcommand*\lineheight[1]{\fontsize{\fsize}{#1\fsize}\selectfont}%
  \ifx\svgwidth\undefined%
    \setlength{\unitlength}{559.41601562bp}%
    \ifx\svgscale\undefined%
      \relax%
    \else%
      \setlength{\unitlength}{\unitlength * \real{\svgscale}}%
    \fi%
  \else%
    \setlength{\unitlength}{\svgwidth}%
  \fi%
  \global\let\svgwidth\undefined%
  \global\let\svgscale\undefined%
  \makeatother%
  \begin{picture}(1,0.93513416)%
    \lineheight{1}%
    \setlength\tabcolsep{0pt}%
    \put(0,0){\includegraphics[width=\unitlength,page=1]{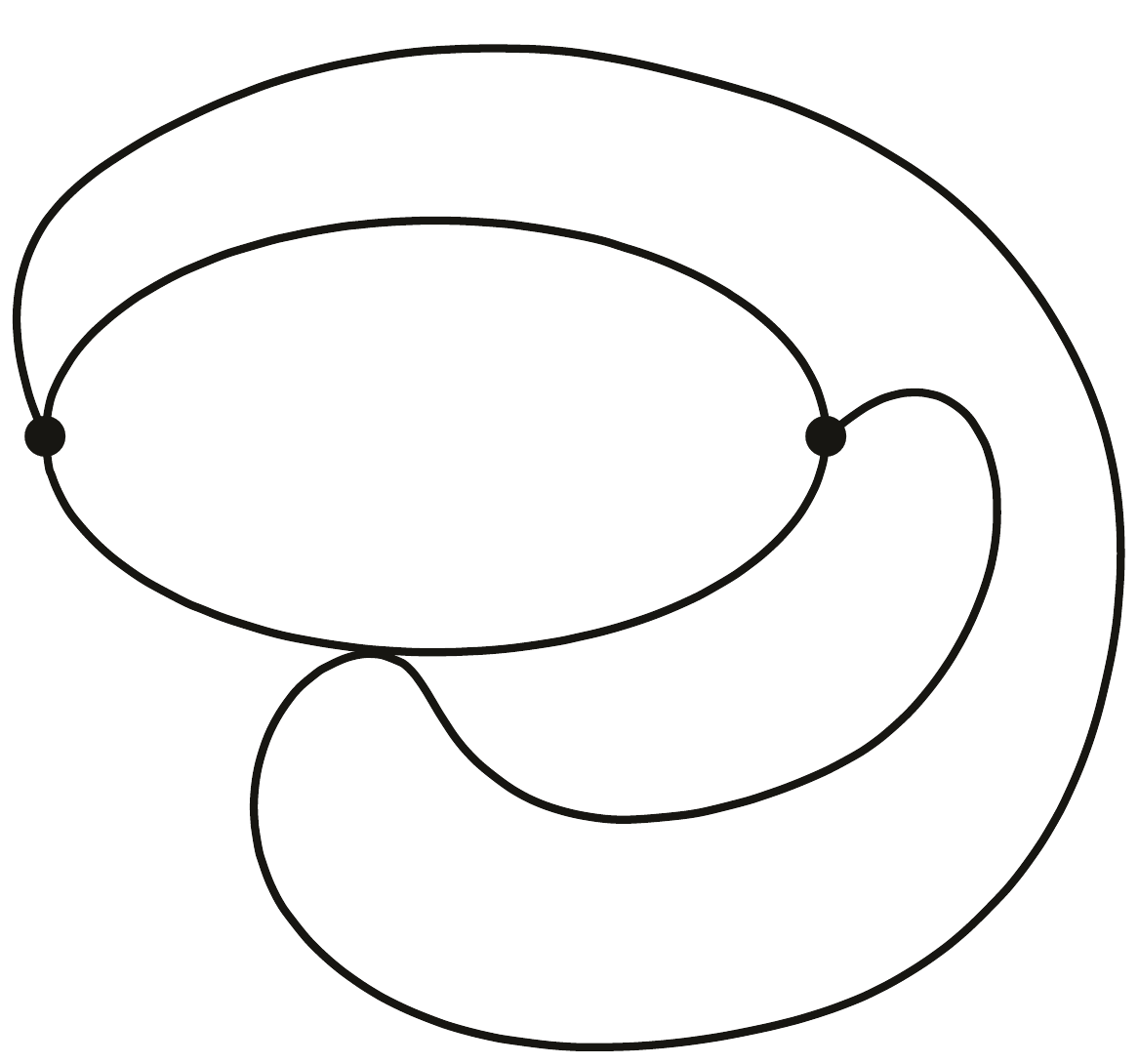}}%
    \put(0.47866166,0.42643225){\color[rgb]{0.09019608,0.08627451,0.07058824}\makebox(0,0)[lt]{\lineheight{1.25}\smash{\begin{tabular}[t]{l}$p$\end{tabular}}}}%
    \put(0.47399501,0.65720862){\color[rgb]{0.09019608,0.08627451,0.07058824}\makebox(0,0)[lt]{\lineheight{1.25}\smash{\begin{tabular}[t]{l}$q$\end{tabular}}}}%
    \put(0.74349462,0.7251366){\color[rgb]{0.09019608,0.08627451,0.07058824}\makebox(0,0)[lt]{\lineheight{1.25}\smash{\begin{tabular}[t]{l}$r$\end{tabular}}}}%
    \put(0,0){\includegraphics[width=\unitlength,page=2]{lrntX.pdf}}%
  \end{picture}%
\endgroup%

	\caption{The path $q$ is to the right of $p$ and $r$ is to the right of $q$, but $r$ is not to the right of $p$.}
\label{fig:lr-not-trans}
\end{figure}

\begin{remk}
	Definition~\ref{defn:left-right-disks-stuff} relies on the notion of a simple cycle-walk being clockwise or counter-clockwise. This is only well-defined when the surface is not a sphere. In the following, we will prove that path-consistency implies strand-consistency while making heavy use of the notions of left and right as well as those of clockwise and counter-clockwise, culminating in the proof of
Proposition~\ref{thm:not-a-cons-then-not-g-conss}. 
	These arguments require $\widetilde Q$ not to be on a sphere, and indeed dimer models on spheres may not be strand-consistent but may be cancellative, so Proposition~\ref{thm:not-a-cons-then-not-g-conss} does not hold in this case. 
\end{remk}

\begin{defn}\label{defn:rightmost-leftmost-paths}
    A path $p$ is a \textit{rightmost path} (respectively \textit{leftmost path}) there are no right-morphable (respectively left-morphable) arrows for $p$.
\end{defn}

\begin{defn}\label{defn:irreducible-pair}
	An \textit{irreducible pair} is a pair of paths $(p,q)$ in $\widetilde Q$ such that $q^{-1}p$ is a simple counter-clockwise cycle-walk, $p$ is leftmost, and $q$ is rightmost.
\end{defn}

If $(p,q)$ is an irreducible pair, then $p$ is to the right of $q$.
Note that $p$ or $q$ may be constant. If this is true, then the other path in the pair cannot be a face-path by elementariness.
The notion of an irreducible pair appears in~\cite{XBocklandt2011} when $Q$ is a dimer model on a torus. 
We now make preparations to prove that irreducible pairs may not occur in path-consistent dimer models.

\begin{defn}
	Let $p$ be an elementary path of $\widetilde Q$ and let $\alpha$ be a morphable arrow for $p$. We say that $\alpha$ is \textit{good} if $\alpha$ is a left-morphable arrow or if $\alpha$ is a right-morphable arrow such that $m_\alpha(p)$ does not contain a proper counter-clockwise subcycle. If $\alpha$ is a right-morphable arrow for $p$ such that $m_\alpha(p)$ contains a proper counter-clockwise subcycle, then we say that $\alpha$ is \textit{bad}.
	A cycle-removing chain $a=\alpha_r\dots\alpha_1$ of $p$ is \textit{good} if each $\alpha_i$ is a good morphable arrow for $\omega_{\alpha_{i-1}\dots\alpha_{1}}(p)$. Otherwise, $a$ is \textit{bad}. 
\end{defn}

There is an asymmetry to our definitions of good and bad cycle-removing morphs. This is because we intend to apply them only to the leftmost path $p$ in an irreducible pair $(p,q)$. Our strategy will be to obtain a good cycle-removing chain taking $p$ to a minimal path $p'$, where $q^{-1}p$ is contained in the area enclosed by the simple counter-clockwise cycle-walk $q^{-1}p'$. Dually, one obtains a minimal path $q'$ so that $(q')^{-1}p'$ is a simple counter-clockwise cycle-walk, which we show to be impossible in the path-consistent case.

Good morphs are useful because a good cycle-removing morph behaves reasonably well with respect to the notion of one path being to the left or right of another.
Consider the following.
\begin{lemma}\label{lem:right-left}
	{ \renewcommand\labelenumi{(\theenumi)}
	\begin{enumerate}
		\item\label{lrl:1} Let $q$ be an elementary path in $\widetilde Q$ and let $p$ be any elementary path to the right of $q$. \label{rlrrl1} Let $\beta$ be a left-morphable arrow for $p$ which is not a left-morphable arrow for $q$. Then $\omega_\beta(p)$ is to the right of $q$.
		\item\label{lrl:2} Let $p$ be an elementary counter-clockwise cycle in $\widetilde Q$ and let $\alpha$ be a right-morphable arrow for $p$ such that $m_\alpha(p)$ has no proper counter-clockwise subcycle. Then $p$ is contained in the area enclosed by $\omega_\alpha(p)$.
	\end{enumerate}}
\end{lemma}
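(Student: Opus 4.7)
For Part~(1), I would first analyze the geometric placement of $R_\beta^{cc}$ inside $p$. Since $R_\beta^{cc}$ is a consecutive subpath of $p$, the face $F_\beta^{cc}$ lies to its left, and because each bounded disk $D_i = q_i^{-1}p_i$ witnessing that $p$ is to the right of $q$ lies to the left of $p_i$, the face $F_\beta^{cc}$ sits inside one of these $D_i$. The hypothesis that $\beta$ is not left-morphable for $q$ is essential here: if $R_\beta^{cc}$ were also a subpath of $q$, morphing only $p$ would leave $R_\beta^{cc}$ in $q$ and introduce a new region bounded by $R_\beta^{cc}$ (in $q$) and $R_\beta^{cl}$ (in the new $p$); such a region is oriented clockwise rather than counter-clockwise, breaking the ``to the right of'' condition. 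Under the hypothesis, this failure mode cannot occur.

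Geometrically, replacing $R_\beta^{cc}$ with $R_\beta^{cl}$ pushes the boundary of $D_i$ inward, shrinking $D_i$ by removing the strip $F_\beta^{cc}\cup F_\beta^{cl}$ adjacent to $\beta$. If no self-intersection results, then $\omega_\beta(p) = m_\beta(p)$ and it is immediately to the right of $q$. Otherwise, the cycle-removing step of Definition~\ref{defn:cycle-removing-morph} trims subcycles at the two vertices $v_a, v_b$ of $F_\beta^{cc}$ determined by the first intersections of $R_\beta^{cl}$ with $p$. I would then verify that the ordered vertices shared by $\omega_\beta(p)$ and $q$ satisfy Definition~\ref{defn:left-right-disks-stuff}, with each segment between consecutive shared vertices either coinciding on an arrow or bounding a CCW disk, by checking that the cycle-removing step either inherits some of the old disks $D_j$ unchanged or merges several of them into one still oriented counter-clockwise.

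For Part~(2), since $p$ is an elementary counter-clockwise cycle, it bounds a disk $D_p$ in $\widetilde{S(Q)}$ with the image of $p$ contained in $\overline{D_p}$. The right-morph at $\alpha$ replaces $R_\alpha^{cl}\subset p$ with $R_\alpha^{cc}$, geometrically pushing the boundary of $D_p$ outward across the strip $F_\alpha^{cc}\cup F_\alpha^{cl}$; hence $m_\alpha(p)$ is a cycle-walk enclosing (with multiplicity, in the winding-number sense of Lemma~\ref{lem:rotation-number-formula}) a region containing $D_p\cup F_\alpha^{cc}\cup F_\alpha^{cl}$. The hypothesis that $m_\alpha(p)$ has no proper counter-clockwise subcycle ensures that any self-intersections of $m_\alpha(p)$ produce only clockwise subcycles. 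The cycle-removing step then trims precisely these clockwise pieces, which lie on the right of $m_\alpha(p)$ and hence outside the counter-clockwise region containing $D_p$. Therefore the remaining cycle $\omega_\alpha(p)$ still encloses $D_p$, and in particular its enclosed area contains $p$.

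The principal obstacle in both parts is the careful bookkeeping of the cycle-removing step. For Part~(1), the delicate point is handling how $R_\beta^{cc}$ can span or meet the shared vertices $v_1,\dots,v_m$, since cycle-removing can drop some $v_j$ from the shared set or merge adjacent disks; one must track both the set of shared vertices and the orientations of the resulting bounded regions. For Part~(2), the delicate point is justifying that the clockwise subcycles trimmed by $\omega_\alpha$ really lie outside the counter-clockwise region that contains $D_p$, which is exactly where the no-proper-CCW-subcycle hypothesis is indispensable.
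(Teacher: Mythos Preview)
Your proposal is broadly correct and captures the right ideas, but the paper's argument is considerably more streamlined in both parts, and the difference is worth noting.

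For Part~(1), you attempt to handle the general configuration directly, tracking all of the shared vertices $v_1,\dots,v_m$ and all of the disks $D_i$, and then carefully analyzing how the cycle-removing morph interacts with this decomposition. The paper instead makes a single reduction: it suffices to treat the case where $p$ and $q$ share only their start and end vertices, so that they bound exactly one disk $q^{-1}p$. In that case the left-morph of $p$ at $\beta$ (and the subsequent removal of any clockwise subcycles) visibly produces a path contained in the closed disk bounded by $q^{-1}p$, and the conclusion is immediate. This reduction absorbs all of the bookkeeping you flag as the ``principal obstacle,'' so you should look for it rather than carry out the case analysis.

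For Part~(2), your geometric picture is exactly right, but the paper makes it rigorous by working face-by-face with winding numbers. Concretely, Lemma~\ref{lem:rotation-number-formula} gives $\wind(m_\alpha(p),F)\geq\wind(p,F)$ for every face $F$; passing from $m_\alpha(p)$ to $\omega_\alpha(p)$ deletes only clockwise subcycles (this is precisely the hypothesis), and deleting a clockwise cycle can only increase winding numbers. Hence $\wind(\omega_\alpha(p),F)\geq\wind(p,F)$ for all $F$, and since both sides take values in $\{0,1\}$ for elementary counter-clockwise cycles, the containment of interiors follows. This avoids entirely the difficulty you identify at the end---you never need to argue that the trimmed clockwise pieces lie ``outside'' anything, because the winding-number inequality handles it uniformly. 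Your parenthetical reference to Lemma~\ref{lem:rotation-number-formula} shows you already have the tool in hand; promote it to the main argument.
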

\begin{proof}
	To see~\eqref{lrl:1}, it suffices to reduce to the case where $p$ and $q$ share only their start and end vertices. In this case, they bound one disk $q^{-1}p$ and a 
	left-morph of $p$ at $\beta$ results in a path contained in the area enclosed by $q^{-1}p$.

	We now prove~\eqref{lrl:2}. 
	We show that $\wind(\omega_\alpha(p),F)\geq\wind(p,F)$ for any face $F$. It follows that $\omega_\alpha(p)$ is also an elementary counter-clockwise cycle. Since any face $F$ is in the interior of $p$ (respectively $\omega_\alpha(p)$) if and only if $\wind(p,F)>0$ (respectively $\wind(\omega_\alpha(p),F)>0$), it further follows that if $F$ is in the interior of $p$ then $F$ is in the interior of $\omega_\alpha(p)$ and the statement is proven.

	Let $p$ be an elementary counter-clockwise cycle in $\widetilde Q$ and let $\alpha$ be a right-morphable arrow for $p$ such that $m_\alpha(p)$ has no proper counter-clockwise subcycle. By Lemma~\ref{lem:rotation-number-formula}, $\wind(m_\alpha(p),F)\geq\wind(p,F)$. The path $\omega_\alpha(p)$ is obtained from $m_\alpha(p)$ by deleting some number of proper elementary subcycles. All of these are clockwise, and hence have a winding number less than or equal to zero around $F$, by assumption. It follows that their deletion can only increase the winding number around $F$ and we have
	\[\wind(\omega_\alpha(p),F)\geq\wind(m_\alpha(p),F)\geq\wind(p,F).\]

		If $F$ is enclosed within $p$, then $\wind(p,F)=1$ and the above inequality forces $\wind(\omega_\alpha(p),F)=1$. It follows that $F$ is enclosed within $\omega_\alpha(p)$. This ends the proof.
\end{proof}

Note that the conditions of~\eqref{lrl:1} and~\eqref{lrl:2} of Lemma~\ref{lem:right-left} necessitate that the morphable arrows considered are good.
Some caution must be shown when considering whether cycle-removing morphs move paths to the right or left, particularly when the morphs are {bad}.
{For example, see the bad right-morph of Figure~\ref{fig:right-creates-counterclockwise}. On the left is a path $p$ with a right-morphable arrow $\alpha$, and on the right is the path $\omega_\alpha(p)$. Note that $p$ is not contained in the area enclosed by $\omega_\alpha(p)$, justifying the limited scope of Lemma~\ref{lem:right-left}~\eqref{lrl:2}.}

\def\svgwidth{200pt}
\begin{figure}[htbp]
    \centering
	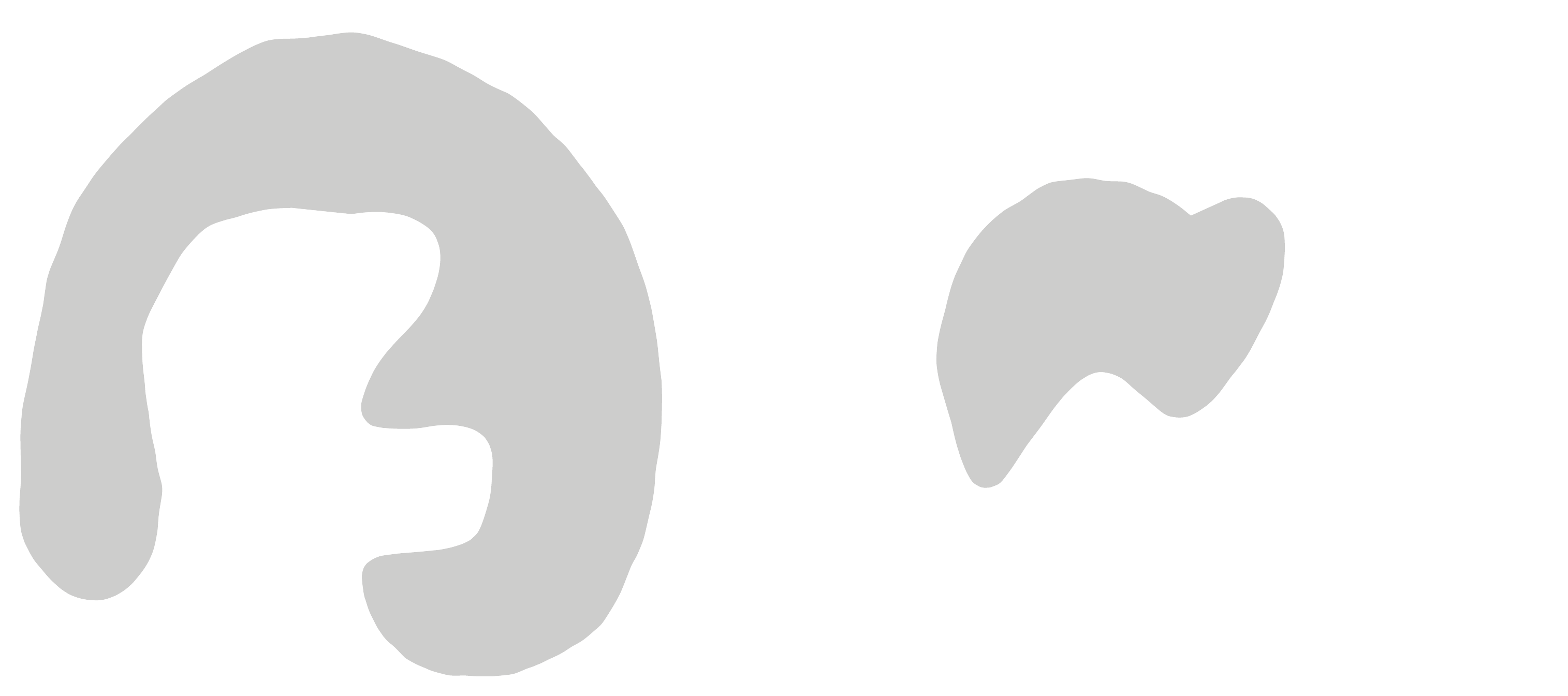
	\caption{On the left is a counter-clockwise cycle $p$ at $v$. Cycle-removing right-morphing $p$ at the arrow $\alpha$ results in a \textit{clockwise} cycle which does not enclose $p$.}
	\label{fig:right-creates-counterclockwise}
\end{figure}

\subsection{Irreducible Pairs}

The goal of this section is to show that {irreducible pairs} (Definition~\ref{defn:irreducible-pair}) cannot appear in simply connected path-consistent dimer models. We begin with some technical lemmas about cycle-removing morphs.

\begin{lemma}\label{lem:cycleless-switcheroo}
	Let $p$ be an elementary path in a path-consistent quiver $\widetilde Q$. Let $\alpha$ be a right-morphable arrow for $p$ and let $\beta$ be a left-morphable arrow for $\omega_\alpha(p)$ distinct from $\alpha$. Then $\beta$ is a left-morphable arrow for $p$.
\end{lemma}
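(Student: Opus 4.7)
The plan is to pin down where $R_\beta^{cc}$ can sit inside $\omega_\alpha(p)$ by exploiting the fact that the arrow $\alpha$ itself cannot appear anywhere in $\omega_\alpha(p)$. Following Definition~\ref{defn:cycle-removing-morph}, write $p = p'' R_\alpha^{cl} p'$ and $\omega_\alpha(p) = q'' R' q'$ with $q' \subseteq p'$, $q'' \subseteq p''$, and $R' \subseteq R_\alpha^{cc}$. The first observation is that $\alpha$ does not occur in $\omega_\alpha(p)$: by definition $\alpha \notin R_\alpha^{cc}$, and if $\alpha$ appeared elsewhere in $p$ in addition to the subpath $R_\alpha^{cl}$ (which does not contain $\alpha$), then one of the strings ``$\alpha \dots R_\alpha^{cl}$'' or ``$R_\alpha^{cl} \dots \alpha$'' would sit inside $p$ and produce a proper subcycle at $h(\alpha)$ or $t(\alpha)$, contradicting the hypothesis that $p$ is elementary. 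Hence $\alpha \notin q' \cup R' \cup q''$.

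The heart of the argument is to show that $R_\beta^{cc}$ and $R'$ share no arrow. Suppose some arrow $\gamma$ of $R_\beta^{cc}$ lies in $R' \subseteq R_\alpha^{cc}$. Then $\gamma$ belongs to both $F_\beta^{cc}$ and $F_\alpha^{cc}$, and the dimer axiom that each internal arrow sits in a unique counter-clockwise face forces $F_\beta^{cc} = F_\alpha^{cc}$. But then $R_\beta^{cc}$ is precisely this common face with $\beta$ deleted, so the hypothesis $\beta \neq \alpha$ yields $\alpha \in R_\beta^{cc} \subseteq \omega_\alpha(p)$, contradicting the first observation.

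Finally, since $R_\beta^{cc}$ is a connected subpath of $\omega_\alpha(p) = q'' R' q'$ that uses no arrow of $R'$, it must lie entirely within $q'$ or entirely within $q''$. The only subtlety is the degenerate case $v_a = v_b$, where $R'$ is a constant path and $q'$ and $q''$ meet directly at that vertex; here I would use that $v_a$ is an internal vertex of $R_\alpha^{cc}$ belonging to both $p'$ and $p''$, which by elementariness of $p$ forces $p$ to be an elementary cycle at $v_a$ with $q'$ and $q''$ both constant, making $\omega_\alpha(p)$ itself constant and leaving no room for a left-morphable $\beta$ to begin with. In the non-degenerate case, $q' \subseteq p'$ and $q'' \subseteq p''$ imply that $R_\beta^{cc}$ is a subpath of $p$, which is exactly the statement that $\beta$ is left-morphable for $p$. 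The main technical hurdle will be the careful bookkeeping at the junction vertices $v_a$ and $v_b$; the clean algebraic fact that $\alpha$ has vanished from $\omega_\alpha(p)$ is what ultimately pins everything down.
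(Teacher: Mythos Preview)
Your proof is correct and follows essentially the same approach as the paper's: both hinge on the observation that $\alpha\notin\omega_\alpha(p)$, so any arrow $\gamma$ of $R_\alpha^{cc}$ lying in $R_\beta^{cc}$ would force $F_\beta^{cc}=F_\alpha^{cc}$ and hence $\alpha\in R_\beta^{cc}\subseteq\omega_\alpha(p)$, a contradiction. The paper phrases this as ``any subpath of $\omega_\alpha(p)$ not already a subpath of $p$ must contain an arrow of $R_\alpha^{cc}$,'' while you instead decompose $\omega_\alpha(p)=q''R'q'$ explicitly and track where $R_\beta^{cc}$ can sit; your treatment of the degenerate case $v_a=v_b$ and the justification that $\alpha\notin p$ are more detailed than the paper's, but the core idea is identical.
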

\begin{proof}
	Any subpath of $\omega_\alpha(p)$ which is not a subpath of $p$ must contain some arrow $\gamma$ of $R_\alpha^{cc}$. Since $\omega_\alpha(p)$ does not contain $\alpha$, the only counter-clockwise return path which $\gamma$ could be a part of is $R_\alpha^{cc}$. This shows that $\omega_\alpha(p)$ contains no counter-clockwise return paths which are not in $p$, other than possibly $R_\alpha^{cc}$. The result follows.
\end{proof}

\begin{defn}
	We say that a (cycle-removing) chain $a=\alpha_r\dots\alpha_1$ of an elementary path $p$ is \textit{crossing-creating} if $m_{\alpha_i\dots\alpha_1}(p)$ is elementary for $i<r$ but $m_a(p)$ is not elementary.
\end{defn}

If a path $p$ of a path-consistent dimer model is not minimal then there must be a crossing-creating chain of $p$. Our goal is now to show that there must be a \textit{good} crossing-creating chain of $p$ (Proposition~\ref{lem:cycle-removing-no-right-cc-cycle}). Our first step is Lemmas~\ref{lem:right-no-creates-cc-left-ccc-cycle-version} and~\ref{lem:right-creates-cc-left-ccc-cycle-version}. These show that an elementary path must have a crossing-creating left-chain under certain technical conditions.

\begin{lemma}\label{lem:right-no-creates-cc-left-ccc-cycle-version}
	Let $p$ be an elementary counter-clockwise cycle in $\widetilde Q$. Let $\alpha$ be a right-morphable arrow for $p$ such that $m_\alpha(p)$ has no proper counter-clockwise subcycle. Suppose there is a crossing-creating left-chain of $\omega_\alpha(p)$. Then there is a crossing-creating left-chain of $p$.
\end{lemma}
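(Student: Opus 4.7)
The plan is to convert the crossing-creating left-chain $b = \beta_r\dots\beta_1$ of $\omega_\alpha(p)$ into a crossing-creating left-chain of $p$. By Lemma~\ref{lem:right-left}~\eqref{lrl:2}, $\omega_\alpha(p)$ is an elementary counter-clockwise cycle enclosing $p$, so geometrically one expects the left-morphs of $b$, which shrink $\omega_\alpha(p)$ until a subcycle appears, to similarly shrink the interior cycle $p$ and create a subcycle at the same step or earlier.

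First I would handle the special case $\beta_1 = \alpha$. This requires $R_\alpha^{cc}$ to lie in $\omega_\alpha(p)$, which by Lemma~\ref{remk:cycle-removing-results}~\eqref{lrr:3} forces $\omega_\alpha(p) = m_\alpha(p)$; hence $m_{\beta_1}(\omega_\alpha(p)) = p$ is elementary, ruling out $r = 1$, and the tail $\beta_r\dots\beta_2$ is immediately a crossing-creating left-chain of $p$. When $\beta_1 \neq \alpha$, Lemma~\ref{lem:cycleless-switcheroo} guarantees $\beta_1$ is left-morphable for $p$; if $m_{\beta_1}(p)$ is not elementary then the length-one chain $\beta_1$ already suffices for $p$, and otherwise we iterate using induction on $r$.

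For the inductive step, the key invariant to maintain is that $m_{\beta_1}(\omega_\alpha(p))$ equals $\omega_{\alpha'}(m_{\beta_1}(p))$ for some arrow $\alpha'$ right-morphable for $m_{\beta_1}(p)$ whose basic morph introduces no proper counter-clockwise subcycle. The chain $\beta_r\dots\beta_2$ is still crossing-creating for $m_{\beta_1}(\omega_\alpha(p))$, so the inductive hypothesis would then yield a crossing-creating left-chain of $m_{\beta_1}(p)$, which prepended by $\beta_1$ gives the desired chain of $p$. Verifying this invariant is the main obstacle: one must track how the return path $R_{\beta_1}^{cc}$ interacts with the arrows of $R_\alpha^{cc}$ which appear in $\omega_\alpha(p)$ but not in $p$, and in particular identify the correct candidate arrow $\alpha'$ (which may differ from $\alpha$) to witness the nesting. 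The geometric picture of two nested counter-clockwise cycles shrinking in parallel is clear, but formalizing it at the level of return paths and cycle-removing morphs, while simultaneously checking the no-counter-clockwise-subcycle hypothesis, is the delicate combinatorial part of the proof.
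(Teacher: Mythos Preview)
Your inductive strategy has a real gap: the invariant you want, that $m_{\beta_1}(\omega_\alpha(p)) = \omega_{\alpha'}(m_{\beta_1}(p))$ for a single right-morphable arrow $\alpha'$ with the same no-counter-clockwise-subcycle property, is not something you can expect to hold when $m_\alpha(p)$ is non-elementary. In that case $\omega_\alpha(p)$ is obtained from $m_\alpha(p)$ by deleting possibly several clockwise subcycles, and after left-morphing at $\beta_1$ there is no reason the two paths $m_{\beta_1}(p)$ and $m_{\beta_1}(\omega_\alpha(p))$ should be related by a \emph{single} cycle-removing right-morph at all; what survives is only the enclosure relation from Lemma~\ref{lem:right-left}, which is too weak to invoke the inductive hypothesis as stated. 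You flag this as ``the delicate combinatorial part,'' but it is not just delicate --- the invariant appears to be the wrong one, and the proposal gives no alternative.

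The paper's argument sidesteps induction on the length of $b$ entirely. Instead it takes the maximal $j$ for which $\beta_j\dots\beta_1$ is simultaneously a left-chain of $p$ and of $\omega_\alpha(p)$. If $j$ exhausts $b$, a winding-number and enclosure argument (via Lemma~\ref{lem:right-left}~\eqref{lrl:2}) forces the crossing vertex to lie outside the region bounded by $p$, contradicting that $m_b(p)$ passes through it. If $j$ is strictly smaller, one shows that $R_\alpha^{cl}$ persists in every $m_{\beta_{j'}\dots\beta_1}(p)$ while no arrow of $F_\alpha^{cl}$ appears in the corresponding morphs of $\omega_\alpha(p)$; this forces $\beta_{j+1}\in F_\alpha^{cc}$, which in the non-elementary case is immediately contradictory, and in the elementary case pins down $\beta_{j+1}=\alpha$ so that deleting the cancelling pair $\beta_{j+1}\alpha$ produces the desired crossing-creating left-chain $\beta_s\dots\beta_{j+2}\beta_j\dots\beta_1$ of $p$. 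Your handling of the special case $\beta_1=\alpha$ is essentially this last manoeuvre at $j=0$, so you have the right endgame; what is missing is a workable replacement for the failed invariant, and the paper's maximal-$j$ trick is exactly that.
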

\begin{proof}
	Let $b=\beta_s\dots\beta_1$ be a crossing-creating left-chain of $\omega_\alpha(p)$.
	Let $j$ be maximal such that $b':=\beta_j\dots\beta_1$ is a left-chain of $p$ as well as $\omega_\alpha(p)$. If $b'$ is crossing-creating for $p$, then we are done, so suppose $b'$ is not a crossing-creating chain of $p$.

	Suppose first that $b$ is a left-chain of $p$, and hence that $j=s$ and $b'=b$. Since $b$ creates a crossing of $\omega_\alpha(p)$ but not of $p$, there must be a proper subcycle of $m_{b}(\omega_\alpha(p))$ starting at a vertex $v$ of $\omega_\alpha(p)$ which is not a vertex of $p$. Then left-morphing $m_{\beta_{s-1}\dots\beta_{1}}(p)$ at $\beta_s$ must add $v$ to $p$. By Lemma~\ref{lem:right-left}~\eqref{lrl:2}, the area bounded by $p$ is contained in the area bounded by $\omega_\alpha(p)$. In particular, vertices of $\omega_\alpha(p)$ which are not vertices of $p$ are not in the area bounded by $p$. 
	Since left-morphing $p$ at $b$ does not create any crossings, $p=\omega_b(\omega_b(p))$ is obtained by applying the (cycle-removing) right-chain $b$ to $\omega_b(p)$. Then Lemma~\ref{lem:right-left}~\eqref{lrl:2} applied to $\omega_b(p)$ shows that $\omega_b(p)$ is strictly contained in the area bounded by $p$. Since $v$ is not in the area bounded by $p$, the vertex $v$ cannot be in $\omega_b(p)$. This is a contradiction.

	It follows that $b$ is not a left-chain of $p$ and hence that $j<s$. 
	We claim that $R_\alpha^{cl}\in m_{\beta_{j'}\dots\beta_{1}}(p)$ and that no arrow of $F_\alpha^{cl}$ is in $m_{\beta_{j'}\dots\beta_{1}}(\omega_\alpha(p))$ for any $1\leq j'\leq j$.
	Since $R_\alpha^{cl}\in p$ and $\beta_1$ does not create a crossing of $p$, we must have $\beta_1\not\in F_\alpha^{cl}$. Since no arrow of $F_\alpha^{cl}$ is in $\omega_\alpha(p)$ and $R_{\beta_1}^{cc}\in\omega_\alpha(p)$, for any $\alpha'\in F_\alpha^{cl}$ we must have $\beta_1\not\in F_{\alpha'}^{cc}$. It follows that $R_\alpha^{cl}$ is in $m_{\beta_1}(p)$ and that no arrow of $F_\alpha^{cl}$ is in $m_{\beta_1}(\omega_\alpha(p))$. We repeat this argument to see that $R_\alpha^{cl}\in m_{\beta{j'}\dots\beta_{1}}(p)$ and that no arrow of $F_\alpha^{cl}$ is in $m_{\beta{j'}\dots\beta_{1}}(\omega_\alpha(p))$ for any $1\leq j'\leq j$, proving the claim.

	First, we suppose that $m_\alpha(p)$ is not elementary and hence that $\omega_\alpha(p)$ does not contain all of $R_\alpha^{cc}$. In particular, either the arrow $\alpha'$ following $\alpha$ in $R_\alpha^{cc}$ or the arrow $\alpha''$ preceding $\alpha$ in $R_\alpha^{cc}$ (or both) are absent from $\omega_\alpha(p)$. Suppose that $\alpha'$ is not in $\omega_\alpha(p)$; the $\alpha''$ case is the same. 
	Since $b'$ is a left-chain of $p$ and $R_\alpha^{cl}$ is in $m_{\beta_{j'}\dots\beta_{1}}(p)$  by the claim above, the arrows $\alpha'$ and $\alpha$ may not be added by any morph in $b'$ without creating a crossing in $p$, which would contradict our choice of $b'$. Hence, neither $\alpha'$ nor $\alpha$ is in $m_{\beta_j\dots\beta_1}(\omega_\alpha(p))$. In particular, no return path of an arrow in $F_\alpha^{cc}$ is a subpath of $m_{\beta_j\dots\beta_1}(\omega_\alpha(p))$. 
	Since $\beta_{j+1}$ is a left-morphable arrow for $m_{b'}(\omega_\alpha(p))$ but not of $m_{b'}(p)$, it must be the case that $R_{\beta_{j+1}}^{cc}$ is a subpath of $m_{b'}(\omega_\alpha(p))$ but not of $m_{b'}(p)$. Since the only such subpaths contain some arrow of $R_\alpha^{cc}$, we must have that $\beta_{j+1}\in F_\alpha^{cc}$. This contradicts the fact that no return path in $F_\alpha^{cc}$ is a subpath of $m_{\beta_j\dots\beta_1}(\omega_\alpha(p))$.

	On the other hand, suppose that $m_\alpha(p)$ is elementary, and hence that $\omega_\alpha(p)=m_\alpha(p)$. 
		As above, the arrow $\beta_{j+1}$ must be in $F_\alpha^{cc}$. 
		If $\beta_{j+1}\neq\alpha$, then $\alpha\in m_{\beta_j\dots\beta_1}(\omega_\alpha(p))$, contradicting the claim in the third paragraph, so $\beta_{j+1}=\alpha$.
	Consider the chain $\beta_{j+1}\beta_j\dots\beta_1\alpha$ 
	of $p$. The left-morph $\beta_{j+1}$ cancels out the right-morph $\alpha$ in this chain, hence the left-chain $\beta_{j+2}\beta_j\dots\beta_1$ 
	of $p$ is equivalent to $\beta_{j+2}\dots\beta_2\beta_1\alpha$. 
	Then $\beta_s\dots\beta_{j+2}\beta_j\dots\beta_1$ 
	is a crossing-creating left-chain of $p$. This completes the proof.

\end{proof}

\begin{lemma}\label{lem:right-creates-cc-left-ccc-cycle-version}
	Let $p$ be an elementary 
	path in $\widetilde Q$. Let $\alpha$ be a right-morphable arrow for $p$ and let $l$ be a proper elementary counter-clockwise subcycle of $m_\alpha(p)$. Suppose there is a crossing-creating left-chain of $l$. Then there is a crossing-creating left-chain of $p$.
\end{lemma}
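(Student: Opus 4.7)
My plan is to follow closely the template of the proof of Lemma~\ref{lem:right-no-creates-cc-left-ccc-cycle-version}. Let $j$ be the largest integer such that $b':=\beta_j\dots\beta_1$ is a left-chain of $p$. If $b'$ is itself crossing-creating for $p$, we are done, so assume that this is not the case.

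First I would handle the case $j<s$. Here $\beta_{j+1}$ is a left-morphable arrow for $m_{b'}(l)$ but not for $m_{b'}(p)$, so $R_{\beta_{j+1}}^{cc}$ appears in $m_{b'}(l)$ and fails to appear in $m_{b'}(p)$. Mirroring the inductive bookkeeping in the proof of Lemma~\ref{lem:right-no-creates-cc-left-ccc-cycle-version}, I would verify by induction on $i\le j$ that (a) the subpath $R_\alpha^{cl}$ survives in $m_{\beta_i\dots\beta_1}(p)$, and (b) no arrow of $F_\alpha^{cl}$ has been introduced into $m_{\beta_i\dots\beta_1}(l)$. The inductive step exploits that a left-morph adding an arrow of $F_\alpha^{cl}$ to the morphed $l$ would also become applicable to the morphed $p$ and would produce the same newly created subcycle there, contradicting the maximality of $b'$ under the assumption that $b'$ is not crossing-creating for $p$. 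Consequently $\beta_{j+1}$ must lie in $F_\alpha^{cc}$. Since $\alpha\notin m_\alpha(p)\supseteq l$, if $\beta_{j+1}\neq\alpha$ then $R_{\beta_{j+1}}^{cc}$ would contain the arrow $\alpha$, contradicting $R_{\beta_{j+1}}^{cc}\subseteq m_{b'}(l)$. Hence $\beta_{j+1}=\alpha$, and the same cancellation argument from Lemma~\ref{lem:right-no-creates-cc-left-ccc-cycle-version} exhibits $\beta_s\dots\beta_{j+2}\beta_j\dots\beta_1$ as the desired crossing-creating left-chain of $p$.

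In the remaining case $j=s$, the full chain $b$ is a left-chain of $p$ but $m_b(p)$ is elementary by assumption. I would derive a contradiction by locating the vertex $v$ at which the proper subcycle of $m_b(l)$ is based. Since the same morphs do not create a crossing in $m_b(p)$, the vertex $v$ must lie on $l$ but not on $p$, and in particular must be an internal vertex of $R_\alpha^{cc}$. Applying Lemma~\ref{lem:rotation-number-formula} iteratively to $b$ on $l$, and using Lemma~\ref{lem:right-left}(\ref{lrl:1}) to track the leftward movement of the morphed paths, I would compare the local structure of $m_b(l)$ and $m_b(p)$ near $v$ and show that a corresponding crossing must also appear in $m_b(p)$, contradicting its elementariness.

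The main obstacle is the inductive bookkeeping in the case $j<s$: one must carefully control how each $\beta_i$ can interact with arrows in the two faces $F_\alpha^{cc}$ and $F_\alpha^{cl}$, exploiting repeatedly the dimer-model fact that each arrow belongs to at most one counter-clockwise face and one clockwise face in order to rule out unwanted introductions or cancellations of such arrows in either $m_{\beta_i\dots\beta_1}(p)$ or $m_{\beta_i\dots\beta_1}(l)$. This parallels but is somewhat more delicate than the analogous step in Lemma~\ref{lem:right-no-creates-cc-left-ccc-cycle-version}, since here $l$ may share only a subpath (rather than all) of $R_\alpha^{cc}$ with $m_\alpha(p)$.
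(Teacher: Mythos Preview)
Your proposal is correct and takes essentially the same approach as the paper: the paper's own proof is the single sentence ``The proof is the same as that of Lemma~\ref{lem:right-no-creates-cc-left-ccc-cycle-version}, with $l$ taking the place of $\omega_\alpha(p)$,'' and your plan is precisely to execute this substitution with the same case structure (maximal $j$, the inductive bookkeeping on $F_\alpha^{cl}$ and $F_\alpha^{cc}$, and the $\beta_{j+1}=\alpha$ cancellation). Your sketch in fact supplies more detail than the paper does.
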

\begin{proof}
	The proof is the same as that of Lemma~\ref{lem:right-no-creates-cc-left-ccc-cycle-version}, with $l$ taking the place of $\omega_\alpha(p)$.
\end{proof}

\begin{lemma}\label{lem:right-morph-no-cc-facepath}
	Let $p$ be an elementary path and let $\alpha$ be a right-morphable arrow for $p$. Then $m_\alpha(p)$ does not contain a counter-clockwise face-path.
\end{lemma}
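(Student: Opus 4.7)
The plan is to argue by contradiction, assuming $m_\alpha(p) = p''R_\alpha^{cc}p'$ contains a counter-clockwise face-path $g$, and deriving an inconsistency with either the elementariness of $p$ or the dimer axiom that each arrow lies in at most one counter-clockwise face.

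First I would verify a small preliminary observation: because $p$ is elementary and contains $R_\alpha^{cl}$ as a subpath, the arrow $\alpha$ itself cannot occur in $p$. Otherwise the segment of $p$ between the occurrences of $\alpha$ and of $R_\alpha^{cl}$ (in whichever order they appear) would close up into a proper subcycle, since $R_\alpha^{cl}$ goes from $h(\alpha)$ to $t(\alpha)$. In particular $\alpha$ does not appear in $p'$ or $p''$, and therefore $\alpha$ does not appear in $m_\alpha(p)$ at all.

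Next I would split into cases based on whether $g$ overlaps the inserted segment $R_\alpha^{cc}$. If $g$ shares no arrow with $R_\alpha^{cc}$, then as a contiguous subpath of $p''R_\alpha^{cc}p'$ it must lie entirely inside $p''$ or entirely inside $p'$, hence inside $p$. But $g$ is a cycle, contradicting the hypothesis that $p$ is elementary. If instead $g$ contains some arrow $\beta$ of $R_\alpha^{cc}$, then $\beta$ lies in the counter-clockwise face $F_\alpha^{cc}$ and also in the counter-clockwise face traced out by $g$. By Definition~\ref{defn:dimer-model}, each arrow lies in at most one counter-clockwise face, so these faces coincide and $g$ is cyclically equivalent to $F_\alpha^{cc}$. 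But then $g$ contains $\alpha$, contradicting the observation above that $\alpha \notin m_\alpha(p)$.

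The main obstacle, such as it is, is simply bookkeeping the first observation that $\alpha \notin p$; once that is in hand, both cases close very quickly by appealing to elementariness and to the dimer axiom restricting each arrow to one counter-clockwise face. The argument does not require simple connectedness, path-consistency, or any hypothesis on $\widetilde{S(Q)}$, so it should go through in the generality stated.
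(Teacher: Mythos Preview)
Your proof is correct and follows essentially the same approach as the paper: both arguments hinge on the observations that $\alpha\notin m_\alpha(p)$ (since $p$ is elementary) and that any arrow of $R_\alpha^{cc}$ lies in the unique counter-clockwise face $F_\alpha^{cc}$, forcing any counter-clockwise face-path in $m_\alpha(p)$ either to sit inside $p$ (impossible by elementariness) or to equal $F_\alpha^{cc}$ (impossible since $\alpha$ is absent). The paper's version is simply terser, leaving the case split implicit.
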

\begin{proof}
	Any arrow added by right-morphing at $\alpha$ is only a part of one counter-clockwise face, which is $F_\alpha^{cc}$. Since $p$ is elementary, $\alpha$ is not in $p$ and hence is not in $m_\alpha(p)$. The result follows.
\end{proof}

Before finally proving that any elementary path has a good crossing-creating chain (Proposition~\ref{lem:cycle-removing-no-right-cc-cycle}), we first prove the special case that any counter-clockwise elementary cycle has a good crossing-creating chain.

\begin{prop}\label{prop:simple-cc-cycle-c-left-chain}
	Let $p$ be a counter-clockwise elementary cycle in a path-consistent dimer model $\widetilde Q$. Then $p$ has a crossing-creating left-chain.
\end{prop}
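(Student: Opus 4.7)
The plan is to induct on the number $n$ of faces of $\widetilde Q$ inside the disk $D$ that $p$ bounds in $\widetilde{S(Q)}$. Simple connectedness of $\widetilde{S(Q)}$, combined with the standing hypothesis that $\widetilde{S(Q)}$ is not a sphere, makes $D$ well-defined, and elementariness of $p$ forces $n \geq 2$ (else $p$ would be a face-path). By Remark~\ref{rem:cycle-cant-be-constant} together with path-consistency, the c-value of $p$ is some $m \geq 1$ and $[p]=[f_v^m]$ where $v=t(p)=h(p)$, so $p$ is not minimal and in particular must admit at least one morphable arrow.

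The key step is to produce a left-morphable arrow $\beta$ for $p$. Since $p$ is counter-clockwise, for every arrow $\alpha$ of $p$ the counter-clockwise face $F_\alpha^{cc}$ lies inside $D$ (on the left of $\alpha$), whereas $F_\alpha^{cl}$ lies outside $D$. Combining this with the dimer-model alternation that each internal arrow belongs to exactly one counter-clockwise and one clockwise face, together with a planar analysis of how faces tile $D$, one locates a counter-clockwise face $F \subset D$ whose boundary decomposes as $\beta \cdot R$, where $R$ is a consecutive subpath of $p$ and $\beta$ is a single interior chord of $D$. Then $R = R_\beta^{cc}$ and $\beta$ is left-morphable for $p$.

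Once $\beta$ is in hand, set $p' := m_\beta(p)$. If $p'$ is non-elementary, then the one-step chain $\beta$ is itself crossing-creating for $p$ and we are done. Otherwise $p'$ is a simple cycle at $v$. Since the chord $\beta$ lies in the interior of $D$, both $F_\beta^{cc}$ and $F_\beta^{cl}$ lie inside $D$ on opposite sides of $\beta$; thus Lemma~\ref{lem:rotation-number-formula} gives that $p'$ has total winding number $n-2$ around the faces of $\widetilde Q$, and an examination of these winding numbers forces $p'$ to be a counter-clockwise simple cycle enclosing a disk $D' \subsetneq D$ with exactly $n-2$ faces. The inductive hypothesis then produces a crossing-creating left-chain $a$ of $p'$, and $a\beta$ is the required crossing-creating left-chain of $p$.

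I expect the main obstacle to be establishing the existence of the left-morphable $\beta$: in principle a counter-clockwise face inside $D$ could meet $p$ along several disjoint arcs, so there is no \emph{a priori} reason a face with exactly one interior arrow must exist. I would try to handle this by an extremal argument, choosing a counter-clockwise face in $D$ adjacent to $p$ whose interior portion of the boundary has the minimum possible number of arrows and arguing this number must be one; failing that, I would fall back on a complementary right-morph strategy, applying Lemmas~\ref{lem:right-creates-cc-left-ccc-cycle-version} and~\ref{lem:right-no-creates-cc-left-ccc-cycle-version} to transfer a crossing-creating left-chain from a proper counter-clockwise subcycle of $m_\alpha(p)$, or from $\omega_\alpha(p)$, back to $p$ itself.
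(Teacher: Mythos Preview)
Your induction scheme and the winding-number analysis of $m_\beta(p)$ are fine, but the argument hinges entirely on the ``key step'' of producing a left-morphable arrow $\beta$ for $p$, and this step is not established. You acknowledge this, but note that the existence of such a $\beta$ is precisely the statement that $p$ is not leftmost, i.e., the special case of Theorem~\ref{prop:no-irreducible-pairs} in which $q$ is the constant path. In the paper's logical order, that case is \emph{deduced from} Proposition~\ref{prop:simple-cc-cycle-c-left-chain}, so taking it as input here is circular. Your proposed extremal argument (``choose a counter-clockwise face adjacent to $p$ whose interior boundary has the fewest arrows'') does not obviously force that minimum to be one; nothing rules out every counter-clockwise face inside $D$ meeting $p$ in several arcs or having two or more interior arrows, and indeed in non-path-consistent models leftmost counter-clockwise cycles do exist, so path-consistency must enter the argument in a substantive way that your planar sketch does not supply.

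The paper avoids this problem by never trying to produce a left-morph directly. It inducts on the c-value of $p$ (not the number of enclosed faces) and starts only from the fact that $p$, being non-minimal, admits \emph{some} crossing-creating chain $c$. Using Lemma~\ref{lem:cycleless-switcheroo} it commutes the left-morphs of $c$ to the front, obtaining $c=ab$ with $b$ a left-chain and $a$ a right-chain; then Lemmas~\ref{lem:right-creates-cc-left-ccc-cycle-version} and~\ref{lem:right-no-creates-cc-left-ccc-cycle-version}, together with the induction hypothesis applied to the strictly smaller counter-clockwise cycles that appear, strip the right-morphs off one by one. Your ``fallback'' is exactly this, but it is the whole proof, not a contingency: you should develop it as the main line and drop the attempt to find $\beta$ geometrically.
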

\begin{proof}
	We induct on the c-value of $p$. 
	For the base case, let $p$ be an elementary counter-clockwise cycle with a minimal c-value among elementary counter-clockwise cycles.
	By path-consistency, $p$ must be equivalent to a composition of face-paths and hence must have a crossing-creating chain. If $p$ has a good crossing-creating chain $a=\alpha_r\dots\alpha_1$ such that $\alpha_r$ is a right-morph, then $\omega_a(p)$ is an elementary counter-clockwise cycle by Lemma~\ref{lem:right-left}~\eqref{lrl:2}, and necessarily has a lower c-value than $p$, contradicting our choice of $p$.
	Suppose $p$ has a good crossing-creating chain $a=\alpha_r\dots\alpha_1$ such that $\alpha_r$ is a left-morph. If $a$ is a left-chain, then there is nothing to show; otherwise, let $j$ be maximal such that $\alpha_j$ is a right-morph. Applying Lemma~\ref{lem:right-no-creates-cc-left-ccc-cycle-version} to $m_{\alpha_1\dots\alpha_{j-1}}(p)$ shows that there is a crossing-creating left-chain of $m_{\alpha_1\dots\alpha_{j-1}}(p)$. Repeating this process for each right-morph of $a$ gives that there is a crossing-creating left-chain of $p$.
	If $p$ has a bad crossing-creating chain $a=\alpha_r\dots\alpha_1$, then $\alpha_r$ is a right-morph and $m_a(p)$ contains a proper subpath $l$ which is a simple counter-clockwise cycle. By Lemma~\ref{lem:right-morph-no-cc-facepath}, $l$ is not a face-path and hence is elementary. This contradicts our choice of $p$, since $l$ has a strictly lower c-value than $p$. This completes the proof of the base case.

	Now suppose that $p$ is an elementary counter-clockwise cycle which does not have a minimal c-value.
	There must be a crossing-creating chain $c=\gamma_r\dots\gamma_1$ of $p$. We first show that there must be a crossing-creating chain $ab$ of $p$ where $b$ is a left-chain of $p$ and $a$ is a (possibly empty) right-chain of $m_b(p)$. Suppose $c$ is not already of this form and let $j$ be minimal such that $\gamma_{j-1}$ is a right-morph in $c$ and $\gamma_j$ is a left-morph in $c$. 

	If $\gamma_{j-1}$ and $\gamma_j$ are the same arrow, then $\gamma_j$ cannot create a crossing and the term $\gamma_{j}\gamma_{j-1}$ may be removed from $c$ to get an equivalent chain $\gamma_r\dots\gamma_{j+1}\gamma_{j-2}\dots\gamma_1$. 
	If $\gamma_{j-1}$ and $\gamma_j$ are not the same arrow, then by Lemma~\ref{lem:cycleless-switcheroo},
	$\gamma_j$ is a left-morphable arrow for 
	$m_{\gamma_{j-2}\dots\gamma_1}(p)=\omega_{\gamma_{j-2}\dots\gamma_1}(p)$. If this left-morph creates a crossing, then let 
	$c_1:=\gamma_j\gamma_{j-2}\dots\gamma_1$. Otherwise, note that $\gamma_j\gamma_{j-1}$ and $\gamma_{j-1}\gamma_{j}$ are equivalent chains of 
	$m_{\gamma_{j-2}\dots\gamma_1}(p)$, hence 	
	\[c_1:=\gamma_r\dots\gamma_{j+1}\gamma_{j-1}\gamma_j\gamma_{j-2}\dots\gamma_1\]
	is a crossing-creating chain of $p$ equivalent to $c$.
	We apply the above logic repeatedly to move all left-morphs in $c$ to the front. We end up with some crossing-creating chain $c_m=ab$ where $b$ is a left-chain of $p$ and $a$ is a right-chain of $m_b(p)$. If $a$ is trivial, then $b$ is a crossing-creating left-chain and we are done. 
	Otherwise, let $a=\alpha_r\dots\alpha_1$.
	Lemma~\ref{lem:right-creates-cc-left-ccc-cycle-version} or Lemma~\ref{lem:right-no-creates-cc-left-ccc-cycle-version} (depending on whether $m_{ba}(p)$ contains a proper counter-clockwise subcycle) along with the induction hypothesis shows that 
	$m_{\alpha_{r-1}\dots\alpha_1b}(p)$ has a crossing-creating left-chain. We now repeatedly apply Lemma~\ref{lem:right-no-creates-cc-left-ccc-cycle-version} to see that $m_b(p)$ has a crossing-creating left-chain $b'$. Then $b'b$ is a crossing-creating left-chain of $p$.

\end{proof}

\begin{prop}\label{lem:cycle-removing-no-right-cc-cycle}
	Let $p$ be an elementary path in a path-consistent dimer model $\widetilde Q$. Then there exists a good cycle-removing chain from $p$ to a minimal path.
\end{prop}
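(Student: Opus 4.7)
The plan is to proceed by induction on the c-value $m$ of $p$, with Proposition~\ref{prop:simple-cc-cycle-c-left-chain} as the key input in the hardest case. If $m=0$ then $p$ is minimal and the empty chain works; so the real content is the inductive step, in which I will construct a good cycle-removing chain from $p$ to some elementary path $p^\ast$ of c-value strictly less than $m$ and then concatenate with the chain provided by the inductive hypothesis.

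To get started, I will use that $p$ is non-minimal, so $[p]=[qf^k]$ for some $k\geq 1$; picking any finite sequence of basic morphs from $p$ to a non-elementary path and truncating at the first step where elementariness fails produces a crossing-creating chain $c=\gamma_r\dots\gamma_1$ of $p$. The truncation is legitimate because morphable arrows for elementary paths are automatically unambiguous. I will then split on whether the final morph $\gamma_r$ is good.

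If $\gamma_r$ is good, then $c$ is already a good chain in disguise: for each $i<r$ the intermediate path $m_{\gamma_i\dots\gamma_1}(p)$ is elementary, so $\gamma_i$ creates no subcycles, is good, and $\omega_{\gamma_i}=m_{\gamma_i}$ on that path. The final cycle-removing step $\omega_{\gamma_r}$ strips at least one nonconstant null-homotopic proper subcycle from $m_c(p)$, and each such subcycle has positive c-value by Remark~\ref{rem:cycle-cant-be-constant}, so $p^\ast:=\omega_c(p)$ has c-value strictly less than $m$. If $\gamma_r$ is bad, then it is a right-morph and $m_c(p)$ contains a proper counter-clockwise subcycle; by decomposing an arbitrary such subcycle at repeated vertices and using additivity of winding numbers, I can choose a simple, hence elementary (by Lemma~\ref{lem:right-morph-no-cc-facepath}), counter-clockwise proper subcycle $l$ of $m_c(p)$. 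Applying Proposition~\ref{prop:simple-cc-cycle-c-left-chain} to $l$ produces a crossing-creating left-chain of $l$, and Lemma~\ref{lem:right-creates-cc-left-ccc-cycle-version} applied to the elementary path $m_{\gamma_{r-1}\dots\gamma_1}(p)$, the right-morphable arrow $\gamma_r$, and the subcycle $l$ lifts this to a crossing-creating left-chain $b$ of $m_{\gamma_{r-1}\dots\gamma_1}(p)$. The concatenation $b\gamma_{r-1}\dots\gamma_1$ is then a crossing-creating chain of $p$ whose final morph is a left-morph, putting us back in the previous case and finishing the inductive step.

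The main obstacle is the bad case, namely converting a crossing-creating chain whose last step is a bad right-morph into a good crossing-creating chain. The mechanism is to recurse into the offending counter-clockwise subcycle via Proposition~\ref{prop:simple-cc-cycle-c-left-chain} and then pull the resulting left-chain back across $\gamma_r$ using Lemma~\ref{lem:right-creates-cc-left-ccc-cycle-version}; the slightly subtle point that must be checked is that one can always extract a \emph{simple} counter-clockwise subcycle from the possibly tangled $m_c(p)$, which is where the winding-number decomposition is used.
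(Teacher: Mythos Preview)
Your proposal is correct and follows essentially the same approach as the paper's proof: induction on the c-value, obtaining a crossing-creating chain, splitting on whether the final morph is good or bad, and in the bad case invoking Proposition~\ref{prop:simple-cc-cycle-c-left-chain} on an elementary counter-clockwise subcycle followed by Lemma~\ref{lem:right-creates-cc-left-ccc-cycle-version} to replace the final bad right-morph by a crossing-creating left-chain. Your exposition even fills in a couple of details the paper leaves implicit, namely why all earlier steps of a crossing-creating chain are automatically good and why one can extract a \emph{simple} counter-clockwise subcycle from $m_c(p)$.
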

\begin{proof}
	We prove by induction on the c-value of $p$. The base case when $p$ is minimal is trivial. 
	Suppose the result has been shown for paths with a strictly lower c-value than that of some non-minimal path $p$. We first show that $p$ has a good crossing-creating chain.
	Since $p$ is not minimal, there is some crossing-creating chain $a=\alpha_r\dots\alpha_1$ of $p$. If $\alpha_r$ is a left-morph or if $m_a(p)$ contains no proper counter-clockwise subcycle, then $a$ is a good crossing-creating chain of $p$. If not, then $m_a(p)$ contains a proper counter-clockwise simple subcycle $l$, which is not a face-path by Lemma~\ref{lem:right-morph-no-cc-facepath}. By Proposition~\ref{prop:simple-cc-cycle-c-left-chain}, $l$ has a crossing-creating left-chain. By Lemma~\ref{lem:right-creates-cc-left-ccc-cycle-version}, $m_{\alpha_{r-1}\dots\alpha_{1}}(p)$ has a crossing-creating left-chain $a'$. Then $a'\alpha_{r-1}\dots\alpha_1$ is a good crossing-creating chain of $p$. Then we may suppose that $p$ has a good crossing-creating chain $a$. Since $\omega_a(p)$ is an elementary path with a lower c-value than $p$, by the induction hypothesis there is a good cycle-removing chain $a''$ from $\omega_a(p)$ to a minimal path. Then $a''a$ is a good cycle-removing chain from $p$ to a minimal path.

\end{proof}

We are now ready to prove the main result of this section.

\begin{lemma}\label{lem:ffl}
	Let $(p,q)$ be an irreducible pair of a path-consistent dimer model $\widetilde Q$. Let $r$ be an elementary path from $t(p)$ to $h(p)$ which does not enter the interior of $q^{-1}p$ and let $\alpha$ be a morphable arrow for $r$. Then $m_\alpha(r)$ does not enter the interior of $q^{-1}p$.
\end{lemma}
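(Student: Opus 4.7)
The plan is to analyze the positions of the two faces $F_\alpha^{cc}$ and $F_\alpha^{cl}$ of $\alpha$ relative to the disk bounded by $q^{-1}p$, and to show that both lie outside this disk, forcing the morphed portion $R_\alpha^{cl}$ (respectively $R_\alpha^{cc}$, in the right-morph case) to stay out of the interior. Let $D$ denote the open disk bounded by the simple counter-clockwise cycle-walk $q^{-1}p$ in $\widetilde{S(Q)}$. Since $\widetilde{S(Q)}$ is simply connected and $q^{-1}p$ is a simple closed walk, every face of $\widetilde Q$ lies entirely inside $D$ or entirely outside $D$. Consequently, each arrow of $\widetilde Q$ lies in $\text{int}(D)$ (both adjacent faces inside), outside $D$ (both outside), or on $\partial D$ (one face on each side). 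Using the counter-clockwise orientation, an arrow on $\partial D$ belongs to $p$ iff its counter-clockwise face lies in $D$, and to $q$ iff its clockwise face lies in $D$.

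By the duality interchanging left and right, counter-clockwise and clockwise, and the roles of $p$ and $q$, I may reduce to the case where $\alpha$ is left-morphable for $r$, so that $R_\alpha^{cc}$ is a subpath of $r$ and $m_\alpha(r)$ is obtained by replacing this subpath with $R_\alpha^{cl}$. Since $r$ avoids $\text{int}(D)$, the morph $m_\alpha(r)$ enters $\text{int}(D)$ if and only if some arrow of $R_\alpha^{cl}$ lies in $\text{int}(D)$. Because every arrow of $R_\alpha^{cl}$ has $F_\alpha^{cl}$ as one of its two adjacent faces, this can occur only when $F_\alpha^{cl}$ lies inside $D$. It therefore suffices to show that both $F_\alpha^{cc}$ and $F_\alpha^{cl}$ lie outside $D$.

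The first claim is proved by contradiction: if $F_\alpha^{cc}$ lay inside $D$, then $R_\alpha^{cc}$ would bound a face in $D$ and hence lie in $\overline D$; combined with $R_\alpha^{cc} \subseteq r$ avoiding $\text{int}(D)$, this forces $R_\alpha^{cc} \subseteq \partial D$. The orientation (with $F_\alpha^{cc}$, on the left of $R_\alpha^{cc}$ traversed forward, lying inside $D$) aligns $R_\alpha^{cc}$ with the CCW traversal of $\partial D$, placing $R_\alpha^{cc}$ as a subpath of $p$. But then $\alpha$ is a left-morphable arrow for $p$, contradicting the leftmost property. The analogous claim for $F_\alpha^{cl}$ is dual in spirit: if $F_\alpha^{cl}$ lay inside $D$ then $\alpha \in q$, and one must derive a contradiction from the rightmost property of $q$ combined with the simplicity of $q^{-1}p$ and the hypothesis $R_\alpha^{cc} \subseteq r$. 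The main technical obstacle is in cleanly setting up this second contradiction, since unlike the first case we do not get to bound $R_\alpha^{cl}$ through $r$ directly; the argument must trace through how $R_\alpha^{cc}$ sitting inside $r$ outside $D$ interacts with the arrow $\alpha$ in $q$ and with the return path $R_\alpha^{cl}$ (which together with $\alpha$ forms the face cycle $F_\alpha^{cl}$), to isolate a forbidden morphable or self-intersecting configuration along $\partial D$.
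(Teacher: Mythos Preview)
Your framework is sound and your first claim (that $F_\alpha^{cc}$ lies outside $D$) is correct and complete: if $F_\alpha^{cc}$ were inside $D$, then $R_\alpha^{cc}\subseteq r$ would be forced onto $\partial D$ and, by your orientation analysis, into $p$, contradicting leftmostness. This is essentially the implicit first step of the paper's argument as well.

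The gap is exactly where you say it is. Your second claim, that $F_\alpha^{cl}$ lies outside $D$, is not proved; you only describe what the contradiction ``should'' involve. Concretely, once you know $F_\alpha^{cc}$ is outside and assume $F_\alpha^{cl}$ is inside, you get $\alpha\in q$, but there is no direct way to force $R_\alpha^{cl}\subseteq q$ (which would contradict rightmostness), because you have no a priori control over $R_\alpha^{cl}$. The information you do have is about $R_\alpha^{cc}\subseteq r$: it runs from $h(\alpha)$ to $t(\alpha)$, while in $q$ the arrow $\alpha$ runs from $t(\alpha)$ to $h(\alpha)$. So $r$ visits $h(\alpha)$ before $t(\alpha)$, whereas along $\partial D$ from $t(p)$ to $h(p)$ one meets $t(\alpha)$ before $h(\alpha)$. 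This order reversal is the real obstruction, and it is topological: one must argue that a \emph{simple} path in the exterior of $D$ from $t(p)$ to $h(p)$ cannot hit these two boundary vertices in the wrong order while staying out of $\text{int}(D)$.

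This is precisely what the paper does (in the dual, right-morph-at-$p$ formulation): it observes that the initial segment $r'$ of $r$ up to $h(\alpha)$ must wind one way or the other around $q^{-1}p$, and in either case the continuation through the return path cannot reach $h(p)$ without either self-intersecting or entering $\text{int}(D)$. Your face-by-face bookkeeping does not substitute for this Jordan-curve/winding argument; to complete your proof you would need to supply it.
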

\begin{proof}
	Suppose to the contrary that $m_\alpha(r)$ enters the interior of $q^{-1}p$. Since $p$ is leftmost and $q$ is rightmost, it must be the case that $\alpha$ is a right-morph at an arrow of $p$ or a left-morph at an arrow of $q$. Suppose the former is true; the latter case is symmetric. The segment $r'$ of $r$ from $t(p)$ to $h(\alpha)$ either winds right or left around $q^{-1}p$. See Figure~\ref{fig:fffff}. In either case, as can be seen from Figure~\ref{fig:fffff}, the path $R_\alpha^{cl}r'$ may not be completed to a path from $t(p)$ to $h(p)$ without entering the interior of $q^{-1}p$ or causing a self-intersection.
	\def\svgwidth{300pt}
	\begin{figure}
		\centering
		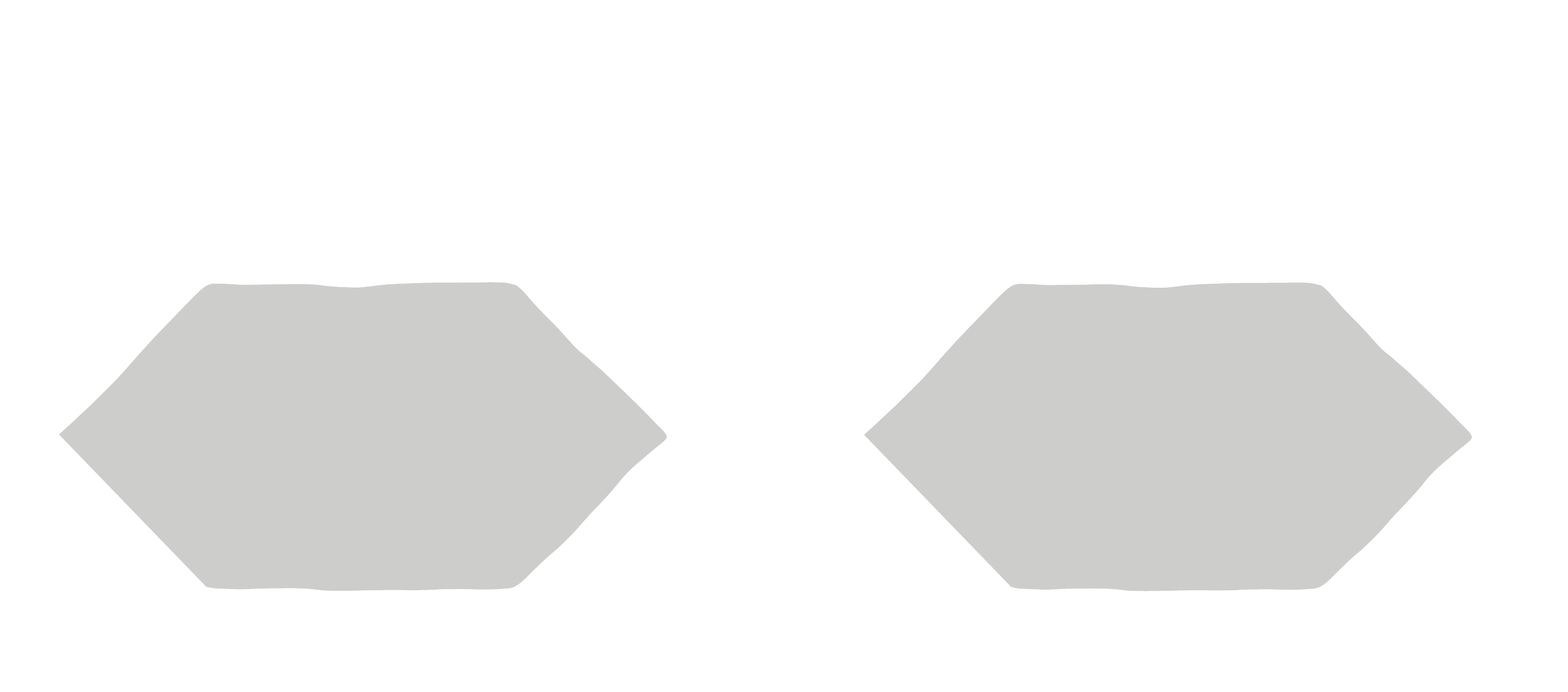
		\caption{On the left, $r$ winds to the right from $t(p)$ to $t(\alpha)$. On the left, $r$ winds to the left from $t(p)$ to $h(\alpha)$. In both cases, there is no way to complete the beginning of $r$ (pictured) to a path from $t(p)$ to $h(p)$ without breaking elementariness or entering $q^{-1}p$.}
		\label{fig:fffff}
	\end{figure}
\end{proof}

\begin{thm}\label{prop:no-irreducible-pairs}
	A simply connected path-consistent dimer model, that is not on a sphere, has no irreducible pairs. 
\end{thm}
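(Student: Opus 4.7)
The plan is to assume for contradiction that $(p,q)$ is an irreducible pair in a simply connected path-consistent dimer model $\widetilde Q$ not on a sphere, and to progressively deform this pair into a configuration that conflicts with path-consistency. First, I note that both $p$ and $q$ are elementary: simplicity of the cycle-walk $q^{-1}p$ prevents internal vertex repeats, and the leftmost/rightmost conditions combined with the CCW orientation rule out either path being a face-path. Indeed, a CCW face-path as $p$ would admit a left-morph (contradicting leftmost), while a CW face-path as $p$ would force $q$ to be constant and $q^{-1}p$ to be CW; the analogous dichotomy rules out $q$.

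Next, I apply Proposition~\ref{lem:cycle-removing-no-right-cc-cycle} to $p$ to obtain a good cycle-removing chain from $p$ to a minimal path $p'$. Iterating Lemma~\ref{lem:ffl} along this chain shows that no intermediate path enters the interior of the disk $D$ bounded by $q^{-1}p$. A further careful geometric argument, using that $p$ is leftmost and that initial right-morphs push the path across faces strictly outside $D$ (since the relevant $F_\alpha^{cl}$ lies on the side of $R_\alpha^{cl}$ opposite to $D$), shows that $p'$ is disjoint from $q$ except at shared endpoints, so $q^{-1}p'$ is a simple CCW cycle-walk enclosing $D$. Dually, a good cycle-removing chain applied to $q$ yields a minimal path $q'$ with $(q')^{-1}p'$ still a simple CCW cycle-walk.

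Finally, I aim to derive a contradiction from the existence of minimal paths $p'$, $q'$ bounding a nontrivial simple CCW cycle-walk. By simple connectedness and path-consistency, $[p']=[q']$, so there is a sequence of basic morphs $p'=r_0\to\cdots\to r_n=q'$. Each $r_i$ is elementary, since c-value is path-invariant and by Remark~\ref{rem:cycle-cant-be-constant} any nontrivial cycle inside $r_i$ would force a positive c-value, contradicting minimality. Fixing an interior face $F$ of $(q')^{-1}p'$ and tracking $\wind((q')^{-1}r_i,F)$ via Lemma~\ref{lem:rotation-number-formula} yields a winding number going from $1$ down to $0$, and a geometric argument—plausibly by induction on the enclosed area, or by selecting an innermost face whose boundary is an elementary CCW cycle to which Proposition~\ref{prop:simple-cc-cycle-c-left-chain} can be applied—rules this out while maintaining elementariness of each $r_i$.

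The main obstacle is this last step: a naive winding-number count is insufficient, since winding numbers can oscillate along the morph sequence, so the contradiction must carefully exploit the interplay between minimality of $p'$ and $q'$, elementariness of the intermediate paths, and the geometry of the disk bounded by $(q')^{-1}p'$. I also expect some care to be needed in justifying that the deformations in the second step preserve the simplicity of the bounding cycle-walk as the chain progresses past its initial all-right-morph segment.
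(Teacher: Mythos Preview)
Your setup is right—apply Proposition~\ref{lem:cycle-removing-no-right-cc-cycle} and iterate Lemma~\ref{lem:ffl}—but the argument diverges from the paper at the point where you try to show that $q^{-1}p'$ and then $(q')^{-1}p'$ remain \emph{simple} counter-clockwise cycle-walks. This step is both unjustified (your ``further careful geometric argument'' is not spelled out, and there is no reason $p'$ should avoid $q$ away from the endpoints) and unnecessary. Even if it held, the pair $(p',q')$ is not irreducible (minimal is not the same as leftmost/rightmost), so Lemma~\ref{lem:ffl} is not available for that pair, which is why your final winding-number step stalls.

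The fix, which is the paper's key point, is to keep the \emph{original} irreducible pair $(p,q)$ as the reference frame for Lemma~\ref{lem:ffl} throughout \emph{all three} chains: the good chain $a$ from $p$ to a minimal $\omega_a(p)$, the dual good chain $b$ from $q$ to a minimal $\omega_b(q)$, and the ordinary chain $c$ of basic morphs connecting $\omega_a(p)$ to $\omega_b(q)$ (these exist and are equivalent by path-consistency; all intermediate paths are minimal, hence elementary in the simply connected setting). Every intermediate path in every chain is an elementary path from $t(p)$ to $h(p)$ that, by induction via Lemma~\ref{lem:ffl}, never enters the interior of the original disk $q^{-1}p$. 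Consequently no morphable arrow in any chain is an arrow of a fixed interior face $F$. Then Lemma~\ref{lem:rotation-number-formula} gives $\wind(q^{-1}\omega_a(p),F)\geq\wind(q^{-1}p,F)=1$ (goodness of $a$ means only clockwise cycles are removed), dually $\wind(q^{-1}\omega_b(q),F)\leq\wind(q^{-1}q,F)=0$, and along $c$ the winding number is exactly preserved, yielding $1\leq 0$. No simplicity of intermediate cycle-walks is ever needed.
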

\begin{proof}
	Let $\widetilde Q$ be path-consistent and simply connected dimer model that is not on a sphere. 
	Take a pair $(p,q)$ of paths such that $q^{-1}p$ is an elementary counter-clockwise cycle-walk. We show that $p$ has a left-morphable arrow or $q$ has a right-morphable arrow.
	If $p$ is a cycle and $q$ is trivial, then Proposition~\ref{prop:simple-cc-cycle-c-left-chain} shows the desired result. If $p$ is trivial and $q$ is a cycle, the dual of Proposition~\ref{prop:simple-cc-cycle-c-left-chain} shows the desired result. We may now assume that $p$ and $q$ are not cycles. Suppose for the sake of contradiction that $(p,q)$ is an irreducible pair.
	
	Choose a face $F$ in the interior of $q^{-1}p$. By Proposition~\ref{lem:cycle-removing-no-right-cc-cycle}, there is a good cycle-removing chain $a=\alpha_r\dots\alpha_1$ from $p$ to some minimal path $\omega_a(p)$. By repeated application of Lemma~\ref{lem:ffl}, no intermediate path of this chain enters the interior of $q^{-1}p$. In particular, no arrow $\alpha_i$ is an arrow of $F$.
	Then Lemma~\ref{lem:rotation-number-formula} tells us that $\wind\big(q^{-1}m_{\alpha_i}(\omega_{\alpha_{i-1}\dots\alpha_{1}}(p)),F\big)=\wind\big(q^{-1}\omega_{\alpha_{i-1}\dots\alpha_{1}}(p),F\big)$ for each $i$.
	There may be some clockwise cycles removed from $m_{\alpha_i}(\omega_{\alpha_{i-1}\dots\alpha_{1}}(p))$ to get $\omega_{\alpha_i\dots\alpha_1}(p)$, but since $a$ is good no counter-clockwise cycles are removed. Hence,
	\[\wind\big(q^{-1}\omega_{\alpha_i\dots\alpha_1}(p),F\big)\geq\wind\big(q^{-1}m_{\alpha_i}(\omega_{\alpha_{i-1}\dots\alpha_1}(p)),F\big)=\wind(\omega_{\alpha_{i-1}\dots\alpha_1}(p),F).\]
	By applying this result for each $i$, we see that
	\[\wind(q^{-1}\omega_a(p),F)\geq\wind(q^{-1}p,F)=1.\]
	Dually, there is a cycle-removing chain $b=\beta_s\dots\beta_1$ of $q$ removing only counter-clockwise cycles such that $\omega_b(q)$ is minimal and
	\[\wind(q^{-1}\omega_b(q),F)\leq\wind(q^{-1}q,F)=0.\]
	By path-consistency, $\omega_a(p)$ and $\omega_b(q)$ are equivalent. Then there is a chain $c=\gamma_t\dots\gamma_1$ such that $m_c(\omega_a(p))=\omega_b(q)$. As above, Lemma~\ref{lem:ffl} shows that no arrow $\gamma_i$ of $c$ is an arrow of $F$, hence repeated application of Lemma~\ref{lem:rotation-number-formula} gives that $1\leq \wind(q^{-1}\omega_a(p),F)=\wind(q^{-1}\omega_b(p),F)\leq 0$, a contradiction.
\end{proof}

\section{Strand Diagrams and Strand-Consistency}\label{sec:pdsc}

In this section, we use \textit{zigzag paths} to associate a dimer model to a \textit{strand diagram} on its surface. Our goal is to prove that, excluding the case of a sphere, a finite dimer model is path-consistent if and only if there are no bad configurations in its strand diagram. This generalizes ideas of Bocklandt~\cite{XBocklandt2011} and Ishii and Ueda~\cite{XIU}. The theory of cycle-removing morphs developed in Section~\ref{sec:actual-argument}, in particular Theorem~\ref{prop:no-irreducible-pairs}, is necessary to prove this result. 

\subsection{Strand Diagrams}

We define strand diagrams and connect them to dimer models. The below definition is a reformulation of~\cite[Definition 1.10]{XBocklandt2015}.

\begin{defn}\label{defn:postnikov-diagram}
	Let $\Sigma$ be an oriented surface with or without boundary with a discrete set of marked points on its boundary. A (connected) \textit{strand diagram} $D$ in $\Sigma$ consists of a collection of smooth directed curves drawn on the surface $\Sigma$, called \textit{strands}, each of which is either an \textit{interior cycle}
	 contained entirely in the interior of $D$ or starts and ends at marked boundary points, subject to the following conditions.
    \begin{enumerate}
        {\item Each boundary marked point is the start point of exactly one strand, and the end point of exactly one strand.\label{pd:1}}
        {\item Any two strands intersect in finitely many points, and each intersection involves only two strands. Each intersection not at a marked boundary point is transversal.\label{pd:2}}
	{\item\label{pd:3} Moving along a strand, the signs of its crossings with other strands alternate. This includes intersections at a marked boundary point. See the figure below, where the bold segment is boundary. 
		\def\svgwidth{60pt}
		\begin{figure}[H]			\centering
			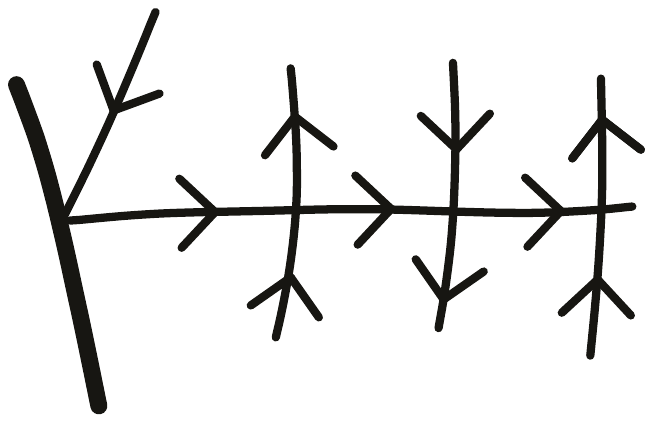
\label{fig:alternating-strands}

		\end{figure}}
	{\item\label{dpd:4} 
		    Any connected component $C$ of the complement of $D$ in the interior of $\Sigma$ is an open disk. The boundary of $C$ may contain either zero or one one-dimensional ``boundary segment'' of the boundary of $\Sigma$. In the former case, $C$ is an \textit{internal region}. In the latter, $C$ is a \textit{boundary region}. 
It follows from~\eqref{pd:3} that $C$ is either an \textit{oriented region} (i.e., all strands on the boundary of the component are oriented in the same direction) or an \textit{alternating region} (i.e., the strands on the boundary of the component alternate directions). See the left and right sides of Figure~\ref{fig:oriented-alternating}, respectively.
Note that by~\eqref{pd:3}, any boundary region with multiple strands must be alternating. We consider a boundary region with a single strand to be alternating.

	\def\svgwidth{150pt}
		\begin{figure}			\centering
			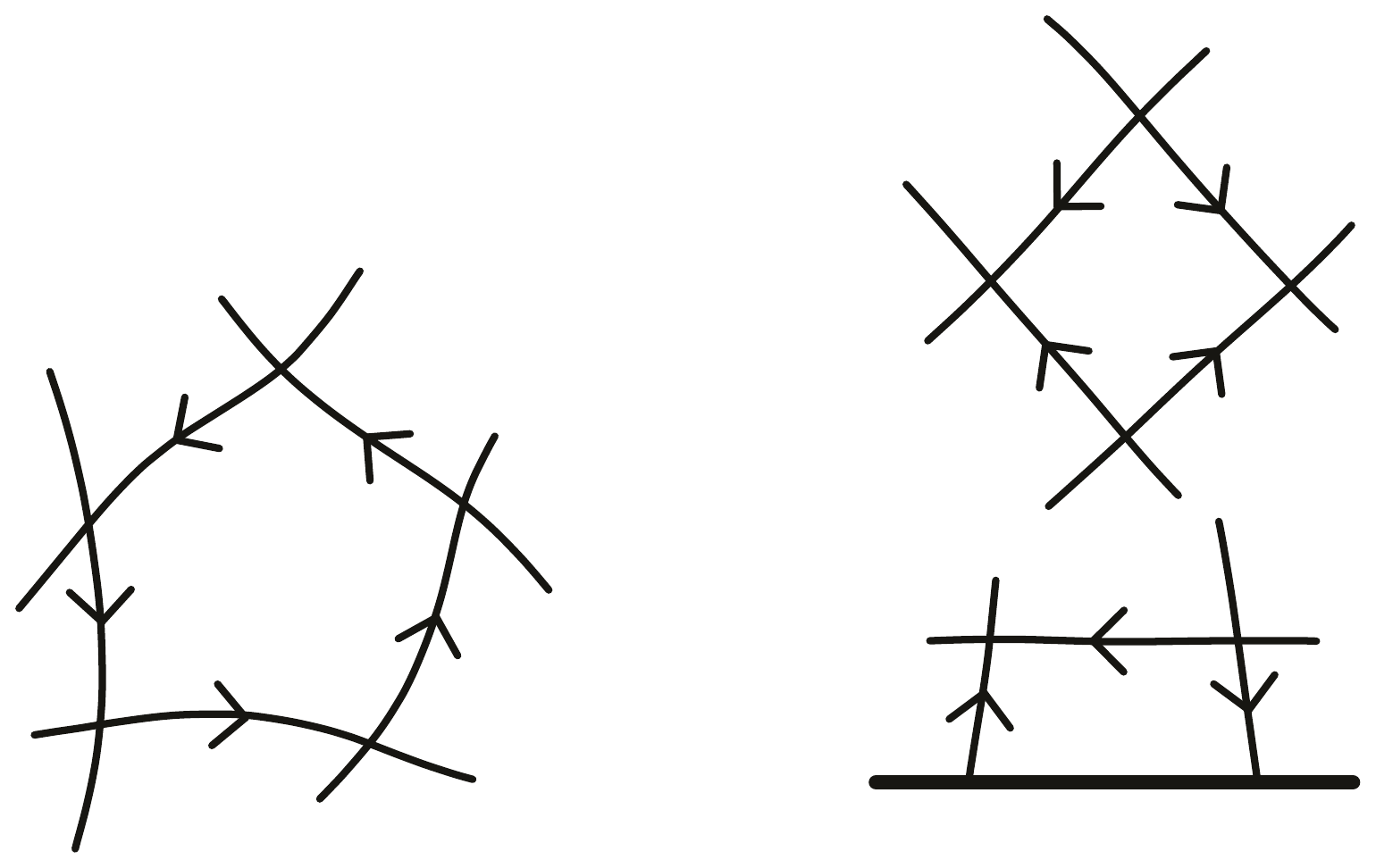
			\caption{One oriented (left) and two alternating (right) regions. The bold segment is boundary.}
			\label{fig:oriented-alternating}
		\end{figure}}
	\item\label{pd:5} The union of the strands is connected.
    \end{enumerate}
    The diagram $D$ is called a \textit{Postnikov diagram} if in addition it satisfies the following conditions
    \begin{enumerate}
        {\item No subpath of a strand is a null-homotopic interior cycle.\label{pd:6nointeriorcycles}}
	{\item\label{spd:2} If two strand segments intersect twice and are oriented in the same direction between these intersection points, then they must not be homotopic.\label{pd:5badlens}} 
    \end{enumerate}
	In other words, \textit{bad configurations} shown in Figure~\ref{fig:bad-configurations} and described below must not appear in order for $D$ to be a Postnikov diagram:
	\def\svgwidth{150pt}
	\begin{figure}		\centering
		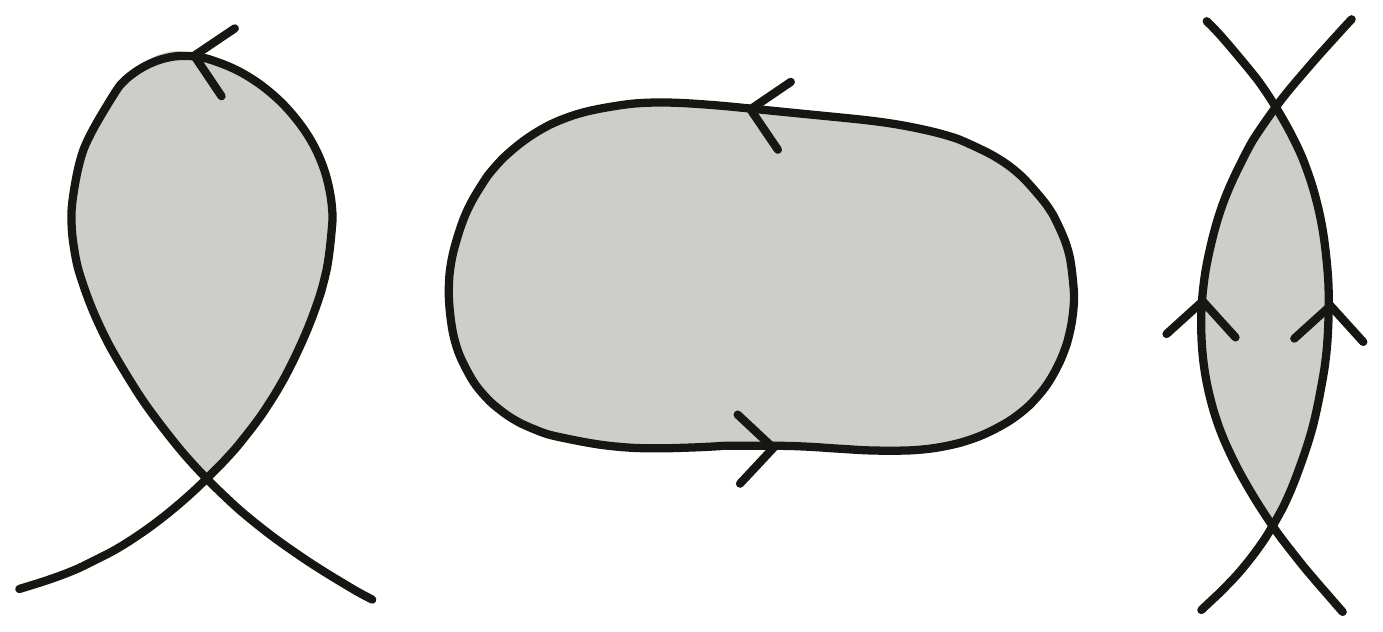
		\caption{The three bad configurations. The shaded areas are contractible.}
		\label{fig:bad-configurations}
	\end{figure}
	\begin{enumerate}
		\item\label{gg1} A strand which intersects itself through a null-homotopic cycle as forbidden in~\eqref{pd:6nointeriorcycles}, called a \textit{null-homotopic self-intersecting strand}. 
		\item\label{gg2} A null-homotopic strand in the interior as forbidden in~\eqref{pd:6nointeriorcycles}, called a \textit{null-homotopic interior cycle}. 
		\item\label{gg3} Two strand segments which intersect in the same order null-homotopically as forbidden in~\eqref{pd:5badlens}, called a \textit{bad lens}.
	\end{enumerate}
	When $Q=\widetilde Q$ is simply connected, any cycle is null-homotopic and we often omit ``null-homotopic'' from~\eqref{gg1} and~\eqref{gg2}.
\end{defn}

\begin{remk}
	In fact, if $\Sigma$ is simply connected then conditions~\eqref{pd:5} and~\eqref{pd:3} of a strand diagram along with the lack of bad lenses imply that if there are multiple strands, then there is no strand which starts and ends at the same marked boundary point, and hence no strand contains a cycle. 
	To see this, suppose there is a strand $z$ which starts and ends at the same marked boundary point. Consider the first time this strand intersects with another strand $w$. Then by connectedness~\eqref{pd:3}, $w$ must enter the area defined by $z$ at this intersection. Then $w$ must eventually leave the area bounded by $z$, creating a bad lens.
\end{remk}

\begin{defn}~\label{defn:dimer-model-from-strand-diagram}
	Let $D$ be a strand diagram in a surface. We associate to $D$ a dimer model $Q_D$ as follows. The vertices of $Q_D$ are the alternating regions of $D$. When the closures of two different alternating regions $v_1$ and $v_2$ meet in a crossing point between strands of $D$, or at one of the marked boundary points, we draw an arrow between $v_1$ and $v_2$, oriented in a way consistent with these strands, as shown in Figure~\ref{fig:strand-arrows}. 
	The counter-clockwise (respectively clockwise) faces of $Q_D$ are the arrows around a counter-clockwise (respectively clockwise) region of $D$.
	\def\svgwidth{200pt}
	\begin{figure}		\centering
		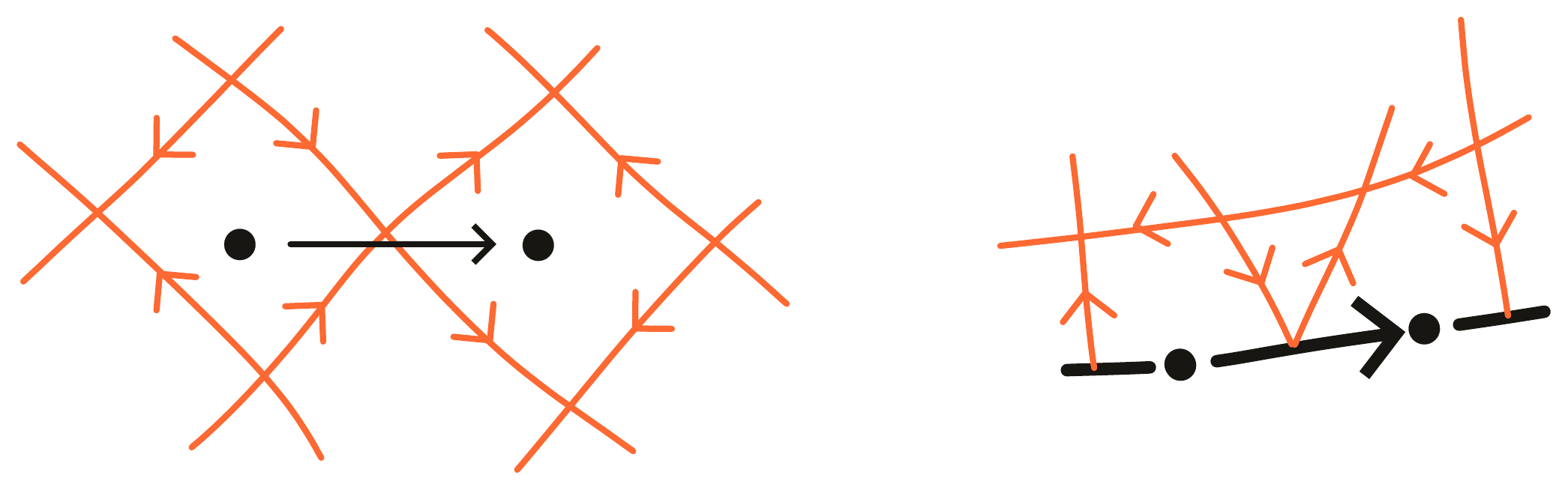
		\caption{The arrows between two alternating faces. The bold arrow is a boundary arrow.}
		\label{fig:strand-arrows}
	\end{figure}
\end{defn}

We may also go in the other direction.

\begin{defn}\label{defn:strand-diagram-from-dimer-model}
	Let $Q$ be a dimer model. We associate a strand diagram $D_Q$ to $Q$ embedded in the surface $S(Q)$ as follows.
	To any arrow $\alpha$ of $Q$, let $v_\alpha$ be the point in the center of $\alpha$ in the embedding of $Q$ into $S(Q)$.

	For any two arrows $\alpha$ and $\beta$ of $Q$ such that $\beta\alpha$ is a subpath of a face-path, we draw a path from $v_\alpha$ to $v_\beta$ along the interior of the face containing $\beta\alpha$. 
	The union of these strand segments forms a connected strand diagram $D_Q$~\cite[\S0.4 -- 0.5]{XBocklandt2015}. See Figure~\ref{fig:submodel-disk-annulus} for an example on the disk and annulus.
    
\end{defn}

The above constructions are mutual inverses, and hence establish a correspondence between strand diagrams and dimer models. This is implicit in the work of Bocklandt~\cite{XBocklandt2015}.

\begin{defn}
	A dimer model $Q$ is \textit{strand-consistent} if its strand diagram $D_Q$ does not have any bad configurations. In other words, $Q$ is strand-consistent if $D_Q$ is a Postnikov diagram.
\end{defn}

\begin{remk}\label{remk:alg-cons-no-sphere}
	A dimer model on a sphere is never strand-consistent since there must be a null-homotopic interior cycle.
	On the other hand, a dimer model on a sphere can still satisfy the path-consistency condition of Definition~\ref{defn:algebraic-consistency}. Hence, in order to prove that the notions of path-consistency and strand-consistency are equivalent (Theorem~\ref{thm:not-a-cons-then-not-g-cons}),
it is necessary to throw out the case where $S(Q)$ is a sphere. 
\end{remk}

Since a cycle on a surface $\Sigma$ lifts to a cycle on the universal cover if and only if it is null-homotopic in $\Sigma$, bad configurations in $Q$ correspond precisely to bad configurations in $\widetilde Q$. The following result is a consequence of this observation.

\begin{prop}\label{prop:Q-g-consistent-iff-hat-Q-g-consistent}
        $Q$ is strand-consistent if and only if $\widetilde Q$ is strand-consistent.
\end{prop}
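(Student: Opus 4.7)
The plan is to make precise the parenthetical remark preceding the statement. First I would verify that the strand diagram of the universal cover is literally the lift of the strand diagram of $Q$: by the local construction in Definition~\ref{defn:strand-diagram-from-dimer-model}, strand segments are drawn inside individual faces via the arrow-face incidence structure, and the covering map $\pi:\widetilde{S(Q)}\to S(Q)$ restricts to a bijection on each face. Hence $\pi$ sends strands of $D_{\widetilde Q}$ to strands of $D_Q$, and any strand of $D_Q$ lifts to strands of $D_{\widetilde Q}$. This also matches alternating regions of $D_{\widetilde Q}$ with lifts of alternating regions of $D_Q$.

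For the forward direction, suppose $D_Q$ contains a bad configuration; that is, a null-homotopic self-intersecting strand, a null-homotopic interior cycle, or a bad lens. In each case the relevant portion of the configuration is a null-homotopic closed curve $\gamma$ in $S(Q)$ (the self-intersection loop, the interior cycle itself, or the boundary of the lens region). By covering space theory, $\gamma$ lifts to a closed curve $\widetilde\gamma$ in $\widetilde{S(Q)}$, and the bad configuration can be lifted together with $\gamma$ to produce the same kind of configuration in $D_{\widetilde Q}$. It is automatically ``null-homotopic'' in $\widetilde{S(Q)}$ since that surface is simply connected.

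For the converse direction, suppose $D_{\widetilde Q}$ contains a bad configuration. Because $\widetilde{S(Q)}$ is simply connected, any such configuration is built from closed curves $\widetilde\gamma$ that bound disks; their images $\pi(\widetilde\gamma)$ are null-homotopic closed curves in $S(Q)$, and $\pi$ carries the configuration to a configuration of the same combinatorial type in $D_Q$ (a self-intersection, an interior cycle, or a pair of strand segments intersecting twice with the same orientation). Nullhomotopy of the bounding curves is preserved, so this is a bad configuration in $D_Q$.

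The only subtlety worth checking is that the projected configuration in $D_Q$ is genuinely of the same type — for instance that the two strand segments of a lifted bad lens descend to two (possibly equal) strand segments still forming a bad lens, rather than collapsing. This is ensured by the fact that $\pi$ restricted to the closed disk bounded by $\widetilde\gamma$ is a homeomorphism onto its image (since that disk is simply connected and $\pi$ is a covering map), so the intersection pattern and orientations are preserved verbatim.
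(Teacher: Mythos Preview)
Your approach is the same as the paper's: both arguments rest on the covering-space fact that a closed curve in $S(Q)$ lifts to a closed curve in $\widetilde{S(Q)}$ if and only if it is null-homotopic, so bad configurations (each of which is characterised by a null-homotopy condition) pass back and forth between $D_Q$ and $D_{\widetilde Q}$. The paper states this in one sentence; you have spelled out the two directions more carefully, which is fine.

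There is, however, a genuine error in your final paragraph. The assertion that ``$\pi$ restricted to the closed disk bounded by $\widetilde\gamma$ is a homeomorphism onto its image (since that disk is simply connected and $\pi$ is a covering map)'' is false: a covering map need not be injective on simply connected subsets. For instance, for $\pi:\mathbb{R}^2\to T^2$ the restriction to any disk of large radius is certainly not injective. Simple connectedness of the source guarantees the \emph{existence} of a lift of a map into the base, not injectivity of the projection.

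Fortunately the argument does not need this claim. The ``collapsing'' worry you raise for bad lenses is handled directly: if $\tilde z'$ and $\tilde w'$ share endpoints $\tilde\alpha,\tilde\beta$ in $\widetilde Q$, then their images $z',w'$ share the arrows $\alpha,\beta$ in $Q$, are oriented the same way between them (since $\pi$ is a local homeomorphism), and are homotopic rel endpoints (since their lifts have common endpoints in a simply connected space). That already violates condition~\eqref{pd:5badlens} of Definition~\ref{defn:postnikov-diagram}, regardless of whether $z'$ and $w'$ acquire extra intersections under projection. If you prefer the zigzag formulation with disjoint interior arrows, any such extra shared arrow yields a strictly smaller lens or a self-intersection, and one may pass to a minimal one. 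Either way, drop the homeomorphism claim and the proof stands.
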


Proposition~\ref{prop:Q-g-consistent-iff-hat-Q-g-consistent} is useful because bad configurations are easier to work with on the simply connected dimer model $\widetilde Q$. In particular, the null-homotopic conditions appearing in each of the three bad configurations may be ignored in the simply connected case.
As such, we often pass to the universal cover models when working with strand diagrams.

\subsection{Path-Consistency Implies Strand-Consistency}

We now prove that path-consistency implies strand-consistency for finite dimer models. We must first define zigzag paths and their return paths.  The notion of a zigzag path is based on work by Kenyon in \cite{Kenyon} and Kenyon and Schlenker in \cite{KenSch}.

\begin{defn}\label{defn:zigzag-path}
	Let $Q$ be a dimer model. A \textit{zigzag path} of $Q$ is a maximal (possibly infinite) path $z=\dots\gamma_{i+1}\gamma_{i}\dots$ such that one of the following holds.

    \begin{enumerate}
	    \item $\gamma_{i+1}\gamma_{i}$ is part of a counter-clockwise face if $i$ is odd and a clockwise face if $i$ is even. In the former case, $\gamma_{i+1}\gamma_{i}$ is called a \textit{zig} and $h(\gamma_i)$ is a \textit{zig vertex}. In the latter, $\gamma_{i+1}\gamma_{i}$ is a \textit{zag} and $h(\gamma_i)$ is a \textit{zag vertex}.
        \item $\gamma_{i+1}\gamma_i$ is part of a clockwise face if $i$ is odd and a counter-clockwise face if $i$ is even. In the former case, $\gamma_{i+1}\gamma_i$ is called a \textit{zag} and $h(\gamma_i)$ is a \textit{zag vertex}. In the latter, $\gamma_{i+1}\gamma_i$ is a \textit{zig} and $h(\gamma_i)$ is a \textit{zig vertex}.
    \end{enumerate}

	If $z$ is a finite path, then we write $z=\gamma_t\dots\gamma_0$ and we note that $\gamma_0$ and $\gamma_t$ must be boundary arrows. we similarly may have infinite zigzag paths $\gamma_0\gamma_{-1}\dots$ or $\dots\gamma_{1}\gamma_0$ ending or starting at boundary vertices, respectively.
\end{defn}

It is immediate by the constructions of Definitions~\ref{defn:strand-diagram-from-dimer-model} and~\ref{defn:dimer-model-from-strand-diagram} that zigzag paths of $Q$ correspond to strands of $D_Q$.
Intersections of strands in $D_Q$ correspond to shared arrows of zigzag paths in the quiver. We may thus view the bad configurations of Definition~\ref{defn:postnikov-diagram} 
as bad configurations of zigzag paths. 
\begin{enumerate}
	\item A zigzag path has a \textit{null-homotopic self-intersection} if it passes through the same arrow twice, first as the start of a zig and then as the start of a zag (or vice versa), and the segment between these occurrences is null-homotopic. 
	\item A zigzag path is a \textit{null-homotopic interior cycle} if it is cyclic, infinitely repeating, and null-homotopic. 
	\item Two homotopic segments of (possibly the same) zigzag paths $z'$ and $w'$ form a \textit{bad lens} if $z'=\beta z_s\dots z_0\alpha$ and $w'=\beta w_t\dots w_0 \alpha$ and $z_i\neq w_j$ for all $i$ and $j$.
\end{enumerate}

The following definition appears in the literature on dimer models in tori.
See, for example,~\cite{XBocklandt2011} and~\cite{XBroomhead2009}.

\begin{defn}\label{defn:return-path}
	Take a subpath $z':=\gamma_t\dots\gamma_1$ of a zigzag path $z$ such that $\gamma_i\neq\gamma_j$ for $i\neq j$. 
	Suppose that $t(z')$ and $h(z')$ are both zag vertices of $z$. 
	For each zig $\gamma_{j+1}\gamma_j$ (for $j<t$ odd), let $p_j$ be the subpath of $F_{\gamma_j}^{cc}$ from $h(\gamma_{j+1})$ to $t(\gamma_j)$. Then $p_j$ consists of all arrows in $F_{\gamma_j}^{cc}$ except for $\gamma_{j+1}$ and $\gamma_j$.

	The \textit{right return path} 
	is the composition $p_{j_{t-1}}\dots p_{j_1}$ of all such $p_j$'s. The \textit{elementary right return path} is the path obtained by removing all proper elementary subcycles of the right return path in order of their appearance. If the result is a face-path, then the elementary right return path is constant.
	We dually define \textit{(elementary) left return paths}.
\end{defn}

For examples, see Figure~\ref{fig:triple}. In particular, the left of this figure gives an example where the right return path and the elementary right return path differ. It is immediate that right return paths are leftmost and left return paths are rightmost.

\begin{thm}\label{thm:good-quiver-no-bad-configurations}
	A path-consistent dimer model not on a sphere is strand-consistent.
\end{thm}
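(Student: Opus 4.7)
The plan is to argue by contrapositive. We suppose $Q$ is a dimer model not on a sphere and that $D_Q$ has a bad configuration, and we produce a contradiction by constructing an irreducible pair in $\widetilde Q$, contradicting Theorem~\ref{prop:no-irreducible-pairs}.

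First I would reduce to the simply connected case. By Proposition~\ref{prop:Q-consistent-iff-hat-Q-consistent}, path-consistency of $Q$ is equivalent to path-consistency of $\widetilde Q$, and by Proposition~\ref{prop:Q-g-consistent-iff-hat-Q-g-consistent}, strand-consistency of $Q$ is equivalent to strand-consistency of $\widetilde Q$. Since $S(Q)$ is not a sphere, neither is $\widetilde{S(Q)}$, so Theorem~\ref{prop:no-irreducible-pairs} applies to $\widetilde Q$. Thus it suffices to show that a path-consistent simply connected dimer model not on a sphere is strand-consistent. Under simple connectedness, the three bad configurations of Definition~\ref{defn:postnikov-diagram} simplify: a self-intersecting strand is an honest cycle in $\widetilde Q$, an interior cycle is a cyclic zigzag path, and a bad lens is any pair of zigzag segments from the same arrow $\alpha$ to the same arrow $\beta$ (in the same direction) sharing no intermediate arrows.

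The core construction is as follows. Recall that right return paths are leftmost and left return paths are rightmost. Given a bad lens formed by segments $z' = \beta z_s\cdots z_0\alpha$ and $w' = \beta w_t\cdots w_0\alpha$, let $p$ be the elementary right return path of the segment lying on the appropriate side, and let $q$ be the elementary left return path of the other segment, with the choice arranged so that $q^{-1}p$ traces the boundary of the disk enclosed by $z'$ and $w'$ in the counter-clockwise direction. I would then verify that $q^{-1}p$ is a simple counter-clockwise cycle-walk, using the fact that $z'$ and $w'$ share no intermediate arrows (which keeps the return paths on opposite sides of the disk) and that the passage to elementary return paths removes all self-intersections within each path. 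Since $p$ is leftmost and $q$ is rightmost, $(p,q)$ is an irreducible pair, contradicting Theorem~\ref{prop:no-irreducible-pairs}. A self-intersecting strand reduces to the same scheme by taking one of the "segments" to be degenerate, producing a return path $p$ such that $(p, e_v)$ is an irreducible pair at an appropriate vertex $v$; an interior cycle does similarly, since its elementary return path is a nontrivial elementary cycle, leftmost (or rightmost, in the dual case), enclosing a disk.

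The main obstacle will be the geometric bookkeeping: verifying in each case that the elementary return paths have matching endpoints, lie on the correct sides to bound a disk, produce a counter-clockwise orientation, and remain nonconstant. In particular, one must check that when proper elementary subcycles are removed from a return path, the result still bounds the same disk as the zigzag segment and does not collapse across it. I expect this to require a careful induction or case analysis on the arrows of each zigzag segment, tracking how the $p_j$ pieces in Definition~\ref{defn:return-path} fit together inside the disk of the bad configuration. Once this geometric analysis is complete, the conclusion follows by a direct appeal to Theorem~\ref{prop:no-irreducible-pairs}, yielding the desired contradiction and establishing strand-consistency of $Q$.
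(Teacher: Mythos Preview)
Your overall strategy matches the paper's: reduce to the universal cover via Propositions~\ref{prop:Q-consistent-iff-hat-Q-consistent} and~\ref{prop:Q-g-consistent-iff-hat-Q-g-consistent}, then show each bad configuration yields an irreducible pair, contradicting Theorem~\ref{prop:no-irreducible-pairs}. The interior-cycle case is essentially as you describe. For the self-intersecting strand, a small correction: the elementary right return path $p$ of the cyclic segment is not itself a cycle but runs from $t(\gamma_0)$ to $h(\gamma_0)$, where $\gamma_0$ is the repeated arrow; the irreducible pair is $(p,\gamma_0)$, not $(p,e_v)$.

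The real gap is in the bad-lens case. You propose to verify directly that $q^{-1}p$ is a \emph{simple} counter-clockwise cycle-walk, and you flag the geometric bookkeeping as the main obstacle. The paper does not attempt this, and for good reason: the elementary right return path $p$ of $z'$ and the elementary left return path $q$ of $w'$ both lie \emph{outside} the lens (on opposite sides of it), and nothing prevents them from meeting one another. Each is elementary, so neither self-intersects, but $p$ and $q$ may well share vertices in a pattern that makes $q^{-1}p$ non-simple. Your proposed verification would have to rule this out, and in general it cannot.

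The paper sidesteps this entirely. It fixes a face $F$ inside the lens, notes that $\wind(q^{-1}p,F)\geq 1$, and then iteratively peels off simple proper subcycle-walks of $q^{-1}p$. Any such subcycle-walk has the form $(q')^{-1}p'$ with $p'$ a subpath of $p$ (hence leftmost) and $q'$ a subpath of $q$ (hence rightmost). If it is counter-clockwise, $(p',q')$ is the desired irreducible pair; if clockwise, removing it can only increase the winding number around $F$, so the process continues with a strictly shorter cycle-walk of positive winding number and must terminate. This winding-number iteration is the missing idea in your approach, and it replaces the simplicity claim you were hoping to prove.
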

\begin{proof}
	Given a path-consistent dimer model $Q$, Proposition~\ref{prop:Q-consistent-iff-hat-Q-consistent} implies that $\widetilde Q$ is path-consistent. By Proposition~\ref{prop:Q-g-consistent-iff-hat-Q-g-consistent}, it suffices to show that $\widetilde Q$ is strand-consistent. We will do this by showing that self-intersecting strands, interior cycles, and bad lenses in the strand diagram give rise to irreducible pairs, which cannot occur in strand-consistent models by Theorem~\ref{prop:no-irreducible-pairs}, since $\widetilde Q$ is not a sphere. See Figure~\ref{fig:triple}.
	\def\svgwidth{400pt}
	\begin{figure}		\centering
		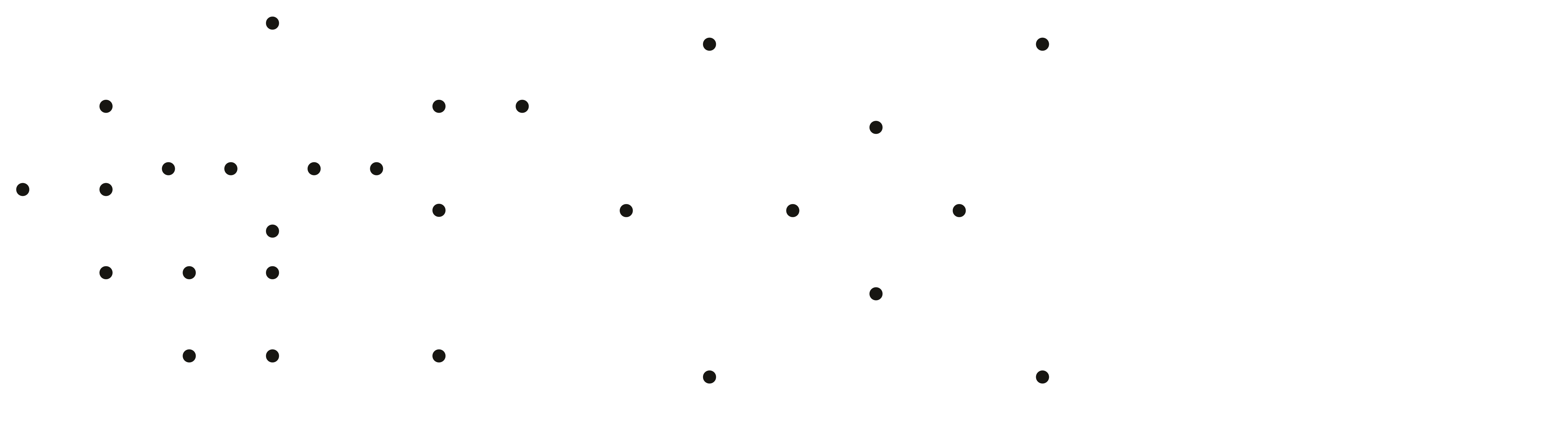
		\caption{An example of a self-intersection (left), interior cycle (middle), and bad lens (right). All of these give rise to irreducible pairs.}
		\label{fig:triple}
	\end{figure}

	First, suppose there is some self-intersecting strand $z$ of $D_{\widetilde Q}$. Recall from the discussion following Definition~\ref{defn:zigzag-path} that intersections of the strand $z$ correspond to multiple incidences of an arrow in its associated zigzag path in ${\widetilde Q}$. Then there is some segment $C=\gamma_0\gamma_t\dots\gamma_1\gamma_0$ of the zigzag path associated to $z$ such that $\gamma_i\neq\gamma_j$ for $i\neq j$. Suppose that the segment of the strand $z$ corresponding to $\gamma_t\dots\gamma_0$ is a clockwise cycle; the counter-clockwise case is symmetric. See the left of Figure~\ref{fig:triple}. Let $v_1:=h(\gamma_0)$ and $v_2:=t(\gamma_0)$. Since the cycle runs clockwise, $v_1$ and $v_2$ are both zag vertices of $z$. Let $z':=\gamma_t\dots\gamma_1$ and let $p$ be the elementary right return path of $z'$. Then $p$ is a leftmost path from $v_2$ to $v_1$ and $\gamma_0^{-1}p$ is an elementary counter-clockwise cycle-path winding counter-clockwise around $z'$, hence $(p,\gamma_0)$ is an irreducible pair. This contradicts Theorem~\ref{prop:no-irreducible-pairs}.

	Now suppose there is a strand $z$ of $D_{\widetilde Q}$ which is an interior cycle. By the above, we may suppose that $z$ contains no self-intersections. Then we may realize $z$ as a path $z':=\gamma_t\dots\gamma_0$ of ${\widetilde Q}$ such that $t(\gamma_t)=h(\gamma_0)$ is a zag vertex of $z$ and $\gamma_i\neq\gamma_j$ for $i\neq j$. See the middle of Figure~\ref{fig:triple}. As above, we assume that $z'$ winds clockwise. Let $p$ be the elementary right return path of $z'$. Then $p$ winds counter-clockwise around $z'$ and thus is a nontrivial counter-clockwise path which is leftmost, contradicting Theorem~\ref{prop:no-irreducible-pairs}.

		Now, suppose there is a bad lens in $D_{\widetilde Q}$. Accordingly, we may take subpaths of zigzag paths $z':=\beta\gamma_t\dots\gamma_1\alpha$ and $w':=\beta\delta_s\dots\delta_1\alpha$ such that $\gamma_i\neq\delta_j$ for $i\neq j$. By the above, the strands have no self-intersections and hence $z'$ and $w'$ have no repeated arrows. Suppose without loss of generality that $z'$ is to the left of $w'$. Then $t(\alpha)$ and $h(\beta)$ are zig vertices of $w$ and zag vertices of $z$. See the right of Figure~\ref{fig:triple}. Let $p$ be the right elementary return path of $z'$ and let $q$ be the left elementary return path of $w'$. 

		Let $p_0:=p$ and $q_0:=q$. Choose a face $F$ in the interior of the bad lens. Then $p_0$ is a leftmost elementary path and $q_0$ is a rightmost elementary path such that $\wind(q_0^{-1}p_0,F)=1>0$, since $q^{-1}p$ winds counter-clockwise around the lens. If $q_0^{-1}p_0$ is simple, then $(p_0,q_0)$ is an irreducible pair and we are done. If not, then $q_0^{-1}p_0$ has some simple proper subcycle-walk $l$. The paths $p_0$ and $q_0$ are elementary, so $l$ must be of the form $(q'_0)^{-1}p'_0$, where $p'_0,q'_0,p_1,q_1$ are paths such that $p_0=p'_0p_1$ and $q_0=q_1q'_0$. If $(q'_0)^{-1}p'_0$ is counter-clockwise, then $(p'_0,q'_0)$ forms an irreducible pair, since any subpath of $p$ is leftmost and any subpath of $q$ is rightmost. If not, then the removal of $(q'_0)^{-1}p'_0$ from $q_0^{-1}p_0$ may only increase the winding number around $F$, hence $\wind(q_1^{-1}p_1,F)\geq\wind(q_0^{-1}p_0,F)>0$. We now start the process over with $p_1$ and $q_1$ in place of $p_0$ and $q_0$. This process must eventually terminate when some $(p'_i,q'_i)$ forms an irreducible pair, contradicting Theorem~\ref{prop:no-irreducible-pairs}.
\end{proof}

\subsection{Strand-Consistency Implies Path-Consistency}

We now prove the converse of Theorem~\ref{thm:good-quiver-no-bad-configurations}, completing the proof that the notions of path-consistency and strand-consistency are equivalent for finite dimer models on surfaces which are not spheres.
In the case where $Q$ is a dimer model on a disk, this is proven in~\cite[Proposition 2.15]{CKP}. In the case where $Q$ is a dimer model on a compact surface without boundary, it appears in~\cite[Theorem 10.1, Theorem 10.2]{XBocklandt2011}.
First, we need the following definition.

\begin{defn}\label{defn:dimer-submodel}
	Let $Q=(Q_0,Q_1,Q_2)$ be a dimer model. Let $\mathcal F\subseteq Q_2$ be a set of faces of $Q_2$ which form a connected surface with boundary which is a subspace of the surface $S(Q)$ of $Q$.  

	We define the \textit{dimer submodel $Q^\mathcal F$ of $Q$ induced by $\mathcal F$} as the dimer model $Q^\mathcal F=(Q^\mathcal F_0,Q^\mathcal F_1,\mathcal F)$, where $Q^\mathcal F_0$ and $Q^\mathcal F_1$ are the sets of vertices and edges of $Q$ appearing in some face of $\mathcal F$.
\end{defn}

Intuitively, $Q^\mathcal F$ is obtained by deleting all faces of $Q$ which are not in $\mathcal F$. See Figure~\ref{fig:submodel-disk-annulus}. It is immediate that if $\mathcal G\subseteq\mathcal F\subseteq Q_2$ induce dimer submodels, then $Q^\mathcal G=(Q^\mathcal F)^\mathcal G$. A \textit{disk submodel} is a dimer submodel which is a dimer model on a disk.

\begin{prop}\label{thm:not-a-cons-then-not-g-conss}
	A strand-consistent dimer model is path-consistent.
\end{prop}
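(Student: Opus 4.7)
The plan is to reduce to the universal cover via Propositions~\ref{prop:Q-consistent-iff-hat-Q-consistent} and~\ref{prop:Q-g-consistent-iff-hat-Q-g-consistent} and then argue by contrapositive. It suffices to show that if $\widetilde Q$ is simply connected and strand-consistent, then $\widetilde Q$ is path-consistent. In the simply connected case, any two paths between fixed endpoints are homotopic, so path-consistency reduces to two claims: (i) each equivalence class contains a minimal representative, and (ii) any two minimal paths between the same vertices are path-equivalent. Claim (i) is immediate: if $[p]=[qf^m]$ with $m\geq 1$, then $q$ is strictly shorter than $p$, so iterating the factorization terminates.

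For claim (ii), I would argue by contradiction. Suppose $p,q$ are minimal paths from $v_1$ to $v_2$ in $\widetilde Q$ with $[p]\neq[q]$. Replacing them by equivalent paths if needed, I would first arrange that $p$ is leftmost and $q$ is rightmost in their equivalence classes, by iteratively applying basic left-morphs to $p$ and basic right-morphs to $q$. A key subclaim is that this iteration terminates in the strand-consistent setting; I would prove termination by monitoring the winding number of $q^{-1}p$ around each face, combined with the fact that an infinite descent would force some zigzag path to realize a null-homotopic interior cycle, violating strand-consistency. Once $p$ is leftmost and $q$ is rightmost, $q^{-1}p$ is a null-homotopic closed walk on $S(\widetilde Q)$ bounding a region which decomposes as a union of disks; selecting an innermost disk yields subpaths $p'$ of $p$ and $q'$ of $q$ such that $(q')^{-1}p'$ is a simple counter-clockwise cycle-walk bounding a single disk $D$ of faces. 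Since subpaths of leftmost (respectively rightmost) paths are themselves leftmost (respectively rightmost), the pair $(p',q')$ is an irreducible pair in the sense of Definition~\ref{defn:irreducible-pair}.

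From the irreducible pair $(p',q')$ bounding the disk $D$, I would extract a bad configuration in $D_{\widetilde Q}$ by analyzing the zigzag paths meeting $D$. Every arrow of $D$ lies on exactly two zigzag paths, and any zigzag path intersecting $D$ either enters and exits $D$ at arrows on the boundary $p'\cup q'$, or remains inside $D$ forming an interior cycle, already a bad configuration. The leftmost property of $p'$ forces each arrow of $p'$ to terminate a zag or initiate a zig of some zigzag path through $D$, and dually for $q'$. A case analysis on how these zigzag paths pair up their entrances and exits, combined with the strand-alternation condition at each intersection, forces either a zigzag path that self-intersects null-homotopically in $D$ or two zigzag path segments through $D$ that form a bad lens, contradicting strand-consistency.

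The main obstacle I expect is the combinatorial analysis of the zigzag paths crossing $D$. The argument parallels the disk and torus cases handled in~\cite[Proposition 2.15]{CKP} and~\cite[Theorem 10.1]{XBocklandt2011} respectively, but the careful accounting of zigzag behavior at a leftmost boundary arrow of $p'$ versus a rightmost boundary arrow of $q'$, along with the structure of a general disk submodel of the universal cover, requires attention. A secondary concern is avoiding circularity in the normalization step: the cycle-removing morphs of Section~\ref{sec:mac} were developed assuming path-consistency, so the termination of the leftmost/rightmost normalization must be justified directly from strand-consistency, presumably by relating an infinite sequence of basic morphs to the emergence of an interior cycle in $D_{\widetilde Q}$.
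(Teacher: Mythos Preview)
Your approach diverges substantially from the paper's, and it has genuine gaps. The paper's proof is a short reduction: using Theorem~\ref{thm:consistent-iff-cancellation} and Lemma~\ref{lem:Q-canc-iff-hat-Q-canc}, it suffices to show $A_{\widetilde Q}$ is cancellative. Given $[pa]=[qa]$, the finite sequence of basic morphs witnessing this equality lives inside some finite disk submodel $Q'\subseteq\widetilde Q$; since $D_{Q'}$ is a restriction of $D_{\widetilde Q}$ it has no bad configurations, so by the already-established disk case~\cite[Proposition~2.15]{CKP} $Q'$ is path-consistent, hence cancellative, giving $[p]=[q]$ in $Q'$ and thus in $\widetilde Q$. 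The whole argument is a localization to a disk plus a citation.

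Your route, by contrast, attempts to build the conclusion from scratch and runs into trouble at several points. First, your claim~(i) argument is incorrect as stated: path-equivalence does not preserve length, so $[p]=[qf^m]$ does \emph{not} imply that $q$ has fewer arrows than $p$. Termination of the face-path extraction is exactly what the proof of Theorem~\ref{thm:consistent-iff-cancellation} establishes \emph{assuming} cancellativity; without a positive grading (which the paper obtains only later from perfect matchings, Lemma~\ref{lem:posgrad}) you have no length-type invariant to force termination. Second, you correctly flag the normalization step as circular: the machinery of Section~\ref{sec:mac} that produces leftmost/rightmost minimal representatives assumes path-consistency throughout, and your sketch of a direct termination argument via winding numbers and null-homotopic interior cycles is not developed and does not obviously work. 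Third, your final step requires the implication ``irreducible pair $\Rightarrow$ bad configuration,'' which is the \emph{reverse} of what Theorem~\ref{thm:good-quiver-no-bad-configurations} proves; the paper never establishes this direction, and your case analysis of zigzag paths through the disk $D$ is a promise rather than an argument.

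In short, the paper sidesteps all of this by reducing to the disk case, where the result is already known. Your strategy is not unreasonable in spirit---it resembles the torus arguments in~\cite{XBocklandt2011}---but each of the three gaps above would require real work, and the first one in particular cannot be patched by the length argument you gave.
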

\begin{proof}
	By Theorem~\ref{thm:consistent-iff-cancellation}, $Q$ is path-consistent if and only if $A_Q$ is cancellative. By Lemma~\ref{lem:Q-canc-iff-hat-Q-canc}, this is true if and only if $A_{\widetilde Q}$ is cancellative. Hence, it suffices to show that $A_{\widetilde Q}$ is a cancellation algebra.
	Accordingly, suppose $p,q,a$ are paths of $\widetilde Q$ such that $h(a)=t(p)=t(q)$ and $h(p)=h(q)$ and $[pa]=[qa]$. Then there is a finite sequence of morphs taking $pa$ to $qa$ in $\widetilde Q$. Let $Q'$ be a disk submodel of $\widetilde Q$ containing every intermediate path in this sequence. Since $D_{Q'}$ is a restriction of $D_{\widetilde Q}$, which has no bad configurations, the former also has no bad configurations. By~\cite[Proposition 2.15]{CKP}, $Q'$ is path-consistent. By Theorem~\ref{thm:consistent-iff-cancellation}, $A_{Q'}$ is cancellative, hence $[p]=[q]$ in $Q'$. Then there is a sequence of morphs taking $p$ to $q$ in $Q'$; this is necessarily a sequence of morphs in $\widetilde Q$, so $[p]=[q]$ in $\widetilde Q$.

	It may similarly be shown that if $p,q,b$ are paths of $\widetilde Q$ such that $t(b)=h(p)=h(q)$ and $t(p)=t(q)$, then $[bp]=[bq]$ implies $[p]=[q]$. This completes the proof that $A_{\widetilde Q}$ is cancellation, and thus that $Q$ is path-consistent.
\end{proof}

\begin{thm}\label{thm:not-a-cons-then-not-g-cons}\label{thm:cons-alg-str}
	Let $Q$ be a dimer model not on a sphere. The following are equivalent:
	\begin{enumerate}
		\item $Q$ is path-consistent,
		\item $Q$ is strand-consistent,
		\item The dimer algebra $A_Q$ is cancellative.
	\end{enumerate}
\end{thm}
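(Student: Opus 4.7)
The plan is to observe that this theorem is nothing more than the concatenation of three results established earlier in the excerpt, so no new work is required and the proof is essentially a citation.

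First I would note that the equivalence (1) $\iff$ (3) is precisely the content of Theorem~\ref{thm:consistent-iff-cancellation}, whose proof passes through the universal cover via Lemma~\ref{lem:Q-canc-iff-hat-Q-canc} and Proposition~\ref{prop:Q-consistent-iff-hat-Q-consistent}, then uses the cancellation property together with Lemma~\ref{lem:many-cycles-factor-through-path} to show that every cycle is equivalent to a composition of face-paths. Importantly, this equivalence does not require the hypothesis that $S(Q)$ is not a sphere.

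Next, the implication (1) $\implies$ (2) is exactly Theorem~\ref{thm:good-quiver-no-bad-configurations}, which is where the sphere hypothesis genuinely enters: its proof passes to $\widetilde Q$ using Proposition~\ref{prop:Q-g-consistent-iff-hat-Q-g-consistent}, then shows that each of the three possible bad configurations in $D_{\widetilde Q}$ (self-intersecting strand, interior cycle, bad lens) produces an irreducible pair, contradicting Theorem~\ref{prop:no-irreducible-pairs}. The latter, in turn, is the main technical output of Section~\ref{sec:mac} and requires $\widetilde{S(Q)}$ not to be a sphere so that the notions of clockwise/counter-clockwise make sense.

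Finally, the implication (2) $\implies$ (1) is Proposition~\ref{thm:not-a-cons-then-not-g-conss}, whose proof reduces cancellativity in $A_{\widetilde Q}$ to cancellativity in a disk submodel $Q'$ containing all the intermediate paths of a given morph sequence, and then applies~\cite[Proposition 2.15]{CKP}. Chaining these together gives (1) $\iff$ (2) $\iff$ (3), as desired. There is no obstacle at this stage — all the difficulty was absorbed in establishing Theorem~\ref{prop:no-irreducible-pairs} in Section~\ref{sec:mac} and in setting up the zigzag/strand dictionary in Definitions~\ref{defn:strand-diagram-from-dimer-model} and~\ref{defn:zigzag-path}; the present theorem is just the bookkeeping step that records the resulting trilogy of equivalences.
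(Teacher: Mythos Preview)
Your proposal is correct and matches the paper's own proof essentially verbatim: the paper also simply cites Theorem~\ref{thm:consistent-iff-cancellation} for (1)$\iff$(3), Theorem~\ref{thm:good-quiver-no-bad-configurations} for (1)$\implies$(2), and Proposition~\ref{thm:not-a-cons-then-not-g-conss} for (2)$\implies$(1). Your additional commentary on where the sphere hypothesis is used and what each cited result does internally is accurate and a bit more informative than the paper's three-line proof.
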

\begin{proof}
	Path-consistency implies strand-consistency by Theorem~\ref{thm:good-quiver-no-bad-configurations}. Strand-consistency implies path-consistency by Proposition~\ref{thm:not-a-cons-then-not-g-conss}.
	Path-consistency is equivalent to cancellativity by Theorem~\ref{thm:consistent-iff-cancellation}.
\end{proof}
Theorem~\ref{thm:not-a-cons-then-not-g-cons} is known for dimer models on tori, for example see~\cite{XBocklandt2011} and references therein.
It was shown for dimer models on the disk corresponding to $(k,n)$-diagrams in~\cite{BKMX}. The implication~\eqref{q2}$\implies$\eqref{q1} for general dimer models on disks appears in~\cite[Proposition 2.11]{XPressland2019}. The authors are not aware of a proof in the other implication in the case of the disk, hence we include this corollary. 
\begin{cor}\label{cor:main-cor-disk}
	Let $Q$ be a dimer model in a disk. Then $Q$ is path-consistent if and only if $Q$ is strand-consistent.
\end{cor}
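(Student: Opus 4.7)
The plan is to observe that this corollary is an immediate specialization of Theorem~\ref{thm:not-a-cons-then-not-g-cons} (equivalently Theorem~\ref{thm:cons-alg-str}). Since the underlying surface $S(Q)$ of a dimer model on a disk is a disk, it is in particular not a sphere, so the hypothesis of Theorem~\ref{thm:cons-alg-str} is satisfied. That theorem then gives the equivalence of path-consistency and strand-consistency (indeed, also with cancellativity of $A_Q$), which specializes to the statement of the corollary.

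In terms of provenance of the two implications: the direction (strand-consistent $\implies$ path-consistent) already appears in the literature for general dimer models on a disk, namely~\cite[Proposition 2.11]{XPressland2019}, so there is nothing new to do there. The reverse direction (path-consistent $\implies$ strand-consistent) in the disk case is the content we are recovering here, and it follows from Theorem~\ref{thm:good-quiver-no-bad-configurations}, which was proven via the cycle-removing morph machinery of Section~\ref{sec:mac} and the nonexistence of irreducible pairs in simply connected path-consistent dimer models (Theorem~\ref{prop:no-irreducible-pairs}). Since a disk is not a sphere, that theorem applies directly without any additional argument.

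Consequently, the proof should simply read: ``A disk is not a sphere, so the result follows from Theorem~\ref{thm:cons-alg-str}.'' There is no nontrivial step to handle and no obstacle, since all the substantive work has been done in establishing the general (non-sphere) case. The corollary is stated separately only because the reverse implication was not previously recorded in the disk setting in the literature.
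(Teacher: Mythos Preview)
Your proposal is correct and matches the paper's treatment exactly: the paper gives no separate proof of this corollary, presenting it as an immediate specialization of Theorem~\ref{thm:cons-alg-str} since a disk is not a sphere, and noting (as you do) that the direction strand-consistent $\implies$ path-consistent was already known from~\cite[Proposition 2.11]{XPressland2019} while the converse in the disk case is what is being recorded here.
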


In light of Theorem~\ref{thm:cons-alg-str}, we use the word \textit{weakly consistent} to refer to both path-consistent and strand-consistent dimer models that are not on a sphere. 
We will define strong consistency in Section~\ref{sec:perfect-matching}.

\section{Dimer Submodels}\label{sec:ds}

Recall the definition of a {dimer submodel} in Definition~\ref{defn:dimer-submodel}. We now show that the dimer submodel of a weakly consistent model is weakly consistent, and moreover that the path equivalence classes of a dimer submodel may be understood in terms of the original dimer model. This is a useful result that will lead to some nice corollaries in Section~\ref{sec:thin-quivers}.

\def\svgwidth{300pt}
\begin{figure}	\centering
	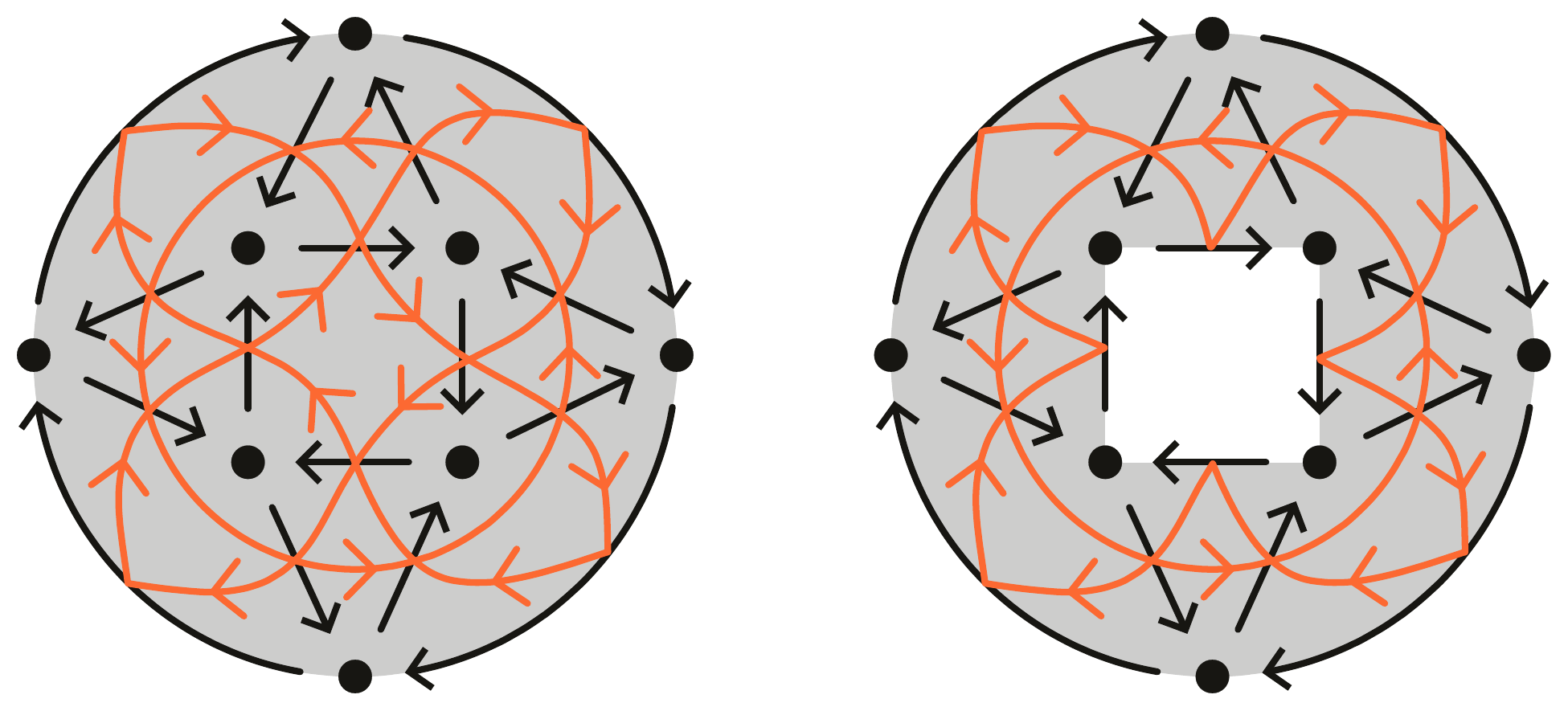
	\caption{The disk model on the left is not weakly consistent, since it has a homologically trivial interior cycle. When we delete the center face by taking the submodel induced by all other faces, this cycle still exists but is no longer homologically trivial. The result is a weakly consistent dimer model on an annulus.}
	\label{fig:submodel-disk-annulus}
\end{figure}

\begin{cor}\label{cor:dimer-submodel-consistent}
	Let $Q$ be a dimer model. Let $\mathcal F$ be a set of faces of $Q$ forming a surface $S$ such that the restriction of the strand diagram $D_Q$ to $S$ has no bad configurations. Then $Q^\mathcal F$ is weakly consistent. In particular, if $Q$ is weakly consistent then any dimer submodel of $Q$ is weakly consistent. 
\end{cor}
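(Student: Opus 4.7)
The plan is to use Theorem~\ref{thm:cons-alg-str} to reduce the statement about weak consistency to a claim about strand-consistency, and then to compare the strand diagram $D_{Q^\mathcal F}$ of the submodel with the restriction of $D_Q$ to the subsurface $S$.

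The first step would be to verify that, as a strand diagram on $S$, the diagram $D_{Q^\mathcal F}$ agrees with the restriction of $D_Q$ to $S$. By Definition~\ref{defn:strand-diagram-from-dimer-model}, a strand segment of $D_Q$ inside a face $F$ connects the midpoints $v_\alpha, v_\beta$ of consecutive arrows $\beta\alpha$ in the face-path of $F$. Since the arrows, vertices, and faces of $Q^\mathcal F$ are by Definition~\ref{defn:dimer-submodel} exactly those of $Q$ lying inside $S$, the same strand segments appear in $D_{Q^\mathcal F}$. Internal arrows of $Q$ that lose one of their adjacent faces become boundary arrows of $Q^\mathcal F$; at such arrows, strand segments of $D_Q$ that previously continued into a face outside $S$ now terminate at a marked boundary point of $S(Q^\mathcal F)$, which is exactly what the restriction of $D_Q$ to $S$ does at the boundary.

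With this identification in place, the rest of the argument is direct. Any bad configuration (null-homotopic self-intersection, null-homotopic interior cycle, or bad lens) in $D_{Q^\mathcal F}$ sits inside $S$ and so is a bad configuration in the restriction of $D_Q$ to $S$, contradicting the hypothesis; hence $D_{Q^\mathcal F}$ is a Postnikov diagram. Moreover, $Q^\mathcal F$ is not on a sphere, since otherwise $S$ would be closed and the restriction of $D_Q$ to $S$ would consist entirely of interior cycles, each null-homotopic on the sphere, contradicting the hypothesis. Thus $Q^\mathcal F$ is strand-consistent and not on a sphere, so by Theorem~\ref{thm:cons-alg-str} it is weakly consistent. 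The ``in particular'' clause is then immediate: if $Q$ itself is weakly consistent, then $D_Q$ has no bad configurations at all, so the restriction to any subsurface has none either.

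The only (mild) obstacle is the first step: making precise the identification of $D_{Q^\mathcal F}$ with the restriction of $D_Q$ to $S$, particularly along the boundary. One must check that the arrows of $Q$ that become boundary arrows in $Q^\mathcal F$ correspond bijectively to the places where strands of $D_Q$ cross out of $S$, and that the new marked boundary points of $S(Q^\mathcal F)$ match these crossings. Once this is settled, the corollary follows as an immediate application of Theorem~\ref{thm:cons-alg-str}.
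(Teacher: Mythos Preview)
Your proposal is correct and follows essentially the same approach as the paper: both reduce to strand-consistency via Theorem~\ref{thm:cons-alg-str} and observe that $D_{Q^{\mathcal F}}$ is the restriction of $D_Q$ to $S$, so no new bad configurations can arise. You are simply more explicit than the paper about the identification of strand diagrams along the new boundary and about ruling out the sphere case; the latter is in fact automatic once you know $D_{Q^{\mathcal F}}$ has no bad configurations, since any dimer model on a sphere has a null-homotopic interior cycle (Remark~\ref{remk:alg-cons-no-sphere}).
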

\begin{proof}
	Since weak consistency is characterized by the absence of bad configurations by Theorem~\ref{thm:not-a-cons-then-not-g-cons}, the first statement is trivial. 
	Passing from $Q$ to a dimer submodel $Q^{\mathcal F}$ corresponds to restricting the strand diagram of $Q$ to the surface given by the union of the faces in $\mathcal F$. This cannot create any bad configurations, so the second statement follows.
\end{proof}

See Figure~\ref{fig:submodel-disk-annulus} for an example of how Corollary~\ref{cor:dimer-submodel-consistent} may be used in practice to obtain weakly consistent dimer models from existing (not necessarily weakly consistent) models.

\begin{thm}\label{thm:submodel-path-equivalence}
	Let $Q$ be a weakly consistent dimer model and let $Q^{\mathcal F}$ be a (necessarily weakly consistent) dimer submodel of $Q$. Then two paths in $Q^{\mathcal F}$ are equivalent in $Q^{\mathcal F}$ if and only if they are equivalent in $Q$ and homotopic in $S(Q^{\mathcal F})$.
\end{thm}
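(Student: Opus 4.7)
The plan is to prove the two directions separately: the forward direction is a straightforward lift of morphs, while the backward direction requires transferring the path-consistency decomposition from $Q^{\mathcal F}$ to $Q$ and using cancellation.

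For the forward direction, suppose $[p]_{Q^{\mathcal F}} = [q]_{Q^{\mathcal F}}$, witnessed by a sequence of basic morphs $p = r_0, r_1, \ldots, r_n = q$ in $Q^{\mathcal F}$. Any internal arrow $\alpha$ of $Q^{\mathcal F}$ lies in two faces of $\mathcal F \subseteq Q_2$ and so is internal in $Q$ with the same $F_\alpha^{cl}, F_\alpha^{cc}$, hence the same $R_\alpha^{cl}, R_\alpha^{cc}$. Each step $r_i \to r_{i+1}$ is therefore a basic morph in $Q$ too, giving $[p]_Q = [q]_Q$. For the homotopy statement, each morph replaces $R_\alpha^{cl}$ by $R_\alpha^{cc}$ (or vice versa); composed with $\alpha$, each bounds a face of $\mathcal F$, which is a disk in $S(Q^{\mathcal F})$. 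Thus $R_\alpha^{cl} \sim R_\alpha^{cc}$ in $S(Q^{\mathcal F})$, every morph is a homotopy there, and concatenating these gives a homotopy from $p$ to $q$ in $S(Q^{\mathcal F})$.

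For the backward direction, suppose $[p]_Q = [q]_Q$ and $p, q$ lie in a common homotopy class $C$ in $S(Q^{\mathcal F})$. By Corollary~\ref{cor:dimer-submodel-consistent}, $Q^{\mathcal F}$ is weakly consistent, and hence path-consistent by Theorem~\ref{thm:cons-alg-str}. Applying path-consistency of $Q^{\mathcal F}$ to the class $C$ yields a minimal path $r$ of $Q^{\mathcal F}$ and non-negative integers $m_p, m_q$ with
\[
[p]_{Q^{\mathcal F}} = [f_{\mathcal F}^{m_p} r]_{Q^{\mathcal F}}, \qquad [q]_{Q^{\mathcal F}} = [f_{\mathcal F}^{m_q} r]_{Q^{\mathcal F}},
\]
where $f_{\mathcal F}$ denotes a face-path of $Q^{\mathcal F}$ at $v := t(r) = t(p) = t(q)$. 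By the forward direction these equalities also hold in $A_Q$. A face-path of $Q^{\mathcal F}$ at $v$ is also a face-path of $Q$ at $v$, so by Remark~\ref{faces-are-equivalent} we may replace $f_{\mathcal F}$ by any face-path $f_v$ of $Q$ at $v$, giving $[f_v^{m_p} r]_Q = [p]_Q = [q]_Q = [f_v^{m_q} r]_Q$. Cancelling $r$ on the right, which is permitted by the cancellation property of $A_Q$ (Theorem~\ref{thm:cons-alg-str}), yields $[f_v^{m_p}]_Q = [f_v^{m_q}]_Q$.

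To conclude, observe that each $f_v^{m_i}$ is a null-homotopic cycle in $S(Q)$ whose minimal representative in its homotopy class is the constant path $e_v$ (by Remark~\ref{rem:cycle-cant-be-constant}). Thus in $Q$ the presentations $[f_v^{m_p}]_Q = [f^{m_p} e_v]_Q$ and $[f_v^{m_q}]_Q = [f^{m_q} e_v]_Q$ are the path-consistency presentations in the null-homotopic class at $v$, and the uniqueness of the c-value in Definition~\ref{defn:algebraic-consistency} forces $m_p = m_q$. Therefore $[p]_{Q^{\mathcal F}} = [f_{\mathcal F}^{m_p} r]_{Q^{\mathcal F}} = [f_{\mathcal F}^{m_q} r]_{Q^{\mathcal F}} = [q]_{Q^{\mathcal F}}$. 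The main obstacle is bridging the two algebras: the cancellation step takes place in $A_Q$ rather than $A_{Q^{\mathcal F}}$, and reconciling the resulting equation of face-path powers with the exponents $m_p, m_q$ originally defined in $Q^{\mathcal F}$ is exactly what the combination of Remark~\ref{faces-are-equivalent} and the uniqueness of the c-value in $Q$ accomplishes.
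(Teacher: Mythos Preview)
Your proof is correct and follows essentially the same approach as the paper. The only difference is cosmetic: the paper skips the auxiliary minimal path $r$ by writing directly $[p]=[qf^m]$ in $Q^{\mathcal F}$ (which is an immediate consequence of path-consistency), transfers this to $Q$, and then reads off $m=0$ from $[p]=[q]$ in $Q$ using uniqueness of the c-value---this avoids the separate invocation of the cancellation property, but the underlying idea is identical to yours.
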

\begin{proof}
	If $[p]=[q]$ in $Q^\mathcal F$, then certainly $[p]=[q]$ in $Q$. Moreover, in this case $p$ is homotopic to $q$ in $S(Q^\mathcal F)$, hence in $Q$.
		On the other hand, suppose that $[p]=[q]$ in $Q$ and that $p$ is homotopic to $q$ in $S(Q^{\mathcal F})$. By path-consistency of $Q^\mathcal F$, without loss of generality we have $[p]=[qf^m]$ in $Q^{\mathcal F}$ for some nonnegative integer $m$. Then we have $[p]=[qf^m]$ in $Q$. By path-consistency of $Q$, since $[p]=[q]$ in $Q$ we must have $m=0$ and hence $[p]=[q]$ in $Q^\mathcal F$. 
\end{proof}

\begin{remk}
	Theorem~\ref{thm:submodel-path-equivalence} could be stated more generally without changing the proof. We don't need $Q$ to be weakly consistent; we merely need to be able to ``cancel face-paths'' in $Q$. In other words, we require that $[pf^m]=[p]$ cannot hold for positive $m$. This is a weaker condition than cancellativity (and hence weak consistency) and is satisfied, for example, if $Q$ has a perfect matching. See Section~\ref{sec:perfect-matching} and  Lemma~\ref{lem:perfect-matching-intersection-num}.
\end{remk}

\section{Perfect Matchings}\label{sec:perfect-matching}
	A \textit{perfect matching} of a dimer model $Q$ is a collection of arrows $\mathcal M$ of $Q$ such that every face of $Q$ contains exactly one arrow in $\mathcal M$.
	See Figure~\ref{fig:plabic-quiver-strand} for two examples.

	Dimer models on the torus with perfect matchings have been studied in, for example,~\cite{XIU},~\cite{XBroomhead2009},~\cite{XBocklandt2011} and are often called~\textit{dimer configurations}. The Gorenstein affine toric threefold obtained by putting the \emph{perfect matching polygon} at height one is the center of the dimer algebra, and the dimer algebra is viewed as a non-commutative crepant resolution of this variety, for example see~\cite{XBocklandt2015, XBroomhead2009, XIUM, Mozg}.
Perfect matchings of dimer models on a more general surface, often called \textit{almost perfect matchings}, are a natural generalization. {Perfect matching polygons may be extended to dimer models over arbitrary compact surfaces with boundary, and capture the data of the master and mesonic moduli spaces~\cite{BFTFDBPTSA}.}
Perfect matchings may be calculated by taking determinants of Kasteleyn matrices, see \cite{HaK} and \cite[\S5]{BFTFDBPTSA}.

In the present paper, we give some basic results, prove existence of perfect matchings in weakly consistent simply connected dimer models, and give a counterexample to the existence of perfect matchings in arbitrary weakly consistent dimer models.

We use combinatorial theory of matchings of (undirected) graphs in order to prove the main result of this section. In order to do so, we associate to $Q$ a bipartite \textit{plabic graph} $\mathcal G=(\mathcal G_0^b,\mathcal G_0^w,\mathcal G_1)$ embedded into $S(Q)$ as follows.
To each face $F$ of $Q$, we associate an \textit{internal vertex} $v_F$ embedded in the interior of $F$. If $F$ is a clockwise face, we say that $v_F$ is a \textit{black vertex} and we write $v_F\in\mathcal G_0^b$. If $F$ is a counter-clockwise face, we say that $v_F$ is a \textit{white vertex} and we write $v_F\in\mathcal G_0^w$. 
For any boundary arrow $\alpha$ of $Q$, we draw a \textit{boundary vertex} $v_\alpha$ embedded in the middle of $\alpha$. We consider $v_\alpha$ to be a \textit{white vertex} if $\alpha$ is part of a clockwise face, and a \textit{black vertex} if $\alpha$ is part of a counter-clockwise face.
For each internal arrow $\alpha$ of $Q$, we draw an edge between $v_{F_\alpha^{cl}}$ and $v_{F_\alpha^{cc}}$. For each boundary arrow $\alpha$ of $Q$, we draw an edge between $v_\alpha$ and the vertex corresponding to the unique face incident to $\alpha$. See Figure~\ref{fig:plabic-quiver-strand}.
\def\svgwidth{200pt}
\begin{figure}
	\centering
	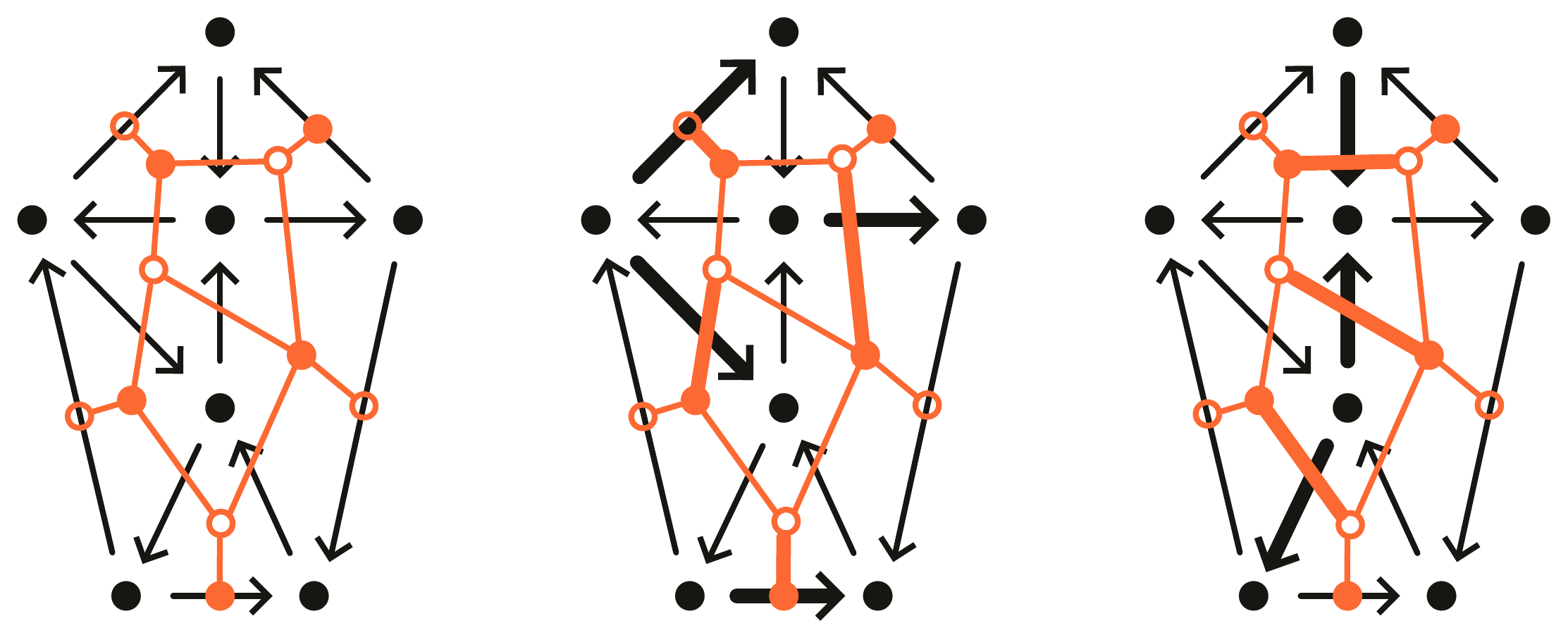
	\caption{On the left, a dimer model on a disk with its plabic graph overlayed is pictured. The two pictures on the right show two different perfect matchings, both as collections of arrows of the quiver and as collections of edges of the plabic graph.}
	\label{fig:plabic-quiver-strand}
\end{figure}

A perfect matching $\mathcal M$ of $Q$ is dual to a collection of edges $\mathcal N$ of the plabic graph $\mathcal G$ of $Q$ such that every internal vertex of $\mathcal G$ is contained in exactly one edge of $\mathcal N$. We refer to both $\mathcal M$ and $\mathcal N$ as perfect matching of $Q$. If $Q$ has no boundary, then a perfect matching is dual to a perfect matching of the dual (plabic) graph of $Q$ in the usual sense. See Figure~\ref{fig:plabic-quiver-strand}.

If $\mathcal M$ is a perfect matching of $Q$, we say that the \textit{intersections} of a path $p$ with $\mathcal M$ are the arrows of $p$ which are in $\mathcal M$. The \textit{intersection number $\mathcal M(p)$ of $p$ with $\mathcal M$} is the number of intersections of $p$ with $\mathcal M$ (counting ``multiplicities'' if $p$ has multiple instances of the same arrow).

\begin{lemma}\label{lem:perfect-matching-intersection-num}
	Suppose $Q$ has a perfect matching $\mathcal M$. If $[p]=[q]$, then $\mathcal M(p)=\mathcal M(q)$.
\end{lemma}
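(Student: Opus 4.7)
The plan is to reduce to the case of a single basic morph, and show that the intersection number with $\mathcal M$ is invariant under the relation $R_\alpha^{cc} - R_\alpha^{cl}$ for each internal arrow $\alpha$. Recall that $[p]=[q]$ is equivalent to the existence of a finite sequence of basic morphs taking $p$ to $q$, so it suffices to verify $\mathcal M(p) = \mathcal M(p')$ whenever $p'$ is a basic morph of $p$ at some internal arrow $\alpha$.

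Fix such an $\alpha$. By definition of a basic morph, $p$ and $p'$ differ only in a single subpath: one contains a copy of $R_\alpha^{cl}$ where the other contains $R_\alpha^{cc}$ (all other arrows are identical, with multiplicities). Thus it is enough to prove $\mathcal M(R_\alpha^{cc}) = \mathcal M(R_\alpha^{cl})$. The arrows of $F_\alpha^{cc}$ are $\alpha$ together with the arrows of $R_\alpha^{cc}$, and analogously for $F_\alpha^{cl}$. Since $\mathcal M$ is a perfect matching, each of the faces $F_\alpha^{cc}$ and $F_\alpha^{cl}$ contains exactly one matching arrow.

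Now split into two cases. If $\alpha \in \mathcal M$, then the unique matching arrow of $F_\alpha^{cc}$ is $\alpha$, so $R_\alpha^{cc}$ contains no matching arrow, and similarly $R_\alpha^{cl}$ contains no matching arrow; hence both intersection numbers are $0$. If $\alpha \notin \mathcal M$, then the unique matching arrow of $F_\alpha^{cc}$ lies in $R_\alpha^{cc}$, so $\mathcal M(R_\alpha^{cc}) = 1$, and by the symmetric argument $\mathcal M(R_\alpha^{cl}) = 1$. In either case $\mathcal M(R_\alpha^{cc}) = \mathcal M(R_\alpha^{cl})$, so the basic morph preserves the intersection number, and induction along a finite sequence of basic morphs completes the proof.

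There is essentially no obstacle here; the lemma is a direct consequence of the defining property of a perfect matching. The only thing to be slightly careful about is handling multiplicities correctly when the morph occurs in a path with repeated arrows, but since a basic morph removes exactly one occurrence of $R_\alpha^{cl}$ (or $R_\alpha^{cc}$) and inserts exactly one occurrence of the other, the count-with-multiplicity version of $\mathcal M$ behaves as required.
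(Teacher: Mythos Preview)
Your proof is correct and follows exactly the same approach as the paper: reduce to a single basic morph and observe that $R_\alpha^{cl}$ contains a matching arrow if and only if $R_\alpha^{cc}$ does. The paper states this in one sentence, while you spell out the case split on whether $\alpha\in\mathcal M$, but the argument is identical.
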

\begin{proof}
	Since $R_\alpha^{cl}$ contains an arrow of $\mathcal M$ if and only if $R_\alpha^{cc}$ does, it follows that basic morphs preserve intersection number. The result follows.
\end{proof}

\begin{prop}\label{prop:perfect-matching-intersection-num}
	Suppose $Q$ is weakly consistent and that $Q$ has a perfect matching $\mathcal M$. Let $p$ and $q$ be paths of $Q$ with the same start vertex, end vertex, and homotopy class. Then $[p]=[q]$ if and only if $\mathcal M(p)=\mathcal M(q)$.
\end{prop}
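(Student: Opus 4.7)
The forward direction is immediate from Lemma~\ref{lem:perfect-matching-intersection-num}. For the converse, the plan is to use path-consistency to reduce $p$ and $q$ to a common minimal path, and then use the fact that multiplication by a face-path increases intersection number by exactly one.

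In more detail: let $v_1 = t(p) = t(q)$, $v_2 = h(p) = h(q)$, and let $C$ be the common homotopy class. By weak consistency, Definition~\ref{defn:algebraic-consistency} provides a minimal path $r := p_{v_2 v_1}^C$ and unique nonnegative integers $m_p, m_q$ such that $[p] = [r f^{m_p}]$ and $[q] = [r f^{m_q}]$. So it suffices to show $m_p = m_q$.

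The key observation is that any face-path $f_v$ traverses every arrow of a single face of $Q$ exactly once, and since $\mathcal M$ is a perfect matching each face contains exactly one arrow of $\mathcal M$; hence $\mathcal M(f_v) = 1$. Picking a concrete path representative of the equivalence class $[r f^{m_p}]$ by inserting $m_p$ face-paths (based at $v_1$, say) into $r$, one computes directly that this representative has intersection number $\mathcal M(r) + m_p$. By Lemma~\ref{lem:perfect-matching-intersection-num} the intersection number is well-defined on equivalence classes, so $\mathcal M(p) = \mathcal M(r) + m_p$. Symmetrically, $\mathcal M(q) = \mathcal M(r) + m_q$.

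The hypothesis $\mathcal M(p) = \mathcal M(q)$ now gives $m_p = m_q$, hence $[p] = [rf^{m_p}] = [rf^{m_q}] = [q]$. I expect no real obstacle here; the only mild subtlety is justifying that the formal product $r f^{m_p}$ (which a priori lives in $A_Q$ rather than being a path) admits a genuine path representative whose intersection number equals $\mathcal M(r) + m_p$, but this is just the standard observation that $[f]$ is represented at any vertex by any chosen face-path, so one may physically splice $m_p$ face-paths into $r$ at $v_1$.
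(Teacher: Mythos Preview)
Your proof is correct and takes essentially the same approach as the paper: both use path-consistency together with the observation that each face-path contributes exactly one to the intersection number. The only cosmetic difference is that the paper argues the contrapositive by writing $[p]=[qf^m]$ directly (with $m>0$) rather than reducing both $p$ and $q$ to a common minimal path $r$.
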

\begin{proof}
	Lemma~\ref{lem:perfect-matching-intersection-num} shows that if $[p]=[q]$ then $\mathcal M(p)=\mathcal M(q)$. On the other hand, if $[p]\neq[q]$, then without loss of generality $[p]=[qf^m]$ for some $m>0$. Since any face-path contains exactly one arrow of $\mathcal M$, we have $\mathcal M(p)=m+\mathcal M(q)$, ending the proof.
\end{proof}

\begin{lemma}\label{lem:perfect-matching-factor-out-finite}
	Suppose $Q$ has a perfect matching $\mathcal M$ and let $p$ be a path in $Q$.
	The set
	\[\{m\ |\ [p]=[f^mq]\textrm{ for some path }q:t(p)\to h(p)\}\]
	is bounded above by $\mathcal M(p)$. In particular, only a finite number of face-paths can be factored out of $p$.
\end{lemma}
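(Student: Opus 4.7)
The plan is to leverage the intersection number $\mathcal M(\cdot)$ as a bookkeeping device. The crucial input is Lemma~\ref{lem:perfect-matching-intersection-num}, which states that intersection number with a perfect matching is a well-defined invariant on path-equivalence classes (this was shown without needing weak consistency, just from the relations $R_\alpha^{cc} - R_\alpha^{cl}$).

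First I would fix an element $m$ of the set in question, so $[p] = [f^m q]$ for some path $q$ from $t(p)$ to $h(p)$. By Lemma~\ref{lem:perfect-matching-intersection-num}, it follows that $\mathcal M(p) = \mathcal M(f^m q)$. Here one should be a bit careful about what $f^m q$ means as a path: unpacking the definition of $[f]$, since $e_v [f_w] = 0$ for $v \ne w$, the product $[f^m q]$ equals $[f_{h(q)}^m \cdot q]$, i.e.\ the composition of $q$ with $m$ face-paths at the vertex $h(q) = h(p)$.

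Next I would compute $\mathcal M(f_{h(q)}^m \cdot q)$ directly. Because $\mathcal M$ is a perfect matching, every face of $Q$ contains exactly one arrow of $\mathcal M$, and a face-path traverses every arrow of its face exactly once; hence each of the $m$ face-paths contributes exactly one intersection with $\mathcal M$. Intersections along a composition add, so
\[
\mathcal M(f_{h(q)}^m \cdot q) = m + \mathcal M(q) \geq m,
\]
since $\mathcal M(q)$ is a nonnegative integer. Combining with $\mathcal M(p) = \mathcal M(f_{h(q)}^m \cdot q)$ yields $m \leq \mathcal M(p)$, which is the claimed bound. The ``in particular'' statement then follows because $\mathcal M(p)$ is a finite nonnegative integer depending only on $p$.

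There is no real obstacle here: the argument is essentially the same bookkeeping as the proof of Proposition~\ref{prop:perfect-matching-intersection-num}, but used one-sidedly as an inequality rather than an equality, so we do not need to invoke weak consistency. The only mild subtlety is keeping the notation $[f^m q]$ straight when $Q_0$ is infinite (so that $[f]$ itself is not an element of $A_Q$); this is handled by reducing to the well-defined product $[f_{h(q)}^m \cdot q]$ as above.
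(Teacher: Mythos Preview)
Your proof is correct and follows essentially the same approach as the paper: use Lemma~\ref{lem:perfect-matching-intersection-num} to get $\mathcal M(p)=\mathcal M(f^m q)$, then observe that each face-path contributes exactly one to the intersection number, giving $\mathcal M(p)=m+\mathcal M(q)\geq m$. Your version is simply more detailed, including the clarification about $[f^m q]$ when $Q_0$ is infinite.
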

\begin{proof}
	Any face-path has an intersection number of one with any perfect matching. Hence, if $[p]=[f^mq]$, then by Lemma~\ref{lem:perfect-matching-intersection-num} $\mathcal M(p)$ must be at least $m$.
\end{proof}

The condition that only a finite number of face-paths can be factored out of any path $p$ is implied by path-consistency. In fact, it is a strictly weaker property than weak consistency. 

We show the stronger statement that the existence of a perfect matching does not imply weak consistency. Indeed, Figure~\ref{fig:perf-match-not-cons}, which shows a dimer model which has a perfect matching but is not weakly consistent.
More generally, if $Q$ is a weakly consistent dimer model, then let $Q'$ be the dimer model obtained by replacing some internal arrow $\alpha$ of $Q$ with two consecutive arrows $\gamma\beta$ such that $h(\gamma)=h(\alpha)$ and $t(\beta)=t(\alpha)$ and $h(\beta)=t(\gamma)$ is a new vertex of $Q'$. Then the strand diagram of $Q'$ has a bad digon, hence is not weakly consistent. On the other hand, $Q$ has a perfect matching by Theorem~\ref{thm:geo-consistent-then-almost-perfect-matching}, hence $Q'$ has a perfect matching.

\subsection{Consistency and Existence of Perfect Matchings}

We have seen that the existence of a perfect matching does not guarantee weak consistency. We now investigate whether weak consistency guarantees the existence of a perfect matching.
We show in Theorem~\ref{thm:geo-consistent-then-almost-perfect-matching} that perfect matchings exist in simply connected weakly consistent dimer models. On the other hand, we see in Example~\ref{ex:cons-tor-no-perf} that perfect matchings need not exist in arbitrary weakly consistent dimer models.

\begin{defn}
	Let $(V_1,V_2,E)$ be a possibly infinite bipartite graph. Let $\{i,j\}=\{1,2\}$. Let $S\subseteq V_i$. A \textit{matching} from $S$ into $V_j$ is a set $\mathcal N$ of disjoint edges in $E$ such that every vertex of $S$ is incident to precisely one edge in $\mathcal N$.
\end{defn}

A perfect matching of a dimer model, then, is a matching onto some full subgraph of the plabic graph $\mathcal G$ of $Q$ induced by all of the internal vertices and some subset of the boundary vertices.
We use the following formulation of Hall's marriage theorem for locally finite graphs.

\begin{thm}[{\cite[Theorem 6]{XXClark}}]\label{thm:IBLFHT}
	Let $(V_1,V_2,E)$ be a bipartite graph in which every vertex has finite degree. The following are equivalent.
	\begin{enumerate}
		\item There is a matching from $V_1$ into $V_2$.
		\item Any $m$ vertices of $V_1$ have at least $m$ distinct neighbors in $V_2$.
	\end{enumerate}
\end{thm}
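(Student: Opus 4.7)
The plan is to reduce the infinite statement to the classical finite Hall's marriage theorem via a compactness argument. The implication $(1) \Rightarrow (2)$ is immediate: given a matching $\mathcal{N}$ from $V_1$ into $V_2$ and any $S \subseteq V_1$ with $|S| = m$, the $m$ edges of $\mathcal{N}$ incident to $S$ provide $m$ distinct endpoints in $V_2$. So I would focus on the harder direction $(2) \Rightarrow (1)$.

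For each $v \in V_1$, let $N(v) \subseteq V_2$ denote its (finite) set of neighbors. Condition $(2)$ applied to the singleton $\{v\}$ shows $N(v) \neq \emptyset$. Equip each $N(v)$ with the discrete topology and form the product space $X := \prod_{v \in V_1} N(v)$. Since each $N(v)$ is finite and hence compact, $X$ is compact by Tychonoff's theorem. A point $f \in X$ assigns a neighbor $f(v)$ to each $v \in V_1$, and such an $f$ corresponds to a matching from $V_1$ into $V_2$ precisely when $f$ is injective. For distinct $u, v \in V_1$, set $C_{u,v} := \{f \in X : f(u) \neq f(v)\}$; this depends on only two coordinates and is therefore clopen in $X$. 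The set of matchings from $V_1$ into $V_2$ is then $\bigcap_{u \neq v} C_{u,v}$, and by the finite intersection property of compact spaces this intersection is nonempty provided that every finite subintersection $C_{u_1, v_1} \cap \cdots \cap C_{u_k, v_k}$ is nonempty.

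To verify this finite intersection property, let $S$ be the finite set of vertices appearing among the $u_i$ and $v_i$. Condition $(2)$ restricts to Hall's condition on subsets of $S$ in the bipartite graph induced on $S \cup N(S)$, so the classical finite Hall's marriage theorem produces an injection $g : S \to V_2$ with $g(s) \in N(s)$ for all $s \in S$. Extending $g$ to an $f \in X$ by choosing $f(v) \in N(v)$ arbitrarily for $v \in V_1 \setminus S$ yields a point in the claimed finite intersection.

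The main obstacle is really just setting up the compactness framework correctly, in particular confirming that $X$ is nonempty (which requires Hall's condition on singletons, so that each factor is nonempty) and that the relevant constraints are closed; beyond this, the proof is a clean reduction to the finite case. An essentially equivalent alternative, valid when $V_1$ is countable, is to enumerate $V_1 = \{v_1, v_2, \ldots\}$ and run a K\"onig's infinity lemma argument on the tree of compatible finite partial matchings of $\{v_1, \ldots, v_n\}$; the local finiteness hypothesis ensures the tree is finitely branching, and each level is nonempty by finite Hall.
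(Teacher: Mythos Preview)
Your proof is correct, but note that the paper does not actually prove this theorem: it is stated as a citation to \cite[Theorem 6]{XXClark} and used as a black box in the proof of Theorem~\ref{thm:geo-consistent-then-almost-perfect-matching}. There is therefore no ``paper's own proof'' to compare against.

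That said, your argument via Tychonoff compactness is one of the standard proofs of this result and is carried out correctly. The only minor remark is that your appeal to Tychonoff uses the axiom of choice in full generality, whereas for this application (finite factors) the weaker ultrafilter lemma or, in the countable case, K\"onig's infinity lemma suffices, as you yourself note at the end.
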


\begin{thm}[{\cite[Theorem 1.1]{AharoniX}}]\label{thm:aharoni}
	Let $(V_1,V_2,E)$ be a bipartite graph. Let $A\subseteq V_1$ and $B\subseteq V_2$. If there exists a matching from $A$ into $V_2$ and a matching from $B$ into $V_1$, then there exists a disjoint set of edges $\mathcal N$ in $E$ such that each vertex in $A\cup B$ is incident to precisely one edge in $\mathcal N$.
\end{thm}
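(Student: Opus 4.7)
The plan is to construct $\mathcal N$ as a subset of $M_A \cup M_B$ by analyzing the structure of the multigraph $H$ on $V_1 \cup V_2$ with edge set $M_A \cup M_B$. Because $M_A$ and $M_B$ are each matchings, every vertex of $H$ has degree at most two, so $H$ decomposes as a disjoint union of (possibly infinite) simple paths and cycles. My strategy is to select, within each component independently, either the $M_A$-edges or the $M_B$-edges, and take $\mathcal N$ to be the union of these selections.

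Cycles and bi-infinite paths are immediate: every vertex has degree two in the component, so either restriction covers every vertex, and I pick (say) the $M_A$-edges. For a one-way infinite ray $v_0 v_1 v_2 \cdots$, the endpoint $v_0$ has degree one in $H$, so its unique incident edge lies in exactly one of $M_A$ or $M_B$; taking the edges of that matching restricted to the ray covers every vertex, including $v_0$.

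The main work lies with finite path components $v_0 v_1 \cdots v_k$. Each internal vertex has degree two, hence is covered by both $M_A$ and $M_B$ and so is matched by either choice. For the endpoints, the key observation is that if the edge at $v_0$ lies in $M_A$, then (since the $V_1$-endpoint of any $M_A$-edge is in $A$) we have $v_0 \in A$; if it lies in $M_B$, then $v_0 \in B$, and dually at $v_k$. A bipartite parity check then rules out the mixed case where the two endpoint-edges are of different types: assuming $v_0 \in V_1$, the vertex $v_k$ lies in $V_1$ or $V_2$ according to the parity of $k$, while the strictly alternating pattern of $M_A/M_B$ edges along the path pins down the type of the last edge in a way incompatible with the endpoint matching-type unless both endpoint-edges have the same type. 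In the two surviving cases (both endpoint-edges in $M_A$, or both in $M_B$), that matching restricted to the path covers every vertex, including both endpoints.

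Taking $\mathcal N$ to be the union of the component-wise selections, disjointness follows because distinct components share no vertex and within a component the chosen edges form a sub-matching. By construction, every vertex of $A \cup B$ is incident to precisely one edge of $\mathcal N$: internal vertices are handled automatically, rays and finite paths by the choices above. The main obstacle I anticipate is the bipartite parity argument for finite paths that eliminates the mixed-endpoint case — this is precisely the step where the hypothesis $A \subseteq V_1$ and $B \subseteq V_2$ is used and where one must be careful about which matching is contributing each edge.
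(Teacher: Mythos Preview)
The paper does not give its own proof of this statement; it is quoted from Aharoni and used as a black box. Your argument is the classical Cantor--Schr\"oder--Bernstein construction for matchings, and the overall strategy (decompose $M_A \cup M_B$ into paths and cycles, then choose one of the two matchings on each component) is correct.

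There is, however, an error in your treatment of finite paths. The claim ``if the edge at $v_0$ lies in $M_B$, then $v_0 \in B$'' is false: under your running assumption $v_0 \in V_1$, one has $v_0 \notin B$ regardless of which matching the incident edge belongs to. Consequently the mixed case (endpoint-edges of different types) is \emph{not} ruled out by parity; for instance $v_0 v_1 v_2$ with $v_0, v_2 \in V_1$, $v_0 v_1 \in M_A$, $v_1 v_2 \in M_B$ is a perfectly valid component of $H$.

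The fix is to observe that parity excludes only the genuinely problematic situation: one endpoint in $A$ and the other in $B$. If $v_0 \in A \subseteq V_1$ and $v_k \in B \subseteq V_2$ then $k$ is odd; since $v_0$ has degree one in $H$ and is $M_A$-saturated, its unique edge lies in $M_A$, and similarly the edge at $v_k$ lies in $M_B$; but alternation over $k$ edges with $k$ odd forces the first and last edges to have the same type, a contradiction. In every remaining case at most one endpoint lies in $A \cup B$ (an endpoint in $V_1$ whose edge is in $M_B$ lies in neither $A$ nor $B$, and dually), and you simply choose the matching covering that endpoint. With this correction your proof goes through.
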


\begin{thm}\label{thm:geo-consistent-then-almost-perfect-matching}
	If a simply connected dimer model $\widetilde Q$ is weakly consistent, then it has a perfect matching.
\end{thm}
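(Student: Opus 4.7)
The strategy is to realize a perfect matching of $\widetilde Q$ as a certain matching in the plabic graph $\mathcal G$ and deduce its existence from Theorems~\ref{thm:IBLFHT} and~\ref{thm:aharoni}. A perfect matching of $\widetilde Q$ corresponds exactly to a set $\mathcal N$ of pairwise disjoint edges of $\mathcal G$ such that every internal vertex of $\mathcal G$ lies on exactly one edge of $\mathcal N$. I would apply Theorem~\ref{thm:aharoni} with $V_1 = \mathcal G_0^w$, $V_2 = \mathcal G_0^b$, $A$ the set of internal white vertices (one per counter-clockwise face), and $B$ the set of internal black vertices (one per clockwise face). It then suffices to produce a matching from $A$ into $\mathcal G_0^b$ and a matching from $B$ into $\mathcal G_0^w$.

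The two tasks are symmetric under swapping black and white, so I focus on producing the matching from $A$ into $\mathcal G_0^b$. Since $\widetilde Q$ is locally finite, so is $\mathcal G$, and Theorem~\ref{thm:IBLFHT} reduces the existence of this matching to Hall's condition: for every finite $S \subseteq A$, one must have $|N_{\mathcal G}(S)| \geq |S|$. Unpacking the definition of $\mathcal G$, the set $N_{\mathcal G}(S)$ consists of the clockwise faces of $\widetilde Q$ sharing an arrow with some face in $S$, together with the boundary arrows of $\widetilde Q$ appearing in some face of $S$.

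To verify Hall, I would consider the closed subcomplex $U = \bigcup_{F \in S}\overline F$ inside $\widetilde{S(Q)}$. Every internal arrow of $\widetilde Q$ lying in $U$ belongs to a unique $F \in S$ and is shared with a unique clockwise face $G \notin S$, and every boundary arrow in $U$ belongs to a unique $F \in S$; in each case the arrow contributes a distinct neighbor to $N_{\mathcal G}(S)$. Using simple connectedness of $\widetilde{S(Q)}$, each connected component of $U$ is a planar 2-complex whose boundary components bound disks, and I would run an Euler characteristic argument on $U$ together with a careful count of the edges along $\partial U$. If this direct count does not close the argument, the backup is to assume that Hall fails on a minimal $S$ and to produce from that data a bad configuration in $D_{\widetilde Q}$ --- a self-intersecting strand, a null-homotopic interior cycle, or a bad lens --- contradicting strand-consistency of $\widetilde Q$ via Theorem~\ref{thm:cons-alg-str} and Proposition~\ref{prop:Q-g-consistent-iff-hat-Q-g-consistent}.

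The main obstacle I anticipate is this third step: translating the inequality $|N_{\mathcal G}(S)| < |S|$ into either a clean Euler characteristic violation on $U$ or into an explicit bad configuration. The subtlety is that $U$ may have several connected components and nontrivial boundary, and that boundary arrows of $\widetilde Q$ contribute to $N_{\mathcal G}(S)$ differently from internal arrows, so the Hall count interacts nontrivially with $\partial \widetilde{S(Q)}$. Once Hall is established, the matching from $B$ into $\mathcal G_0^w$ follows by the symmetric argument, and Theorem~\ref{thm:aharoni} assembles the two one-sided matchings into a disjoint edge set $\mathcal N$ covering all internal vertices, yielding the desired perfect matching of $\widetilde Q$.
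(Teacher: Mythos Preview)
Your overall architecture matches the paper exactly: reduce to Hall's condition on each color via Theorem~\ref{thm:IBLFHT}, then glue the two one-sided matchings with Theorem~\ref{thm:aharoni}. The divergence is entirely in how Hall's condition is verified, and this is where your proposal has a genuine gap.

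Your direct count does not work as stated: the sentence ``in each case the arrow contributes a distinct neighbor to $N_{\mathcal G}(S)$'' is false, since many arrows on $\partial U$ can point to the \emph{same} clockwise face $G$. So the number of boundary arrows of $U$ overcounts $|N_{\mathcal G}(S)|$, and an Euler-characteristic identity on $U$ alone will not separate $|S|$ from $|N_{\mathcal G}(S)|$ without further input. Your backup plan (extract a bad configuration from a Hall violation) is plausible in spirit but is essentially re-proving the disk case from scratch, and you give no mechanism for it.

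The paper sidesteps all of this by reducing to the disk case, which is already known. Given a finite set $S$ of internal white vertices, simple connectedness of $\widetilde{S(Q)}$ lets one choose a finite \emph{disk submodel} $Q'$ of $\widetilde Q$ containing all faces in $S$. The strand diagram of $Q'$ is a restriction of $D_{\widetilde Q}$ and hence has no bad configurations, so $Q'$ is a consistent dimer model on a disk. One then invokes the known result (\cite[Corollary~4.6]{CKP}) that consistent disk models admit perfect matchings; Hall's theorem applied inside $Q'$ then gives $|N_{\mathcal G_{Q'}}(S)|\ge|S|$, and since neighbors in $Q'$ are also neighbors in $\widetilde Q$, Hall holds in $\mathcal G_{\widetilde Q}$. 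The symmetric argument handles black vertices, and Aharoni finishes. The missing idea in your proposal is precisely this localization to a disk submodel and the appeal to the established disk result.
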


\begin{proof}
	We will use the dual definition of a perfect matching. We must then show that there is a set $\mathcal N$ of edges of the plabic graph $\mathcal G$ of $\widetilde Q$ such that every vertex of $\mathcal G$ is incident to exactly one edge of $\mathcal N$.

	We first claim that it suffices to show that any collection of $m$ internal black vertices is connected to at least $m$ white vertices and that any collection of $m$ internal white vertices is connected to at least $m$ black vertices. 
	Suppose this is true. 
		By applying Theorem~\ref{thm:IBLFHT} to the internal black vertices, we see that there is a matching from the set of internal black vertices into the white vertices. Symmetrically, we get a matching from the set of internal white vertices into the black vertices. Then Theorem~\ref{thm:aharoni} shows that there exists a perfect matching. This ends the proof of the claim.

		We show that any collection of $m$ internal white vertices is connected to at least $m$ black vertices. The remaining case is symmetric. 
		Take a set $S$ of $m$ internal white vertices of $\mathcal G_{\widetilde Q}$. These correspond to $m$ internal faces of $\widetilde Q$. Let $Q'$ be a disk submodel of $\widetilde Q$ containing all faces of $S$. Such a disk submodel must exist since $\widetilde Q$ is simply connected. Since $D_{\widetilde Q}$ has no bad configurations and $D_{Q'}$ is a restriction of $D_{\widetilde Q}$, the latter also has no bad configurations. By~\cite[Proposition 2.15]{CKP}, $Q'$ is a path-consistent dimer model. By~\cite[Corollary 4.6]{CKP}, $\mathcal G_{Q'}$ has a perfect matching. In particular, by Hall's marriage theorem (Theorem~\ref{thm:IBLFHT})
 the set $S$ of white vertices considered as vertices of $\mathcal G_{Q'}$ has at least $m$ neighbors in $\mathcal G_{Q'}$, hence $S$ has at least $m$ neighbors in $\mathcal G_{\widetilde Q}$. This completes the proof.
\end{proof}

\def\svgwidth{70pt}
\begin{figure}
	\centering
	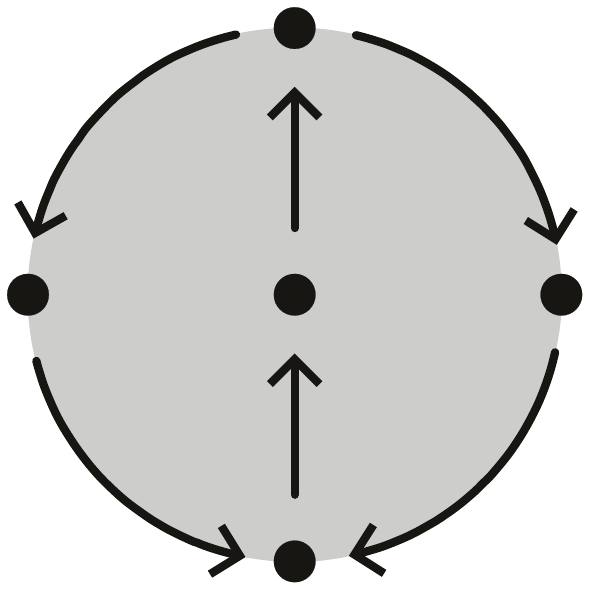
	\caption{A dimer model on a disk which is not weakly consistent but has a perfect matching.}
	\label{fig:perf-match-not-cons}
\end{figure}

Example~\ref{ex:cons-tor-no-perf} shows that Theorem~\ref{thm:geo-consistent-then-almost-perfect-matching} does not work for dimer models which are not simply connected.

\begin{example}\label{ex:cons-tor-no-perf}
	\def\svgwidth{130pt}
\begin{figure}
	\centering
\begingroup%
  \makeatletter%
  \providecommand\color[2][]{%
    \errmessage{(Inkscape) Color is used for the text in Inkscape, but the package 'color.sty' is not loaded}%
    \renewcommand\color[2][]{}%
  }%
  \providecommand\transparent[1]{%
    \errmessage{(Inkscape) Transparency is used (non-zero) for the text in Inkscape, but the package 'transparent.sty' is not loaded}%
    \renewcommand\transparent[1]{}%
  }%
  \providecommand\rotatebox[2]{#2}%
  \newcommand*\fsize{\dimexpr\f@size pt\relax}%
  \newcommand*\lineheight[1]{\fontsize{\fsize}{#1\fsize}\selectfont}%
  \ifx\svgwidth\undefined%
    \setlength{\unitlength}{532.02801514bp}%
    \ifx\svgscale\undefined%
      \relax%
    \else%
      \setlength{\unitlength}{\unitlength * \real{\svgscale}}%
    \fi%
  \else%
    \setlength{\unitlength}{\svgwidth}%
  \fi%
  \global\let\svgwidth\undefined%
  \global\let\svgscale\undefined%
  \makeatother%
  \begin{picture}(1,0.99999805)%
    \lineheight{1}%
    \setlength\tabcolsep{0pt}%
    \put(0,0){\includegraphics[width=\unitlength,page=1]{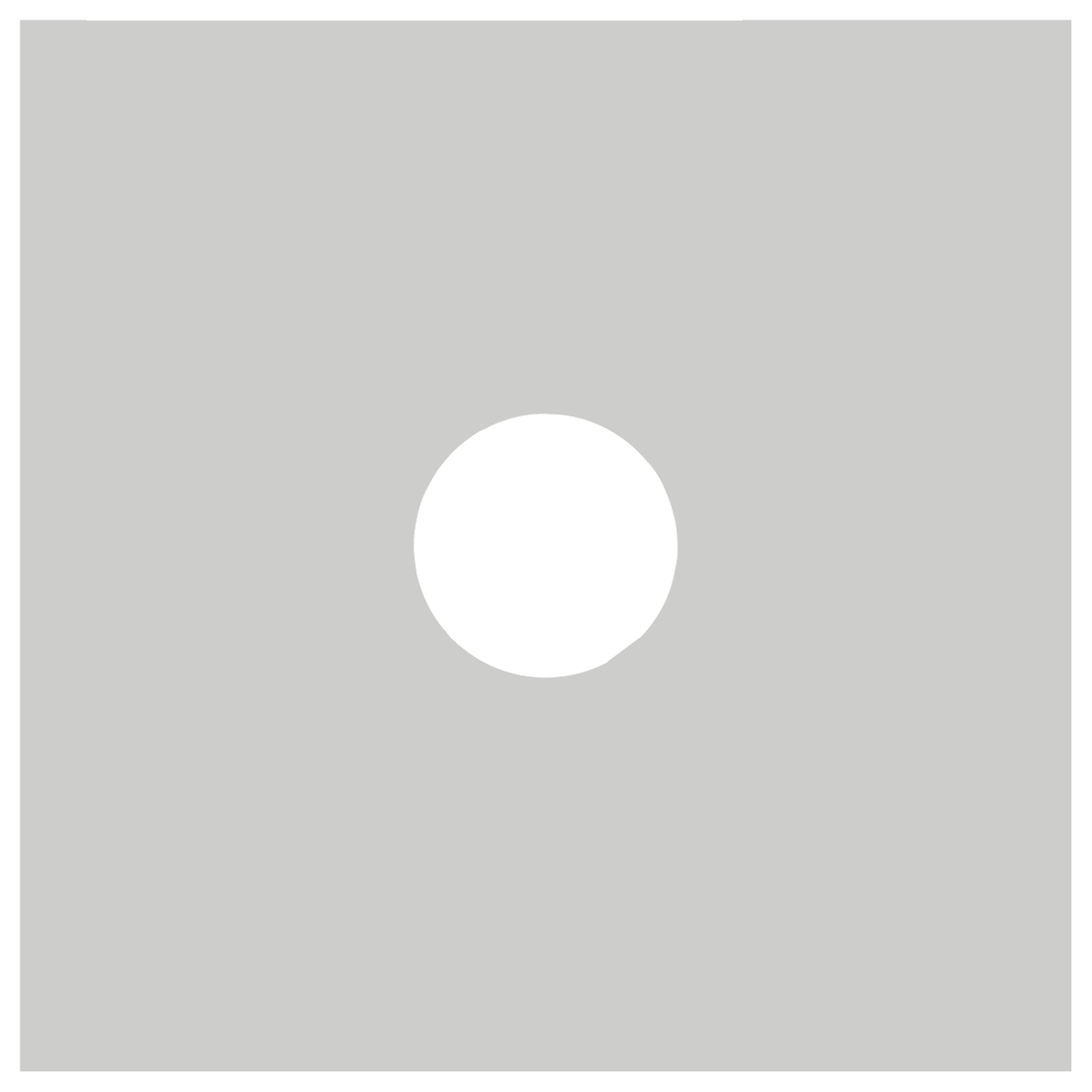}}%
    \put(0.76304948,0.4706857){\color[rgb]{0.09019608,0.08627451,0.07058824}\makebox(0,0)[lt]{\lineheight{1.25}\smash{\begin{tabular}[t]{l}$\gamma$\end{tabular}}}}%
    \put(0.46325324,0.76390334){\color[rgb]{0.09019608,0.08627451,0.07058824}\makebox(0,0)[lt]{\lineheight{1.25}\smash{\begin{tabular}[t]{l}$\delta$\end{tabular}}}}%
    \put(0.1935306,0.4716255){\color[rgb]{0.09019608,0.08627451,0.07058824}\makebox(0,0)[lt]{\lineheight{1.25}\smash{\begin{tabular}[t]{l}$\alpha$\end{tabular}}}}%
    \put(0.47453084,0.19250486){\color[rgb]{0.09019608,0.08627451,0.07058824}\makebox(0,0)[lt]{\lineheight{1.25}\smash{\begin{tabular}[t]{l}$\beta$\end{tabular}}}}%
    \put(0,0){\includegraphics[width=\unitlength,page=2]{nopX.pdf}}%
  \end{picture}%
\endgroup%

	\caption{A weakly consistent dimer model on a torus with a disk taken out which has no perfect matching. Opposite dashed edges are identified.}
	\label{fig:cons-tor-no-perf}
\end{figure}
	Consider the dimer model on a torus pictured in Figure~\ref{fig:cons-tor-no-perf}. It is immediate that any perfect matching of this dimer model must contain one of the arrows of its digon.  A short check verifies that a perfect matching must also contain the arrows $\{\alpha, \beta\}$ or the arrows $\{\gamma, \delta\}$.  However, this prevents any arrow of the face appearing in a corner of the diagram from being in a perfect matching.  Hence, this dimer model has no perfect matching.
	We remark that this dimer model is obtained by taking a weakly consistent dimer model on a torus, and replacing a counter-clockwise square with a variant of the dimer model of Figure~\ref{fig:non-red-annulus}.
\end{example}

Example~\ref{ex:cons-tor-no-perf} raises a question: what sort of conditions may we impose on a weakly consistent dimer model to necessitate the existence of some perfect matching?
In particular, does any weakly consistent dimer model with no digons have a perfect matching?

\subsection{Nondegeneracy}
\label{sec:nondeg}

In the disk and torus case, an important idea is \textit{nondegeneracy} of dimer models. We will define nondegeneracy and prove a simple result which will be used in Section~\ref{sec:3CY}.
\begin{defn}\label{defn:nondeg}
	A dimer model $Q$ is \textit{nondegenerate} if every arrow is contained in a perfect matching. Otherwise, it is \textit{degenerate}.
\end{defn}

In the disk and torus case, nondegeneracy is implied by weak consistency~\cite[Proposition 6.2]{XIUM}~\cite{CKP}. In the general case, this is not true.
For example, the weakly consistent dimer model in Figure~\ref{fig:cons-tor-no-perf} has no perfect matchings, hence is certainly degenerate. The middle of Figure~\ref{fig:noeth-ann} shows a weakly consistent dimer model which has a perfect matching but is still degenerate. Nondegeneracy will feature prominently in Section~\ref{sec:3CY} and Section~\ref{sec:reduce}. 
Figure~\ref{fig:perf-match-not-cons} gives an example of a disk model which is nondegenerate but not weakly consistent.
See Example~\ref{ex:noeth-fin} for multiple examples of nondegenerate weakly consistent dimer models on annuli.
In the rest of this paper, we will see that nondegeneracy is a useful condition that allows us to generalize results from the disk and torus case, motivating the following definition.

\begin{defn}
	A dimer model is \textit{strongly consistent} if it is weakly consistent and nondegenerate.
\end{defn}

The following lemma generalizes the well-known situation in the torus and disk literature. See, for example,~\cite[\S2.3]{XBroomhead2009} in the torus case and~\cite[Proposition 3.1]{XPressland2019} in the disk case.
\begin{lemma}\label{lem:posgrad}
	If $Q$ is {finite and} strongly consistent, then $A_Q$ (and hence $\widehat A_Q$) admits a $\mathbb Z$-grading such that
	\begin{enumerate}
		\item Every nonconstant path has a positive degree, and
		\item Every face-path has the same degree.
	\end{enumerate}
\end{lemma}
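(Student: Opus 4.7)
The plan is to construct the grading directly from a cleverly chosen finite collection of perfect matchings, using nondegeneracy to ensure that every arrow gets a positive degree while using the defining feature of perfect matchings (one arrow per face) to make the relations homogeneous.

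Since $Q$ is finite, its arrow set $Q_1$ is finite. By the definition of strong consistency, $Q$ is nondegenerate, so for each arrow $\alpha\in Q_1$ we may fix a perfect matching $\mathcal M_\alpha$ containing $\alpha$. Let $\{\mathcal M_1,\dots,\mathcal M_k\}$ be the resulting finite multiset of perfect matchings (equivalently, take any finite collection of perfect matchings whose union is all of $Q_1$; such a collection exists by nondegeneracy and the finiteness of $Q_1$). Define a degree function on arrows by
\[
  \deg(\alpha):=\#\{i\in[k]\ :\ \alpha\in\mathcal M_i\},
\]
and extend multiplicatively to paths (so the degree of a path is the sum of the degrees of its arrows, and each constant path has degree $0$). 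By construction $\deg(\alpha)\geq 1$ for every arrow, so every nonconstant path has strictly positive degree, yielding property (1).

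Next I would check that this gives a well-defined grading on $A_Q$, i.e., that the relations $R_\alpha^{cc}-R_\alpha^{cl}$ are homogeneous. For any face $F$ of $Q$ and any $i\in[k]$, the perfect matching $\mathcal M_i$ contains exactly one arrow of $F$. Summing over $i$, any face-path has degree exactly $k$; this already gives property (2). Applying this to the two face-paths $\alpha R_\alpha^{cl}$ and $\alpha R_\alpha^{cc}$ associated to an internal arrow $\alpha$, we find $\deg(R_\alpha^{cl})=k-\deg(\alpha)=\deg(R_\alpha^{cc})$, so each generator of the ideal $I_Q$ is homogeneous. Hence the grading descends to $A_Q$ and satisfies both (1) and (2).

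Finally, I would extend the grading to $\widehat A_Q$. Since every arrow has positive degree, the $n$th piece of the arrow-ideal filtration on $\mathbb C\langle\langle Q\rangle\rangle$ sits inside the closure of $\bigoplus_{m\geq n}(\mathbb CQ)_m$, so the completion is compatible with the grading in the sense that $\widehat A_Q$ becomes a complete graded algebra in the standard way (each element decomposes as a possibly infinite sum of homogeneous components, and the closure $\widehat I_Q$ is generated by the same homogeneous relations). I don't expect any serious obstacle here; the only subtle point is that "graded" for $\widehat A_Q$ has to be interpreted in the topological/completed sense, which is forced on us precisely because every arrow has positive degree. The real content of the lemma is the combinatorial construction in the first two paragraphs, where nondegeneracy is used in an essential way—without it, some arrow would necessarily receive degree $0$ and property (1) would fail.
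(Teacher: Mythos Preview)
Your proof is correct and follows essentially the same approach as the paper: define the degree of an arrow as the number of perfect matchings in a chosen finite collection that contain it, using nondegeneracy for positivity and the one-arrow-per-face property for homogeneity of the relations. The only cosmetic difference is that the paper sums over \emph{all} perfect matchings of $Q$ (finite since $Q$ is finite) rather than a covering subcollection, but the argument is otherwise identical.
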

\begin{proof}
	Let $C$ be a collection of all perfect matchings on $Q$. Since $Q$ is finite, $C$ has finite cardinality. Given a path $p$ of $Q$, we give $p$ the grading
	\[G(p)=\sum_{\mathcal M\in C}\mathcal M(p),\]
	where $\mathcal M(p)$ is the number of arrows of $p$ which are in $\mathcal M$.
	Note that, for any perfect matching $\mathcal M$, the quantity $\mathcal M(p)$ is unchanged by applying a basic morph to $p$. This means that the quantity $G(p)$ is a well-defined number of the equivalence class of $p$. It is clear that if $p$, $q$, and $qp$ are paths, then $G(p)+G(q)=G(qp)$. It follows that $G$ gives a positive $\mathbb Z$-grading on $A$ through which every arrow is given a positive degree.
	The second statement follows because the degree of any face-path is equal to the number of perfect matchings on $Q$.
\end{proof}
\begin{lemma}\label{lem:mabel}
	Let $Q$ be finite and strongly consistent. Then an arbitrary element $x$ of the completed dimer algebra $\widehat A_Q$ may be represented as
	\[x=\sum_{v,w\in Q_0}\sum_{C:v\to w}\sum_{m\geq0}a_{C,m}[r_Cf^m],\]
	where the second sum iterates over all homotopy classes of paths from $v$ to $w$, and $a_{C,m}\in \mathbb{C}$. Moreover, this element is zero if and only if all coefficients $a_{C,m}$ are zero.
\end{lemma}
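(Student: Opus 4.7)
The plan is to establish a canonical $\mathbb C$-basis of $A_Q$ indexed by the pairs $(C,m)$ in the statement, show that this basis is compatible with a positive grading whose homogeneous pieces are finite-dimensional, and deduce the analogous decomposition of $\widehat A_Q$ as a direct product of these pieces.

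First, I would use path-consistency directly. Definition~\ref{defn:algebraic-consistency} produces a bijection between equivalence classes of paths from $v$ to $w$ in $Q$ and pairs $(C,m)$, where $C$ is a homotopy class of such paths and $m\geq 0$; the pair $(C,m)$ corresponds to the class $[r_C f^m]$. Since the equivalence classes of paths form a $\mathbb C$-basis of $A_Q$, the elements $\{[r_C f^m]\}$ also form a $\mathbb C$-basis of $A_Q$.

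Second, I would invoke Lemma~\ref{lem:posgrad} to put a positive $\mathbb Z$-grading on $A_Q$ under which every nonconstant path has positive degree and every face-path has a common degree $d>0$. Each basis element $[r_C f^m]$ is then homogeneous of degree $\deg(r_C)+md$. Since $Q$ is finite and each arrow has integer degree at least one, there are only finitely many paths of any fixed degree, and hence only finitely many basis elements in each graded piece $(A_Q)_n$; in particular, each $(A_Q)_n$ is finite-dimensional, with basis $\{[r_C f^m]:\deg(r_C)+md=n\}$.

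Third, I would identify $\widehat A_Q$ with the direct product $\prod_n (A_Q)_n$. Since every arrow has positive degree, the arrow ideal of $A_Q$ coincides with the augmentation ideal $\mathfrak m=\bigoplus_{n>0}(A_Q)_n$; using the finite-dimensionality of each graded piece, the $\mathfrak m$-adic completion of $A_Q$ equals $\prod_n(A_Q)_n$. Unwinding Definition~\ref{defn:completed-dimer-algebra} then shows that $\widehat A_Q=\mathbb C\langle\langle Q\rangle\rangle/\widehat I_Q$ agrees with this $\mathfrak m$-adic completion. Under this identification, every element of $\widehat A_Q$ decomposes uniquely as a formal sum $\sum_n x_n$ of its homogeneous components, and expanding each $x_n$ in the basis of $(A_Q)_n$ yields the desired representation; the vanishing statement is immediate from uniqueness of this expansion.

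The main obstacle I expect is the identification $\widehat A_Q\cong\prod_n(A_Q)_n$, which requires reconciling the two completion procedures: closing $I_Q$ inside the completed path algebra versus completing $A_Q$ at $\mathfrak m$. Positivity of the grading, together with finite-dimensionality of the graded pieces of both $\mathbb CQ$ and $A_Q$, should make this routine by passing through the inverse systems $\{\mathbb CQ/(I_Q+J^k)\}_k$ and $\{A_Q/\mathfrak m^k\}_k$, but it may be worth isolating as a short preliminary lemma since $A_Q$ need not be Noetherian and $I_Q$ need not be finitely generated.
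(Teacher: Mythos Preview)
Your proposal is correct and shares the essential ingredients with the paper's proof---path-consistency gives the basis $\{[r_Cf^m]\}$ of $A_Q$, and Lemma~\ref{lem:posgrad} makes each graded piece finite-dimensional---but the packaging differs. You take a structural route: identify $\widehat A_Q$ with $\prod_n (A_Q)_n$ once and for all, then read off existence and uniqueness of the expansion from the direct-product description. The paper instead argues more directly at the level of the quotient $\mathbb C\langle\langle Q\rangle\rangle/\widehat I_Q$: for existence it observes that each path-equivalence class is finite (using the grading bound) and simply regroups the infinite sum $\sum_p a_p[p]$; for uniqueness it truncates at a fixed degree $G'$, obtains a finite sum in the uncompleted $A_Q$, and checks it cannot lie in $\widehat I_Q$. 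Your version is cleaner once the completion identification is in hand, but that identification (reconciling closure of $I_Q$ in $\mathbb C\langle\langle Q\rangle\rangle$ with the $\mathfrak m$-adic completion of $A_Q$) is exactly the work the paper sidesteps; note also that the arrow-ideal-adic and $\mathfrak m$-adic topologies on $A_Q$ only agree because arrow degrees are bounded above and below by positive integers, a point worth making explicit if you isolate this as a lemma.
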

\begin{proof}
Since $Q$ is strongly consistent, we may fix a positive $\mathbb Z$-grading $G$ of the completed dimer algebra $\widehat A_Q$ as in Lemma~\ref{lem:posgrad}.
	We first show that an arbitrary $x\in\widehat A_Q$ is of the desired form.
	By definition, an element $x\in\widehat A_Q$ is of the form 
	\begin{equation}\label{ghqw}x=\sum_{p\text{ a path of }Q}a_p[p],\end{equation}
	for some coefficients $a_p\in\mathbb C$.
	Define $G_{\min}:=\textup{min}\{G(\alpha)\ :\ \alpha\in Q_1\}$.

	Let $p$ be any path of $Q$.
Let $C$ be the homotopy class of $p$. By path-consistency, write $[p]=[r_Cf^{m}]$ for some value $m$ and a minimal path $r_C$ in homotopy class $C$.
	Let $M_{C,m}$ be an integer greater than $\frac{G(p)}{G_{\min}}$. Then any path of $Q$ with at least $M_{C,m}$ arrows must have a grading greater than $G(p)$, hence every path equivalent to $p$ must have less than $M_{C,m}$ arrows. Such paths are finite in number, hence the path equivalence class of $p$ is finite.
	Then we may define $a_{C,m}:=\sum_{q\text{ equivalent to }p}a_q$. The desired
\[x=\sum_{v,w\in Q_0}\sum_{C:v\to w}\sum_{m\geq0}a_{C,m}[r_Cf^m]\]
	now follows from rearranging the terms of equation~\eqref{ghqw}.

	Now suppose that some $a_{C',m'}$ is nonzero. Define a real number $G'>G(r_C'f^{m'})$. We show that the graded part of $x$ below $G'$ must be nonzero, and hence that $x$ itself is nonzero. Define for any homotopy class $C$ and $m\geq0$
	\[a'_{C,m}:=\begin{cases}
		a_{C,m}&G(r_Cf^m)<G' \\
		0&\text{else.}
	\end{cases}\]
	Then the sum
	\begin{equation}\label{greatscott}x_{G'}:=\sum_{v,w\in Q_0}\sum_{C:v\to w}\sum_{m\geq0}a'_{C,m}[r_Cf^m]\end{equation}
	gives the graded part of $x$ below $G'$.

	Set $M'>\frac{G'}{G_{\min}}$. Then any path of $Q$ with grading less than $G'$ has less than $M'$ arrows. So, the sum~\eqref{greatscott} has a finite number of nonzero summands. Then the sum~\eqref{greatscott} is in the noncompleted path algebra of $Q$. Since $a_{C',m'}$ is nonzero and in this sum, then $x_{G'}$ is not in the ideal $I_Q$ of the noncompleted dimer algebra.
	Moreover, since every path with at least $M'$ arrows has a grading greater than $G'$, the sum $x_{G'}$ cannot be in the completion of $I_Q$ with respect to the arrow ideal. This shows that $x_{G'}$ is nonzero in the completed dimer algebra, hence $x$ is nonzero in the completed dimer algebra.
\end{proof}

\section{Bimodule Internal 3-Calabi-Yau Property}
\label{sec:3CY}\label{sec:3cy}

We show that finite strongly consistent (completed or noncompleted) dimer models are bimodule internally 3-Calabi-Yau with respect to their boundary idempotent in the sense of~\cite{XPressland2015}. As an application, we use a result from~\cite{AIRX} to show that the Gorenstein-projective module category over the completed boundary algebra of a finite strongly consistent dimer model $Q$ satisfying some extra conditions categorifies the cluster algebra given by the ice quiver of $Q$. We give new examples of suitable dimer models.

The technical part of this section follows~\cite[\S3]{XPressland2019} (in the disk case) and~\cite[\S7]{XBroomhead2009} (in the torus case).
Note that the former writes $A$ for the completed dimer algebra (or Jacobian algebra) $\widehat A_Q$, and the latter deals only with the noncompleted dimer algebra. 

Throughout this section, let $Q$ be a finite weakly consistent dimer model. We will eventually pass to the case when $Q$ is in addition nondegenerate. We write $\A$ to denote simultaneously the noncompleted dimer algebra $A_Q$ and the completed dimer algebra $\widehat A_Q$, since the arguments are the same.

\subsection{One-Sided and Two-Sided Complexes}

We begin by defining some $(\A,\A)$-bimodules. If $v$ is a vertex of $Q$, define $T_v$ (respectively $H_v$) to be the set of all arrows with tail (respectively head) $v$. Define $Q_0^m$ 
to be the set of internal 
vertices of $Q$.  Let $Q_1^m$ 
be the set of internal 
arrows of $Q$.
Define $T_v^m$ and $H_v^m$ to be the internal arrows of $T_v$ and $H_v$, respectively.
We define vector spaces
\[ T_3:=\oplus_{v\in Q_0^m}\mathbb C\omega_v,\ \  
T_2:=\oplus_{\alpha\in Q_1^m}\mathbb C\rho_\alpha,\ \ 
T_1:=\oplus_{\alpha\in Q_1}\mathbb C\alpha,\ \ 
T_0:=\oplus_{v\in Q_0}\mathbb C e_v.\]
The $(\A,\A)$-bimodule structures are given by
\[e_v\cdot\omega_v\cdot e_v=\omega_v,\ \ e_{t(\alpha)}\cdot\rho_\alpha\cdot e_{h(\alpha)}=\rho_\alpha,\ \ 
e_{h(\alpha)}\cdot\alpha\cdot e_{t(\alpha)}=\alpha,\ \ e_v\cdot e_v\cdot e_v=e_v.\]
All other products with the generators of $T_i$ are zero.
In this section, all tensor products are over $T_0$ unless otherwise specified. We consider the following complex, which we will call the \textit{two-sided complex}.
\begin{equation}\label{eqn:cmp2}0\to \A\otimes T_3\otimes \A\xrightarrow{\mu_3}\A\otimes T_2\otimes \A\xrightarrow{\mu_2}\A\otimes T_1\otimes \A\xrightarrow{\mu_1}\A\otimes T_0\otimes \A\xrightarrow{\mu_0}\A\to0\end{equation}
	We consider $\A\otimes T_0\otimes \A$ to be the degree-zero term of the complex and
we define the maps $\mu_i$ as follows.
First, define a function $\partial$ on the arrows of $Q$ by
\[\partial(\alpha)=\begin{cases}R_\alpha^{cc}-R_\alpha^{cl}&\alpha\text{ is an internal arrow}\\
	R_\alpha^{cc}&\alpha\text{ is a boundary arrow in a counter-clockwise face}\\
-R_\alpha^{cl}&\alpha\text{ is a boundary arrow in a clockwise face}.\end{cases}\]
For a path $p=\alpha_m\dots\alpha_1$, we define 
\[\Delta_\alpha(p)=\sum_{\alpha_i=\alpha}\alpha_m\dots\alpha_{i+1}\otimes\alpha\otimes\alpha_{i-1}\dots\alpha_1\]
and extend by linearity and continuity to obtain a map $\Delta_\alpha:\mathbb C\langle\langle Q\rangle\rangle\to \A\otimes T_1\otimes \A$. Then we define
\[\mu_3(x\otimes\omega_v\otimes y)=\sum_{\alpha\in T_v^{m}}x\otimes\rho_\alpha\otimes \alpha y-\sum_{\beta\in H_v^{m}}x\beta\otimes\rho_\beta\otimes y,\]
\[\mu_2(x\otimes\rho_\alpha\otimes y)=\sum_{\beta\in Q_1}x\Delta_\beta\big(\partial(\alpha)\big)y,\textup{ and }\]
\[\mu_1(x\otimes\alpha\otimes y)=x\otimes e_{h(\alpha)}\otimes\alpha y-x\alpha\otimes e_{t(\alpha)}\otimes y.\]
Since the tensor products are over $T_0$, there is a natural isomorphism $\A\otimes T_0\otimes \A\cong \A\otimes \A$. This may be composed with the multiplication map to obtain $\mu_0$.

The following was shown for Jacobian ice quivers, and completed dimer algebras are special case of these.  Nevertheless, the same proof applies in the noncompleted case, so we cite it here without this limitation. 

\begin{thm}[{\cite[Theorem 5.6]{XPressland2015}}]
	\label{thm:bim3cy}
	If the complex~\eqref{eqn:cmp2} is exact, then $\A$ is bimodule internally 3-Calabi-Yau with respect to the idempotent given by the sum of all frozen vertex simples.
\end{thm}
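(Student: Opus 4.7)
The plan is to deduce this result directly from~\cite[Theorem 5.6]{XPressland2015} by identifying our setup as a special instance of a Jacobian ice quiver with potential. Specifically, we view $Q$ as an ice quiver with frozen subquiver $Q_0 \setminus Q_0^m$, $Q_1 \setminus Q_1^m$, and take the potential to be $W = \sum_{F \in Q_2^{cc}} F - \sum_{F \in Q_2^{cl}} F$. The cyclic partial derivative $\partial_\alpha W$ at an internal arrow $\alpha$ yields exactly the relation $R_\alpha^{cc} - R_\alpha^{cl}$, so the (completed or noncompleted) dimer algebra $\A$ coincides with the (completed or noncompleted) Jacobian algebra of this ice quiver with potential. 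Under this translation, the complex~\eqref{eqn:cmp2} is precisely the standard bimodule complex Pressland associates to a Jacobian ice quiver with potential.

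The substance of the argument then runs as follows. Exactness of~\eqref{eqn:cmp2} gives a projective bimodule resolution of $\A$ of length three, since each $\A \otimes T_i \otimes \A$ decomposes as a direct sum of projectives of the form $\A e_v \otimes e_w \A$. The bimodule internal 3-Calabi-Yau property requires that applying $\textup{RHom}_{\A^e}(-, \A \otimes \A)$ to this resolution recovers the same resolution shifted by three, after quotienting by the contributions from the frozen idempotent $e$. The mechanism enabling this self-duality is the asymmetry between $T_3, T_2$ (indexed only on internal vertices and arrows) and $T_1, T_0$ (indexed on all arrows and vertices): dualization pairs the internal portions of $T_0$ with $T_3$ and of $T_1$ with $T_2$, leaving the boundary terms as precisely the discrepancy measured by the \emph{internal} qualifier with respect to $e$.

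The main obstacle, and the real technical content, is verifying that this pairing is compatible with the differentials $\mu_1, \mu_2, \mu_3$; in particular one must check that the dualization of $\mu_2$ is captured by the same combinatorial function $\partial$ encoding the Jacobian relations. This verification is carried out in~\cite[\S5]{XPressland2015} and needs no modification in our setting. Finally, because the argument manipulates only a finite-length projective bimodule resolution and the bimodule structures on $T_0, \dots, T_3$ are identical in the two cases, the same proof applies verbatim to both the noncompleted $A_Q$ and the completed $\widehat A_Q$, justifying our use of the common notation $\A$.
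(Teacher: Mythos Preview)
Your proposal is correct and matches the paper's approach: the paper does not prove this theorem but cites it directly from~\cite[Theorem 5.6]{XPressland2015}, noting just before the statement that completed dimer algebras are a special case of Jacobian ice quivers and that the same proof goes through in the noncompleted case. Your write-up simply makes explicit the identification of the potential $W$ and the resulting Jacobian relations, which is exactly the content needed to justify the citation.
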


Our goal is to show that when $Q$ is strongly consistent, the complex~\eqref{eqn:cmp2} is exact. 
To do this, we will first define a version of~\eqref{eqn:cmp2} which is merely a complex of modules, rather than bimodules, and prove that exactness of this one-sided complex is equivalent to exactness of the two-sided complex~\eqref{eqn:cmp2} in the non-degenerate case. We will then show that the one-sided complex is exact to finish the proof.

Use the quotient map $\A\to \A/\textup{rad}\ \A\cong T_0$ to consider $T_0$ as an $(\A,\A)$-bimodule. Using this bimodule structure we consider the functor $\mathcal F=-\otimes_\A T_0$ from the category of $(\A,\A)$-bimodules to itself. We apply this to the complex~\eqref{eqn:cmp2} and note that 
$T_i\otimes\A\otimes_\A T_0\cong T_i$ and $\A\otimes_{T_0}T_0\cong\A$ to get the complex

\begin{equation}\label{eqn:cmp1}0\to \A\otimes T_3\xrightarrow{\mathcal F(\mu_3)}\A\otimes T_2\xrightarrow{\mathcal F(\mu_2)}\A\otimes T_1\xrightarrow{\mathcal F(\mu_1)} \A\xrightarrow{\mu_0}T_0\to0\end{equation}
We forget the right $\A$-module structure and treat this as a complex of left $\A$-modules, which we refer to as the \textit{one-sided complex}.

\begin{remk}\label{rem:bimodule}
The two-sided sequence \eqref{eqn:cmp2} is indeed a complex of $\A$-modules, which is exact in degrees 1, 0, -1.  This can be seen as follows.  In the noncompleted case $\A = \A_Q$  this follows by the work of Ginzburg \cite[Proposition 5.1.9 and Theorem 5.3.1]{Ginz}, see also the exposition in \cite[Section 7.1]{XBroomhead2009}.  In the completed case $\A = \widehat \A_Q$ this statement appears in  \cite[Lemma 5.5]{XPressland2015}, however as noted by Pressland \cite{PRESSPR} there is a gap in the proof.  Namely,  the citation of the results by Butler and King for the exactness in degrees 1, 0, -1 applies only for the noncompleted algebra.  Instead, following Pressland \cite{PRESSPR} we can deduce exactness of the one-sided sequence \eqref{eqn:cmp1} from \cite[Proposition 3.3]{BIRS} in degrees 1, 0, -1.  Then the same holds for the two-sided complex by taking inverse limits as in the proof of \cite[Lemma 4.7]{XPressland2017}, which works degree by degree. 
\end{remk}

We would like to show exactness of the one-sided complex in degrees $-3$ and $-2$, so we explicitly write the maps $\F(\mu_3)$ and $\F(\mu_2)$.
\begin{align*}
	\F(\mu_3):x\otimes\omega_v&\mapsto
		-\sum_{\beta\in H_v^m}x\beta\otimes\rho_\beta\\
	\F(\mu_2):x\otimes\rho_\alpha&\mapsto\sum_{\beta\in Q_1}x\Delta_\beta^r(\partial(\alpha))
\end{align*}
where, for $p=\alpha_m\dots\alpha_1$,
\[\Delta_\beta^r(p)=\sum_{\alpha_i=\beta}\alpha_m\dots\alpha_{i+1}\otimes\alpha_i.\]

We now do some calculations for $\mu_3$.
\begin{align*}
	\mu_3:x\otimes\omega_v\otimes y&\mapsto \sum_{\alpha\in T_v^{m}}x\otimes\rho_\alpha\otimes\alpha y-\sum_{\beta\in H_v^{m}}x\beta\otimes\rho_\beta\otimes y\\
	&=\sum_{\alpha\in T_v^{m}}x\otimes\rho_\alpha\otimes\alpha y-\left(\sum_{\beta\in H_v^m}x\beta\otimes\rho_\beta\right)\otimes y\\
	&=\left(\sum_{\alpha\in T_v^{m}}x\otimes\rho_\alpha\otimes\alpha\right)y+(\F(\mu_3)(x\otimes\omega_v))\otimes y
\end{align*}
We perform a similar calculation for $\mu_2$.
\begin{align*}
	\mu_2:x\otimes\rho_\alpha\otimes y&\mapsto\sum_{\beta\in Q_1}x\Delta_\beta\big(\partial(\alpha)\big)y\\
	&=\left(\sum_{\beta\in Q_1}x\Delta_\beta^r(p)\right)\otimes y\\
	&=(\F(\mu_2)(x\otimes\rho_\alpha))\otimes y
\end{align*}
We have shown that, for $j\in\{2,3\}$, we have
\begin{equation}\label{eqn:idd}\mu_j:u\otimes y\mapsto((\F(\mu_j)(u))\otimes y+\left(\sum_{v,y'}v\otimes y'\right)y,\end{equation}
	where 
	\begin{itemize}
		\item $u$ is in either $\A\otimes T_3$ (if $j=3$) or $\A\otimes T_2$ (if $j=2$), 
		\item $v$ ranges across some elements of $\A\otimes T_2$ (if $j=3$) or $\A\otimes T_1$ (if $j=2$), and
		\item $y'$ ranges across some arrows of $\A$.
	\end{itemize}

\subsection{Proving 3-Calabi-Yau Property for Strongly Consistent Models}

We now show that exactness of the bimodule complex~\eqref{eqn:cmp2} is equivalent to exactness of the one-sided complex~\eqref{eqn:cmp1} when $Q$ is strongly consistent. We will then show that the one-sided complex is exact, and hence that the completed dimer algebra is bimodule internally 3-Calabi-Yau with respect to its boundary idempotent.

We consider the bimodules $T_i$ to be $\mathbb Z$-graded as follows. All elements of $T_3$ and $T_0$ have degree 0. An element $\rho_\alpha$ in $T_2$ corresponding to an arrow $\alpha\in Q$ is given the negative of the grading of $\alpha$ in $Q$. An element $\alpha$ in $T_1$ corresponding to an arrow $\alpha\in Q$ is given the grading of $\alpha$ in $Q$.

We extend the grading on $\A$ given by Lemma~\ref{lem:posgrad} with the gradings on $T_i$ described above to a $\mathbb Z$-grading on the bimodule complex $\A\otimes T_*\otimes \A$ by adding the grading in each of the three positions. 

We remark that the grading on $\A\otimes T_2\otimes \A$ is not positive. The minimum possible degree of an element of $\A\otimes T_2\otimes \A$ is $-m$, where $m$ is the maximum possible degree of an arrow of $\A$.
Moreover, every face-path has degree equal to the number of perfect matchings on $Q$.

\begin{lemma}\label{lem:gradres}
	The maps $\mu_3$ and $\mu_2$ are maps of graded bimodules. In other words, they map homogeneous elements to homogeneous elements. 
\end{lemma}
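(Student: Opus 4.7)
The plan is to verify directly that $\mu_3$ and $\mu_2$ send homogeneous elements to homogeneous elements by computing degrees summand by summand, using the grading supplied by Lemma~\ref{lem:posgrad}.

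The first step is to pin down the degree of return paths. For any internal arrow $\alpha$, the paths $R_\alpha^{cc}\alpha$ and $R_\alpha^{cl}\alpha$ are face-paths (they trace out $F_\alpha^{cc}$ and $F_\alpha^{cl}$ starting at $t(\alpha)$), and for a boundary arrow the same holds with the single available return path. By Lemma~\ref{lem:posgrad} every face-path has the same degree $d_f$, so
\[
  \deg R_\alpha^{cc} \;=\; \deg R_\alpha^{cl} \;=\; d_f - \deg \alpha.
\]
In particular, every monomial appearing in $\partial(\alpha)$ is a path of degree $d_f - \deg\alpha$, so $\partial(\alpha)$ is itself homogeneous in $\A$.

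The second step is to handle $\mu_3$. Take a homogeneous $x\otimes\omega_v\otimes y$, which has degree $|x|+|y|$ since $\omega_v\in T_3$ has degree $0$. In each summand $x\otimes\rho_\alpha\otimes\alpha y$ with $\alpha\in T_v^m$, the factor $\rho_\alpha$ contributes $-\deg\alpha$ and the extra $\alpha$ on the right contributes $+\deg\alpha$, so the total degree is $|x|+|y|$. Symmetrically, each summand $x\beta\otimes\rho_\beta\otimes y$ with $\beta\in H_v^m$ has the $\deg\beta$ from $\beta$ cancelling the $-\deg\beta$ from $\rho_\beta$, again giving degree $|x|+|y|$. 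Hence $\mu_3$ preserves degree.

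The third step is to handle $\mu_2$. Take a homogeneous $x\otimes\rho_\alpha\otimes y$ of degree $|x|-\deg\alpha+|y|$. By step 1, every path $p=\alpha_m\cdots\alpha_1$ appearing in $\partial(\alpha)$ has degree $d_f-\deg\alpha$. For any arrow $\beta$, each term $\alpha_m\cdots\alpha_{i+1}\otimes\beta\otimes\alpha_{i-1}\cdots\alpha_1$ in $\Delta_\beta(p)$ has total degree $\deg(p)=d_f-\deg\alpha$, since the grading on $T_1$ matches that on $\A$. Multiplying on the left by $x$ and on the right by $y$ adds $|x|+|y|$ to each summand, so every term of $\mu_2(x\otimes\rho_\alpha\otimes y)$ has degree $|x|+|y|+d_f-\deg\alpha$. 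Thus $\mu_2$ is homogeneous of degree $d_f$ (a uniform shift), which in particular sends homogeneous elements to homogeneous elements.

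I do not anticipate any real obstacle: the lemma is a direct bookkeeping argument, and the only nontrivial input is the equality $\deg R_\alpha^{cc}=\deg R_\alpha^{cl}$, which is exactly what the ``all face-paths have the same degree'' clause of Lemma~\ref{lem:posgrad} is designed to give.
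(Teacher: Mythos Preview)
Your proof is correct and follows essentially the same approach as the paper: both verify summand by summand that in $\mu_3$ the contribution of $\alpha$ cancels that of $\rho_\alpha$, and that in $\mu_2$ each summand has degree $|x|+|y|+d_f-\deg\alpha$ because every return path $R_\alpha$ has degree $d_f-\deg\alpha$. The only cosmetic difference is that you isolate the computation $\deg R_\alpha^{cc}=\deg R_\alpha^{cl}=d_f-\deg\alpha$ as a preliminary step, whereas the paper folds it into the $\mu_2$ paragraph.
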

\begin{proof}
	First, we consider $\mu_3$. Any summand of $\mu_3(x\otimes\omega_v\otimes y)$ is of the form $x\otimes\rho_\alpha\otimes\alpha y$ or $x\alpha\otimes\rho_\alpha\otimes y$ for some arrow $\alpha$. The $\alpha$ on the left or right summands adds some number $m$ to the degree, and the $\rho_\alpha$ in the middle subtracts that same degree, so the degree of $\mu_3(x\otimes\omega_v\otimes y)$ is the same as the degree of $x\otimes\omega_v\otimes y$. 

	We now consider $\mu_2$. Any summand of $\mu_2(x\otimes\rho_\alpha\otimes y)$ is of the form $xR''\otimes\beta\otimes R'y$ for some arrow $\beta$ and some paths $R'$ and $R''$ such that $R''\beta R'$ is some return path $R_\alpha$ of $\alpha$. Compared to $x\otimes\rho_\alpha\otimes y$, we are replacing the (negative) grading of middle term $\rho_\alpha$ with the (positive) grading of the path $R_\alpha=R''\beta R'$. The result is that the grading has increased by the grading of $\alpha R_\alpha$ in $\A$. This is a face-path, and all face-paths have the same grading. It follows that $\mu_2$ has the effect of increasing the grading of a homogeneous element by the grading of a face-path in $\A$.
\end{proof}
\begin{thm}[{\cite[Lemma 4.7]{XPressland2017}}]\label{thm:wherebefore}
	If the one-sided complex~\eqref{eqn:cmp1} is exact for $\A=\widehat A_Q$, then the two-sided complex~\eqref{eqn:cmp2} is exact for $\A=\widehat A_Q$.
\end{thm}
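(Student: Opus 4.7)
The plan is to lift exactness from the one-sided complex~\eqref{eqn:cmp1} to the two-sided complex~\eqref{eqn:cmp2} via a graded Nakayama-type argument based on the arrow-ideal filtration of $\widehat A_Q$. By Remark~\ref{rem:bimodule}, exactness of~\eqref{eqn:cmp2} is already known at the three rightmost positions, so only exactness at $\widehat A_Q \otimes T_2 \otimes \widehat A_Q$ (i.e., $\ker \mu_2 \subseteq \operatorname{im} \mu_3$) and at $\widehat A_Q \otimes T_3 \otimes \widehat A_Q$ (i.e., injectivity of $\mu_3$) remain to be established.

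The key structural observation is that every bimodule $\widehat A_Q \otimes T_i \otimes \widehat A_Q$ is filtered by the arrow-degree of the right factor. Using Lemma~\ref{lem:mabel} and the positive $\mathbb Z$-grading from Lemma~\ref{lem:posgrad}, any element $\xi \in \widehat A_Q \otimes T_j \otimes \widehat A_Q$ can be written as a convergent sum $\xi = \sum_{d \geq 0} \xi^{(d)}$, where the summands of $\xi^{(d)}$ have right factor homogeneous of arrow-degree $d$. The degree-zero summand lies in $\widehat A_Q \otimes T_j \otimes T_0 \cong \widehat A_Q \otimes T_j$, canonically identified with a term of~\eqref{eqn:cmp1}, and under this identification the projection of $\mu_j$ onto this summand coincides with $\mathcal F(\mu_j)$. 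This is made explicit by~\eqref{eqn:idd}: $\mu_j(u \otimes y) = \mathcal F(\mu_j)(u) \otimes y + (\sum v \otimes y') y$, where the arrows $y'$ have strictly positive arrow-degree, so $\mu_j$ strictly preserves the right-factor filtration.

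Now suppose $\xi \in \ker(\mu_j)$ for $j \in \{2, 3\}$. Projecting $\mu_j(\xi) = 0$ onto the lowest right-factor degree yields $\mathcal F(\mu_j)(\xi^{(0)}) = 0$. For $j = 3$, injectivity of $\mathcal F(\mu_3)$ (from exactness of~\eqref{eqn:cmp1}) forces $\xi^{(0)} = 0$; for $j = 2$, exactness of~\eqref{eqn:cmp1} at $\widehat A_Q \otimes T_2$ produces $\eta^{(0)} \in \widehat A_Q \otimes T_3$ with $\mathcal F(\mu_3)(\eta^{(0)}) = \xi^{(0)}$. Replacing $\xi$ with $\xi - \mu_3(\eta^{(0)} \otimes 1)$ in the latter case yields a new cycle whose nonzero support lies in strictly higher right-factor degrees, and iterating produces a sequence $\eta^{(n)}$ of partial lifts.

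The principal obstacle is convergence: one must show that the sequence of partial sums $\sum_{n < N} \eta^{(n)}$ converges in $\widehat A_Q \otimes T_3 \otimes \widehat A_Q$ to an element $\eta$ with $\mu_3(\eta) = \xi$. This is guaranteed by completeness of $\widehat A_Q$ in the arrow-ideal topology: the correction at stage $n$ lies in $\widehat A_Q \otimes T_3 \otimes \mathfrak{m}^n$ for $\mathfrak{m}$ the arrow ideal, and Lemma~\ref{lem:gradres} ensures the lifts can be chosen to respect gradings so that the partial sums form a Cauchy sequence in the filtered topology. For $j = 3$ the same iteration forces every $\xi^{(d)} = 0$, hence $\xi = 0$, giving injectivity of $\mu_3$ and completing the proof.
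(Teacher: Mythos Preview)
The paper does not prove this statement; it simply cites \cite[Lemma 4.7]{XPressland2017}. Your argument is essentially the proof of Proposition~\ref{prop:1iff2} (the noncompleted analogue) adapted to the completed setting by replacing ``finite termination'' with ``convergence in the arrow-ideal topology.'' That adaptation is sound, and the iteration via~\eqref{eqn:idd} is exactly right.

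There is, however, a genuine gap: you invoke Lemmas~\ref{lem:posgrad}, \ref{lem:mabel}, and~\ref{lem:gradres}, all of which require $Q$ to be \emph{strongly} consistent (nondegenerate). Theorem~\ref{thm:wherebefore} as stated assumes only weak consistency (the standing hypothesis of the section), so your proof establishes a strictly weaker statement. In the paper's logic this matters: the point of citing Pressland here rather than just running the graded argument is precisely that Pressland's inverse-limit proof works without the perfect-matching grading. If one is willing to assume nondegeneracy from the start, then Theorem~\ref{thm:wherebefore} and Proposition~\ref{prop:1iff2} collapse into a single statement with your proof, and the separate citation becomes unnecessary.

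To fix the gap and match the stated hypotheses, you should filter by the arrow-ideal (path-length) filtration on the right tensor factor rather than by the perfect-matching degree. Equation~\eqref{eqn:idd} already shows that $\mu_j$ raises the right-factor path-length filtration strictly, so the iteration pushes the residual cycle into $\widehat A_Q\otimes T_j\otimes\mathfrak m^n$ after $n$ steps, and completeness gives convergence; no grading on $\widehat A_Q$ is needed. This is the content of Pressland's argument, as alluded to in Remark~\ref{rem:bimodule}.
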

To show a version of Theorem~\ref{thm:wherebefore} when $\A$ is the noncompleted dimer algebra $A_Q$, we assume in addition nondegeneracy.
\begin{prop}\label{prop:1iff2}
	Suppose $Q$ is strongly consistent. If the one-sided complex~\eqref{eqn:cmp1} is exact, then the two-sided complex~\eqref{eqn:cmp2} is exact.
\end{prop}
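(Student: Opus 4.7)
The plan is to first reduce, then grade. By Remark~\ref{rem:bimodule}, the two-sided complex~\eqref{eqn:cmp2} is already exact in homological degrees $1$, $0$, $-1$ in both the noncompleted and completed settings, so the content of the proposition lies in verifying exactness at the two remaining terms $\A \otimes T_1 \otimes \A$ and $\A \otimes T_2 \otimes \A$. In the completed case $\A = \widehat A_Q$ this is Theorem~\ref{thm:wherebefore}, so the essential new input is the noncompleted case $\A = A_Q$, where I would invoke the positive $\mathbb Z$-grading supplied by Lemma~\ref{lem:posgrad} via strong consistency. Let $\A_p$ denote the degree-$p$ homogeneous piece of $\A$ in this grading.

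I would then filter each term $C_j := \A \otimes T_j \otimes \A$ of the two-sided complex by the degree of the rightmost tensor factor, setting $F^p C_j := \A \otimes T_j \otimes \bigoplus_{q \geq p} \A_q$. Equation~\eqref{eqn:idd} gives $\mu_j(u \otimes y) = \F(\mu_j)(u) \otimes y + (\sum v \otimes y') y$, in which each $y'$ is an arrow and thus has strictly positive degree; this shows that $\mu_j$ preserves the filtration, the leading term $\F(\mu_j)(u) \otimes y$ keeping the right-factor degree while the correction strictly increases it. The induced differential on the associated graded piece $F^p C_j / F^{p+1} C_j$ is then $\F(\mu_j) \otimes \mathrm{id}_{\A_p}$. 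Since $\A_p$ is a flat $\mathbb C$-module, exactness of the one-sided complex~\eqref{eqn:cmp1} implies exactness of the associated graded complex at each filtration level $p$.

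To promote this to exactness of the filtered complex itself, take a homogeneous $z \in \ker(\mu_j)$ of total degree $d$. Let $p_0$ be the minimum $p$ such that the image of $z$ in $F^p C_j / F^{p+1} C_j$ is nonzero. Exactness of the associated graded at level $p_0$ supplies $w_{p_0} \in F^{p_0} C_{j+1}$ with $z - \mu_{j+1}(w_{p_0}) \in F^{p_0 + 1} C_j$, still in $\ker \mu_j$. I would iterate this procedure. Termination is guaranteed because, with $d$ fixed and $Q$ finite, the right-factor degrees appearing in any homogeneous element of total degree $d$ in $C_j$ are bounded above by $d + M$, where $M$ is the maximum of $|\deg(\rho_\alpha)|$ over internal arrows $\alpha$. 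Summing the $w_{p_i}$'s produced through the iteration yields an element of $C_{j+1}$ mapping to $z$ under $\mu_{j+1}$.

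The main obstacle is carefully justifying the filtration behavior of $\mu_j$ and ensuring termination of the lifting iteration. Verifying that the correction terms in~\eqref{eqn:idd} strictly raise the filtration is where positivity of the grading—equivalently, strong consistency—enters essentially. In the completed case, termination is replaced by convergence in the arrow-ideal topology; in the noncompleted case, the combination of positivity of the grading and finiteness of $Q$ provides the requisite finiteness, ensuring that the lifting procedure terminates in finitely many steps and yields an honest preimage inside $A_Q \otimes T_{j+1} \otimes A_Q$.
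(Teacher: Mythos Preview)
Your approach is essentially the same as the paper's: both reduce via Remark~\ref{rem:bimodule}, use the positive grading from Lemma~\ref{lem:posgrad}, organize elements by the degree of the rightmost tensor factor, invoke~\eqref{eqn:idd} to see that the ``leading'' part is governed by $\F(\mu_j)$, and iterate a lifting procedure that terminates by the degree bound. Your filtration/associated-graded language is a clean repackaging of the paper's more hands-on computation.

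One correction: you misidentify the remaining terms. With $\A\otimes T_0\otimes\A$ in degree~$0$, the known exactness in degrees $1,0,-1$ covers $\A$, $\A\otimes T_0\otimes\A$, and $\A\otimes T_1\otimes\A$; what remains is exactness at $\A\otimes T_2\otimes\A$ and $\A\otimes T_3\otimes\A$ (degrees $-2$ and $-3$), i.e., $\ker\mu_2\subseteq\mathrm{im}\,\mu_3$ and $\ker\mu_3=0$. This is harmless for your argument, since~\eqref{eqn:idd} is stated precisely for $j\in\{2,3\}$ and your lifting uses $\mu_{j+1}$ for $j\in\{2,3\}$ (with $\mu_4=0$), but the text should be amended.
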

\begin{proof}
	We follow the proof of~\cite[Proposition 7.5]{XBroomhead2009}.
	Suppose the one-sided complex~\eqref{eqn:cmp1} is exact. 
	We know by Remark~\ref{rem:bimodule} that the bimodule complex~\eqref{eqn:cmp2} is exact in every degree except for, possibly, $-3$ and $-2$. It remains to show that $\ker\mu_2\subseteq\textup{\textup{im}}\mu_3$ and $\ker\mu_3\subseteq\textup{im}\mu_4=0$, where $T_4:=0$ and $\mu_4:T_4\to \A\otimes T_3\otimes \A$ is the zero map.

	Let $j\in\{3,2\}$.
	Let $\phi_0$ be a nonzero element of $\A\otimes T_j\otimes \A$ which is in the kernel of $\mu_j$. We show that $\phi_0$ is in the image of $\mu_{j+1}$. Since the grading is respected by the map $\mu_j$ (Lemma~\ref{lem:gradres}), we may assume that $\phi_0$ is homogeneous of some grade $d$. We may organize the terms of $\phi_0$ by the degree of the term in third position.
	\[\phi_0=\sum_{y\in Y}u_y\otimes y+\{\textup{terms with strictly higher degree in the third position}\},\]
	where $Y$ is a nonempty linearly independent set of monomials in the graded piece $\A^{(d_0)}$ with least possible degree, and $u_y\in(\A\otimes T_j)^{(d-d_0)}$. 
	Applying the map $\mu_j$ and using~\eqref{eqn:idd} and Lemma~\ref{lem:posgrad}, we see that $\mu_j(\phi_0)=0$ is equivalent to the condition
	\[0=\sum_{y\in Y}(\F(\mu_j)(u_y))\otimes y+\{\textup{terms with strictly higher degree in the third position}\}.\]
	Since the monomials $y\in Y$ are linearly independent, this implies that for all $y\in Y$ we have $\mathcal F(\mu_j)(u_y)=0$. 
	Using the exactness of the one-sided complex, we conclude that there exist elements $v_y\in(\A\otimes T_{j+1})^{(d-d_0)}$ such that $\F(\mu_j)(v_y)=u_y$ for each $y\in Y$. We construct an element
	\[\psi_1=\sum_{y\in Y}v_y\otimes y\in(\A\otimes T_{j_1}\otimes\A)^{(d)}\]
	and apply $\mu_{j+1}$ to get
	\[\mu_{j+1}(\psi_1)=\sum_{y\in Y}u_y\otimes y+\{\textup{terms with strictly higher degree in the third position}\}.\]
	We observe that $\phi_1:=\phi_0-\mu_{j+1}(\psi_1)$ is in the kernel of $\mu_{j}$ and that its terms have strictly higher degree in the third position than $\phi_0$. We iterate the procedure, noting that the degree in the third position is strictly increasing but is bounded above by the total degree $d$ if $j=3$, and by $d$ plus the maximum degree of an arrow of $\A$ if $j=2$. Hence, after a finite number of iterations we get $\phi_r=\phi_0-\sum_{i=1}^r\mu_{j+1}(\psi_i)=0$. We conclude that $\phi_0=\mu_{j+1}(\sum_{i=1}^r\psi_i)$ and that the complex~\eqref{eqn:cmp2} is exact at $\A\otimes T_j\otimes \A$.
\end{proof}

We now know that in order to show exactness of the bimodule complex~\eqref{eqn:cmp2}, it suffices to show exactness of the one-sided complex~\eqref{eqn:cmp1}.
We follow~\cite[\S3]{XPressland2019} and consider exactness of~\eqref{eqn:cmp1} vertex by vertex.
If $v$ is a vertex, let $S_v:=e_vT_0e_v$ be the simple module at $v$.

We may consider
the complex~\eqref{eqn:cmp1} as a complex of $(\A,T_0)$-bimodules, which we denote by $\mathbf P^1$. Since $T_0=\oplus_{v\in Q_0}S_v$, we have
\[\mathbf P^1=\oplus_{v\in Q_0}\mathbf P^1e_v\]
as a complex of left $\A$-modules.
Hence, in order to show exactness of the complex of left $\A$-modules~\eqref{eqn:cmp1}, it suffices to show that for any vertex $v$ of $Q$, the complex of left $\A$-modules $\mathbf P^1e_v$ is exact. We rewrite this complex $\mathbf P^1e_v$ as

\begin{equation}
	\label{eqn:cmp4}
	0\to X_3\xrightarrow{\bar\mu_3}X_2\xrightarrow{\bar\mu_2}X_1\xrightarrow{\bar\mu_1}A\otimes T_0\otimes S_v\to S_v\to0,
\end{equation}
where the spaces $X_i$ are defined as
\begin{align*}
	X_1&:=\bigoplus_{\beta\in T_v}\A e_{h(\beta)},\\
	X_2&:=\bigoplus_{\alpha\in H_v^m}\A e_{t(\alpha)},\\
	X_3&:=\begin{cases}
		\A e_v&v\in Q_0^m\\
	0&\textup{else}\end{cases},
\end{align*}
and the maps $\bar\mu_j$ are induced by $\mathcal F(\mu_j)$ under the relevant isomorphisms. We will make explicit the maps $\bar\mu_3$ and $\bar\mu_2$ after introducing some notation. We write a general element $x$ of $\oplus_{\alpha\in H_v^{m}}\A e_{t(\alpha)}$ as $x=\sum_{\alpha\in H_v^{m}}x_\alpha\otimes[\alpha]$, where for any $\alpha\in H_v^{m}$, the summand $x_\alpha\otimes[\alpha]$ refers to the element $x_\alpha\in\A e_{t(\alpha)}$ in the summand of $\oplus_{\alpha\in H_v^{m}}\A e_{t(\alpha)}$ indexed by $\alpha$. Similarly, a general element of $\oplus_{\beta\in T_v}\A e_{h(\beta)}$ will be written as $y=\sum_{\beta\in T_v}y_\beta\otimes[\beta]$.

We define the \textit{right derivative $\partial_\beta^r$ with respect to $\beta$} on a path $\gamma_k\dots\gamma_1$ by
\[\partial_\beta^r(\gamma_k\dots\gamma_1)=\begin{cases}\gamma_k\dots\gamma_2&\gamma_1=\beta\\0&\gamma_1\neq\beta\end{cases}\]
and extend linearly and continuously. Similarly, there is a \textit{left derivative}, defined on paths by
\[\partial_\beta^l(\gamma_k\dots\gamma_1)=\begin{cases}\gamma_{k-1}\dots\gamma_1&\gamma_k=\beta\\0&\gamma_k\neq\beta.\end{cases}\]
Given two arrows $\alpha$ and $\beta$ of $Q$, we observe that
\[\partial_\beta^r(\partial(\alpha))=\partial_\alpha^l(\partial(\beta)).\]
We now calculate:
\begin{align*}
	\bar\mu_2(x)&=\sum_{\beta\in T_v}\left(\sum_{\alpha\in H_v^m}x_\alpha\partial_\beta^r(\partial(\alpha))\right)\otimes[\beta]\\
	\bar\mu_3(x)&=\sum_{\alpha\in H_v^{m}}x\alpha\otimes[\alpha]
\end{align*}

We now finally prove the main result of this section after citing the disk version proven by Pressland.
Note that in~\cite{XPressland2019}, dimer models on a disk are required to have at least three boundary vertices to avoid degenerate cases. This condition is not used in the cited result (see~\cite[Remark 2.2]{XPressland2019}), so we cite it without this limitation. {The results and proofs of~\cite[Theorem 5.6]{XPressland2015} and~\cite[Theorem 3.7]{XPressland2019} are stated only for completed Jacobian algebras, but work also for their noncompleted variants. Hence, we cite this result for the completed and noncompleted algebras simultaneously.}

\begin{thm}[{\cite[Theorem 3.7]{XPressland2019},~\cite[Theorem 5.6]{XPressland2015}}]\label{thm:Calabi-Yau-disk}
	If $Q$ is a path-consistent dimer model on a disk, then the sequence~\eqref{eqn:cmp4} is exact for all $v$, and hence $\A$ is bimodule internally 3-Calabi-Yau with respect to its boundary idempotent.
\end{thm}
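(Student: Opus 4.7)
The plan is to obtain the 3-Calabi-Yau conclusion from vertex-wise exactness of the one-sided complex~\eqref{eqn:cmp4}, since combining that with Theorem~\ref{thm:wherebefore} (or Proposition~\ref{prop:1iff2}) and Theorem~\ref{thm:bim3cy} yields the bimodule internal 3-Calabi-Yau property with respect to the boundary idempotent. The reduction to the one-sided complex is clean in the disk setting because a path-consistent dimer model on a disk is automatically nondegenerate: it is simply connected, so by Theorem~\ref{thm:geo-consistent-then-almost-perfect-matching} it has a perfect matching, and in the disk case this upgrades (as recorded in the paper's discussion of Section~\ref{sec:nondeg}) to every arrow lying in some perfect matching. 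Thus the grading machinery of Lemma~\ref{lem:posgrad} is available, which is what powers the degree-by-degree argument in Proposition~\ref{prop:1iff2}.

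For the vertex complex itself, the easy part is exactness at $S_v$ and at $\A\otimes T_0\otimes S_v$, which is formal: the augmentation is surjective and $\bar\mu_1$ visibly surjects onto the augmentation ideal $e_v(\textup{rad}\,\A)$. The substantive content is exactness at $X_1$, at $X_2$, and (for internal $v$) injectivity of $\bar\mu_3$. I would attack exactness at $X_1$ as follows: a syzygy $\sum_{\beta\in T_v}y_\beta\beta=0$ in $\A$ must, by path-consistency together with the perfect-matching grading, be witnessed by a finite sequence of basic morphs, i.e.\ by a finite collection of applications of the relations $R_\alpha^{cc}-R_\alpha^{cl}$ at internal arrows $\alpha$. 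Organizing these applications by the arrow $\alpha$ shows the syzygy is the image under $\bar\mu_2$ of $\sum_\alpha x_\alpha\otimes[\alpha]$ where the $x_\alpha$ count signed applications of each relation. The delicate point is verifying that the accounting really closes up: this is a finite-dimensional problem at each graded piece, and one can localize to a disk submodel containing all intermediate paths (Corollary~\ref{cor:dimer-submodel-consistent}), so that the argument takes place in a combinatorially bounded setting.

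For exactness at $X_2$ and injectivity of $\bar\mu_3$, the same strategy applies one step higher: the ``second syzygies'' among the generating relations $\partial(\alpha)$ come only from the incidence of these relations at a shared internal vertex $v$, where the two natural ways to combine the outgoing and incoming return paths cancel. This cancellation is precisely what $\bar\mu_3:\A e_v\to\bigoplus_{\alpha\in H_v^m}\A e_{t(\alpha)}$ records via $x\mapsto\sum_\alpha x\alpha\otimes[\alpha]$. Injectivity of $\bar\mu_3$ then follows because $\A$ has no nontrivial cycles that multiply to zero: the positive grading from Lemma~\ref{lem:posgrad} shows $x\alpha\neq 0$ for nonzero $x\in\A e_v$ and $\alpha\in H_v^m$, and the different summands indexed by $\alpha$ land in independent direct summands.

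The main obstacle I expect is the combinatorial bookkeeping at $X_2$: one must show that all relations among the $\partial(\alpha)$ are generated by the relations coming from internal vertices, without any ``hidden'' syzygy produced by a long sequence of morphs. The cleanest way to handle this is to use the perfect-matching grading to restrict to a graded piece where only finitely many paths contribute, apply the disk submodel reduction (Corollary~\ref{cor:dimer-submodel-consistent}) to work inside a finite subquiver, and then check the exactness by direct linear algebra using path-consistency to control equivalence classes. This is essentially the route taken in~\cite{XPressland2019}, and the argument there can be packaged to avoid any separate appeal to $(k,n)$-diagram combinatorics.
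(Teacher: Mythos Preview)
The paper does not give its own proof of this statement: Theorem~\ref{thm:Calabi-Yau-disk} is stated as a cited result from \cite{XPressland2019,XPressland2015} and is then used as a black box in the proof of Theorem~\ref{thm:Calabi-Yau}. So there is nothing to compare against in the usual sense---both you and the paper ultimately defer to Pressland for the substance of the disk case.

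That said, a few remarks on your sketch. First, exactness at $X_1$ (and at the lower terms) is already available from the framework in Remark~\ref{rem:bimodule}: the one-sided complex is known to be exact in degrees $1,0,-1$ via \cite[Proposition~3.3]{BIRS}, so the only content needed for Theorem~\ref{thm:Calabi-Yau-disk} is injectivity of $\bar\mu_3$ and $\ker\bar\mu_2\subseteq\operatorname{im}\bar\mu_3$. Your discussion of syzygies at $X_1$ is therefore unnecessary. Second, your justification for injectivity of $\bar\mu_3$ is slightly off: the positive grading alone does not give $x\alpha\neq 0$ for nonzero $x\in\A e_v$; what you actually need is cancellativity (Theorem~\ref{thm:consistent-iff-cancellation}) in the noncompleted case, and Lemma~\ref{lem:mabel} in the completed case, exactly as in the proof of Theorem~\ref{thm:Calabi-Yau}. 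Third, and most importantly, for exactness at $X_2$ you say the ``cleanest way'' is to reduce via disk submodels and then ``check the exactness by direct linear algebra using path-consistency''---but this is precisely the step that carries all of the work in \cite{XPressland2019}, and you have not supplied any of it. Your proposal is a reasonable outline of the architecture of Pressland's proof, but it is not a proof; since you concede the hard step to \cite{XPressland2019}, you are in the same position as the paper, which simply cites the result.
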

In fact, Pressland's proof of Theorem~\ref{thm:Calabi-Yau-disk} works in the general setting, with the stipulation that all computations must be performed as a sum over homotopy classes. We give a shorter proof here, which uses Pressland's result for disk models as well as the theory of dimer submodels developed in Section~\ref{sec:ds}. 
\begin{thm}\label{thm:Calabi-Yau}
	Let $Q$ be a strongly consistent finite dimer model. Then $A_Q$ and $\widehat A_Q$ are bimodule internally 3-Calabi-Yau with respect to their boundary idempotents.
\end{thm}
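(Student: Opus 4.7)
The plan is to follow the reduction set up earlier in the section and then apply Pressland's disk result (Theorem~\ref{thm:Calabi-Yau-disk}) to appropriate finite disk submodels of the universal cover. First I would invoke Theorem~\ref{thm:bim3cy} to reduce to exactness of the bimodule complex~\eqref{eqn:cmp2}, and then Proposition~\ref{prop:1iff2} (in the noncompleted case) and Theorem~\ref{thm:wherebefore} (in the completed case) to further reduce to exactness of the one-sided complex~\eqref{eqn:cmp1}. Decomposing this complex over vertices yields the sequence~\eqref{eqn:cmp4} for each $v \in Q_0$; since Remark~\ref{rem:bimodule} already gives exactness in the first two positions, only exactness at $X_2$ and $X_3$ remains to be verified.

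Next, I would decompose over homotopy classes at $v$. Fix $v$ and take $x \in \ker(\bar\mu_j)$ for $j \in \{2,3\}$. Each summand of $x$ is a linear combination of paths which, after prepending the indexing arrow from $T_v$ or $H_v^m$, becomes a path through $v$ in $S(Q)$ with a well-defined homotopy class. Since the dimer relations $R_\gamma^{cc} - R_\gamma^{cl}$ preserve homotopy class, the maps $\bar\mu_j$ respect this splitting, so $x$ decomposes as $x = \sum_C x_C$ with each $x_C$ separately in the kernel.

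For each fixed homotopy class $C$, I would lift $x_C$ to the universal cover $\widetilde Q$, which is weakly consistent by Proposition~\ref{prop:Q-consistent-iff-hat-Q-consistent} and simply connected. In the noncompleted case $x_C$ is a finite sum of paths, so a finite disk submodel $Q' \subseteq \widetilde Q$ may be chosen to contain every path appearing in $x_C$ together with every face $F_\gamma^{cc}$ and $F_\gamma^{cl}$ needed for the maps $\bar\mu_j$ to act correctly on these paths; in the completed case one works degree by degree using the positive grading from Lemma~\ref{lem:posgrad} and the canonical form from Lemma~\ref{lem:mabel}. Pressland's Theorem~\ref{thm:Calabi-Yau-disk} applied to $Q'$ then yields a preimage $y_C$ of $x_C$ in the $Q'$-complex, and Theorem~\ref{thm:submodel-path-equivalence} guarantees that $y_C$ descends to a valid preimage in the $C$-component of the $Q$-complex. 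Summing over $C$ produces the required preimage of $x$.

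The main obstacle will be the bookkeeping of the homotopy-class decomposition and the choice of the disk submodel $Q'$: one must verify (i) that prepending the indexing arrows really gives a well-defined decomposition compatible with $\bar\mu_j$, (ii) that $Q'$ covers every face needed for $\bar\mu_j$ to act correctly on the lifted $x_C$ (not only those appearing within the paths themselves, but also $F_\gamma^{cl}$ and $F_\gamma^{cc}$ for every arrow $\gamma$ that can be morphed against), and (iii) in the completed case, that the assembled preimage lies in the closure defining $\widehat A_Q$ rather than merely in an abstract infinite product. Once these are handled, Pressland's disk theorem together with Theorem~\ref{thm:submodel-path-equivalence} combine to close the argument.
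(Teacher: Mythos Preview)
Your proposal is correct and follows essentially the same route as the paper: reduce via Theorem~\ref{thm:bim3cy} and Proposition~\ref{prop:1iff2}/Theorem~\ref{thm:wherebefore} to the vertex-by-vertex complex~\eqref{eqn:cmp4}, decompose over homotopy classes, lift each piece to a finite disk submodel of $\widetilde Q$, and invoke Pressland's Theorem~\ref{thm:Calabi-Yau-disk} together with Theorem~\ref{thm:submodel-path-equivalence}. The only notable difference is that the paper handles injectivity of $\bar\mu_3$ directly from cancellativity and Lemma~\ref{lem:mabel} rather than via disk submodels, whereas you treat $j=2$ and $j=3$ uniformly; both work.
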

\begin{proof}
	By Theorem~\ref{thm:bim3cy}, it suffices to prove exactness of the two-sided complex~\eqref{eqn:cmp2}.
	By Proposition~\ref{prop:1iff2} or Theorem~\ref{thm:wherebefore} (depending on whether $\A=A_Q$ or $\A=\widehat A_Q$), it suffices to prove exactness of the one-sided complex~\eqref{eqn:cmp1}. 
	As argued in the text following Proposition~\ref{prop:1iff2}, we may prove this by showing that the complex~\eqref{eqn:cmp4} is exact for any choice of $v\in Q_0$.
	We need only show that $\bar\mu_3$ is injective and that $\ker\bar\mu_2\subseteq\textup{im}\ \bar\mu_3$.

	First, we show injectivity of $\bar\mu_3$. If $v$ is boundary, then this is trivial, so suppose $v\in Q_0^{m}$. Suppose there is some nonzero $x\in\A e_v$ with $0=\bar\mu_3(x)=\sum_{\alpha\in H_v^{m}}x\alpha\otimes[\alpha]$. Write $x=\sum_Cx_C$, where the sum is over homotopy classes of paths in $Q$ starting at $v$ and 
		$x_C=\sum_{m=0}^\infty a_{C,m}r_Cf^m$, where $r_C$ is a minimal path in the homotopy class $C$ and $a_{C,m}\in\mathbb C$. 

		If $C$ and $C'$ are different homotopy classes, then $C\alpha$ and $C'\alpha$ are different homotopy classes for any arrow $\alpha$. In particular, the summands of $\bar\mu_3(x_C)$ and $\bar\mu_3(x_{C'})$ corresponding to each arrow $\alpha\in H_v^{m}$ are in different homotopy classes. 
		Since $\bar\mu_3(x)=0$, this means that $\bar\mu_3(x_C)=0$ for all homotopy classes $C$. Since $x\neq0$, we may choose a homotopy class $C$ and $m\geq0$ such that $a_{C,m}\neq0$.
		{Then $0=\bar\mu_3(x_C)=\sum_{\alpha\in H_v^{m}}x_C\alpha\otimes[\alpha]$. Then $0=x_C\alpha=\sum_{m=0}^\infty a_{C,m}r_C\alpha f^m$ for all $\alpha\in H_v^{m}$.
		Hence, $a_{C,m}=0$ for all $m\geq0$ (if $\A=A_Q$, this is immediate by cancellativity, and if $\A=\widehat A_Q$ this follows from Lemma~\ref{lem:mabel}) and we have $x_C=0$. This contradicts our choice of $C$ and completes the proof of injectivity of $\bar\mu_3$.}

	We now prove that the image of $\bar\mu_3$ contains the kernel of $\bar\mu_2$. 
	Take a nonzero element $x=\sum_Cx_C$ of $\ker\bar\mu_2$, where the sum is over homotopy classes of paths in $Q$ starting at the tail of some arrow $\alpha\in H_v^{m}$ and $x_C=\sum_{\alpha\in H_v^{m}}\left(a_{C,\alpha,m}r_Cf^m\otimes[\alpha]\right)$, where $r_C$ is a minimal path in $Q$ homotopic to $C$ and $a_{C,\alpha,m}$ are some coefficients in $\mathbb C$.
	We wish to find $y\in X_3$ such that $\bar\mu_3(y)=x$. It suffices to find $y_C$ for each homotopy class $C$ with $\bar\mu_3(y_C)=x_C$ (note that paths in $y_C$ will not be in the homotopy class $C$), so fix a homotopy class $C$.
	Pick a vertex $\tilde v$ of $\widetilde Q$ corresponding to $v$. For $\alpha\in H_v^{m}$, let $\tilde\alpha$ be the corresponding arrow of $\widetilde Q$ ending at $\tilde v$. Choose a (finite) disk submodel $Q^C$ of $\widetilde Q$ containing $\tilde v$ and a minimal path $r_{C\alpha}$ in $Q$ homotopic to $C\alpha$ for each $\alpha\in H_v^{m}$. Lift each $r_{C\alpha}$ to a minimal path $\tilde r_{C\alpha}$ beginning at $t(\tilde\alpha)$.

	{Similarly, lift $x_C$ to $\tilde x_C:=\sum_{\alpha\in H_v^{m}}(a_{C,\alpha,m}\tilde r_Cf^m\otimes[\alpha])$, where $\tilde r_C$ is the lift of $r_C$ to $\widetilde Q$ beginning at $\tilde v$. We claim that $\tilde x_C$ is in the kernel of the lift $\widetilde{\bar\mu_2}$.}

	Choose coefficients $b_{\beta,D,m}$ such that $\bar\mu_2(x_C)=\sum_{\beta\in T_v}\sum_{D:h(\beta)\to?}\sum_{m\geq0}b_{\beta,D,m}[r_Df^m]\otimes[\beta]$, where the second sum iterates over homotopy classes of paths of $Q$ starting at $h(\beta)$. For $\beta\in T_v$, let $\tilde\beta\in T_{\tilde v}\subseteq \widetilde Q_1$ be the corresponding arrow of $\widetilde Q_1$ starting at $\tilde v$. Similarly, for any $r_D$ starting at some $h(\beta)$, lift it to $\tilde r_D$ starting at $h(\tilde\beta)$. It follows from the definition of $\bar\mu_2$ and $\widetilde{\bar\mu_2}$, then, that
	\[\widetilde{\bar\mu_2}(\tilde x_C)=\sum_{\beta\in T_{\tilde v}}\sum_{D:h(\tilde\beta)\to?}\sum_{m\geq0}b_{\beta,D,m}[\tilde r_Df^m]\otimes[\tilde\beta].\]
	But $\bar\mu_2(x_C)=0$, so for each $\beta\in T_v$, we have
	$\sum_{D:h(\beta)\to?}\sum_{m\geq0}b_{\beta,D,m}[r_Df^m]=0$. Then each coefficient $b_{\beta,D,m}=0$ (again this is immediate if $\A=A_Q$ and follows from Lemma~\ref{lem:mabel} if $\A=\widehat A_Q$). Then the above formula shows that $\widetilde{\bar\mu_2}(\tilde x_C)=0$.
	Then $\tilde x_C$ is in the kernel of $\widetilde{\bar\mu_2}$.

	 By Corollary~\ref{cor:dimer-submodel-consistent}, $Q^C$ is weakly consistent. By Theorem~\ref{thm:submodel-path-equivalence}, two paths of $Q^C$ are equivalent in $Q^C$ if and only if they are equivalent as paths of $\widetilde Q$. By Theorem~\ref{thm:Calabi-Yau-disk}, the exact sequence corresponding to~\eqref{eqn:cmp4} of $Q^C$ must be exact. In particular, we get some $\tilde y_C$ in $\tilde X_3$ which maps to $\tilde x_C$ through $\widetilde{\bar\mu_3}$. Then $\tilde y_C$ considered as a sum of paths in $\widetilde Q$ descends to some $y_C$ in $X_3$ with $\bar\mu_3(y_C)=x_C$ and the proof is complete.
\end{proof}

\subsection{Categorification}

In certain special cases Theorem~\ref{thm:Calabi-Yau} provides examples of categorifications of cluster algebras. We loosely model this section after~\cite[\S4]{XPressland2019}. We avoid defining technical terms used in this subsection, and instead refer to~\cite{XPressland2019} for more information.

\begin{thm}[{\cite[Theorem 4.1 and Theorem 4.10]{AIRX}\label{AIR}\label{thm:cat-lemma}}]
	Let $A$ be an algebra and $e\in A$ an idempotent. If $A$ is Noetherian, $\underline A=A/\langle e\rangle$ is finite dimensional, and $A$ is bimodule 3-Calabi-Yau with respect to $e$, then
	\begin{enumerate}
		\item $B=eAe$ is Iwanaga-Gorenstein with Gorenstein dimension at most 3,
		\item $eA$ is a cluster-tilting object in the Frobenius category of Gorenstein projective modules $\textup{GP}(B)$,
		\item The stable category $\underline{\textup{GP}}(B)$ is a 2-Calabi-Yau triangulated category, and
		\item The natural maps $A\to\textup{End}_B(eA)^{op}$ and $\underline{A}\to\underline{\textup{End}}_B(eA)^{op}$ are isomorphisms.
	\end{enumerate}
\end{thm}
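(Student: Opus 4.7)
The plan is to exploit the length-three projective bimodule resolution of $A$ provided by the bimodule internal $3$-Calabi-Yau property with respect to $e$. This resolution is the main technical input: roughly, it gives a quasi-isomorphism $\mathrm{RHom}_{A^e}(A, A^e) \simeq A[-3]$ after suitably twisting by the idempotent $e$, which is the source of all the duality statements below. All four conclusions should be viewed as consequences of transferring this resolution along the functors $e(-)$, $(-)e$, and $e(-)e$ and then reading off the homological behavior on $B = eAe$.

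First I would prove (1). Applying $e(-)e$ to the bimodule resolution of $A$ yields a projective resolution of $B$ as a $B$-bimodule of length at most $3$, because idempotent truncation sends $\mathrm{add}(A \otimes A^{\mathrm{op}})$ into $\mathrm{add}(B \otimes B^{\mathrm{op}})$. Using Noetherianity to control tensor products and local cohomology, one deduces that both left and right injective dimensions of $B$ over itself are finite and bounded by $3$, which is the Iwanaga-Gorenstein conclusion. Next, for the endomorphism algebra statement in (4), the natural map $A \to \mathrm{End}_B(eA)^{\mathrm{op}}$ is well-defined by right multiplication; its kernel is annihilated by $AeA$, and its cokernel lives in $\mathrm{mod}\,\underline{A}$, and both can be shown to vanish using the finite dimensionality of $\underline{A}$ together with the Ext-duality provided by the $3$-Calabi-Yau property. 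The stable version then follows formally.

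For (2), I would first show that $eA \in \mathrm{GP}(B)$ by applying $\mathrm{Hom}_B(-, B)$ to a projective resolution of $eA$ and using the $3$-Calabi-Yau duality to produce the required exactness, effectively reading off self-orthogonality from the bimodule resolution. Rigidity $\mathrm{Ext}^1_B(eA, eA) = 0$ follows because $\mathrm{Ext}^i_B(eA, eA)$ corresponds via the idempotent truncation to a computation in $\mathrm{mod}\, A$ controlled by the bimodule resolution, which forces vanishing in the relevant degrees. The completeness half of cluster-tilting -- that any $X \in \mathrm{GP}(B)$ with $\mathrm{Ext}^1_B(eA, X) = 0$ lies in $\mathrm{add}(eA)$ -- is the subtle point: one reconstructs $X$ from its image $e X$ in $\mathrm{mod}\, A$ and uses the Gorenstein projective assumption together with the finite dimensionality of $\underline{A}$ to force $X$ to be a summand of a finite direct sum of copies of $eA$. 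Finally, (3) follows from (2) by a standard argument: Auslander-Reiten duality on $\underline{\mathrm{GP}}(B)$ is computed from the bimodule resolution of $B$, and the shift by one that occurs in passing from the Frobenius category $\mathrm{GP}(B)$ to its stable category turns $3$-Calabi-Yau on the bimodule level into $2$-Calabi-Yau on the triangulated level.

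The main obstacle is the completeness part of the cluster-tilting conclusion in (2). Rigidity and the Iwanaga-Gorenstein property are essentially formal consequences of applying the appropriate functor to the bimodule resolution, but completeness requires one to go the other direction and recover global information about $X \in \mathrm{GP}(B)$ from its idempotent truncation $eX$, which is a delicate recollement-style argument. This is where the Noetherian hypothesis on $A$ and the finite dimensionality of $\underline{A}$ both become essential: the former guarantees that the relevant $\mathrm{Ext}$-groups are finite-dimensional so that duality applies, and the latter controls the kernel of the truncation so that the reconstruction of $X$ from $eX$ is well-posed.
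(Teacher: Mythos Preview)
The paper does not prove this theorem at all: it is quoted verbatim as a black-box result from Amiot--Iyama--Reiten~\cite[Theorem 4.1 and Theorem 4.10]{AIRX}, and is used only as an input to deduce Corollary~\ref{thm:cat-thm}. There is therefore nothing in the paper to compare your proposal against.

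Your outline is a plausible high-level strategy for how the original Amiot--Iyama--Reiten argument proceeds, and you correctly identify that the completeness half of the cluster-tilting condition is the delicate point. But as written it is a sketch of ideas rather than a proof: the steps ``both can be shown to vanish using the finite dimensionality of $\underline{A}$ together with the Ext-duality'' and ``one reconstructs $X$ from its image $eX$'' hide exactly the technical content that makes the result nontrivial. If your goal is to reproduce the paper's treatment, the correct thing to do is simply to cite~\cite{AIRX} and move on; if your goal is to actually prove the theorem, you would need to consult the original reference and fill in the recollement and duality arguments precisely.
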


If $Q$ is a strongly consistent dimer model and $e$ is its boundary idempotent, then $\widehat A_Q$ is bimodule internally 3-Calabi-Yau with respect to $e$ by Theorem~\ref{thm:Calabi-Yau}. 
Hence, in order to apply Theorem~\ref{thm:cat-lemma}, it suffices to check that $\widehat A_Q$ is Noetherian and that $\widehat A/\langle e\rangle$ is finite dimensional as a vector space.

\begin{cor}\label{thm:cat-thm}
	Let $Q$ be a finite strongly consistent dimer model 
	with boundary idempotent $e$. Suppose that $\widehat A_Q/\langle e\rangle$ is finite dimensional (hence that $\widehat A_Q/\langle e\rangle\cong A_Q/\langle e\rangle$), and that $\widehat A_Q$ is Noetherian. Write $\widehat B_Q:=e\widehat A_Qe$ for the completed boundary algebra of $Q$.
	\begin{enumerate}
		\item $\widehat B_Q$ is Iwanaga-Gorenstein with Gorenstein dimension at most 3,
		\item $e\widehat A_Q$ is a cluster-tilting object in the Frobenius category of Gorenstein projective modules $\textup{GP}(\widehat B_Q)$,
		\item The stable category $\underline{\textup{GP}}(B_Q)$ is a 2-Calabi-Yau triangulated category, and
		\item \label{en:4}The natural maps $\widehat A_Q\to\textup{End}_{\widehat B_Q}(e\widehat A_Q)^{op}$ 
		and $A_Q/\langle e\rangle\to\widehat A_Q/\langle e\rangle\cong\underline{\textup{End}}_{\widehat B_Q}(e\widehat A_Q)^{op}$ are isomorphisms.
	\end{enumerate}

\end{cor}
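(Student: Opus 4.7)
The plan is that this corollary should essentially be an assembly of Theorem~\ref{thm:Calabi-Yau} with the categorification machinery of Amiot-Iyama-Reiten recalled as Theorem~\ref{thm:cat-lemma}, so rather than doing real work one mainly needs to verify that all hypotheses of Theorem~\ref{thm:cat-lemma} are in place and that the conclusions translate cleanly.

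First I would observe that Theorem~\ref{thm:Calabi-Yau} applied to the finite strongly consistent dimer model $Q$ yields that $\widehat A_Q$ is bimodule internally $3$-Calabi-Yau with respect to its boundary idempotent $e$. Combined with the standing hypotheses that $\widehat A_Q$ is Noetherian and that $\widehat A_Q/\langle e\rangle$ is finite dimensional, all hypotheses of Theorem~\ref{thm:cat-lemma} are satisfied for the pair $(\widehat A_Q, e)$. Setting $\widehat B_Q := e\widehat A_Qe$ and applying Theorem~\ref{thm:cat-lemma} directly delivers the four conclusions: $\widehat B_Q$ is Iwanaga-Gorenstein of Gorenstein dimension at most $3$; $e\widehat A_Q$ is cluster-tilting in $\textup{GP}(\widehat B_Q)$; the stable category $\underline{\textup{GP}}(\widehat B_Q)$ is $2$-Calabi-Yau; and the natural maps $\widehat A_Q\to\textup{End}_{\widehat B_Q}(e\widehat A_Q)^{op}$ and $\widehat A_Q/\langle e\rangle\to\underline{\textup{End}}_{\widehat B_Q}(e\widehat A_Q)^{op}$ are isomorphisms.

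The only item requiring separate comment is the parenthetical claim that $\widehat A_Q/\langle e\rangle \cong A_Q/\langle e\rangle$, which gives the isomorphism $A_Q/\langle e\rangle \to \underline{\textup{End}}_{\widehat B_Q}(e\widehat A_Q)^{op}$ in conclusion~\eqref{en:4}. For this I would argue as follows: the surjection $A_Q \to \widehat A_Q$ induces a surjection $A_Q/\langle e\rangle \to \widehat A_Q/\langle e\rangle$ between the noncompleted and completed quotients, using the fact that the two-sided ideal $\langle e\rangle$ in $\widehat A_Q$ is the closure of its counterpart in $A_Q$. Using the positive $\mathbb Z$-grading on $A_Q$ (Lemma~\ref{lem:posgrad}) together with the hypothesis that $\widehat A_Q/\langle e\rangle$ is finite dimensional, the class of any path of sufficiently large length must lie in $\langle e\rangle$, so $\widehat A_Q/\langle e\rangle$ contains no genuinely infinite linear combinations and the map above is injective as well.

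The main obstacle, such as it is, is merely a bookkeeping one: ensuring that the two-sided and one-sided ideals generated by $e$ in the completed versus the noncompleted algebra are compatibly identified so that the AIR machinery, stated for a general pair $(A,e)$ with $A$ Noetherian and $A/\langle e\rangle$ finite dimensional, applies with $A = \widehat A_Q$. Once this is verified, no further genuinely new argument is needed, and the corollary follows as a direct consequence of Theorem~\ref{thm:Calabi-Yau} and Theorem~\ref{thm:cat-lemma}.
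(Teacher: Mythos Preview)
Your approach is exactly the paper's: the corollary is stated immediately after Theorem~\ref{thm:cat-lemma} with the explicit remark that Theorem~\ref{thm:Calabi-Yau} supplies the bimodule internally 3-Calabi-Yau hypothesis, so that only Noetherianness and finite-dimensionality of $\widehat A_Q/\langle e\rangle$ remain to check, and these are assumed. No separate proof is given in the paper.

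One small slip in your added justification of the parenthetical $\widehat A_Q/\langle e\rangle\cong A_Q/\langle e\rangle$: the natural map $A_Q\to\widehat A_Q$ is not a surjection (the completion contains genuine infinite sums), so you cannot deduce surjectivity of the induced map on quotients from that. The argument should run the other way: the grading of Lemma~\ref{lem:posgrad} together with finite-dimensionality of $\widehat A_Q/\langle e\rangle$ forces all but finitely many path classes to lie in $\langle e\rangle$, so every element of the completed quotient is represented by a finite sum of paths, i.e.\ by an element of $A_Q$; this gives surjectivity. Injectivity then comes from checking that $\langle e\rangle_{\widehat A_Q}\cap A_Q=\langle e\rangle_{A_Q}$, which follows from the same grading considerations (or from Lemma~\ref{lem:mabel}).
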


	If $Q$ has no 1-cycles or 2-cycles, then 
it follows from Corollary~\ref{thm:cat-thm}~\eqref{en:4}
that ${\textup{GP}}(\widehat B_Q)$ is a categorification of the cluster algebra whose seed is given by the underlying ice quiver of $Q$.
	Section~\ref{sec:red} gives us tools to \textit{reduce} a dimer model by removing digons while preserving the dimer algebra.

\begin{remk}\label{remk:gggg}
	In the setting of Theorem~\ref{thm:cat-lemma}, if $e''$ is an idempotent orthogonal to $e$ then $\widehat A$ is 3-Calabi-Yau with respect to $e':=e+e''$ by~\cite[Remark 2.2]{XPressland2015}. This means that if $Q$ is a strongly consistent dimer model and $\widehat A_Q$ is Noetherian, then even if $\widehat A_Q/\langle e\rangle$ is not Noetherian we may use a larger idempotent $e'$, where some of the internal vertices are seen to be frozen, in order to apply Theorem~\ref{thm:cat-lemma}. In this way, we can get categorifications even from, for example, a consistent dimer model on a torus. See~\cite[\S6]{AIRX}.
\end{remk}

\begin{defn}\label{defn:bf}
	We say that a dimer model $Q$ is \textit{boundary-finite} if $\widehat A_Q/\langle e\rangle$ is finite-dimensional. We say that $Q$ is \textit{Noetherian} if $\widehat A_Q$ is Noetherian.
\end{defn}

Hence, if $Q$ is a finite strongly consistent dimer model, then in order to apply Corollary~\ref{thm:cat-thm} we must only check Noetherianness and boundary-finiteness of $Q$.
The authors are not aware of an example of a (weakly or strongly) consistent Noetherian dimer model on a surface other than a disk, annulus, or torus.
Dimer models on tori and disks have been studied extensively. 
\begin{itemize}
	\item Any consistent dimer model on a torus is Noetherian~\cite{Beil2021}, but since the boundary idempotent is zero such a dimer model is never boundary-finite. On the other hand, as in Remark~\ref{remk:gggg} we may use a larger idempotent to apply Theorem~\ref{thm:cat-lemma}.
	\item If $Q$ is a consistent dimer model on a disk with no digons, Pressland showed in~\cite[Proposition 4.4]{XPressland2019} that $Q$ is Noetherian and boundary-finite. Hence, Corollary~\ref{thm:cat-thm} can be applied to any consistent dimer model on a disk.
Moreover, in the same paper it is shown that $(\textup{GP}(\widehat B_Q),e\widehat A_Q)$ is a Frobenius 2-Calabi-Yau realization of the cluster algebra $\AA_Q$ given by the ice quiver $Q$.
\end{itemize}

On the other hand, dimer models on annuli have seen comparatively little attention.
We give some examples pertaining to Corollary~\ref{thm:cat-thm} in the following subsection.

\subsection{Examples on the Annulus}

We first prove a technical lemma which we will use to verify Noetherianness of some examples from this section.  Recall that $r_C$ denotes a minimal path in the homotopy class $C$ of paths on the surface of $Q$.

\begin{lemma}\label{lem:noeth}
	Let $Q$ be a strongly consistent dimer model on an annulus.
	Pick a vertex $v$ of the dimer model and a generator of the homotopy group of cycles on the annulus at $v$.  Let $C_v$ be the homotopy class of this generator. For any vertex $w$ of $Q$, pick a path $p:w\to v$ and let $C_w$ be the homotopy class of $p^{-1}C_vp$ (note this does not depend on the choice of $p$).
	Suppose there exists a perfect matching $\mathcal M$ of $Q$ such that, for any vertex $w$ of $Q$, we have $\mathcal M(r_{C_v})=\mathcal M(r_{C_w})$ and $\mathcal M(r_{C_v^{-1}})=\mathcal M(r_{C_w^{-1}})$. Then $A_Q$ and $\widehat A_Q$ are Noetherian.
\end{lemma}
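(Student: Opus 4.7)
The strategy is to exhibit a Noetherian commutative subring $Z$ of the center of $A_Q$ over which $A_Q$ is a finitely generated module; module-finiteness over a Noetherian commutative ring then yields Noetherianness of $A_Q$, and the parallel argument with the $\mathfrak{m}$-adic completion $\widehat Z$ handles $\widehat A_Q$.

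First, I would define the central loop elements
\[
	[C^+] := \sum_{w \in Q_0} [r_{C_w}], \qquad [C^-] := \sum_{w \in Q_0} [r_{C_w^{-1}}]
\]
and verify their centrality. For each arrow $\alpha \colon w_1 \to w_2$, the paths $\alpha \cdot r_{C_{w_1}}$ and $r_{C_{w_2}} \cdot \alpha$ are homotopic, hence by path-consistency they equal $[r_D f^a]$ and $[r_D f^b]$ for a common minimal path $r_D$ in the shared homotopy class; the hypothesis $\mathcal{M}(r_{C_{w_1}}) = \mathcal{M}(r_{C_{w_2}})$ together with Lemma~\ref{lem:perfect-matching-intersection-num} forces $a = b$, and so $\alpha [C^+] = [C^+] \alpha$; the argument for $[C^-]$ is entirely dual. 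Next, I would verify the relation $[C^+][C^-] = [f]^k$ where $k := \mathcal{M}(r_{C_v}) + \mathcal{M}(r_{C_v^{-1}})$: at each vertex $w$, the null-homotopic cycle $r_{C_w} \cdot r_{C_w^{-1}}$ equals $[f_w]^{k_w}$ by path-consistency, and the $\mathcal{M}$-count combined with the hypothesis shows $k_w = k$ independently of $w$. The subring $Z$ generated by $[C^+], [C^-], [f]$ is then a commutative quotient of $\mathbb{C}[x, y, z]/(xy - z^k)$, hence is Noetherian, and likewise its closure $\widehat Z$ inside $\widehat A_Q$.

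The main step, and the main obstacle, is proving that $A_Q$ is finitely generated as a $Z$-module. By path-consistency on the annulus, for each pair $(w_1, w_2)$ of vertices the equivalence classes of paths from $w_1$ to $w_2$ are indexed by a winding integer $n \in \mathbb{Z}$ and a nonnegative integer $m$, with representatives $[r_n^{w_1, w_2} f^m]$. Writing $[C^+] \cdot [r_n^{w_1, w_2}] = [r_{n+1}^{w_1, w_2}] [f]^{s_n}$ and $[C^-] \cdot [r_n^{w_1, w_2}] = [r_{n-1}^{w_1, w_2}] [f]^{u_n}$ with $s_n, u_n \geq 0$, the product relation of the previous step forces $s_n + u_{n+1} = k$, so these jumps are uniformly bounded in $\{0, \ldots, k\}$. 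The crucial claim is that $s_n = 0$ for $n$ sufficiently large and $u_n = 0$ for $n$ sufficiently small, with thresholds depending only on $(w_1, w_2)$: granting this, the finite set $\{[r_n^{w_1, w_2}] : |n| \leq N\}$ generates $A_Q$ as a $Z$-module, since distant minimal paths then arise as $[C^+]^{n-N}[r_N^{w_1, w_2}]$ or $[C^-]^{-N-n}[r_{-N}^{w_1, w_2}]$, while face-path powers come from $[f] \in Z$. To establish this stabilization I would pass to the universal cover $\widetilde Q$ on the infinite strip, which is simply connected and weakly consistent by Proposition~\ref{prop:Q-consistent-iff-hat-Q-consistent} and hence admits a perfect matching $\widetilde{\mathcal{M}}$ by Theorem~\ref{thm:geo-consistent-then-almost-perfect-matching} (concretely, the $\mathbb{Z}$-periodic lift of $\mathcal{M}$). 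Using Proposition~\ref{prop:perfect-matching-intersection-num} in $\widetilde Q$, the plan is to compare the lift of $r_n^{w_1, w_2}$ with a concatenation of fixed paths $w_1 \to v \to w_2$ and a lifted power of $r_{C_v}$ to show $a_n := \mathcal{M}(r_n^{w_1, w_2}) = n \mathcal{M}(r_{C_v}) + O(1)$; the non-increasing sequence $a_n - n\mathcal{M}(r_{C_v})$ is then bounded below, hence eventually constant, forcing $s_n = 0$ for large $n$, and the dual argument yields $u_n = 0$ for $n$ sufficiently small. With module-finiteness in hand, Noetherianness of $A_Q$ follows from the standard fact that a ring module-finite over a Noetherian commutative ring is Noetherian, and $\widehat A_Q$ is handled by observing it is module-finite over the Noetherian ring $\widehat Z$.
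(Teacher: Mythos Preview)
Your overall strategy coincides with the paper's: build a Noetherian commutative subring $Z$ generated by $[C^+]$, $[C^-]$, $[f]$ subject to $[C^+][C^-]=[f]^k$, and prove $A_Q$ is module-finite over it. Your verification of centrality and of the relation is correct (and in fact more explicit than the paper, which uses these facts implicitly).

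The gap is in your module-finiteness step. The comparison you sketch---concatenating fixed paths $w_1\to v$, $v\to w_2$ with a power of $r_{C_v}$---only produces an \emph{upper} bound $a_n\le n\,\mathcal M(r_{C_v})+O(1)$, because $r_n^{w_1,w_2}$ is minimal and the concatenation is merely some path in the same class. To get the lower bound you need, you would sandwich $r_n^{w_1,w_2}$ between fixed paths to form a cycle at $v$ and compare with $(r_{C_v})^{n+c}$; but that comparison yields $a_n\ge (n+c)\mathcal M(r_{C_v})-O(1)$ \emph{only if you already know} $(r_{C_v})^{n+c}$ is minimal in its class. Without that, the non-increasing integer sequence $a_n-n\,\mathcal M(r_{C_v})$ has no evident lower bound when $\mathcal M(r_{C_v})>0$, and the stabilization claim is unjustified.

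The paper closes exactly this gap, and more simply, by proving first that $(r_{C_w})^m$ is the minimal path in class $C_w^m$ for every $w$ and every $m\ge1$. The induction on $m$ factors a minimal cycle in class $C_w^m$ as $p_2\,q\,p_1$ with $q$ a proper subcycle; by the inductive hypothesis $[q]=[r_{C_{t(q)}}^{a}]$ and $[p_2p_1]=[r_{C_w}^{b}]$ with $a+b=m$, and then the $\mathcal M$-hypothesis plus Proposition~\ref{prop:perfect-matching-intersection-num} forces $\mathcal M(p)=\mathcal M(r_{C_w}^m)$, hence $[p]=[r_{C_w}^m]$. Once this is in hand, every path factors (via extracting subcycles and using the centrality you already proved) as an element of $Z$ times a path with no repeated vertices; since $Q$ is finite there are only finitely many such paths, giving module-finiteness directly. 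You should either insert this minimality lemma to supply your missing lower bound, or---cleaner---replace the stabilization argument by the paper's decomposition into $Z$ times an acyclic path.
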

\begin{proof}
	Define $\A$ to be either $A_Q$ or $\widehat A_Q$. We must show that $\A$ is Noetherian.
	
	We first claim that for any vertex $w$ and any $m\geq1$, the path $(r_{C_w})^m$ is the minimal path in the homotopy class $C_w^m$. We show this by induction on $m$. The base case is true by hypothesis.
	Now choose minimal cycle $p$ in some homotopy class $C_w^m$ (for $m\in\mathbb N_{>1}$) and suppose we have shown the claim for smaller values of $m$.
	Since $S(Q)$ is an annulus and $m>1$, we must be able to factor $p=p_2 q p_1$, where $q$ is a cycle and either $p_1$ or $p_2$ (or both) is nonconstant. Since $p$ is minimal, the paths $p_1,q,p_2$ must be minimal, so by the induction hypothesis we know that $[q]=[r_{C_{t(q)}}^a]$ and $[p_2p_1]=[r_{C_w}^b]$ 
	for some $a,b\in\mathbb N_{\geq1}$ such that $a+b=m$. 
	{Then $\mathcal M(p)=\mathcal M(p_2 qp_1)=\mathcal M(r_{C_{t(q)}}^a)+\mathcal M(r_{C_{w}}^b)=\mathcal M(r_{C_w}^m)$}
	and hence $[p]=[p_2qp_1]=[r_{C_w}^m]$ by Proposition~\ref{prop:perfect-matching-intersection-num}.

	Symmetrically, for any vertex $w$ and $m\geq1$, we have that $(r_{C_w})^{-m}$ is the minimal path in the homotopy class $C_w^{-m}$.

	Define $M:=\mathcal M(r_{C_v}r_{C_v^{-1}})$.
	Note that by assumption for any $w\in Q_0$ we have 
	$\mathcal M(r_{C_w}r_{C_{w}^{-1}})=\mathcal M(r_{C_v}r_{C_{v}^{-1}})=M$. Then for any $w\in Q_0$ we have $r_{C_w}r_{C_w^{-1}}=f_w^M$, hence $\left(\sum_{w\in Q_0}r_{C_w}\right)\left(\sum_{w\in Q_0}r_{C_w^{-1}}\right)=f^M$.

	Define the subalgebra $\mathcal Z$ of $\mathcal A$ as the set of (possibly infinite if $\mathcal A=\widehat A_Q$) linear combinations of elements of the form $\sum_{w\in Q_0}f^ar_{C_w}^b$ or $\sum_{w\in Q_0}f^ar_{C_w^{-1}}^b$, for $a,b\in\mathbb N_{\geq0}$.
	Then $\mathcal Z$ is the polynomial ring (if $\mathcal A=A_Q$) or ring of formal power series (if $\mathcal A=\widehat A_Q$) in three variables $f$ and $\sum_{w\in Q_0}r_{C_w}$ and $\sum_{w\in Q_0}r_{C_w^{-1}}$, modulo the relation
	$\left(\sum_{w\in Q_0}r_{C_w}\right)\left(\sum_{w\in Q_0}r_{C_w^{-1}}\right)=f^M$.
	In either case, $\mathcal A$ is Noetherian by standard results.

	Let $S$ be the set of paths of $Q$ containing no cycles. Since $Q$ is finite, $S$ is also finite.
	We claim that $\mathcal A$ is generated as a $\mathcal Z$-algebra by $S$.

	Indeed, take any path $p$ of $Q$. 
	{We may write $p$ in the form $p_m l_m \dots l_2p_2l_1p_1$ for some $m\in\mathbb N_{\geq0}$ and paths $p_i$ and $l_j$ such that each $l_j$ is a cycle and $p_m\dots p_2p_1$ is a path which contains no cycles. This expression is not unique.}
	By the claim, each $l_j$ is equivalent to $r_{C_{t(l_j)}^{\varepsilon_j}}^{a_j}f^{b_j}$ for some $a_j,b_j\in\mathbb N_{\geq0}$ and $\varepsilon_j\in\{1,-1\}$.
	Then
	\begin{align*}
		[p]&=[p_ml_m\dots l_2p_2l_1p_1] \\
		&=\left[p_m\left(
		r_{C_{t(l_m)}^{\varepsilon_m}}^{a_m}f^{b_m}\right)\dots p_2
		\left(r_{C_{t(l_1)}^{\varepsilon_1}}^{a_1}f^{b_1}\right)p_1\right]\\
		&=\left[
			\left(\sum_{j\in[m] :\ \varepsilon_j=1}r_{C_{h(p_m)}}^{a_j}f^{b_j}\right)
			\left(\sum_{j\in[m] :\ \varepsilon_j=-1}r_{C_{h(p_m)}^{-1}}^{a_j}f^{b_j}\right)\right][p_m\dots p_1].
	\end{align*}
	Note that $p_m\dots p_1\in S$ and 
$\left[ \left(\sum_{j\in[m] :\ \varepsilon_j=1}r_{C_{h(p_m)}}^{a_j}f^{b_j}\right) \left(\sum_{j\in[m] :\ \varepsilon_j=-1}r_{C_{h(p_m)}^{-1}}^{a_j}f^{b_j}\right)\right]$ is in $\mathcal Z$. Since every path is of this form, it follows that $\mathcal A$ is generated by the finite set of paths $S$ as a $\mathcal Z$-module. Since $\mathcal Z$ is Noetherian, the algebra $\mathcal A$ is also Noetherian, completing the proof.
\end{proof}

Before giving positive examples, we give three examples to which Corollary~\ref{thm:cat-thm} may \textit{not} be applied in Figure~\ref{fig:noeth-ann}, explained in the following two example environments. These examples show that neither the assumption of Noetherianness nor boundary-finiteness is vacuous in Corollary~\ref{thm:cat-thm}.

\def\svgwidth{410pt}
\begin{figure}
	\centering
	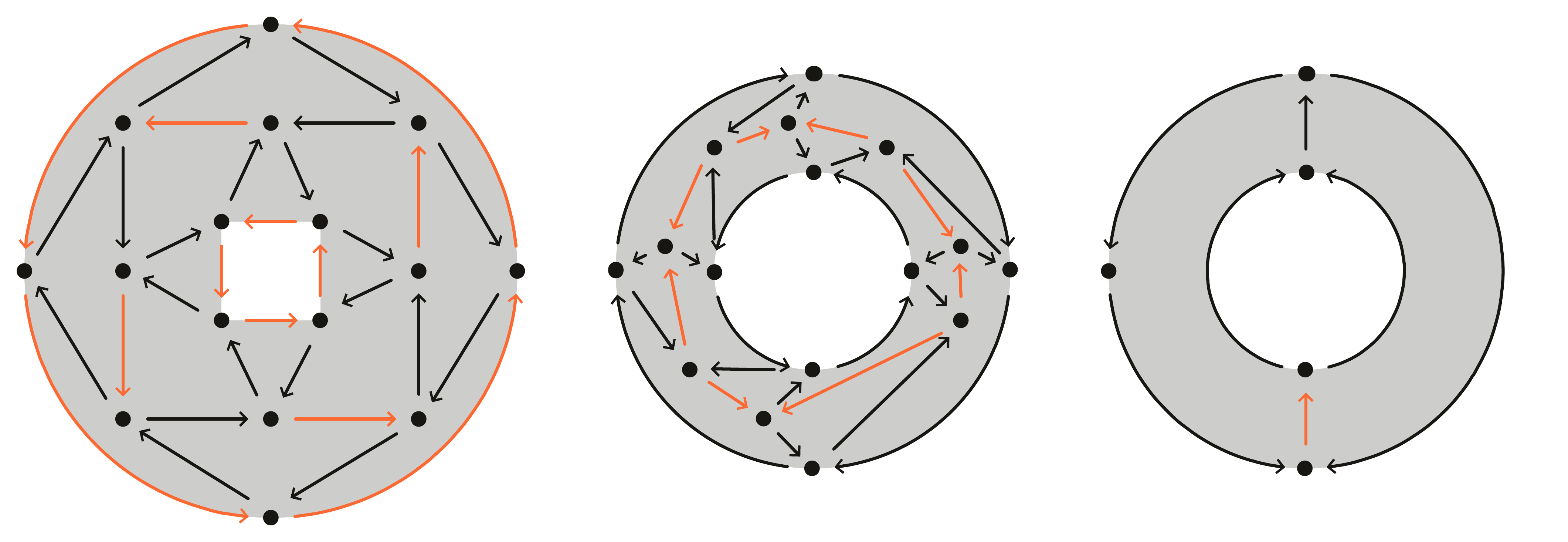
	\caption{On the left is a strongly consistent dimer model on an annulus which is Noetherian but not boundary-finite. In the middle is a weakly consistent dimer model on an annulus which is boundary-finite but not Noetherian. On the right is a strongly consistent dimer model on an annulus which is boundary-finite but not Noetherian.}
	\label{fig:noeth-ann}
\end{figure}

\begin{example}\label{ex:left}
	Consider the dimer model $Q$ on the left of Figure~\ref{fig:noeth-ann}. It may be checked using the strand diagram that $Q$ is weakly consistent, and it is not hard to see that $Q$ is in addition nondegenerate. Let $\mathcal M$ be the perfect matching consisting of the arrows drawn in red.
	For any vertex $w\in Q_0$, let $C_w$ be the homotopy class of cycles at $w$ winding once counter-clockwise around the annulus as embedded in Figure~\ref{fig:noeth-ann}. Then it may be checked that, for any $w$, we have $\mathcal M(r_{C_w})=0$ and $\mathcal M(r_{C_w^{-1}})=4$. Then Lemma~\ref{lem:noeth} shows that $A_Q$ and $\widehat A_Q$ are Noetherian.
	On the other hand, if $w$ is an internal vertex, then any power of $r_{C_w}$ is the only path in its equivalence class, showing that $Q$ is not boundary-finite. Then $Q$ satisfies all requirements for Corollary~\ref{thm:cat-thm} except for boundary-finiteness.
\end{example}
\begin{example}\label{ex:right}
	Consider the dimer model $Q$ in the middle of Figure~\ref{fig:noeth-ann}.
	As in Lemma~\ref{lem:noeth}, let $\mathcal A$ be $A_Q$ or $\widehat A_Q$.
	It may be verified using the strand diagram that $Q$ is weakly consistent, though $Q$ is not strongly consistent because no perfect matching contains any of its outer boundary arrows.
	Since every path with at least two arrows passes through a boundary vertex, the model $Q$ is boundary-finite.

	Let $\mathcal M$ be the perfect matching consisting of the arrows drawn in red.
	We have $\mathcal M(r_{C_{v_1}})=0$ but $\mathcal M(r_{C_{v_2}})=1$.
	Now for any $j\in\mathbb N_{\geq0}$, define $I_j$ to be the left ideal of $\mathcal A$ generated by $\{\alpha r_{C_{v_1}}^{i}\ :\ i\in[j]\}$, where $\alpha$ denotes the arrow $v_1\to v_2$. 
	For any $j$, we have $\mathcal M(\alpha r_{C_{v_1}}^j)=0$, hence $\alpha r_{C_{v_1}}^j$ is minimal. Since this path has no morphable arrows, it is the only minimal path in its homotopy class. This shows that $[\alpha r_{C_{v_1}}^j]$ is nonzero in $\widehat A_Q$ and that, moreover, if $j>0$ then $\alpha r_{C_{v_1}}^j$ is not in the left ideal $I_{j-1}$. Then $I_0\subsetneq I_1\subsetneq I_2\subsetneq \dots$ is an infinite increasing chain of left ideals of $\mathcal A$ which never stabilizes, hence $\mathcal A$ is not Noetherian.
	{This example is notable as it comes from a triangulated annulus as in~\cite[\S13]{BKMX}. The fact that such models are not nondegenerate or Noetherian indicates that they may not be the nicest dimer models on annuli from our perspective.}
\end{example}
\begin{example}\label{ex:rightest}
	Consider the dimer model $Q$ on the right of Figure~\ref{fig:noeth-ann}. It may be verified using the strand diagram that $Q$ is weakly consistent, and it is not hard to see that $Q$ is in addition nondegenerate. Boundary-finiteness of $Q$ is immediate since all vertices are boundary.

	Let $\mathcal A$ be $A_Q$ or $\widehat A_Q$.
	For $j\in\mathbb N_{\geq0}$, let $I_j$ be the left ideal of $\mathcal A$ generated by $\{\alpha_1(\gamma_1\beta_1\gamma_2\alpha_3)^i\ :\ i\in[j]\}$. As in Example~\ref{ex:right}, we may check that $[\alpha_1(\gamma_1\beta_1\gamma_2\alpha_3)^{j+1}]\not\in I_j$, hence $I_0\subsetneq I_1\subsetneq\dots$ is an increasing chain of left ideals of $\mathcal A$ which never stabilizes, hence $\mathcal A$ is not Noetherian.
	This dimer model then satisfies all assumptions of Corollary~\ref{thm:cat-thm} except for Noetherianness.
\end{example}

Example~\ref{ex:left} shows that a strongly consistent and Noetherian dimer model may fail to be boundary finite. Example~\ref{ex:rightest} shows that a strongly consistent and boundary-finite dimer model may have a non-Noetherian (completed and noncompleted) dimer algebra. So, no conditions of Corollary~\ref{thm:cat-thm} are vacuous.
Further work is required to understand how the Noetherian condition interacts with weak and strong consistency, as in~\cite{Beil2021} on the torus. It would be of interest to obtain a condition on the strand diagram for a strongly consistent dimer model (on an annulus or in general) to be Noetherian.
For now, we give examples of annulus models satisfying the conditions of Corollary~\ref{thm:cat-thm}.

\begin{example}\label{ex:noeth-fin}
	Figure~\ref{fig:noeth-fin} shows two strongly consistent dimer models on annuli satisfying Noetherianness and boundary finiteness.
	Consider the model $Q$ on the right. Let $\mathcal M$ be the perfect matching consisting of the six arrows drawn vertically in Figure~\ref{fig:noeth-fin}.
	For any vertex $w\in Q_0$, we let $C_w$ be the homotopy class of paths winding once counter-clockwise around the model. It can then be checked that $\mathcal M(r_{C_w})=4=\mathcal M(r_{C_w^{-1}})$ for any vertex $w\in Q_0$.
	Then Lemma~\ref{lem:noeth} shows that $A_Q$ and $\widehat A_Q$ are Noetherian.
	The model $Q$ is boundary-finite because any path $p$ of length at least four is equivalent to a path which factors through a boundary vertex. We have now seen that $Q$ is strongly consistent, Noetherian, and boundary-finite.
	The models of Figure~\ref{fig:noeth-fin} suggest a general construction. Note that the internal subquiver of the model on the left is an alternating affine type $\widetilde A_4$ quiver, and on the right the internal subquiver is two layers of affine type $\widetilde A_4$ quivers which are connected to each other. One may obtain similar models whose internal subquiver is any number of layers of an alternating affine type $\widetilde A_{2n}$ quiver.
	\def\svgwidth{300pt}
	\begin{figure}
		\centering
		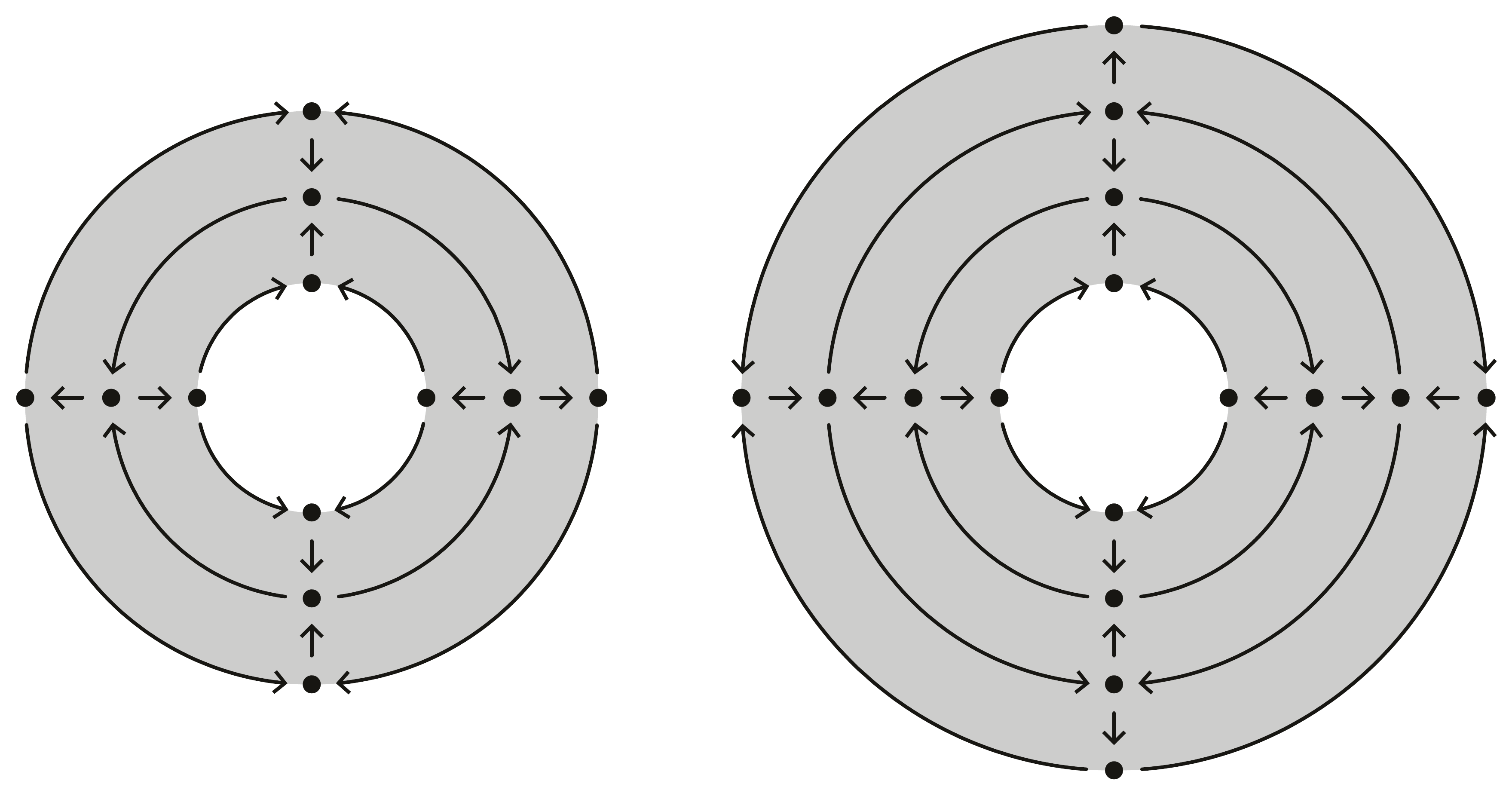
		\caption{Two strongly consistent dimer models on annuli satisfying Noetherianness and boundary finiteness.}
		\label{fig:noeth-fin}
	\end{figure}
\end{example}
\begin{example}\label{ex:noeth-finn}
	Let $Q$ be the dimer model of Figure~\ref{fig:noeth-finn}. One may check similarly to Example~\ref{ex:noeth-fin} that $Q$ is strongly consistent, Noetherian, and boundary-finite. Similarly to the previous example, the internal subquiver of this dimer model consists of two layers of some orientation of an affine type $\widetilde A_{11}$ quiver, stitched together with some vertical and diagonal arrows. Again, a more general construction is indicated, and one may obtain a similar strongly consistent, Noetherian, and boundary-finite model for any number of layers of any orientation of an affine type quiver.
	\def\svgwidth{300pt}
	\begin{figure}
		\centering
		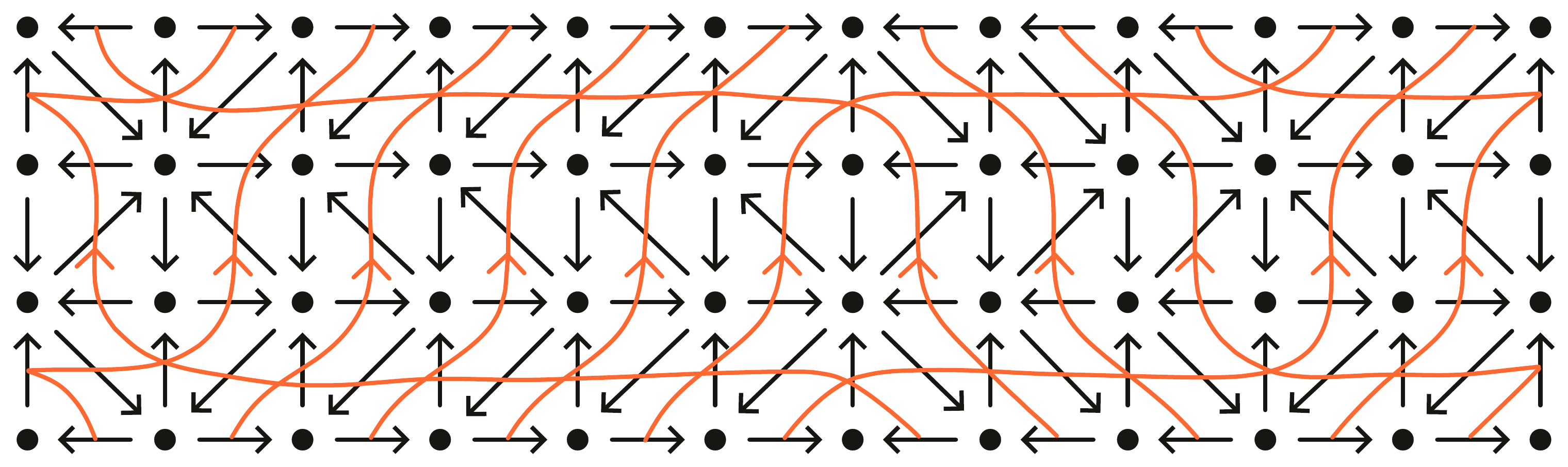
		\caption{A strongly consistent dimer model on an annulus satisfying Noetherianness and boundary finiteness. The left and right sides should be identified. The strands going up are shown in red, while the strands going down are left out for readability.}
		\label{fig:noeth-finn}
	\end{figure}
\end{example}

Corollary~\ref{thm:cat-thm} may be applied to the annulus models of Examples~\ref{ex:noeth-fin} and~\ref{ex:noeth-finn} to show that the Gorenstein-projective module categories over their boundary algebras categorify the cluster algebra given by their underlying quivers.

\section{Extra Results about Equivalence Classes}\label{sec:thin-quivers}\label{sec:eraec}

For this section, unless otherwise specified we suppose that $Q$ is a weakly consistent dimer model and $\widetilde{Q}$ is a simply connected dimer model that is not on a sphere. We use Theorem~\ref{thm:submodel-path-equivalence} to get some interesting results about the path equivalence classes in weakly consistent quivers. These results are not used anywhere else in the paper. 

\begin{prop}\label{prop:catch-all-right-left-c-value-thing}
    Let $p$ and $q$ be distinct elementary paths in a weakly consistent dimer model $\widetilde Q$ with the same start and end vertices. If $p$ is to the right of $q$, then either $p$ has a left-morphable arrow or $q$ has a strictly higher c-value than $p$.
\end{prop}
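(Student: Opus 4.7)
\emph{Proof plan.} The plan is to prove the contrapositive: assume $p$ has no left-morphable arrow and show $m_q>m_p$. Since $\widetilde Q$ is simply connected, $p$ and $q$ lie in the same homotopy class; path-consistency then gives a common minimal representative $r$ with $[p]=[f^{m_p}r]$ and $[q]=[f^{m_q}r]$, and the goal becomes $m_p<m_q$. As a preliminary reduction, I would handle the case where $p$ and $q$ share an intermediate vertex $w$ by decomposing $p=p_2p_1$ and $q=q_2q_1$ through $w$. Each sub-pair $(p_i,q_i)$ inherits the hypotheses: $p_i$ is elementary, leftmost (any left-morphable arrow of $p_i$ would be one for $p$), and to the right of $q_i$. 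Since each $p_i$ and $q_i$ lie in the same sub-homotopy class and hence share a common minimal representative, a perfect matching computation via Proposition~\ref{prop:perfect-matching-intersection-num} yields the clean identity $m_q-m_p=(m_{q_1}-m_{p_1})+(m_{q_2}-m_{p_2})$, so the result for the sub-pairs implies the result for $(p,q)$.

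Under this reduction I may assume $p$ and $q$ share only their endpoints, so $q^{-1}p$ is a simple counter-clockwise cycle-walk bounding a disk $U\subseteq S(\widetilde Q)$. Restricting to the dimer submodel $Q'$ of $\widetilde Q$ whose faces are those contained in $U$, Corollary~\ref{cor:dimer-submodel-consistent} guarantees that $Q'$ is weakly consistent with disk surface (hence simply connected and not a sphere), and Theorem~\ref{thm:submodel-path-equivalence} ensures that c-values in $Q'$ agree with those in $\widetilde Q$. The orientation key is that every face of $Q'$ adjacent to $p$ must be of the form $F_\beta^{cc}$ and every face adjacent to $q$ must be of the form $F_\gamma^{cl}$, since $U$ lies to the left of $p$'s forward direction and to the right of $q$'s forward direction. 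I then induct on the number of faces of $Q'$. The base case of a single face forces one of $p,q$ to be the corresponding face-path with the other constant, which satisfies the proposition on direct inspection (e.g., a counter-clockwise face forces $p$ to be the face-path, and $\alpha_0\in p$ is left-morphable; a clockwise face forces $q$ to be the face-path with $m_q=1>0=m_p$). For the inductive step I locate an arrow $\gamma$ of $q$ with $R_\gamma^{cl}\subseteq q$ and apply the basic right-morph $q':=m_\gamma(q)\sim q$, so $m_{q'}=m_q$. In the key case $\gamma\in p$, both $F_\gamma^{cl}$ and $F_\gamma^{cc}$ sit inside $U$ and exit the enclosed region after morphing; the face count strictly drops and the induction hypothesis applied to $(p,q')$ yields $m_p<m_{q'}=m_q$.

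The main obstacle is the case in which $q$ admits no right-morphable arrow in $Q'$, i.e., $q$ is rightmost in $Q'$. Then $p$ leftmost and $q$ rightmost with $p\neq q$ forces $(p,q)$ to be an irreducible pair in the simply connected $Q'$, directly contradicting Theorem~\ref{prop:no-irreducible-pairs}. The more delicate verification is confirming that a chosen right-morph of $q$ at some $\gamma$ strictly decreases the enclosed face count while preserving the condition that $p$ is to the right of $q'$. When $\gamma\in p$ this is immediate from the orientation observation above; when $\gamma$ lies strictly in the interior of $U$, tracking which of $F_\gamma^{cl},F_\gamma^{cc}$ leaves $U$ and whether the new pair still has $p$ to the right of $q'$ requires a careful local geometric analysis, possibly by restarting the shared-vertex reduction after the morph introduces a new shared intermediate vertex between $p$ and $q'$.
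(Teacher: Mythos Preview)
Your reduction to the case where $p$ and $q$ share only their endpoints and your passage to the disk submodel $Q'$ are both correct and match the paper. However, once you are in $Q'$, the paper's argument is far simpler than your induction and avoids the technical difficulties you flag at the end.

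The key observation you are missing is this: in $Q'$, every arrow of $p$ is a \emph{boundary} arrow lying in a counter-clockwise face only. Consequently $p$ has no right-morphable arrows in $Q'$ whatsoever (a right-morphable arrow $\gamma$ would require $R_\gamma^{cl}\subseteq p$, forcing the arrows of $p$ to sit in the clockwise face $F_\gamma^{cl}\in Q'_2$). So your hypothesis that $p$ is leftmost in $\widetilde Q$, and hence in $Q'$, actually forces $p$ to have \emph{no morphable arrows at all} in $Q'$. Now the paper finishes in one line: if $c(q)\le c(p)$, then $[p]=[qf^m]$ in $\widetilde Q$ for some $m\ge0$, hence in $Q'$ by Theorem~\ref{thm:submodel-path-equivalence}; but $p$ admits no morphs in $Q'$, so $p=qf^m$ as paths, contradicting that $p$ is elementary and $p\neq q$. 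This replaces your entire induction and your appeal to Theorem~\ref{prop:no-irreducible-pairs}.

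Two further remarks on your proposal. First, your base case is not quite right: elementary paths are by definition not face-paths, so a single face cannot be bounded by two elementary paths sharing only endpoints; the base case is vacuous rather than verified by the inspection you sketch. Second, your inductive step genuinely has the gaps you identify: after right-morphing $q$, the result $q'$ may fail to be elementary or may meet $p$ at new interior vertices, and handling this cleanly would essentially force you to rebuild the cycle-removing machinery of Section~\ref{sec:mac}. The paper's route via Theorem~\ref{thm:submodel-path-equivalence} sidesteps all of this. Your $c$-value additivity formula $m_q-m_p=(m_{q_1}-m_{p_1})+(m_{q_2}-m_{p_2})$ for the initial reduction is correct and does not need perfect matchings: it follows directly from path-consistency since the sub-pairs share the same minimal representatives $r_1,r_2$.
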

\begin{proof}
	We may reduce to the case where $p$ and $q$ share no vertices except for the start and end vertices. Then $q^{-1}p$ defines a disk submodel $Q'$ of $\widetilde Q$ in which $p$ consists only of boundary arrows in counter-clockwise faces and $q$ consists of boundary arrows in clockwise faces. If $p$ has a strictly higher c-value than $q$ in $\widetilde Q$, then we are done. Suppose the c-value of $p$ is less than or equal to that of $q$ in $\widetilde Q$. Then $[p]=[qf^m]$ for some $m$ in $\widetilde Q$. By Theorem~\ref{thm:submodel-path-equivalence}, the same is true in $Q'$. It follows that $p$ has a left-morphable arrow in $Q'$, and hence in $\widetilde Q$. This proves the desired result.
\end{proof}

\begin{cor}\label{cor:lrcool}
	Let $Q=\widetilde Q$ be a simply connected dimer model.
	Let $v$ and $w$ be vertices of $Q$. Suppose that there is a leftmost minimal path $p$ from $v$ to $w$. Then $p$ is to the left of every minimal path and to the right of every leftmost path.
\end{cor}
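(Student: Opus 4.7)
The plan is to prove both claims by applying Proposition~\ref{prop:catch-all-right-left-c-value-thing} segmentwise to the disks cut out between the two paths, using Corollary~\ref{cor:dimer-submodel-consistent} and Theorem~\ref{thm:submodel-path-equivalence} to move between $\widetilde Q$ and the relevant disk submodels. First I would observe that in a simply connected weakly consistent dimer model every minimal path is elementary: any subcycle would be null-homotopic, hence equivalent to a power of face-paths by path-consistency, contradicting minimality. So both claims concern elementary paths and the dichotomy of Definition~\ref{defn:left-right-disks-stuff} is available.

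For the first claim, let $q$ be a minimal (hence elementary) path from $v$ to $w$. Enumerate the shared vertices of $p$ and $q$ as $v_0 = v, v_1, \ldots, v_n = w$ in the order they appear in $p$ (simple connectedness together with elementariness of $p$ and $q$ forces the same order in $q$), and write $p = p_n\cdots p_1$ and $q = q_n\cdots q_1$ with $p_i, q_i\colon v_{i-1}\to v_i$. For each $i$ with $p_i\neq q_i$, the walk $q_i^{-1}p_i$ is simple and bounds a disk submodel $D_i$, and I want to rule out the case that $q_i^{-1}p_i$ is counter-clockwise. In that case $p_i$ is to the right of $q_i$ inside $D_i$, which is weakly consistent by Corollary~\ref{cor:dimer-submodel-consistent}. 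Applying Proposition~\ref{prop:catch-all-right-left-c-value-thing} within $D_i$ yields two cases: either $p_i$ has a left-morphable arrow in $D_i$, and since the counter-clockwise face containing any such arrow is inherited unchanged from $\widetilde Q$ that arrow is still left-morphable for $p$ in $\widetilde Q$, contradicting $p$ leftmost; or $q_i$ has c-value strictly greater than $p_i$ in $D_i$, so $[q_i] = [p_i f^m]$ in $D_i$ for some $m\geq 1$, which by Theorem~\ref{thm:submodel-path-equivalence} also holds in $\widetilde Q$, letting us factor a face-path out of $q$ and contradicting its minimality. Hence each $q_i^{-1}p_i$ is clockwise or trivial, giving $p$ to the left of $q$.

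The second claim is dual. Let $q$ be a leftmost elementary path and suppose some $q_i^{-1}p_i$ is clockwise. Then $p_i^{-1}q_i$ is counter-clockwise, so $q_i$ is to the right of $p_i$ in the disk $D_i$ that it bounds, and Proposition~\ref{prop:catch-all-right-left-c-value-thing} applied to $(q_i,p_i)$ in $D_i$ forces either a left-morphable arrow for $q_i$ in $D_i$ (hence for $q$ in $\widetilde Q$, contradicting $q$ leftmost) or $[p_i] = [q_i f^m]$ with $m\geq 1$, which again promotes to $\widetilde Q$ by Theorem~\ref{thm:submodel-path-equivalence} and contradicts $p$ minimal.

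The main technical obstacle is justifying that the shared vertices of $p$ and $q$ appear in the same order, so that the decomposition underlying Definition~\ref{defn:left-right-disks-stuff} is actually available, and confirming that left-morphability and c-values transfer between $D_i$ and $\widetilde Q$. The former is handled by elementariness combined with simple connectedness, and the latter is immediate once one notes that $D_i$ inherits its orientation, and hence its face-labeling, from $\widetilde Q$.
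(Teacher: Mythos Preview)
Your approach matches the paper's: reduce to segments between consecutive shared vertices and apply Proposition~\ref{prop:catch-all-right-left-c-value-thing} to each. The detour through the disk submodels $D_i$ is not needed---once $p_i$ and $q_i$ share only their endpoints, Proposition~\ref{prop:catch-all-right-left-c-value-thing} already applies directly in $\widetilde Q$ (indeed its own proof performs exactly this reduction), so there is nothing to transfer via Corollary~\ref{cor:dimer-submodel-consistent} or Theorem~\ref{thm:submodel-path-equivalence}.

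The one place your argument is genuinely incomplete is the assertion that elementariness plus simple connectedness forces the shared vertices of $p$ and $q$ to appear in the same order. This is false for arbitrary pairs of simple paths in a disk: one can draw a simple $p$ passing through $a$ then $b$ while a simple $q$ passes through $b$ then $a$. So condition~(1) of Definition~\ref{defn:left-right-disks-stuff} really does require an argument that uses the leftmost and minimal hypotheses, not just elementariness. The paper's one-line proof glosses over the same point, so you are not behind it, but since you correctly flag this as ``the main technical obstacle'' you should not then dismiss it with a parenthetical.
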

\begin{proof}
	If $p'$ is any other minimal path from $v$ to $w$, Proposition~\ref{prop:catch-all-right-left-c-value-thing} shows that $p$ is to the left of $p'$. If $q$ is any other leftmost path from $v$ to $w$, Proposition~\ref{prop:catch-all-right-left-c-value-thing} shows that $q$ is to the left of $p$.
\end{proof}

If $Q$ is not simply connected, we may still pass to the universal cover and apply Corollary~\ref{cor:lrcool} to gain insight into the equivalence classes of paths of $Q$.

\begin{cor}\label{cor:thin-leftmost-path-unique}
    
	Suppose $Q$ is a weakly consistent dimer model. If $p$ and $q$ are minimal homotopic leftmost paths with the same start and end vertices, then $p=q$.
\end{cor}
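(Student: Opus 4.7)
The plan is to reduce the statement to the simply connected case by passing to the universal cover, then invoke Corollary~\ref{cor:lrcool} to squeeze $\tilde p$ and $\tilde q$ between each other in the left/right order, and finally unpack Definition~\ref{defn:left-right-disks-stuff} to conclude that the two lifts must coincide arrow-for-arrow.

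First I would lift $p$ and $q$ to paths $\tilde p, \tilde q$ in the universal cover $\widetilde Q$ starting at a common vertex; because $p$ and $q$ are homotopic in $S(Q)$ with matching endpoints, their lifts share a common terminal vertex as well. Leftmostness transfers to lifts since $\alpha$ is a left-morphable arrow for a path if and only if (any lift of) $R_\alpha^{cc}$ is a subpath of (the corresponding lift of) the path. Minimality transfers by Proposition~\ref{prop:Q-consistent-iff-hat-Q-consistent} together with the correspondence of path-equivalence classes in Remark~\ref{remk:Q-hat-Q-path-correspondence}. So $\tilde p$ and $\tilde q$ are minimal leftmost paths in the simply connected weakly consistent dimer model $\widetilde Q$ with common start and end vertices.

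Next I would apply Corollary~\ref{cor:lrcool} to $\tilde p$: since $\tilde q$ is minimal, $\tilde p$ is to the left of $\tilde q$, i.e.\ $\tilde q$ is to the right of $\tilde p$; and since $\tilde q$ is leftmost, $\tilde p$ is to the right of $\tilde q$. Thus $\tilde p$ and $\tilde q$ are simultaneously to the right of each other.

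The remaining step, which is really the crux, is to observe that this mutual right-relation forces $\tilde p = \tilde q$. Using Definition~\ref{defn:left-right-disks-stuff}, enumerate the shared vertices $v_1,\dots,v_m$ and for each $i$ consider the segments $\tilde p_i$ and $\tilde q_i$ between $v_i$ and $v_{i+1}$. Either $\tilde p_i = \tilde q_i$ as arrows, or $\tilde p$ being to the right of $\tilde q$ makes $\tilde q_i^{-1}\tilde p_i$ a counter-clockwise simple cycle-walk, while $\tilde q$ being to the right of $\tilde p$ makes $\tilde p_i^{-1}\tilde q_i$ counter-clockwise, i.e.\ $\tilde q_i^{-1}\tilde p_i$ is clockwise; since $\widetilde{S(Q)}$ is not a sphere (so orientation of simple cycle-walks is well-defined), this is a contradiction. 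Hence $\tilde p_i=\tilde q_i$ for each $i$, so $\tilde p=\tilde q$, and projecting back to $Q$ yields $p=q$.

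The main obstacle is really bookkeeping: verifying that leftmostness and minimality are preserved under lifting, and carefully applying Corollary~\ref{cor:lrcool} in both directions. No genuinely new technical content should be needed beyond Definition~\ref{defn:left-right-disks-stuff} and Corollary~\ref{cor:lrcool}.
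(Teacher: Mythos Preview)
Your proof is correct, but it takes a slightly more indirect route than the paper's. The paper also lifts to $\widetilde Q$, but then reduces by taking subpaths to the case where $\tilde p$ and $\tilde q$ share only their start and end vertices; at that point one of them (say $\tilde p$) is to the right of the other, and a \emph{single} application of Proposition~\ref{prop:catch-all-right-left-c-value-thing} immediately yields a left-morphable arrow of $\tilde p$ (since both paths are minimal and hence have equal c-value), contradicting leftmostness. You instead invoke Corollary~\ref{cor:lrcool} to obtain that $\tilde p$ and $\tilde q$ are mutually to the right of each other, and then unpack Definition~\ref{defn:left-right-disks-stuff} segment-by-segment to derive a contradiction from the orientation of the bounding cycle-walk. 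Your argument is perfectly valid, and the segment-by-segment step is essentially the paper's ``reduce to disjoint subpaths'' in disguise; the only cost is that you pass through Corollary~\ref{cor:lrcool} (which itself applies Proposition~\ref{prop:catch-all-right-left-c-value-thing} twice) rather than using Proposition~\ref{prop:catch-all-right-left-c-value-thing} once directly.
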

\begin{proof}
	Suppose $p$ and $q$ are homotopic minimal leftmost paths with the same start and end vertices. Lift them to minimal leftmost paths $\tilde p$ and $\tilde q$ of $\widetilde Q$. Since $p$ and $q$ are homotopic, these paths have the same start and end vertices. It suffices to show that $\tilde p=\tilde q$. Suppose this is not the case. By taking subpaths, we may reduce to the case where $\tilde p$ and $\tilde q$ share only their start and end vertices. Say $\tilde p$ is to the right of $\tilde q$. Since both paths have the same c-value, Proposition~\ref{prop:catch-all-right-left-c-value-thing} shows that $\tilde p$ has a left-morphable arrow, a contradiction.
\end{proof}

Corollary~\ref{cor:thin-leftmost-path-unique} indicates that there is at most one minimal leftmost path between any two vertices of $Q$.
In general, leftmost paths need not exist. Figure~\ref{fig:non-red-annulus} gives an example of a finite weakly consistent dimer model on an annulus where a minimal path has an infinite equivalence class. Let $\alpha$ be one of the arrows of the digon of $Q$. Then an arbitrarily long series of left-morphs at the other arrow of the digon may be applied to $\alpha$. In other words, there is no leftmost path from the outer boundary ring to the inner boundary ring.
The dimer model in the middle of Figure~\ref{fig:noeth-ann} has no 2-cycles and displays the same behavior: a minimal path from a vertex of the inner boundary to a vertex of the outer boundary may be left-morphed or right-morphed indefinitely, depending on the path.

On the other hand, by assuming nondegeneracy we guarantee the existence of leftmost and rightmost paths between any given vertices.
\begin{thm}\label{cor:c}
	Let $Q$ be a strongly consistent dimer model. Let $v$ and $w$ be distinct vertices of $Q$ and let $C$ be a homotopy class of paths from $v$ to $w$. Then there is a unique minimal leftmost path $p$ from $v$ to $w$ in $C$. 
\end{thm}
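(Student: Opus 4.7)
The uniqueness of $p$ is precisely Corollary~\ref{cor:thin-leftmost-path-unique}, so my entire plan is to produce existence. The first step is to pass to the universal cover $\widetilde Q$, which is weakly consistent by Proposition~\ref{prop:Q-consistent-iff-hat-Q-consistent}, simply connected by construction, and nondegenerate since perfect matchings of $Q$ lift canonically to $\widetilde Q$. By Remark~\ref{remk:Q-hat-Q-path-correspondence} the homotopy class $C$ determines lifts $\tilde v, \tilde w$ so that paths in $C$ correspond to paths $\tilde v \to \tilde w$ in $\widetilde Q$; it suffices to find a leftmost minimal such path upstairs, since it will project to the desired $p$ downstairs (any alleged left-morph of $p$ lifts to one of $\tilde p$, contradicting leftmostness).

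Pick any minimal path $\tilde p_0$ from $\tilde v$ to $\tilde w$; it is elementary, because in a simply connected path-consistent model no nonconstant null-homotopic cycle is minimal. Let $S$ be its path-equivalence class, and observe that $S$ is closed under left-morphs, since morphs preserve equivalence classes, hence minimality, hence (in this simply connected setting) elementariness. To single out a leftmost element of $S$, I would fix any walk $\tilde q$ from $\tilde w$ to $\tilde v$ and define
\[
    W(\tilde p) := \sum_{F \in \widetilde Q_2} \wind(\tilde q\tilde p, F),
\]
a finite sum since $\tilde q\tilde p$ has finite length. By Lemma~\ref{lem:rotation-number-formula}, each left-morph strictly decreases $W$ by exactly $2$, so any $\tilde p^* \in S$ attaining $\min_S W$ has no left-morphable arrow and is therefore leftmost. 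Descending $\tilde p^*$ to $Q$ produces the required $p$.

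The main obstacle is showing this minimum is attained; I would achieve this by proving $S$ is finite, which is where nondegeneracy plays the essential role. By Lemma~\ref{lem:posgrad}, $A_Q$ admits a positive $\mathbb Z$-grading $G$ under which every arrow has positive degree. Applying $G$ to the projection of a path in $\widetilde Q$ gives a quantity preserved by morphs in $\widetilde Q$ (since such morphs project to morphs in $Q$), so every $\tilde p \in S$ has the common value $G(\tilde p_0)$ and thus bounded length. Local finiteness of $\widetilde Q$ then forces $S$ to be finite, and the minimum of $W$ exists. Nondegeneracy is essential in this step: in weakly consistent degenerate models such as the digon model of Figure~\ref{fig:non-red-annulus}, an arrow can be left-morphed indefinitely, $S$ is infinite, and no leftmost minimal path exists.
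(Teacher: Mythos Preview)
Your proof is correct and follows essentially the same approach as the paper: both pass to the universal cover, use the positive grading coming from nondegeneracy (Lemma~\ref{lem:posgrad}) to bound the length of minimal paths and hence make the equivalence class finite, and then use winding numbers (Lemma~\ref{lem:rotation-number-formula}) to force the left-morphing process to terminate. The only cosmetic difference is that the paper argues successive left-morphs produce pairwise distinct paths (since each step strictly decreases $\wind(\cdot,F)$ for some face $F$ and never increases it for any face), whereas you package this as minimizing the scalar $W=\sum_F\wind(\tilde q\tilde p,F)$; these are equivalent and your formulation is arguably cleaner.
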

\begin{proof}
	We show the existence of a leftmost path from $v$ to $w$. Uniqueness follows from Corollary~\ref{cor:thin-leftmost-path-unique}. Lift $v$ and $w$ to vertices $\tilde v$ and $\tilde w$ of the universal cover model $\widetilde Q$ such that a path from $\tilde v$ to $\tilde w$ descends to a path of $Q$ in homotopy class $C$.
	We consider paths on $\widetilde Q$ to have the grading of their corresponding paths on $Q$.
	By path-consistency, all minimal paths from $\tilde v$ to $\tilde w$ are equivalent and hence have the same grading $k$. Then if $\tilde q=\alpha_m\dots\alpha_1$ is a minimal path from $\tilde v$ to $\tilde w$, then since the degree of each arrow is positive we must have $m\leq k$. This shows that the length of a minimal path from $\tilde v$ to $\tilde w$ is bounded by $k$. This condition guarantees that there are a finite number of minimal paths from $\tilde v$ to $\tilde w$.

	Now consider again the minimal path $\tilde q$ from $\tilde v$ to $\tilde w$. Let $\tilde r$ be any path from $\tilde w$ to $\tilde v$. 
	If $\tilde q$ is not leftmost, then left-morph it at some arrow $\alpha_1$ to get some $\tilde q_1$. Then Lemma~\ref{lem:rotation-number-formula} gives that
	$\wind(\tilde r\tilde q_1,F)<\wind(\tilde r\tilde q,F)$
		if $F$ is one of the faces containing $\alpha_1$, and
	$\wind(\tilde r\tilde q_1,F)=\wind(\tilde r\tilde q,F)$
	otherwise.
		Continue to left-morph to get some sequence of paths $\tilde q,\tilde q_1,\tilde q_2,\dots$. By the above inequalities, for any $j$ the inequality $\label{fdsdf}\wind(\tilde r\tilde q_{j+1},F)\leq\wind(\tilde r\tilde q_j,F)$ holds for all faces $F$, and is strict for some choice of $F$. Repeating this argument shows that $\tilde q_i\neq\tilde q_j$ for $i\neq j$.
	Since there are a finite number of minimal paths from $\tilde v$ to $\tilde w$ and the sequence never repeats itself, the sequence must terminate with some leftmost path $\tilde p:=\tilde q_l$ from $\tilde v$ to $\tilde w$. This descends to a leftmost path $p$ from $v$ to $w$ in homotopy class $C$.
\end{proof}

Theorem~\ref{cor:c} and Corollary~\ref{cor:lrcool} are useful tools to understand equivalence classes of paths in nondegenerate dimer models.
{In particular, if $Q$ is a consistent (hence also nondegenerate) dimer model in a disk and $e$ is the boundary idempotent of $Q$, the \textit{boundary algebra} $eA_Qe$ may be used to obtain an additive Frobenius categorification of the ice quiver of $Q$~\cite{XPressland2019}. In a future paper, we will use these results to study boundary algebras of consistent dimer models on disks.}

\section{Reduction of a Dimer Model}\label{sec:red}
\label{sec:reduce}

There has been some interest, particularly in the disk case, in \textit{reducing} a dimer model by removing digons. See~\cite{XPressland2019} and~\cite[Remark 2.9]{XBroomhead2009}.
In particular, if a weakly consistent, Noetherian, boundary-finite dimer model $Q$ has no 1-cycles or 2-cycles, our categorification result Corollary~\ref{thm:cat-thm} shows that $\textup{GP}(Q)$ categorifies the cluster algebra $\AA_Q$ given by the underlying quiver of $Q$. On the other hand, if $Q$ has 1-cycles or 2-cycles, then it may not be immediately clear what cluster algebra is being categorified. Reducing a dimer model helps to avoid this issue.
In~\cite{XPressland2019}, Pressland works with consistent dimer models on a disk with more than three boundary vertices. He observes that, in this case, the removal of digons corresponds to untwisting moves in the strand diagram and hence their removal is straightforward. Moreover, the resulting \textit{reduced dimer models} do not have any 2-cycles, and the disk version of Corollary~\ref{thm:cat-thm} can be used to prove results about categorification. The situation for more general weakly consistent dimer models is more complicated, as there may be copies similar to Figure~\ref{fig:non-red-annulus} contained in the dimer model whose digons may not be removed. However, such occurrences will force the dimer model to be degenerate. Then in the nondegenerate case we may freely remove internal digons from a dimer model, that is digons where both of its arrows are internal.

We now consider the reduction of a dimer model by removing digons.
First, we consider weakly consistent dimer models.
\begin{prop}\label{prop:reduce-dimer-model}
	Let $Q$ be a weakly consistent dimer model with a finite number of digons. There exists a \textit{reduced dimer model $Q_{red}$ of $Q$} satisfying the following:
	\begin{enumerate}
		\item\label{prdm:0} $S(Q_{red})=S(Q)$,
		\item\label{prdm:1} $A_{Q_{red}}\cong A_Q$,
		\item\label{prdm:2}$Q_{red}$ is weakly consistent,
		\item\label{prdm:55}If $Q$ is nondegenerate, then $Q_{red}$ is nondegenerate, and
		\item\label{prdm:-1} Either $Q_{red}$ is a dimer model on a disk composed of a single digon, or every digon of $Q_{red}$ is an internal face incident to only one other face. 
	\end{enumerate}

\end{prop}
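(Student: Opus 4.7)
The plan is to iterate a local reduction move that removes one digon at a time, showing at each step that (1)--(4) are preserved, and that the iteration terminates in a model satisfying (5). For a counter-clockwise digon $D=\alpha\beta$ with $\alpha\colon v\to w$ and $\beta\colon w\to v$, I will call $D$ \emph{reducible} if either (a) both $\alpha,\beta$ are internal and $F_\alpha^{cl}\neq F_\beta^{cl}$, or (b) at least one of $\alpha,\beta$ is a boundary arrow and $Q$ is not itself a single-digon disk. In case (a), the reduction deletes $D,\alpha,\beta$ and replaces $F_\alpha^{cl}$ and $F_\beta^{cl}$ by a single clockwise face bounded by $R_\alpha^{cl}R_\beta^{cl}$ based at $v$; the vertices $v,w$ survive because they remain incident to arrows in the return paths. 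In case (b) an analogous local move removes the digon, identifying $v$ with $w$ (collapsing the contractible disk $D$ to a single point), and shrinks the adjacent clockwise face by one arrow. Since each move is local, the surface $S(Q)$ is preserved, giving (1).

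For (2), the dimer relation gives $[\beta]=[R_\alpha^{cl}]$ and $[\alpha]=[R_\beta^{cl}]$ in $A_Q$; substituting these identities defines a two-sided inverse to the inclusion $\mathbb{C}Q_{red}\hookrightarrow\mathbb{C}Q$ at the level of dimer algebras, giving an isomorphism $A_{Q_{red}}\cong A_Q$ after a direct check that the new relations (one per arrow of the merged face) match the consequences of the old ones under the substitution. Property (3) then follows from Theorem~\ref{thm:cons-alg-str}, since cancellativity is an algebra-isomorphism invariant. For (4), any perfect matching $\mathcal M$ of $Q$ uses exactly one of $\alpha,\beta$, say $\alpha$; then $R_\alpha^{cl}\cap\mathcal M=\emptyset$ while $R_\beta^{cl}\cap\mathcal M$ contains the unique matching arrow of $F_\beta^{cl}$, so $\mathcal M\setminus\{\alpha,\beta\}$ is a perfect matching of $Q_{red}$ meeting the merged face exactly once. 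This yields a surjection of perfect matching sets that covers every arrow of $Q_{red}$, so nondegeneracy transfers.

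The iteration terminates because each reduction strictly decreases the total arrow count by two, and the finite-digon hypothesis means every reduction takes place within a bounded local neighborhood of the original digons, so only finitely many steps occur. When no reducible digon remains, any surviving digon has both arrows internal with $F_\alpha^{cl}=F_\beta^{cl}$, unless the entire model has collapsed to a single-digon disk; these are precisely the two alternatives in (5). The main obstacle I expect is case (b): rigorously defining the boundary-digon move so that the resulting object is still a locally finite dimer model (connected incidence graphs at the identified vertex, well-defined new face cycle, unchanged surface), and then extending both the algebra isomorphism of (2) and the perfect-matching correspondence of (4) to this case. A secondary subtlety is the termination argument when case (b) creates a new digon (e.g.\ when the face adjacent to $\beta$ is a triangle); one must show that the ensuing chain of reductions always consumes arrows from a bounded region, so it cannot persist indefinitely.
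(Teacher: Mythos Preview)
Your case (a) is essentially the paper's move and your arguments for (1)--(4) there are fine. The real problem is case (b): identifying $v$ with $w$ cannot give an algebra isomorphism. In $A_Q$ the idempotents $e_v$ and $e_w$ are orthogonal and nonzero, so the semisimple quotient $A_Q/\mathrm{rad}\,A_Q\cong T_0$ has $|Q_0|$ simple summands; after your vertex identification, $A_{Q_{red}}/\mathrm{rad}\,A_{Q_{red}}$ has one fewer. Thus $A_{Q_{red}}\not\cong A_Q$, and (2) fails outright for your boundary move. The surface argument is also shaky: collapsing a boundary digon to a point can create a $1$-cycle at the merged vertex or destroy the connectedness condition on the incidence graph, exactly the obstacles you flag but cannot resolve with this definition.

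The paper's fix is much simpler and sidesteps all of this. If (say) $\alpha$ is a boundary arrow and $\beta$ is internal in the digon $\alpha\beta$, just delete the arrow $\alpha$ and the digon face, and do nothing else: $\beta$ now lies in only one face and becomes a boundary arrow, while $v$ and $w$ survive untouched. The relation $[\alpha]=[R_\beta^{cc}]$ (for a clockwise digon; dually otherwise) lets you substitute $\alpha$ out of any path, giving the algebra isomorphism exactly as in your case (a). The perfect-matching transfer is equally direct: if $\mathcal M$ contained $\alpha$, drop it; if it contained $\beta$, keep $\mathcal M$ as is. With this correction, termination is immediate because each step removes at least one face, and your worry about cascading new digons disappears since no faces are merged in the boundary case.
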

The condition~\eqref{prdm:-1} means that, as long as $Q_{red}$ is not composed of a single digon, every digon of $Q_{red}$ is in a configuration like that of Figure~\ref{fig:non-red-annulus}. See also Figure~\ref{fig:cons-tor-no-perf}. In contrast, the configuration of Figure~\ref{fig:tr-annulus} is not possible in a weakly consistent dimer model.
\begin{proof}
	Note that~\eqref{prdm:2} follows from~\eqref{prdm:1}. Suppose that $Q$ is not merely a digon and let $\alpha\beta$ be a digon of $Q$ which is either a boundary face or is incident to two distinct faces. Then $\alpha$ and $\beta$ may not both be boundary arrows. Suppose $\alpha\beta$ is a clockwise face; the counter-clockwise case is symmetric. 

	If $\alpha$ and $\beta$ are both internal arrows, then by assumption the faces $F_\alpha^{cc}$ and $F_\beta^{cc}$ are the distinct neighbors of the digon $\alpha\beta$. 
	Let $Q^1=(Q^1_0,Q^1_1,Q^1_2)$ be the dimer model whose underlying quiver is $Q$ without $\alpha$ and $\beta$ and whose set of faces is the same as $Q_2$, but with the faces $\{\alpha\beta,F_\alpha^{cc},F_\beta^{cc}\}$ replaced with one face whose arrows are those of $R_\alpha^{cc}$ and $R_\beta^{cc}$. Since the dimer algebra relations of $Q$ give $[\alpha]=[R_\beta^{cc}]$ and $[\beta]=[R_\alpha^{cc}]$, the dimer algebra is not changed by this operation. The surface is also unchanged by this operation. 
	If $\mathcal M$ is a perfect matching of $Q$, then $\mathcal M$ contains either $\alpha$ or $\beta$, and the removal of this arrow from $\mathcal M$ gives a perfect matching of $Q$. Hence, if $Q$ is nondegenerate then $Q^1$ is nondegenerate.
		See Figure~\ref{fig:digon-remove1}.

		On the other hand, suppose without loss of generality that $\alpha$ is a boundary arrow and $\beta$ is internal. Then let $Q^1$ be the dimer model obtained by removing the arrow $\alpha$ from $Q_1$ and the face $\alpha\beta$ from $Q_2$. As above, $[\alpha]=[R_\beta^{cc}]$ in $A_Q$ and hence the dimer model is unchanged by this operation. If $\mathcal M$ is a perfect matching of $Q$, then $\mathcal M$ contains $\alpha$ or $\beta$. If $\mathcal M$ contains $\alpha$, then removing this arrow gives a perfect matching of $\mathcal M$. If $\mathcal M$ contains $\beta$, then $\mathcal M$ is also a perfect matching of $Q$. Then if $Q$ is nondegenerate, then $Q^1$ is nondegenerate. Furthermore, $S(Q^1)=S(Q)$. 
		See Figure~\ref{fig:digon-remove2}. 

		\def\svgwidth{200pt}
		\begin{figure}\centering
			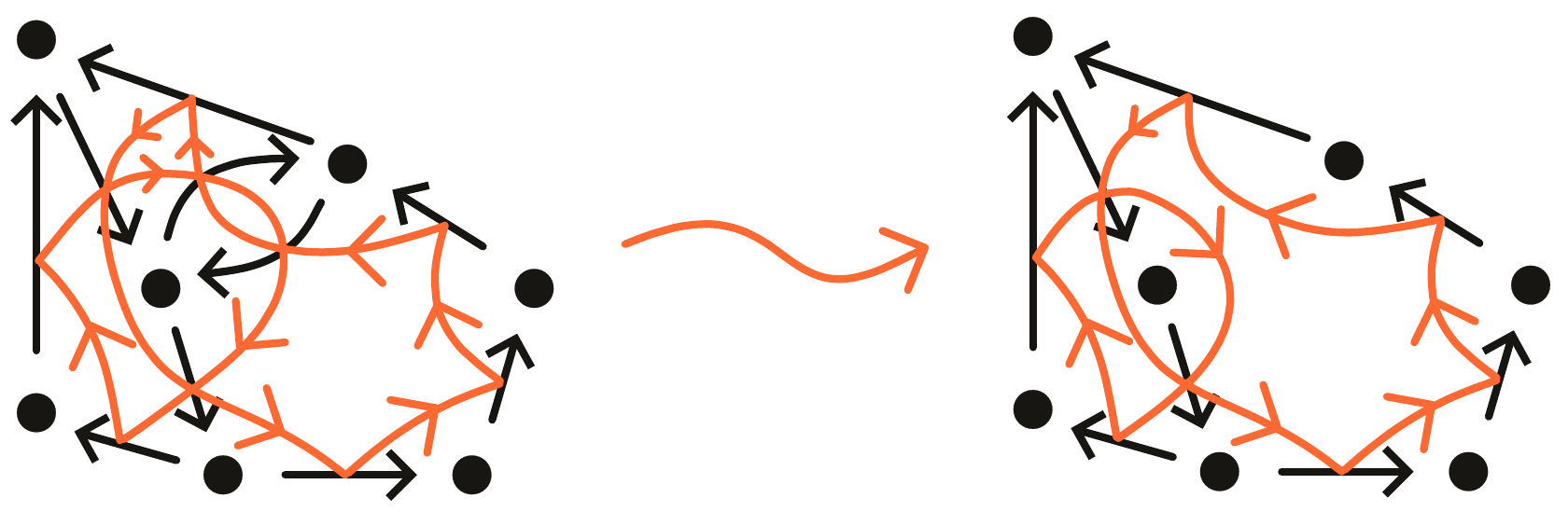
			\caption{Removing an internal digon.}
			\label{fig:digon-remove1}
		\end{figure}
		\def\svgwidth{200pt}
		\begin{figure}\centering
			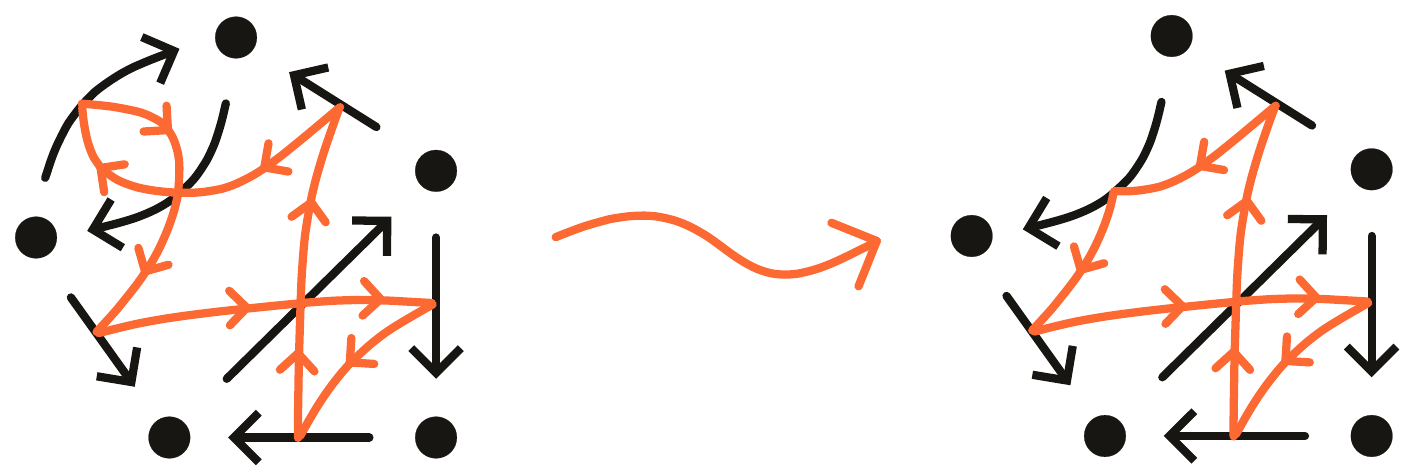
			\caption{Removing a boundary digon.}
			\label{fig:digon-remove2}
		\end{figure}

		In either case, we have defined a quiver $Q^1$ such that $S(Q^1)=S(Q)$ and $A_{Q^1}\cong A_Q$. Furthermore, $Q^1$ has strictly less digons than $Q$. We may now apply this process repeatedly to remove all such digons of $Q$.  Note that a digon cannot be removed only if it is internal and incident to only one face or it is incident to no other faces.  In the latter case, such a digon constitutes the entire dimer model.
		Therefore, since $Q$ has a finite number of digons, this process must terminate with some $Q_{red}$ such that $Q_{red}$ is a dimer model on a disk composed of a single digon, or every digon of $Q_{red}$ is internal and incident to only one other face. 
\end{proof}

\def\svgwidth{70pt}
\begin{figure}
	\centering
	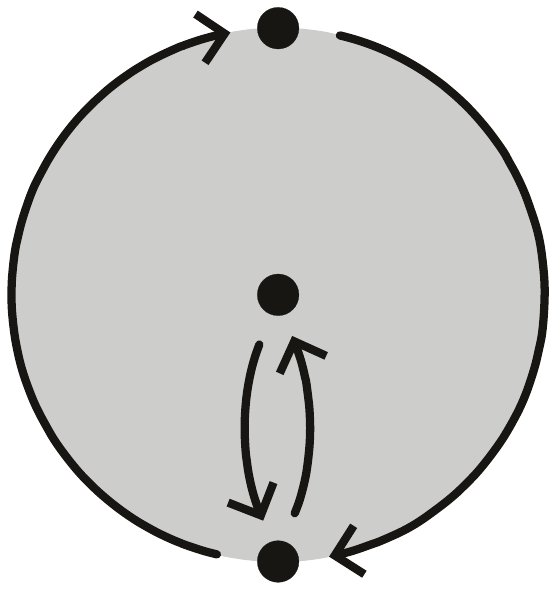
	\caption{One way that that a digon could be incident to only one other face. The strand diagram has a bad lens, hence this configuration is not possible in a weakly consistent dimer model.}
	\label{fig:tr-annulus}
\end{figure}

\def\svgwidth{120pt}
\begin{figure}
	\centering
	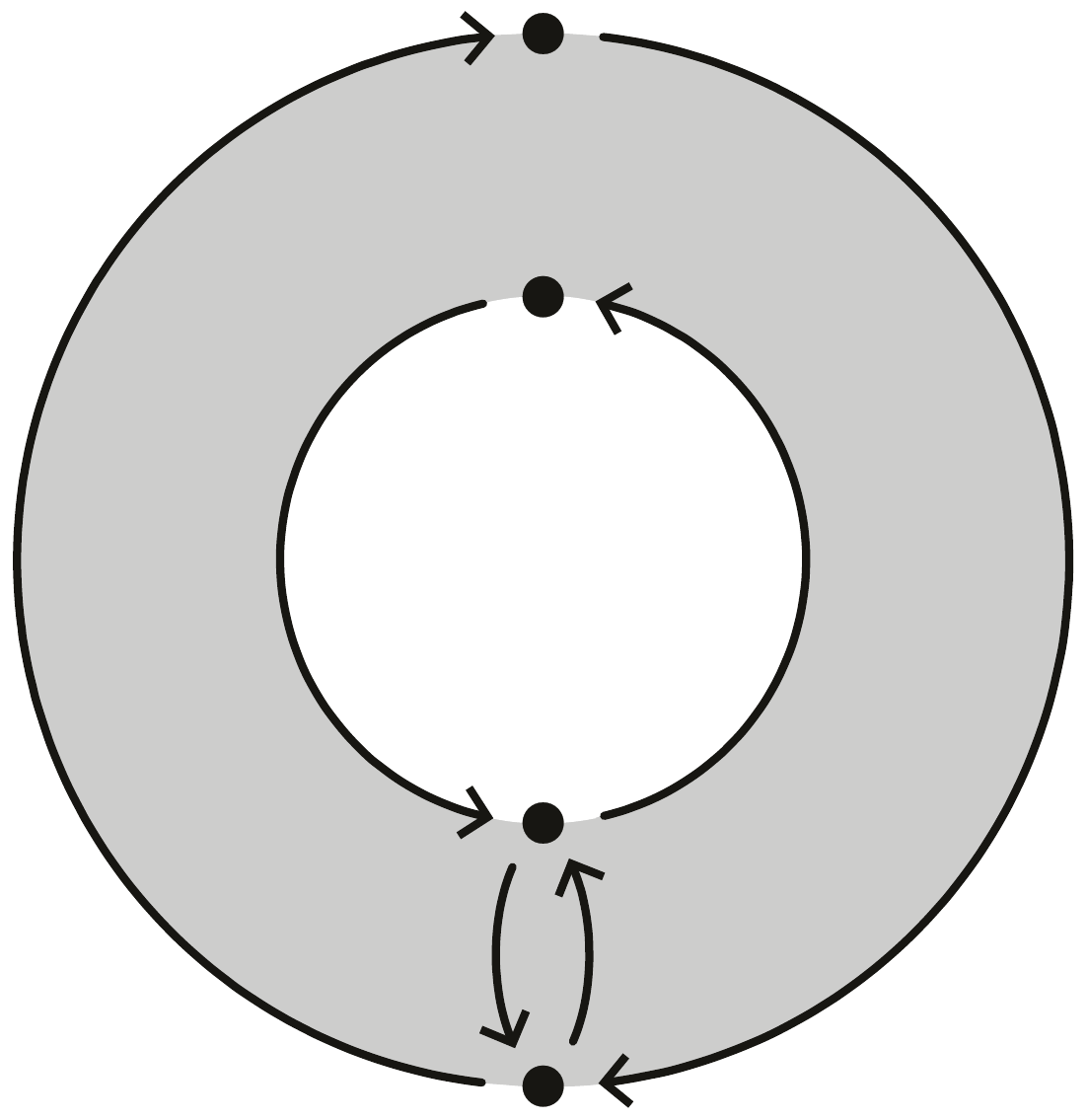
	\caption{A reduced dimer model on an annulus with a digon which may not be removed.} 
	\label{fig:non-red-annulus}
\end{figure}

Figure~\ref{fig:non-red-annulus} shows a weakly consistent model with an internal digon which may not be removed by the process of the above theorem. Indeed, if the digon is removed then the resulting ``dimer model'' would have a face which is not homeomorphic to an open disk. On the level of strand diagrams, removing the digon corresponds to an untwisting move that disconnects the strand diagram. 
If $Q$ has an infinite number of digons, a new problem appears. See Figure~\ref{fig:cant-reduce}, which shows an infinite model with an infinite number of digons. While any finite number of digons may be removed, all of them may not be removed at once. The universal cover of the dimer model of Figure~\ref{fig:non-red-annulus} displays the same behavior.

\begin{remk}
	In~\cite[\S3]{XPressland2020}, Pressland outlines a method of removing digons from general Jacobian ice quivers without changing the completed algebra. If this process is applied to the dimer model of Figure~\ref{fig:non-red-annulus}, the resulting ice quiver is not a dimer model.
\end{remk}

\def\svgwidth{120pt}
\begin{figure}
	\centering
	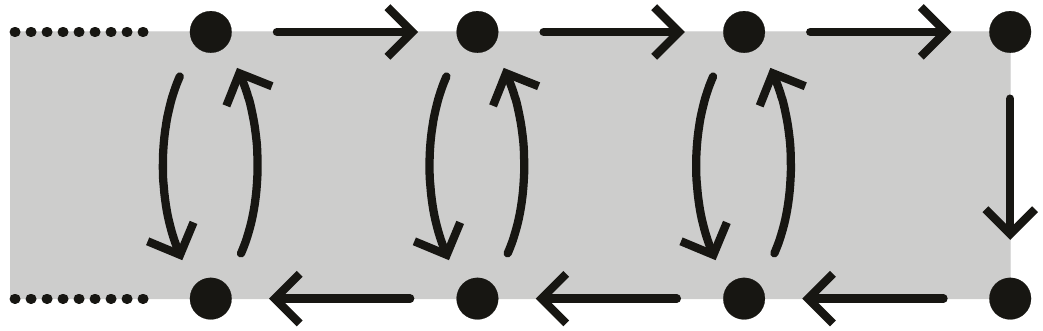
	\caption{Shown is a dimer model on an infinite half-strip. If all digons were removed, then there would only be one infinite face making up the entire non-compact surface, which is impossible.}
	\label{fig:cant-reduce}
\end{figure}

Note that neither Figure~\ref{fig:non-red-annulus} nor Figure~\ref{fig:cant-reduce} is nondegenerate. In fact, the following result shows that any strongly consistent dimer model may be reduced to a dimer model with no digons. 

\begin{cor}\label{cor:degenreduce}
	Let $Q$ be a strongly consistent dimer model. There exists a \textit{reduced dimer model $Q_{red}$ of $Q$} satisfying the following:
	\begin{enumerate}
		\item $S(Q_{red})=S(Q)$,
		\item $A_{Q_{red}}\cong A_Q$,
		\item $Q_{red}$ is strongly consistent, and
		\item Either $Q_{red}$ is a dimer model on a disk composed of a single digon, or $Q_{red}$ has no digons.
	\end{enumerate}
\end{cor}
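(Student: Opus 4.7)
The plan is to apply Proposition~\ref{prop:reduce-dimer-model} and then use nondegeneracy to rule out the residual ``unremovable digon'' configuration described in item~\eqref{prdm:-1} of that proposition. First, invoke Proposition~\ref{prop:reduce-dimer-model} (whose hypothesis of finitely many digons holds in the finite setting of interest). This produces a dimer model $Q_{red}$ satisfying items~(1), (2), and~(3) of the present Corollary, with nondegeneracy preserved by~\eqref{prdm:55}. Moreover, either $Q_{red}$ is the single-digon disk (so item~(4) holds trivially) or every remaining digon of $Q_{red}$ is an internal face incident to only one other face. It remains to show that, in the latter case, no digons can actually remain.

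Second, suppose for contradiction that $Q_{red}$ is not the single-digon disk but contains such a digon $\alpha\beta$. Without loss of generality assume $\alpha\beta$ is clockwise, with $\alpha\colon v\to w$ and $\beta\colon w\to v$. Since $\alpha$ and $\beta$ are both internal, each lies in exactly one counter-clockwise face, and by hypothesis this is the same face $F$ for both. Hence $F$ contains $\alpha$ and $\beta$ in its boundary. I would then argue that $F$ must contain some third arrow $\gamma\notin\{\alpha,\beta\}$: otherwise $F$ itself would be a counter-clockwise digon on $\{\alpha,\beta\}$, and the two faces $\alpha\beta$ and $F$ would together constitute the entire connected dimer model (no other face can contain $\alpha$ or $\beta$), forcing $S(Q_{red})$ to be a sphere. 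This is impossible, since by Remark~\ref{remk:alg-cons-no-sphere} weak consistency excludes dimer models on spheres.

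Third, derive the contradiction via perfect matchings. Let $\mathcal M$ be any perfect matching of $Q_{red}$. The face $\alpha\beta$ contributes exactly one arrow to $\mathcal M$, which is either $\alpha$ or $\beta$. But both $\alpha$ and $\beta$ lie in $F$, so this same arrow must also be the unique element of $\mathcal M$ in $F$. Consequently $\gamma\notin\mathcal M$. Since $F$ is the unique counter-clockwise face containing $\gamma$, this means $\gamma$ lies in no perfect matching of $Q_{red}$, contradicting the nondegeneracy of $Q_{red}$ established in the first step. Hence no such digon exists and item~(4) is proved.

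The main obstacle I expect is handling the edge case where $F$ could itself be a digon on $\{\alpha,\beta\}$, since this cannot be excluded purely by the local digon structure and requires a mild surface-theoretic argument combining connectedness of $Q_{red}$ with the fact that weak consistency rules out spheres. Once $F$ is known to contain a third arrow, the nondegeneracy contradiction follows immediately from the defining property of perfect matchings applied to the overlapping faces $\alpha\beta$ and $F$.
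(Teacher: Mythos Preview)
Your proposal is correct and follows essentially the same approach as the paper: apply Proposition~\ref{prop:reduce-dimer-model}, then observe that any residual internal digon incident to only one other face $F$ forces some arrow $\gamma$ of $F$ to be excluded from every perfect matching, contradicting nondegeneracy. You are slightly more careful than the paper in justifying the existence of the third arrow $\gamma$ via the sphere exclusion, whereas the paper simply asserts it.
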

\begin{proof}
	Apply Proposition~\ref{prop:reduce-dimer-model}. If $Q_{red}$ is not a dimer model on a disk composed of a single digon, then $Q_{red}$ has an internal digon $\alpha\beta$ which is incident to a single other face $F$. Let $\gamma$ be an arrow of $F$ which is not $\alpha$ or $\beta$. Any perfect matching $\mathcal M$ must contain $\alpha$ or $\beta$, since $\alpha\beta$ is a face. Then $\mathcal M$ cannot contain $\gamma$, since $\gamma$ shares a face with these arrows. We have shown that no perfect matching contains $\gamma$. Then $Q_{red}$, and by extension $Q$, is degenerate.
\end{proof}

We remark that $Q_{red}$ may have 1-cycles and 2-cycles, even if it has no digons. Consider the dimer model on a torus pictured on the left of Figure~\ref{fig:dignotdig}. While the quiver $Q$ has 2-cycles, it has no \textit{null-homotopic} 2-cycles. As we see by looking at the universal cover model on the right, this means that there are no digons in $Q$, hence $Q=Q_{red}$ is reduced.

\def\svgwidth{120pt}
\begin{figure}
	\centering
	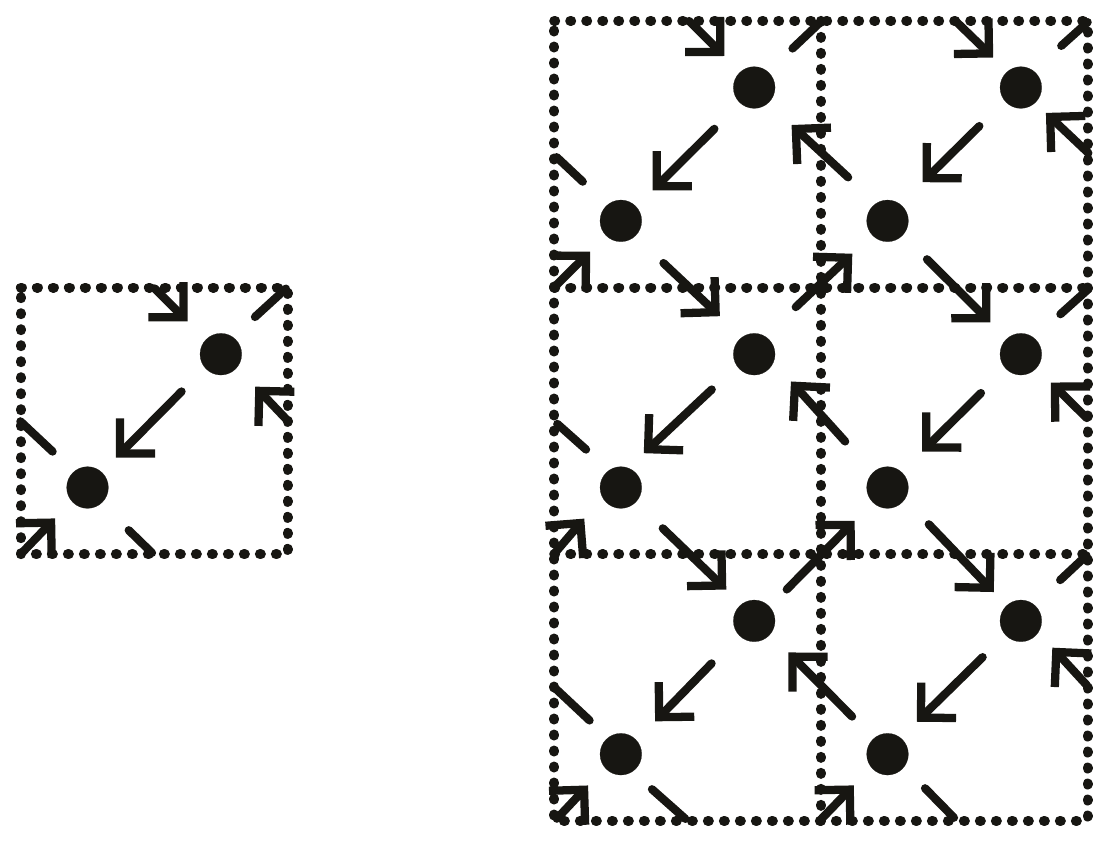
	\caption{Shown on the left is a dimer model on a torus. Opposite dashed edges should be identified. Shown on the right is a piece of its universal cover model.}
	\label{fig:dignotdig}
\end{figure}

\bibliographystyle{plain}
\bibliography{biblio}

\end{document}